\documentclass[a4paper]{amsart}
\usepackage[latin1]{inputenc}
\usepackage[T1]{fontenc}
\usepackage[english]{babel}
\usepackage{amsmath,mathrsfs, geometry}
\usepackage{amscd}
\usepackage{graphicx}
\usepackage[all]{xy}
\usepackage{amsthm}
\usepackage{amssymb,url}
\usepackage[charter]{mathdesign}
\usepackage{bbm}
\usepackage{ae}
\usepackage{color}

\DeclareFontFamily{OT1}{pzc}{}
\DeclareFontShape{OT1}{pzc}{m}{it}{<-> s * [1.1] pzcmi7t}{}
\DeclareMathAlphabet{\mathpzc}{OT1}{pzc}{m}{it}

\newtheorem{thm}{Theorem}[subsection]
\newtheorem{thm*}{Theorem}[]
\newtheorem{thmapp}{Theorem}[section]
\newtheorem{cor}[thm]{Corollary}
\newtheorem{prop}[thm]{Proposition}
\newtheorem{propapp}[thmapp]{Proposition}
\newtheorem{prop*}[thm*]{Proposition}
\newtheorem{lem}[thm]{Lemma}
\newtheorem{lemapp}[thmapp]{Lemma}
\newtheorem{lem*}[thm*]{Lemma}

\theoremstyle{remark}
\newtheorem{remark}[thm]{Remark}
\newtheorem{remark*}[thm*]{Remark}
\newtheorem{remarkapp}[thmapp]{Remark}

\theoremstyle{definition}
\newtheorem{deef}[thm]{Definition}
\newtheorem{deef*}[thm*]{Definition}
\newtheorem{deefapp}[thmapp]{Definition}

\newcommand{\Ko}{\mathbb{K}}
\newcommand{\Bo}{\mathbb{B}}
\newcommand{\He}{\mathpzc{H}}
\newcommand{\minotimes}{\bar{\otimes}}
\newcommand{\hotimes}{\tilde{\otimes}}
\newcommand{\R}{\mathbbm{R}}
\newcommand{\C}{\mathbbm{C}}

\newcommand{\Z}{\mathbbm{Z}}
\newcommand{\Q}{\mathbbm{Q}}
\newcommand{\N}{\mathbbm{N}}

\newcommand{\id}{\, \mathrm{id}}
\newcommand{\rd}{\mathrm{d}}

\newcommand{\Fg}{\mathcal{F}}

\newcommand{\ind}{\mathrm{i} \mathrm{n} \mathrm{d} \,}

\newcommand{\Hom}{\mathrm{Hom}\;}

\newcommand{\Ext}{\mathrm{Ext}}

\renewcommand{\epsilon}{\varepsilon}
\renewcommand{\phi}{\varphi}

\newcommand{\cstar}{A}

\newcommand{\im}{\mathrm{i} \mathrm{m} \,}

\newcommand{\supp}{\mathrm{s} \mathrm{u} \mathrm{p} \mathrm{p}\,}
\newcommand{\e}{\mathrm{e}}
\newcommand{\tra}{\textnormal{Tr}}

\newcommand{\End}{\textnormal{End}}
\newcommand{\Dom}{\textnormal{Dom}\,}
\newcommand{\K}{\mathbb{K}}
\newcommand{\B}{\mathbb{B}}
\newcommand{\Lip}{\textnormal{Lip}}
\newcommand{\halfepsilon}{\delta}
\renewcommand{\theequation}{\arabic{section}.\arabic{equation}}
\newcommand{\cemptyset}{\circ}

\title[Finite summability on Cuntz-Krieger algebras]{Spectral triples and finite summability on Cuntz-Krieger algebras}
\author{Magnus Goffeng and Bram Mesland}
\thanks{email: 
\texttt{goffeng@math.uni-hannover.de}, \texttt{b.mesland@warwick.ac.uk}
}
\address{M. Goffeng\\ Insitut f\"{u}r Analysis, Leibniz Universit\"{a}t Hannover\\Welfengarten 1\\
30167 Hannover\\ Germany}

\address{B. Mesland\\ Department of Mathematics, University of Warwick\\Zeeman Building\\Coventry CV4 7AL\\ UK}

\begin{document}

\begin{abstract}
We produce a variety of odd bounded Fredholm modules and odd spectral triples on Cuntz-Krieger algebras by means of realizing these algebras as ``the algebra of functions on a non-commutative space" coming from a sub shift of finite type. We show that any odd $K$-homology class can be represented by such an odd bounded Fredholm module or odd spectral triple. The odd bounded Fredholm modules that are constructed are finitely summable. The spectral triples are $\theta$-summable, although their phases will already on the level of analytic $K$-cycles be finitely summable bounded Fredholm modules. Using the unbounded Kasparov product, we exhibit a family of generalized spectral triples, related to work of Bellissard-Pearson, possessing mildly unbounded commutators, whilst still giving well defined $K$-homology classes.
\end{abstract}

\maketitle

\vspace{-8mm}
\small
\tableofcontents

\Large
\section*{Introduction}
\normalsize

This paper is a study of how odd $K$-homology classes on Cuntz-Krieger algebras can be realized by explicit cycles; both by means of bounded Fredholm modules (also known as analytic $K$-cycles) and as unbounded Fredholm modules, e.g. spectral triples. We will use the Poincar\'e duality for Cuntz-Krieger algebras constructed by Kaminker-Putnam \cite{kaminkerputnam} to find explicit finitely summable Fredholm modules representing any odd $K$-homology class. This allows us to realize odd $K$-homology classes by means of abstract Toeplitz operators and the finite summability of the cycles is proved using the work of the first named author \cite{extensionsgoffeng}. 

The construction of unbounded representatives of these $K$-homology classes is more elaborate. We discuss the possibility of using Kasparov products of unbounded Fredholm modules for the fixed point algebra of the gauge action with a well studied unbounded bivariant cycle. Related constructions can be found in \cite{gabgrens}. In many cases it is difficult to understand cohomological properties of unbounded Fredholm modules on the fixed point algebra, they nevertheless exist in abundance due to \cite{chrivan}. However, in interesting cases such as the Cuntz algebra $O_N$ this will produce $K$-homologically trivial unbounded Fredholm modules. 

A more fruitful viewpoint comes from describing the Cuntz-Krieger algebra as the noncommutative quotient of the underlying subshift of finite type, via its groupoid model. This viewpoint is common to noncommutative geometry. The maximal abelian subalgebra corresponding to the unit space in the groupoid plays the r\^{o}le of the base space in a fibration. The unbounded Fredholm modules are then obtained by restricting an unbounded bivariant cycle to a ``fiber" over the unit space. The bivariant cycle is inspired both by the dynamics of the underlying subshift of finite type and the structures appearing in Kaminker-Putnam's Poincar\'e duality class. This uses the idea of multiplication by real valued functions defined on the groupoid to obtain regular operators as in \cite{Mes1}. Localizations of this bivariant cycle to the commutative base exhausts the odd $K$-homology of the Cuntz-Krieger algebra.

An explicit construction of the unbounded Kasparov product of this cycle with canonically defined spectral triples on the commutative base from \cite{BP} yields a generalization of the notion of unbounded Fredholm module, allowing for unbounded commutators. This generalization is compatible with $K$-homology. The Kasparov products are constructed using the operator space approach to connections initiated by the second named author in \cite{Mes} and developed further in \cite{BMS, KaLe}.
\newline

The problem that this work originates from can be formulated as follows. Whenever $B$ is a $C^*$-algebra and $x\in K^*(B)$ is a $K$-homology class, is it possible to find an explicit analytic $K$-cycle or unbounded Fredholm module representing $x$ with favourable analytic properties? Here we are mainly concerned with finite- and $\theta$-summability. We return to discuss this problem setting more precisely below. The Cuntz-Krieger algebras are interesting in this aspect because results of Connes \cite{Connestrace}, combined with the fact that (under weak assumptions) Cuntz-Krieger algebras admit no traces, imply that it is (under these weak assumptions) not possible to have a finitely summable unbounded Fredholm module on a Cuntz-Krieger algebra\footnote{Not even for the generalized notion of unbounded Fredholm modules alluded to above.}. In this paper we show that any odd $K$-homology class is represented by a finitely summable $K$-cycle. It should be mentioned that this interesting structure has been shown to appear also on the crossed product of boundary actions of a hyperbolic group \cite{EN2}. We believe that our constructions illuminate the differences between finite summability in the bounded and the unbounded models for $K$-homology.

In contrast to the obstructions to finite summability of unbounded Fredholm modules from \cite{Connestrace}, there is to our knowledge no analog for bounded Fredholm modules. Nor were we able to find an example in the literature of a $K$-homology class that can not be represented by an analytic $K$-cycle which is finitely summable on some dense sub-algebra. We provide such an example on a commutative $C^*$-algebra.

\subsection*{Preliminaries}
\label{prelsubsection}

Before entering into finite summability issues and the precise formulation of the results in this paper, we recall some concepts of noncommutative geometry. This paper discusses the noncommutative geometry of Cuntz-Krieger algebras from the point of view of Kasparov's $KK$-theory \cite{Kas1, Kas2}, and the unbounded formulation thereof due to Connes \cite{Connesbook} and Baaj-Julg \cite{BJ}. The central objects in Kasparov's approach to $KK$-theory are Fredholm modules. Fredholm modules come in two flavors, bounded and unbounded. The bounded Fredholm modules are sometimes referred to as analytic $K$-cycles.  

\begin{deef*} 
\label{bddfred}
Let $A$ be a $C^{*}$-algebra. A bounded \emph{even Fredholm module over $A$} is a triple $(\pi, \mathpzc{H}, F)$ consisting of 
\begin{enumerate}
\item a $\Z/2$-graded Hilbert space $\mathpzc{H}$ carrying an even $*$-representation $\pi:A\rightarrow \mathbb{B}(\mathpzc{H})$;
\item an odd operator $F\in\mathbb{B}(\mathpzc{H})$ with the property that, for all $a\in A$, $\pi(a)(F^{2}-1), \pi(a)(F-F^{*})$ and $[F,\pi(a)]$ are all compact operators.
\end{enumerate}
A triple $(\pi,\mathpzc{H},F)$ with the above properties save for the fact that the Hilbert space $\mathpzc{H}$ is graded, defines a bounded \emph{odd Fredholm module}. If in the above $\mathpzc{H}$ is replaced with a Hilbert $C^{*}$-module $\mathpzc{E}$ over a second $C^{*}$-algebra $B$,\footnote{ In which case $\mathbb{B}(\mathpzc{H})$ is replaced with $\End^{*}_{B}(\mathpzc{E})$ -- the $C^*$-algebra of adjointable $B$-linear operators on $\mathpzc{E}$, and the $C^*$-algebra of compact operators by the $C^*$-algebra of $B$-compact operators $\Ko_B(\mathpzc{E})$.} then $(\pi,\mathpzc{E},F)$ defines an $(A,B)$-\emph{Kasparov module}. 

\end{deef*}

By defining a suitable notion of homotopy, the set of homotopy classes of even Fredholm modules forms an abelian group $K^{0}(A)$, and the odd Fredholm modules are used to build an abelian group $K^{1}(A)$. The groups $K^0(A)$ and $K^1(A)$ are called the $K$-homology groups of $A$ combining into the $\Z/2\Z$-graded abelian group $K^*(A)=K^0(A)\oplus K^1(A)$. The $K$-homology groups are homotopy invariants of $A$ and encode index theoretic information. See \cite{AKH} for an excellent exposition of this theory.  Historically, the Fredholm picture of $K$-homology was conceived by Atiyah \cite{atiyahkhom} who introduced it to make the Atiyah-Singer index theorem into a functorial statement. It reached full maturity in the work of Kasparov \cite{Kas1}, where the groups $KK_{*}(A,B)=KK_0(A,B)\oplus KK_1(A,B)$ are defined similarly, as an abelian group of homotopy classes of $(A,B)$-Kasparov modules. This culminated in his proof of the Novikov conjecture for a large class of groups \cite{Kas2}.  For computational purposes, it is sometimes convenient to work with unbounded Fredholm modules.

\begin{deef*}
\label{unbddfred}
An unbounded even Fredholm module over a $C^*$-algebra $A$ consists of a triple $(\pi,\He,D)$ containing the data:
\begin{enumerate}
\item a $\Z/2$-graded Hilbert space $\He$ carrying an even $*$-representation $\pi:A\rightarrow \mathbb{B}(\He)$.
\item a selfadjoint odd operator $D$ with locally compact resolvents $\pi(a)(D\pm i)^{-1}\in \Ko(\He)$ for all $a\in A$, such that the $*$-algebra 
\[\mathrm{Lip}(\pi,\He,D):=\left\{a\in \,A\,:\; 
\begin{split}
&\pi(a)\Dom(D)\subseteq \Dom(D)\quad\textnormal{and}\\ 
&[D,\pi(a)]\quad\textnormal{extends to a bounded operator}
\end{split} \right\},
\]
is norm dense in $A$.
\end{enumerate}
A triple $(\pi,\mathpzc{H},D)$ with the above properties save for the fact that the Hilbert space $\mathpzc{H}$ is graded, defines an unbounded \emph{odd Fredholm module}. If $\pi$ is faithful and $\mathcal{A}\subseteq \pi(\mathrm{Lip}(\pi,\He,D))$ is dense in $\pi(A)$ the triple $(\mathcal{A},\He,D)$ is called an even (odd) spectral triple.  

If in the above $\mathpzc{H}$ is replaced with a Hilbert $C^*$-module $\mathpzc{E}$ over a second $C^{*}$-algebra $B$,  $\mathbb{B}(\mathpzc{H})$ with $\End^{*}_{B}(\mathpzc{E})$ -- the $C^*$-algebra of adjointable operators on $\mathpzc{E}$, and on the operator $D$ the further assumption that $D$ is \emph{regular} is added (for details see \cite{BJ}), then $(\pi,\mathpzc{E},D)$ defines an \emph{unbounded $KK$-cycle} for $(A,B)$.
\end{deef*}

An unbounded $KK$-cycle defines a Kasparov module by setting $F:=D(1+D^{2})^{-\frac{1}{2}}$, the \emph{bounded transform} of $D$. It should be noted that with any choice of bounded continuous function $\chi\in C_{b}(\R,\R)$, such that $\chi^2-1\in C_0(\R)$, we can associate a Kasparov module by setting $F_\chi:=\chi(D)$, producing a Kasparov module differing from that defined by $F$ only by a compact perturbation. 

For the special case of unbounded Fredholm modules, another way of doing this, which features prominently in the present work, is through the \emph{phase} of $D$; the phase is defined as $D|D|^{-1}$. Here $|D|^{-1}$ is defined to be $0$ on the kernel of $D$. The construction of the phase hinges on the fact that for unbounded Fredholm modules, the spectrum of $D$ is discrete, and there is a $\chi\in C^\infty(\R,\R)$ as above with $\chi'\in C^\infty_c(\R,\R)$ such that $F_\chi=D|D|^{-1}$. In the special case of unbounded Fredholm modules, a modification of $\chi$ on a compact subset of $\R$ affects the bounded Fredholm module by a mere finite rank perturbation. In particular, the associated $KK$-class does not depend on $\chi$. We will in this paper see several examples of how finer analytic properties depend on the choice $\chi$. \newline

The foundation of noncommutative geometry is built on the idea that the geometry of a ``noncommutative space" is encoded by a spectral triple on the ``algebra of functions", i.e. a $C^*$-algebra. Conformal geometry is encoded by a choice of a bounded Fredholm module. Homological algebra corresponds to $K$-theory and $K$-homology. These ideas were pioneered by Connes and many examples are to be found in \cite{Connesbook}. In the classical case of manifolds, this circle of ideas is supported by facts such as 
\begin{enumerate}
\item The geodesic distance on a manifold can be reconstructed from any spectral triple defined from a Dirac type operator, see \cite[Chapter VI]{Connesbook}.
\item The conformal class of a metric is uniquely determined by the bounded transform of a spectral triple defined from a Dirac type operator modulo compact perturbations, see \cite{baer}.
\item A Riemannian spin$^c$-manifold can be reconstructed from the spectral triple\footnote{Once it is decorated with some further manifold-like structures.} associated with the spin$^c$-Dirac operator, see \cite{connesreconstr}.
\end{enumerate}
We have made a choice of a distinguishing in terminology between {\bf \emph{spectral triples}} and {\bf \emph{unbounded Fredholm modules}} as the former corresponds to prescribing a ``non-commutative geometry" while the latter is a cocycle for a cohomology theory for $C^*$-algebras. Despite this, we abuse the notation by sometimes identifying an unbounded Fredholm module $(\pi,\He,D)$ with the spectral triple $(\pi(\textnormal{Lip}(\pi,\He,D)),\He,D)$ for $A/\ker \pi$.

\subsection*{Obstructions to finite summability}
\label{finsumsubsec}

Summability of Fredholm modules is based on the idea of refining the compactness properties in its definition by requiring that the compact operators appearing in Definition \ref{bddfred} and \ref{unbddfred} belong to a finer symmetrically normed operator ideals. For details on symmetrically normed operator ideals, the reader is referred to \cite{Connesbook, losuza, simon}. We will mainly use finite summability and $\theta$-summability; they are respectively defined using Schatten ideals and the $\mathrm{Li}$-ideals. Throughout the paper, we let $\He$ denote a separable Hilbert space. For a compact operator $T$ on $\He$ we let $(\mu_k(T))_{k\in \N}\subseteq \R_+$ denote a decreasing enumeration of the singular values of $T$. Recall that the \emph{Schatten ideals} are defined as 
\[\mathcal{L}^{p}(\He):=\left\{T\in\mathbb{K}(\He): (\mu_k(T))_{k\in \N}\in \ell^p(\N) \right\},\]
for $p>0$.  These spaces are not closed in operator norm and form ideals of compact operators in $\mathbb{B}(\He)$. The homogeneous function 
$$\|T\|_{\mathcal{L}^p}:=\sqrt[p]{\textnormal{Tr}((T^{*}T)^{\frac{p}{2}})}=\|(\mu_k(T))_{k\in \N}\|_{\ell^p(\N)},$$ 
makes $\mathcal{L}^p(\He)$ into a  symmetrically normed operator ideal (in particular a Banach $*$-algebra) for $p\in [1,\infty)$ and for $p\in (0,1)$ into a quasi-normed space. For $p\in [1,\infty)$, the spaces
\begin{align*}
\mathcal{L}^{p,\infty}(\He)&:=\left\{T\in\mathbb{K}(\He): \mu_k(T)=O(k^{-1/p}) \right\}\quad\mbox{and}\\
\textnormal{Li}^{\frac{1}{p}}(\He)&:=\left\{T\in\Ko(\He): \mu_k(T)=O(\log(k)^{-1/p})\right\},
\end{align*}
form symmetrically normed operator ideals as well. We use the notation $\textnormal{Li}(\He):=\textnormal{Li}^1(\He)$. 

\begin{deef*}

Let $(\pi, \He,F)$ be an analytic $K$-cycle for a $C^{*}$-algebra $\cstar$. Then $(\pi, \He, F)$ is said to be $p$-\emph{summable} if the $*$-algebra
\begin{align*}
\mbox{H\"ol}^p&(\pi,\He,F)\\
&:= \{a\in \cstar: [F,\pi(a)]\in\mathcal{L}^{p}(\He), \;\pi(a)(F^*-F),\,\pi(a)(F^2-1)\in \mathcal{L}^{p/2}(\He)\},
\end{align*}
is norm dense in $\cstar$. If $\mathcal{L}^{p}(\He)$ and $\mathcal{L}^{p/2}(\He)$ is replaced with $\textnormal{Li}^{\frac{1}{2}}(\He)$ respectively $\textnormal{Li}(\He)$, $(\pi,\He, F)$ is $\theta$-\emph{summable}. An unbounded Fredholm module $(\pi,\He,D)$ is $p$-summable if $\pi(a)(D\pm i)^{-1}\in\mathcal{L}^{p}(\He)$, for $a$ in a subalgebra of $\mathrm{Lip}(\pi,\He,D)$ dense in $\cstar$, and $\theta$-summable if $\mathcal{L}^{p}(\He)$ is replaced with $\textnormal{Li}^{\frac{1}{2}}(\He)$.
\end{deef*}

More generally, one can speak of summability relative to any ideal of operators. Whenever $\mathcal{I},\mathcal{J}\subseteq \Ko(\He)$ are $*$-ideals such that $\mathcal{J}=\{a:\,a^*a\in \mathcal{I}\}$, we will say that $(\pi, \He, F)$ is $\mathcal{J}$-summable if the $*$-algebra
\[\mbox{H\"ol}^\mathcal{J}(\pi,\He,F):= \left\{a\in \cstar: [F,\pi(a)]\in\mathcal{J}, \;\pi(a)(F^*-F),\,\pi(a)(F^2-1)\in \mathcal{I}\right\},\]
is norm dense in $\cstar$. The $*$-algebra $\mbox{H\"ol}^\mathcal{J}(\pi,\He,F)$ forms a Banach $*$-algebra closed under holomorphic functional calculus once the $*$-ideals $\mathcal{I}$ and $\mathcal{J}$ are Banach $*$-ideals in $\Bo(\He)$ such that $\|a\|_\mathcal{J}^2=\|a^*a\|_{\mathcal{I}}$, see more in  \cite[Proposition $3.12$]{cuntzblackadar}. Yet another instance is if $\mathcal{I}(\He)=\mathcal{L}^{p/2,\infty}(\He)$ and $\mathcal{J}(\He)=\mathcal{L}^{p,\infty}(\He)$, in this case we refer to summability as $p^+$-summability. \newline

The motivation for the terminology of the H\"older subalgebra comes from the prototypical example of a bounded Fredholm module on a manifold. Let $M$ be a smooth closed $n$-dimensional manifold and $F$ a self-adjoint pseudo-differential operator of order $0$ acting on a hermitean vector bundle $E\to M$ such that $F^2=1$. Letting $\pi$ denote the representation of $C(M)$ on $L^2(M,E)$ given by pointwise multiplication, we obtain an odd bounded Fredholm module $(\pi,L^2(M,E),F)$. If $E$ is graded and $F$ odd in this grading this Fredholm module can be viewed as an even Fredholm module. It follows from combining the results reviewed in \cite[Section $3.6$]{nlin} with \cite[Proposition $1$]{SW}, the Weyl law for elliptic operators and standard results of real interpolation theory that there is a continuous inclusion of the H\"older continuous functions into the H\"older algebra:
\begin{equation}
\label{holdincl}
C^\alpha(M)\subseteq \mbox{H\"ol}^{\frac{n}{\alpha}^+}(\pi,L^2(M,E),F).
\end{equation}

If $(\pi, \He, F)$ is $p$-summable, $p$ is referred to as the degree of summability of $(\pi,\He,F)$. In geometric situations, we saw in Equation \eqref{holdincl} that the degree of summability often is related to the dimension of the underlying space via some type of Weyl law. The notion of $\theta$-summability is robust in the sense that $\theta$-summable $K$-cycles can be lifted to unbounded $\theta$-summable Fredholm modules (cf. \cite[Chapter IV.$8.\alpha$, Theorem 4]{Connesbook}). Particular instances of this phenomenon are known for finite summability as well; notably, in the paper \cite{SW} a lifting result for the group algebra of a group of polynomial growth was established. The general situation is quite different in the case of finite summability.

The paper \cite{Connestrace} shows that the existence of a finitely summable unbounded Fredholm module over a $C^{*}$-algebra $\cstar$ implies the existence of a tracial state on $\cstar$. In particular, purely infinite $C^{*}$-algebras do not admit finitely summable unbounded Fredholm modules. Recent results by Emerson-Nica \cite{EN2} show that certain purely infinite $C^*$-algebras arising as boundary crossed product $C^*$-algebras associated to hyperbolic groups are ``uniformly summable"; that is, they admit finitely summable bounded Fredholm modules in a strong sense made precise below in Definition \ref{defofunifsum}. Thus, a general lifting construction for Fredholm modules, preserving finite summability, is impossible. 

We show, among other results in this paper, that a result similar to that of \cite {EN2} holds for Cuntz-Krieger algebras, which also are purely infinite in many cases. We furthermore provide a class of examples of $\theta$-summable unbounded Fredholm modules on Cuntz-Krieger algebras such that their phases are finitely summable. This difference in finite summability for bounded and unbounded Fredholm modules indicates not only that lifting is a delicate matter but that the same holds for finer analytic properties of bounded transforms and the choice of $\chi$.

\begin{deef*} 
Let $\cstar$ be a $C^{*}$-algebra. We say that the class $x\in K^*(\cstar)$ is \emph{summable of degree p} if there exists a $p$-summable Fredholm module representing $x$. We define the degree of summability of $x$ to be the infimum of the set of numbers $p>0$ for which $x$ is $p$-summable. The odd (even) degree of summability of $\cstar$ is the supremum of the degree of summability of all odd (even) $K$-homology classes.
\end{deef*}

When taking the infimum of a set, we always apply the convention that the infimum of the empty set if infinite. We say that the $C^*$-algebra $\cstar$ has \emph{finitely summable odd respectively even $K$-homology} if it has a finite odd respectively even degree of summability. We say that a $K$-homology class is finitely summable if it has a finite degree of summability. The summability degree of a $C^*$-algebra is clearly an isomorphism invariant. An interesting question is if it is a homotopy invariant. A related open problem hinted above is to find obstructions for finite summability of $K$-homology classes similar to the tracial obstructions for finitely summable unbounded Fredholm modules from \cite{Connestrace}. We also note the terminology \emph{uniformly summable} from \cite{EN2}.

\begin{deef*}[\cite{EN2}]
\label{defofunifsum}
A $C^*$-algebra $\cstar$ is said to be uniformly summable if there is a $p>0$ and a dense $*$-subalgebra $\mathcal{\cstar}\subseteq \cstar$ such that any $x\in K^*(\cstar)$ admits a representative that is $p$-summable on $\mathcal{\cstar}$. 
\end{deef*}

\subsubsection*{Example}

We have one example of a $K$-homology class that is not finitely summable. This result is not to be confused with the interesting results of \cite{puschnigg, ravethesis} where a subalgebra of $\cstar$, on which $K$-homology classes are required to be finitely summable, is fixed. 

\begin{lem*}
Let $\cstar:=\bigoplus_{j=1}^\infty C( S^{2j-1})$ -- the $C_0$-direct sum of odd-dimensional spheres. There is a $K$-homology class $x\in K^1(\cstar)$ with infinite degree of summability.
\end{lem*}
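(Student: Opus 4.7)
The plan is to derive a cohomological obstruction via Connes' Chern character. I would use the standard identification $K^1(\cstar)\cong \prod_{j=1}^\infty K^1(C(S^{2j-1}))=\prod_j\Z$ for the $K$-homology of a $c_0$-direct sum and take $x\in K^1(\cstar)$ to be the class $(1,1,1,\dots)$; its pairing with the canonical generator $[u_j]\in K_1(C(S^{2j-1}))\hookrightarrow K_1(\cstar)$, represented by a smooth unitary-valued degree-one map $u_j:S^{2j-1}\to U_{N_j}$, satisfies $\langle x,[u_j]\rangle=1$ for every $j$.

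Assume for contradiction that $(\pi,\He,F)$ is a $p$-summable analytic $K$-cycle representing $x$ with $p<\infty$, and fix any odd integer $n\geq p-1$. The Connes cocycle
\[
\phi_n(a_0,\dots,a_n)=\lambda_n\,\textnormal{Tr}\bigl(F[F,a_0]\cdots[F,a_n]\bigr)
\]
will then be a cyclic $n$-cocycle on $\mbox{H\"ol}^p(\pi,\He,F)$ whose periodic class is the Chern character of $(\pi,\He,F)$ and which computes the odd index pairing $\langle[F],[u]\rangle=c_n\,\phi_n(u^{-1},u,\dots,u^{-1},u)$ for unitaries $u$ in the matrix algebra over $\mbox{H\"ol}^p$. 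Next I would exploit the density of $\mbox{H\"ol}^p$ in $\cstar$ together with norm-continuity of the Fredholm index to replace $u_j$ by unitary approximants inside $\mbox{H\"ol}^p$ essentially supported on the $j$-th summand, reducing $\langle x,[u_j]\rangle$ to the cyclic pairing of $[u_j]\in K_1(C^\infty(S^{2j-1}))$ against the restriction of $\phi_n$ to $C^\infty(S^{2j-1})$.

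This pairing factors through the comparison map $HC^n(C^\infty(S^{2j-1}))\to HP^{\mathrm{odd}}(C^\infty(S^{2j-1}))$, and Connes' computation of the cyclic cohomology of a smooth manifold shows that its image is contained in $\bigoplus_{s\leq n,\,s\text{ odd}}H^s_{\mathrm{dR}}(S^{2j-1})$. Since the odd de Rham cohomology of $S^{2j-1}$ is concentrated in the single degree $2j-1$, this image vanishes whenever $n<2j-1$, forcing $\langle x,[u_j]\rangle=0$ and contradicting the value $1$. Choosing $n$ to be the smallest admissible odd integer and any $j$ with $2j-1>n$ yields the contradiction, and since $j$ may be taken arbitrarily large this rules out every finite $p$. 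The hard part will be the approximation step: one needs unitaries in matrix algebras over $\mbox{H\"ol}^p$ whose non-$j$-th components are arbitrarily small while all $[F,\,\cdot\,]$-commutators remain in $\mathcal{L}^p$, so that the cocycle evaluation genuinely reduces to one on $C^\infty(S^{2j-1})$. I would handle this by functional-calculus arguments turning norm-close invertibles into unitaries inside $\mbox{H\"ol}^p$, together with the perturbation stability of the Fredholm index.
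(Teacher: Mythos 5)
Your global strategy --- identify $K^1(\cstar)\cong\prod_j\Z$, take the class restricting to a generator on every summand, and derive a contradiction from a lower bound on the degree of summability of a nonzero class on a single sphere $S^{2j-1}$ as $j\to\infty$ --- is exactly the paper's strategy, and your class $x=(1,1,\dots)$ agrees with its sum of Szeg\"o fundamental classes. The gap is in your mechanism for the single-sphere lower bound. The cocycle $\phi_n$ is defined only on $\mbox{H\"ol}^p(\tilde{\pi},\He,\tilde{F})$, which is \emph{some} dense holomorphically closed $*$-subalgebra of $\cstar$; there is no reason for it to contain $C^\infty(S^{2j-1})$, or indeed any smooth function beyond locally constant ones. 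Consequently $\phi_n$ has no restriction to a cyclic cocycle on $C^\infty(S^{2j-1})$, and the comparison map $HC^n(C^\infty(S^{2j-1}))\to HP^{\mathrm{odd}}(C^\infty(S^{2j-1}))$ together with Connes' de Rham computation is simply not available. Nor can you replace $C^\infty$ by the algebra $p_j\,\mbox{H\"ol}^p\subseteq C(S^{2j-1})$ that you actually have: the continuous cyclic cohomology of an arbitrary dense holomorphically closed Banach subalgebra of $C(M)$ is not computed by de Rham homology, and the dimension filtration on periodic cyclic cohomology is not inherited from $C^\infty(M)$ in that generality. The statement you need --- that \emph{any} Fredholm module on $C(S^{2k-1})$ which is $p$-summable on \emph{some} dense subalgebra with $p<2k-2$ pairs trivially with the generator of $K_1$ --- is precisely \cite[Proposition 3]{dougvoic}, which the paper invokes; its proof is a direct antisymmetrized trace estimate and does not pass through $HC^*(C^\infty)$. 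Either cite that result or reprove it by such a direct estimate; the $C^\infty$ detour does not close.

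Separately, the localization step you flag as ``the hard part'' is handled more cleanly in the paper: since $\mbox{H\"ol}^p(\tilde{\pi},\He,\tilde{F})$ is dense and closed under holomorphic functional calculus in $\cstar$ by \cite[Proposition $3.12$]{cuntzblackadar}, the central characteristic function $p_k$ of the $k$-th summand already lies in $\mbox{H\"ol}^p$ (approximate it by a nearby self-adjoint element and apply holomorphic functional calculus). Then $p_k\,\mbox{H\"ol}^p$ is a dense holomorphically closed subalgebra of $C(S^{2k-1})$, and compressing the module by $\tilde{\pi}(p_k)$ yields a $p$-summable representative of $x|_{C(S^{2k-1})}$. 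This removes any need for unitaries ``essentially supported'' on one summand and for the attendant perturbation arguments.
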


\begin{proof}
Consider the sum of fundamental classes $x=\sum_{j=1}^\infty [S^{2j-1}]$, that is, $x$ is represented by $(\pi,\bigoplus_{j=1}^\infty L^2(S^{2j-1}),F)$ where $\pi$ is action by pointwise multiplication and $F=\oplus F_j$ where $F_j=2P_j-1$ and $P_j$ is the Szeg\"o projection on $S^{2j-1}$. This is a well defined Fredholm module since $a\mapsto [F,\pi(a)]$ is a norm-continuous mapping $A\to \Bo(\bigoplus_{j=1}^\infty L^2(S^{2j-1}))$ of norm at most $2$ and for $a$ in the dense subalgebra $C_c(\coprod_{j=1}^\infty S^{2j-1})\subseteq A$ it holds that $[F,\pi(a)]\in \mathcal{L}^p(\bigoplus_{j=1}^\infty L^2(S^{2j-1}))$ for any $p>\dim(\supp(a))+1$. It holds that $K^1\left(\cstar\right)\cong\prod_{j=1}^\infty \Z$ and it is a well known fact that $x|_{C(S^{2j-1})}\in K^1(C(S^{2j-1}))\cong \Z$ is a generator for any $j$, see for instance \cite{venugopalkrishna}. 

Suppose that $x$ admits a $p$-summable representative $(\tilde{\pi}, \mathcal{H}, \tilde{F})$. By \cite[Proposition $3.12$]{cuntzblackadar} it follows that $\mathcal{\cstar}:= \mbox{H\"ol}^p(\tilde{\pi},\He,\tilde{F})$ is not only dense but also holomorphically closed in $\bigoplus_{j=1}^\infty C( S^{2j-1})$. By a standard approximation argument, using that $\mathcal{\cstar}$ is holomorphically closed, the characteristic function $p_k$ for $S^{2k-1}\subseteq \coprod_{j=1}^\infty S^{2j-1}$ belongs to $\mathcal{\cstar}$. In particular $p_k\mathcal{\cstar}\subseteq C(S^{2k-1})$ is a holomorphically closed dense subalgebra. It follows that the degree of summability of $x$ is bounded from below by that of $x|_{S^{2k-1}}$. By \cite[Proposition $3$]{dougvoic} we have a lower estimate for the degree of summability of any $z\in K^1(C(S^{2k-1}))\setminus \{0\}$ given by $2k-2$. Hence we have a contradiction since our assumptions imply that $p\geq 2k-2$ for all $k$.
\end{proof}

\subsection*{Content and organization of the paper} 

The content of the paper can be summarized in the following Theorem. We use the letter $A$ for an $N\times N$-matrix only containing the numbers $0$ and $1$. It will be clear from its context when $A$ is a matrix and when it is a $C^*$-algebra as above.

The notation $\Omega_A$ is for the space of characters of the standard maximal abelian subalgebra in the Cuntz-Krieger algebra $O_A$ associated with the $N\times N$-matrix $A$, see more in Section \ref{groupoidsection} below. There is a $C(\Omega_A)$-valued conditional expectation on $O_A$. We let $\mathpzc{E}^\Omega_A$ denote the $C(\Omega_A)$-Hilbert $C^*$-module closure of $O_A$ and $\pi_A^\Omega:O_A\to \End_{C(\Omega_A)}^*(\mathpzc{E}^\Omega_A)$ the left $O_A$-action.

\begin{thm*} 
The odd degree of summability of a Cuntz Krieger algebra $O_{A}$ is $0$. Moreover, there exists an unbounded bivariant $(O_{A}, C(\Omega_{A}))$-cycle $(\mathpzc{E}^{\Omega}_{A},D)$ with the following property.  For each class $x\in K^{1}(O_A)$ one can find a finite collection $(\omega_{j,x})_{j=1}^{m_x}\subseteq \Omega_A$ and localize $D$ at each of these characters to construct the self-adjoint operator $D_{j,x}$ on $\mathpzc{E}^\Omega_A\otimes_{\omega_{j,x}}\C$ such that  the unbounded Fredholm module 
$$\left(\bigoplus_{j=1}^{m_x}(\pi_{A}^\Omega\otimes_{\omega_{j,x}}\!\!\!\!\id_\C),\;\bigoplus_{j=1}^{m_x}(\mathpzc{E}^\Omega_A\otimes_{\omega_{j,x}}\C),\;\bigoplus_{j=1}^{m_x}D_{j,x}\right),$$
is a $\theta$-summable representative for $x\in K^{1}(O_{A})$. Moreover, each of the triples 
$$\left(\pi_{A}^\Omega\otimes_{\omega_{j,x}}\id_\C,\mathpzc{E}^\Omega_A\otimes_{\omega_{j,x}}\C,D_{j,x}|D_{j,x}|^{-1}\right)$$
form $p$-summable analytic $K$-cycles for any $p>0$ .
\end{thm*}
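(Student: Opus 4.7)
The plan is to split the proof into three parts, one for each assertion in the theorem, with the bivariant cycle $(\mathpzc{E}^\Omega_A, D)$ serving as the central construction from which the other statements follow by localization and by passing to various bounded transforms of the localized operators.

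For the vanishing of the odd summability degree, I would use the Poincar\'e duality for Cuntz-Krieger algebras of \cite{kaminkerputnam} to realize every class $x\in K^1(O_A)$ by a Toeplitz-type bounded Fredholm module of the form $F=2P-1$, where $P$ is the Toeplitz projection associated to the one-sided subshift of finite type. The Schatten estimates for commutators of such Toeplitz operators developed in \cite{extensionsgoffeng} then show that $[F,\pi(a)]\in\mathcal{L}^p$ for every $p>0$ as soon as $a$ lies in the dense $*$-subalgebra of locally constant (or sufficiently H\"older regular) elements of $O_A$, yielding odd summability degree zero.

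To build the bivariant cycle I would follow \cite{Mes1} and take $D$ to be multiplication on the Hilbert $C(\Omega_A)$-module $\mathpzc{E}^\Omega_A$ by a real-valued proper cocycle on the groupoid $G_A$ of the subshift, chosen so as to reproduce the Kaminker-Putnam duality class. Regularity and self-adjointness of $D$, local compactness of $\pi_A^\Omega(a)(D\pm i)^{-1}$ on a dense $*$-subalgebra of $O_A$, and boundedness of the commutators $[D,\pi_A^\Omega(a)]$ will follow from the standard analysis of multiplication operators on \'etale groupoid $C^*$-modules. The finite collection of characters $(\omega_{j,x})$ that jointly detect $x$ exists because the localization at characters, combined with the unbounded Kasparov product with the Bellissard-Pearson spectral triples \cite{BP} on the Cantor base, surjects onto $K^1(O_A)$, as asserted earlier in the paper. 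The $\theta$-summability of the direct sum is then a Weyl-type estimate: the combinatorial structure of cylinders in the subshift forces the spectral counting function of $D_{j,x}$ to grow at a rate which makes the singular values of $(D_{j,x}\pm i)^{-1}$ lie in $\mathrm{Li}^{1/2}$, but no better. For the $p$-summability of the phase $D_{j,x}|D_{j,x}|^{-1}$ for every $p>0$, the argument is that modulo the finite-rank projection onto $\ker D_{j,x}$ the positive spectral projection of $D_{j,x}$ coincides with the Toeplitz projection $P$ from the first step, so $D_{j,x}|D_{j,x}|^{-1}$ differs from the bounded Fredholm module $2P-1$ by a finite-rank operator and inherits its Schatten estimates.

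The main obstacle, and what gives the theorem its interest, is the coexistence of $\theta$-summability of each $D_{j,x}$ with $p$-summability of its phase for every $p>0$. Connes' trace obstruction \cite{Connestrace} forbids $O_A$ from admitting any finitely summable unbounded Fredholm module in the purely infinite case, so the $\theta$-summability cannot be strengthened at the unbounded level. The improvement under passage to the phase must therefore be located precisely in the low-lying spectrum of $D_{j,x}$, where the phase $D_{j,x}|D_{j,x}|^{-1}$ differs sharply from the smooth bounded transform $D_{j,x}(1+D_{j,x}^2)^{-1/2}$, and the delicate step is to use the explicit eigenspace structure coming from the subshift to show that the modifications near zero built into the choice $\chi=\mathrm{sgn}$ carry the finer summability, while any smoothing across zero irretrievably spreads the commutator norm over too many modes to be $p$-summable.
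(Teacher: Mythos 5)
Your overall architecture matches the paper's: Kaminker--Putnam duality plus the splitting-summability machinery of \cite{extensionsgoffeng} for the vanishing of the odd summability degree; a multiplication operator on the Haar module $\mathpzc{E}^\Omega_A$ for the bivariant cycle; a Weyl-type estimate using the growth of $\phi(k)$ for $\theta$-summability; and the identification of the positive spectral projection of the localized operator with a Toeplitz-type projection $W_{\lambda,\omega}W_{\lambda,\omega}^*$, so that the phase is a finite-rank perturbation of the $K$-cycles from the first part. That last identification is exactly how the paper proves finite summability of the phases, and your statement of it is correct.

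There are, however, two genuine gaps. First, you locate the surjectivity onto $K^1(O_A)$ in the Kasparov product with the Bellissard--Pearson spectral triples. Those triples are (sums of) \emph{differences} of point evaluations $[\tau_+(\mu)]-[\tau_-(\mu)]$, so their products with $[\mathpzc{E}^\Omega_A,D_\lambda]$ only reach the subgroup of $K^1(O_A)$ generated by $[\beta_{i}]-[\beta_{j}]$; for the Cuntz algebra $O_N$ all $[\beta_j]$ coincide and this subgroup is zero, while $K^1(O_N)\cong\Z/(N-1)\Z$. The paper instead localizes at \emph{single} characters $\omega$ starting in a letter $j$ and computes $\omega_*[\mathpzc{E}^\Omega_A,D_{\circ_A}]=[\beta_j]$ directly; by the Cuntz description of $K_0(O_{A^T})$ these classes do generate, and the finite collection $(\omega_{j,x})$ is then read off from the coefficients of $x$ in the $[\beta_j]$. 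Second, "multiplication by a real-valued proper cocycle" does not produce a cycle over $C(\Omega_A)$: the zero eigenspace of the gauge cocycle $c_A$ is the module completion of $F_A$, which is not finitely generated projective over $C(\Omega_A)$, so the resolvent is not locally compact. The paper must introduce the non-cocycle $\kappa(x,n,y)=\min\{k:\sigma^{n+k}(x)=\sigma^k(y)\}$ to split $c_A^{-1}(0)$ into finitely generated projective pieces, and the sign pattern of the resulting function $\psi_\lambda$ (via the sets $Y_\lambda$) is what controls the $K$-homology class of the localization; this is where the real work in the construction lies. Finally, a smaller point: your closing heuristic placing the summability discrepancy between the phase and the smooth bounded transform "near zero" is backwards. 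Modifying $\chi$ on a compact set perturbs $\chi(D)$ by a finite-rank operator, so it cannot affect summability; the two transforms differ by the $O(\lambda^{-2})$ tail of $\mathrm{sgn}(\lambda)-\lambda(1+\lambda^2)^{-1/2}$ at infinity, which fails to be Schatten-class precisely because the multiplicities $\phi(k)$ grow exponentially. The finite summability of the phase is an algebraic phenomenon (the compressions $W^*\pi(S_i)W$ satisfy the Cuntz--Krieger relations modulo finite rank), not a spectral-cutoff one.
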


\begin{remark*}
The first statement of this Theorem should be compared to the results of \cite{EN2}. The intersection of applications for this paper with \cite{EN2} lies in the examples of discrete hyperbolic groups $\Gamma$ such that $C(\partial \Gamma)\rtimes \Gamma$ is a Cuntz-Krieger algebra. E.g. when $\Gamma$ is a free group (see below in Subsubsection \ref{freegroupexample}). 

In these cases, the results of \cite{EN2} are stronger in regards to finite summability of bounded Fredholm modules as they consider also the even $K$-homology. We compare the two approaches in a special case in Subsubsection \ref{dualofunitfreegroup}.
\end{remark*}

\begin{remark*}
Whittaker \cite{whittakerthesis,whittakerpaper}, has carried out constructions similar to those in this paper. The computational approaches differs, but the spirit prevails. It is an interesting question if the results of this paper carry over to general Smale spaces relating to the work of Whittaker.
\end{remark*}

\begin{remark*}
In regards to the discussions above, we can use the same $*$-algebra for all of the Fredholm modules constructed in this paper. Namely, the $*$-algebra generated by the $C^*$-generators of the Cuntz-Krieger algebra.
\end{remark*}

The paper is organized as follows. In Section \ref{groupoidsection} we recall some well known facts about Cuntz-Krieger algebras, focusing on their origin in the dynamics of sub shifts of finite type. This is encoded by means of a groupoid, first studied by Renault \cite{Ren1, Ren2}. The possibility to interchange the groupoid picture of Cuntz-Krieger algebras and the standard generator picture, used in the original definition of Cuntz-Krieger \cite{CK}, is crucial to identifying the $K$-homology classes of the Fredholm modules and spectral triples constructed in this paper. 

Section \ref{sectionfinisumandputnamkam} contains the proof of the fact that the odd degree of summability of $O_A$ is $0$, this is stated in Theorem \ref{finsumthm}. To be precise, we recall the construction from \cite{kaminkerputnam} of Poincar\'e duality $K^*(O_A)\cong K_{*+1}(O_{A^T})$. Using this construction, we identify exactly which odd $K$-homology cycles we need to prove finite summability for. The results of Section \ref{sectionfinisumandputnamkam} implies that any odd $K$-homology class can be represented by a Fredholm module on the GNS-space $L^2(O_A)$ associated with the KMS-state on $O_A$. It would be desirable to prove that the duality class $\Delta\in K^1(O_A\minotimes O_{A^T})$ in fact is finitely summable, in which case it would follow that the even degree of summability of $O_A$ is finite. We can in general only prove $\theta$-summability of $\Delta$ (see Theorem \ref{thetaandfinitekp}). In certain cases, for instance $SU_q(2)$, we obtain finite summability of a $K$-cycle representing $\Delta$.

For the construction of unbounded Fredholm modules on $O_A$ in a given odd $K$-homology class, we consider two approaches. One using the subalgebra of fixed points for the gauge action, an AF-algebra, in Section \ref{oafasection} and one using the standard maximal abelian subalgebra in Section \ref{oacasection}. The approach of using the fixed point algebra is explored first as there is an unbounded $KK$-cycle naturally associated with the gauge action, the \emph{gauge cycle}. We prove that the gauge cycle plays the role of a boundary mapping in the Pimsner-Voiculescu six term exact sequence associated with the gauge action. As such, the possible $K$-homology classes of the unbounded Fredholm modules that can be constructed from the fixed point algebra by means of a Kasparov product with the gauge cycle can be computed. We carry out these computations in some special cases in Subsection \ref{fixedcompsec}. In some cases any $K$-homology class is of this form (see Remark \ref{almostfreegroupremark}), only the odd ones are (e.g. for $SU_q(2)$, see Remark \ref{suq2rem}) and in other cases only the trivial class is (e.g. the Cuntz algebra $O_N$, see Remark \ref{ktheorycomputationsforthealgebraon}).

Before considering the approach of constructing unbounded Fredholm modules from the maximal abelian subalgebra in Section \ref{oacasection}, we recall some spectral triples in Section \ref{sectionbptriples} considered by Bellissard-Pearson \cite{BP}. These spectral triples are interesting since there is an obstruction to extending them to the ambient Cuntz-Krieger algebra coming from the class of the unit $[1_{O_{A^T}}]\in K_0(O_{A^T})$ under Poincar\'e duality $K_0(O_{A^T})\cong K^1(O_A)$. They also provide a natural candidate for constructing spectral triples on $O_A$ with geometric content. In Section \ref{oacasection}, we construct an unbounded bivariant $(O_A,C(\Omega_A))$-cycle. The restrictions of this bivariant cycle to suitable fibres over $\Omega_A$ generate the odd $K$-homology group of $O_A$. We may even identify the phases of these unbounded Fredholm modules with \emph{finitely summable} analytic $K$-cycles very similar to those constructed in Section \ref{sectionfinisumandputnamkam}. In particular, this shows that the Kasparov product
\[KK_{1}(O_{A},C(\Omega_{A}))\otimes K^{0}(C(\Omega_{A}))\rightarrow K^{1}(O_{A}),\]
is surjective. This stands in sharp contrast with the situation where $C(\Omega_{A})$ is replaced with the fixed point algebra $F_{A}$. E.g. for the Cuntz algebra where $K^0(F_N)=0$ but $K^1(O_N)\cong \Z/(N-1)\Z$ (cf. computations in Subsubsection \ref{subsubonon}).

In view of this, we end this paper with a construction of an unbounded Kasparov product between the unbounded bivariant $(O_A,C(\Omega_A))$-cycle with the Bellissard-Pearson spectral triples -- a certain infinite direct sum of point evaluations with dynamical content. 
 
We compute the class of this Kasparov product in rational $K$-homology. The construction uses and extends the techniques of \cite{BMS, KaLe,Mes} to account for naturally ocurring unbounded commutators. This is achieved in the context of \emph{$\epsilon$-unbounded Fredholm modules}, a slight weakening of the notion of unbounded Fredholm modules. This weakening has been hinted at in the literature. We describe the main properties of $\epsilon$-unbounded Fredholm modules in the Appendix.

\begin{remark*} In the Ph.D. thesis of Senior \cite{rogerthesis}, an unbounded Fredholm operator for the quantum group $SU_{q}(2)$ is constructed through the same connection techniques employed here. Although the algebra $C(SU_{q}(2))$ is a Cuntz-Krieger algebra (see Subsection \ref{suqtwoexample}), the base space taken in \cite{rogerthesis} is the noncommutative Podl\'{e}s sphere, and thus the constructions differ fundamentally. In particular, the $\epsilon$-unbounded Fredholm module techniques we employ to prove that our operators represent Kasparov products do not apply there. 
\end{remark*}
\newpage

\Large
\section{Groupoids, $C^{*}$-algebras and dynamics}
\label{groupoidsection}
\normalsize

In this section we will recall some well known facts about the dynamics of sub shifts of finite type, Cuntz-Krieger algebras and the interplay in between them arising from a certain groupoid. The purpose of this section is to set notations and to introduce the underlying classical geometry before describing its noncommutative geometry. Relevant references are provided in each subsection.

\subsection{Subshifts of finite type on the boundary of a tree}
\label{realizingboundarysubsec}
In this section we recall basic facts and introduce notation regarding subshifts of finite type. We let $A=(A_{ij})_{i,j=1}^N$ denote an $N\times N$ matrix with coefficients being $0$ or $1$. Sometimes we write $A(i,j)=A_{ij}$. The matrix $A$ can be thought of as defining the admissible paths in a Markov chain, where a jump from $i$ to $j$ is admissible if and only if $A(i,j)=1$. We always assume that no row nor column of $A$ is zero to guarantee that there is always an allowed jump into as well as out of a letter $j\in \{1,\ldots, N\}$.

There are several well studied geometric objects associated with this Markov chain. The first is the compact space of infinite admissible words:
\[\Omega_A:=\{(x_k)_{k\in \N_+}\in\{1,\ldots, N\}^\N: \forall k : A(x_k,x_{k+1})=1\}, \]
equipped with the topology induced from the compact product topology on $\{1,\ldots, N\}^\N$. The space $\Omega_A$ is totally disconnected. There is a natural shift operator 
\[\begin{split}\Omega_A &\rightarrow \Omega_{A} \\
(x_k)_{k\in \N} &\mapsto (x_{k+1})_{k\in \N}.\end{split}\]
The pair $(\Omega_A,\sigma)$ is called a \emph{subshift of finite type} and is amongst the most well studied systems in dynamics, see for example \cite{Kitchens, Lind, PP}.

We call a sequence of numbers $\mu=(\mu_j)_{j=1}^M$ with $\mu_j\in \{1,\ldots,N\}$ a finite word of length $M$. The length $M$ of $\mu$ is denoted by $|\mu|$. A finite word $\mu=(\mu_j)_{j=1}^M$ is said to be admissible for $A$ if $A(\mu_j,\mu_{j+1})=1$ for $j=1,\ldots, M-1$. To simplify notation we often write $\mu_1\mu_2\cdots \mu_M$ for a finite word $\mu=(\mu_j)_{j=1}^M$. The empty word is defined to be an admissible finite word that we denote by $\circ_A$. The length of the empty word is defined to be $0$. The set of all admissible finite words will be denoted by $\mathcal{V}_A$. For $k\in \N$, we use the notation
\begin{equation}
\label{wordsoflenghtk}
\phi(k):=\#\{\mu\in \mathcal{V}_A:|\mu|=k\}.
\end{equation}

The space $\Omega_A$ splits into \emph{cylinder sets} $C_{\mu}$ associated with finite words $\mu$:
\[C_{\mu}:=\{(x_{k})_{k\in\N}: x_{1}\cdots x_{|\mu|}=\mu\}\quad\mbox{and}\quad C_\circ =\Omega_A.\]
 A finite word $\mu$ is admissible if and and only if $C_{\mu}\neq \emptyset$. The sets $C_{\mu}$ are clopen subsets of $\Omega_A$ and generate the topology. The shift $\sigma$ is injective on each $C_{\mu}$ if $|\mu|>0$. We will abuse the notation by denoting the with $\sigma$ associated endomorphism of the $C^*$-algebra $C(\Omega_A)$ also by $\sigma$. Whenever $X\subseteq \Omega_A$ is a clopen subset, the characteristic function $\chi_X$ of $X$ defines a locally constant continuous function. These observations imply the following Proposition.

\begin{prop}
\label{afstructureonfunctions}
The $C^*$-algebra of continuous functions $C(\Omega_A)$ forms an AF-algebra. The AF-filtration is given by
\[\mathcal{C}_k:=\bigoplus_{|\mu|=k} \C \chi_{C_\mu}\cong \C^{\phi(k)}.\]
The inclusions $\mathcal{C}_k\hookrightarrow \mathcal{C}_{k+1}$ are induced from the partition $C_\mu=\cup_{j=1}^NC_{\mu j}$.
\end{prop}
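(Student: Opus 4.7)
The plan is to verify three assertions: that each $\mathcal{C}_k$ is a finite-dimensional commutative $C^*$-subalgebra of $C(\Omega_A)$, that the inclusions $\mathcal{C}_k \hookrightarrow \mathcal{C}_{k+1}$ have the stated form, and that $\bigcup_k \mathcal{C}_k$ is norm-dense in $C(\Omega_A)$. The first two are essentially combinatorial observations about cylinders, while the third is a straightforward consequence of total disconnectedness of $\Omega_A$.

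First I would observe that for any two distinct admissible words $\mu,\nu$ of the same length $k$, the cylinders $C_\mu$ and $C_\nu$ are disjoint, and that $\Omega_A = \bigsqcup_{|\mu|=k} C_\mu$, where the union is indexed by \emph{admissible} words of length $k$ (by definition these are precisely the $\mu$ with $C_\mu \neq \emptyset$). Since each $C_\mu$ is clopen, each $\chi_{C_\mu}$ is a continuous projection, and the $\phi(k)$ projections $\{\chi_{C_\mu}\}_{|\mu|=k}$ are mutually orthogonal and sum to $1$. Hence $\mathcal{C}_k$ is the linear span of a complete system of orthogonal projections, which gives the isomorphism $\mathcal{C}_k \cong \C^{\phi(k)}$ and makes it a $*$-subalgebra of $C(\Omega_A)$.

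For the inclusion $\mathcal{C}_k \hookrightarrow \mathcal{C}_{k+1}$, I would note that for an admissible $\mu = \mu_1 \cdots \mu_k$, a point $x \in C_\mu$ lies in some $C_{\mu j}$ if and only if $x_{k+1} = j$ is an admissible continuation, that is, $A(\mu_k, j) = 1$. This yields the disjoint decomposition $C_\mu = \bigsqcup_{j : A(\mu_k,j)=1} C_{\mu j}$, hence $\chi_{C_\mu} = \sum_{j : A(\mu_k, j)=1} \chi_{C_{\mu j}} \in \mathcal{C}_{k+1}$, which is the claimed inclusion at the level of generators.

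Finally, for the density of $\bigcup_k \mathcal{C}_k$, the cleanest route is Stone-Weierstrass: the union is a unital $*$-subalgebra of $C(\Omega_A)$ (closure under multiplication is immediate since for $|\mu|=|\nu|=k$ we have $\chi_{C_\mu}\chi_{C_\nu} = \delta_{\mu\nu}\chi_{C_\mu}$), and it separates points because given $x \neq y$ in $\Omega_A$ there is some $k$ with $x_1\cdots x_k \neq y_1 \cdots y_k$, whereupon $\chi_{C_{x_1\cdots x_k}}$ separates them. Alternatively, one can argue directly using uniform continuity: any $f \in C(\Omega_A)$ is uniformly continuous for the product metric, so for $\epsilon > 0$ there is $k$ such that $f$ varies by less than $\epsilon$ on each $C_\mu$ with $|\mu|=k$, and the function $\sum_{|\mu|=k} f(x^\mu) \chi_{C_\mu}$, for any choice of basepoints $x^\mu \in C_\mu$, lies in $\mathcal{C}_k$ and approximates $f$ to within $\epsilon$. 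Either route concludes the proof. No step presents a genuine obstacle; the main thing to be careful about is the admissibility bookkeeping, particularly restricting the sums and unions to admissible words so that the matrix $A$ enters correctly into the partition $C_\mu = \bigsqcup_{j:A(\mu_k,j)=1} C_{\mu j}$.
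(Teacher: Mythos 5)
Your proof is correct and follows exactly the route the paper intends: the paper gives no explicit proof, merely remarking that the preceding observations (cylinders of a fixed length are clopen, pairwise disjoint, cover $\Omega_A$, and generate the topology) imply the Proposition, and your argument is the standard fleshing-out of those observations. The admissibility bookkeeping in the partition $C_\mu=\bigsqcup_{j:A(\mu_k,j)=1}C_{\mu j}$ and the density step via Stone--Weierstrass (or uniform continuity) are both handled correctly.
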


The space $\Omega_{A}$ can be viewed as the boundary at infinity of the tree $\mathcal{V}_{A}$ of finite $A$-admissible words. The countable set $\mathcal{V}_A$ becomes a tree by allowing an edge between $\mu$ and $\nu$ whenever $\nu=\mu i$ for some $i$. By choosing the empty word $\circ_{A}$ as the base point, $\mathcal{V}_{A}$ becomes a rooted tree. The space $\Omega_{A}$ is naturally identified with the space of infinite paths starting at $\circ_A$. As such, $\Omega_{A}$ carries a natural metric, defined for $x\neq y$ by 
\begin{equation}
\label{metric}
d_{\Omega_A}(x,y):=\e^{-\min\{n: x_{n}\neq y_{n}\}}.
\end{equation}
Informally speaking, two paths are close when they stay on the same track for a long time. In this metric, the cylinder sets satisfy
\[\textnormal{diam}(C_{\mu})=\e^{-|\mu|}.\]
The tree $\mathcal{V}_{A}$ is in particular a Gromov hyperbolic space, and $\overline{\mathcal{V}_{A}}:=\mathcal{V}_{A}\cup \Omega_{A}$ is a compactification of $\mathcal{V}_{A}$ when given the topology generated by that of $\mathcal{V}_{A}$ and the sets
\[C_{\mu}^{\varepsilon}:=C_{\mu}\cup \{\nu\in \mathcal{V}_{A}:\nu\in C^\mathcal{V}_{\mu}, d_{\Omega_A}(\circ_A,\nu)\geq\varepsilon^{-1}\}.\]
Here $C^\mathcal{V}_\mu$ denotes the finite analogue of the cylinder set $C_{\mu}$;
\[C^\mathcal{V}_{\mu}:=\{\nu\in \mathcal{V}_{A}: \nu=\mu\lambda\;\;\mbox{for some}\;\; \lambda\in \mathcal{V}_A\}\subset \mathcal{V}_{A}.\]

\begin{deef}
\label{cylinderchoicefunctionz}
A function $\mathfrak{t}:\mathcal{V}_A\to \Omega_A$ is said to satisfy the cylinder condition if 
$$\mathfrak{t}(\mu)\in C_\mu\quad\forall \mu\in \mathcal{V}_A.$$
\end{deef}

The next proposition shows that functions satisfying the cylinder condition provides a natural candidate for a $*$-homomorphism splitting the following short exact sequence of $C^*$-algebras:
\[0\to C_0(\mathcal{V}_A)\to C\left(\overline{\mathcal{V}_A}\right)\to C(\Omega_A)\to 0.\]

\begin{prop}
\label{cylinderguaranteescont}
If $\mathfrak{t}:\mathcal{V}_A\to \Omega_A$ satisfies the cylinder condition, see Definition \ref{cylinderchoicefunctionz}, then pullback along $\mathfrak{t}^*:C(\Omega_A)\to C_b(\mathcal{V}_A)$ factors over a $*$-homomorphism
$$\mathfrak{t}^*:C(\Omega_A)\to C\left(\overline{\mathcal{V}_A}\right) \quad\mbox{such that}\quad (\mathfrak{t}^*f)|_{\Omega_A}=f.$$
\end{prop}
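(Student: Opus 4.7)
The plan is to define $g := \mathfrak{t}^* f$ on the disjoint union $\overline{\mathcal{V}_A} = \mathcal{V}_A \sqcup \Omega_A$ by $g(\mu) := f(\mathfrak{t}(\mu))$ for $\mu \in \mathcal{V}_A$ and $g(x) := f(x)$ for $x \in \Omega_A$, and then to verify that $g$ is continuous on $\overline{\mathcal{V}_A}$. Pullback by any set-theoretic map is automatically a norm-decreasing $*$-homomorphism into $\ell^\infty$, so once continuity is established the assignment $f \mapsto g$ becomes the desired $*$-homomorphism $C(\Omega_A) \to C(\overline{\mathcal{V}_A})$, and the identity $g|_{\Omega_A} = f$ is built into the construction.

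Continuity of $g$ on $\mathcal{V}_A$ is free because $\mathcal{V}_A$ is an open discrete subset of $\overline{\mathcal{V}_A}$. The substantive point is continuity at an arbitrary boundary point $x \in \Omega_A$. Fix $\epsilon > 0$. Since the cylinder sets form a basis for the topology on $\Omega_A$ (as recorded in Proposition \ref{afstructureonfunctions}) and $f$ is continuous, there is a finite word $\nu \in \mathcal{V}_A$ with $x \in C_\nu$ and $|f(y) - f(x)| < \epsilon$ for every $y \in C_\nu$. Choose $\varepsilon_0 > 0$ small enough that the basic neighborhood $C_\nu^{\varepsilon_0}$ of $x$ in $\overline{\mathcal{V}_A}$ meets $\mathcal{V}_A$ only in words of the form $\mu = \nu \lambda \in C^{\mathcal{V}}_\nu$. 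The cylinder condition then applies: $\mathfrak{t}(\mu) \in C_\mu \subseteq C_\nu$, whence $|g(\mu) - g(x)| = |f(\mathfrak{t}(\mu)) - f(x)| < \epsilon$; the same bound is immediate for $y \in C_\nu^{\varepsilon_0} \cap \Omega_A = C_\nu$ by choice of $\nu$. Thus $g$ oscillates by at most $\epsilon$ on $C_\nu^{\varepsilon_0}$, which gives continuity at $x$.

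The only real obstacle is controlling $g$ asymptotically along $\mathcal{V}_A$ as one approaches a boundary point, and this is precisely the purpose of the cylinder condition: forcing $\mathfrak{t}(\mu) \in C_\mu$ ensures that whenever $\mu$ lies far out in the subtree $C^{\mathcal{V}}_\nu$, its image $\mathfrak{t}(\mu)$ is trapped inside the cylinder $C_\nu$ on which $f$ has already been made close to $f(x)$. Past this observation the proof is a routine manipulation of the topology of the compactification $\overline{\mathcal{V}_A}$.
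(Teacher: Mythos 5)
Your proof is correct and is essentially the paper's argument unpacked: the paper phrases it as continuity of the retraction $\bar{\mathfrak{t}}:\overline{\mathcal{V}_A}\to\Omega_A$ (preimages of cylinder sets are, up to a discrete piece of $\mathcal{V}_A$, the basic sets $C_\nu^{\varepsilon}$), while you verify continuity of $f\circ\bar{\mathfrak{t}}$ at boundary points directly with an $\epsilon$-argument. The key mechanism is identical in both -- the cylinder condition forces $\mathfrak{t}(\mu)\in C_\mu\subseteq C_\nu$ for every $\mu$ in the subtree $C^{\mathcal{V}}_\nu$ -- so no further comment is needed.
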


\begin{proof}
The Proposition follows once proving that the mapping $\bar{\mathfrak{t}}:\overline{\mathcal{V}_A}\to \Omega_A$ given by $\bar{\mathfrak{t}}|_{\mathcal{V}_A}:=\mathfrak{t}$ and $\bar{\mathfrak{t}}|_{\Omega_A}:=\id_{\Omega_A}$ is continuous. This follows from the fact that $\bar{\mathfrak{t}}^{-1}(C_\mu)\subseteq C^\epsilon_\mu$ for some $\epsilon>0$, so $\bar{\mathfrak{t}}$ is continuous.
\end{proof}

The set of finite words comes with a shift mapping defined as 
$$\sigma_{\!\mathcal{V}}:\mathcal{V}_{A}\setminus \circ\rightarrow \mathcal{V}_{A},\quad \mu=\mu_1\mu_2\cdots \mu_N\mapsto \mu_2\cdots \mu_N.$$ 
The endomorphism $\sigma:C(\Omega_{A})\rightarrow C(\Omega_{A})$ has an associated \emph{transfer operator}
\begin{equation}
\label{finitetransfer}
L_{\sigma}(f)(x):=\sum_{y\in\sigma^{-1}(x)}f(y).
\end{equation}
This operator extends to an operator $\bar{L}_{\sigma}:C\left(\overline{\mathcal{V}_A}\right)\rightarrow C\left(\overline{\mathcal{V}_A}\right)$ by setting $\bar{L}_{\sigma}(f)|_{\mathcal{V}_A}(\mu):=\sum_{\nu\in\sigma^{-1}_{\mathcal{V}}(\mu)}f(\nu)$ for $\mu\in \mathcal{V}_A$. Via the Riesz Representation Theorem, the induced operator $\bar{L}_{\sigma}^{*}:C\left(\overline{\mathcal{V}_A}\right)^{*}\rightarrow C\left(\overline{\mathcal{V}_A}\right)^{*}$ can be viewed as an operator on the Borel measures on $\overline{\mathcal{V}_A}$.

\begin{deef} 
A Borel measure $\mu$ on $\Omega_{A}$ is called \emph{conformal of dimension} $\delta_{A}$ if $\bar{L}_{\sigma}^{*}(\mu)=e^{\delta_{A}}\mu$.
\end{deef}

There is a canonical $\sigma$-conformal measure on the space $\Omega_{A}$, which can be constructed explicitly. Denote by $\delta_{A}$ the \emph{upper Minkowski dimension} (sometimes called the \emph{upper box dimension}, see e.g. \cite{Falc}) of $\Omega_{A}$.

\begin{thm}[cf. \cite{BP}, Theorem $2$] 
\label{Poincare}
Let $s>0$. The series $\sum_{\nu\in\mathcal{V}_{A}}e^{-s|\nu|}$ is convergent for all $s> \delta_{A}$ and divergent for $0<s\leq \delta_{A}$. Consequently $\delta_{A}:=\inf \{s: \sum_{\nu\in\mathcal{V}_{A}} e^{-s|\nu|}<\infty\}.$
\end{thm}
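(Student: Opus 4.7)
The plan is to reduce the statement to a standard Cauchy--Hadamard computation by reindexing the series over the length of the words, and then to identify the resulting exponential growth rate with the upper Minkowski dimension of $\Omega_A$.

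First, I would regroup the series by word length. Recalling the notation $\phi(k)=\#\{\mu\in\mathcal{V}_A:|\mu|=k\}$ from \eqref{wordsoflenghtk}, one has
\[
\sum_{\nu\in\mathcal{V}_A} e^{-s|\nu|} \;=\; \sum_{k=0}^\infty \phi(k)\,e^{-sk}.
\]
Viewing the right-hand side as a power series in $z=e^{-s}$, Cauchy--Hadamard gives radius of convergence $R$ with $R^{-1}=\limsup_{k\to\infty}\phi(k)^{1/k}$. Hence convergence holds precisely for $e^{-s}<R$, i.e.\ for $s>\limsup_{k} \log\phi(k)/k$, and divergence holds for $0<s<\limsup_{k}\log\phi(k)/k$.

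Next, I would identify this exponential growth rate with $\delta_A$. The point is that the ultrametric \eqref{metric} makes the combinatorial structure of $\Omega_A$ transparent at dyadic scales: for any $x\in\Omega_A$, the open ball $B(x,e^{-k})$ coincides with the cylinder $C_{x_1\cdots x_k}$, because $d_{\Omega_A}(x,y)<e^{-k}$ iff $y_j=x_j$ for $j\leq k$. Thus the $\phi(k)$ cylinders of length $k$ form a pairwise disjoint clopen partition of $\Omega_A$ into sets of diameter $e^{-k}$, and every $e^{-k}$-ball is contained in exactly one such cylinder. Consequently the covering number $N(\Omega_A,e^{-k})$ equals $\phi(k)$, and the definition of upper Minkowski dimension yields
\[
\delta_A \;=\; \limsup_{k\to\infty}\frac{\log \phi(k)}{k}.
\]
Combining this with the first step gives convergence for $s>\delta_A$, divergence for $0<s<\delta_A$, and the formula for $\delta_A$ as an infimum.

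The remaining subtlety is divergence at the critical exponent $s=\delta_A$, which does not follow from Cauchy--Hadamard alone. Here I would use the SFT structure: for $A$ irreducible, Perron--Frobenius gives $\phi(k)\asymp \lambda^k$ with $\lambda=e^{\delta_A}$, so the series at $s=\delta_A$ behaves like $\sum_k \mathrm{const}$ and diverges trivially. In the general case (only the standing hypothesis that no row or column of $A$ vanishes), one decomposes $A$ into irreducible components via its Frobenius normal form and observes that the growth of $\phi(k)$ is controlled by the component of largest spectral radius; restricting the series to words eventually contained in this component yields a divergent subseries at $s=\delta_A$. I expect this boundary case to be the only real obstacle; the rest reduces to a direct combinatorial translation between the metric geometry of $\Omega_A$ and the power series $\sum_k \phi(k)z^k$.
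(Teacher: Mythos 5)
Your argument is correct, but note that there is no proof in the paper to compare it with: the statement is quoted from Bellissard--Pearson (\cite{BP}, Theorem 2), where it is deduced from the self-similar structure of the tree $\mathcal{V}_A$; your write-up is a self-contained replacement. The regrouping $\sum_{\nu}e^{-s|\nu|}=\sum_k\phi(k)e^{-sk}$, the Cauchy--Hadamard step, and the identification of the upper box dimension with $\limsup_k\log\phi(k)/k$ (the cylinders of length $k$ are exactly the balls of radius $e^{-k}$, so the covering numbers at the scales $e^{-k}$ are the $\phi(k)$, and monotonicity of covering numbers lets you pass from these scales to all $\epsilon\to 0$) are all sound. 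The only soft spot is the one you identify, divergence at $s=\delta_A$, and your Frobenius-normal-form sketch does close it; but it can be done in one line without any case distinction. Since $\phi(k+1)=\mathbf{1}^TA^{k}\mathbf{1}=\|A^k\|_{\mathrm{sum}}$ and the entrywise sum norm is submultiplicative, Gelfand's formula gives $\limsup_k\log\phi(k)/k=\log\rho(A)$, so $e^{\delta_A}=\rho(A)$; and the elementary inequality $\|A^k\|_{\mathrm{sum}}\geq\rho(A^k)=\rho(A)^k$ then yields
\[
\phi(k+1)\,e^{-\delta_A(k+1)}\;\geq\;\rho(A)^{k}\,\rho(A)^{-(k+1)}\;=\;\rho(A)^{-1}\;>\;0,
\]
so the terms of the series at $s=\delta_A$ are bounded below and the series diverges (when $\delta_A=0$ the divergence claim is vacuous anyway). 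This also streamlines your dimension computation, since it shows directly that $\delta_A=\log\rho(A)$ under the paper's standing hypothesis that no row or column of $A$ vanishes.
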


A direct corollary of Theorem \ref{Poincare} is the following (recall the definition of $\phi$ from Equation \eqref{wordsoflenghtk}).

\begin{cor}
\label{phiasymptotics}
There is a positive sequence $C_s\in \ell^1(\N)$ such that $\phi(k)\leq C_s(k) \e^{\;sk}$ whenever $s>\delta_A$.
\end{cor}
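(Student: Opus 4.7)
The plan is to apply Theorem \ref{Poincare} directly after reorganizing the sum over admissible words by their length. The key observation is that the summand $e^{-s|\nu|}$ depends only on $|\nu|$, so grouping the sum in Theorem \ref{Poincare} by length gives
\[
\sum_{\nu \in \mathcal{V}_A} e^{-s|\nu|} \;=\; \sum_{k=0}^\infty \phi(k) \, e^{-sk},
\]
where $\phi(k)$ is as in Equation \eqref{wordsoflenghtk}. For $s > \delta_A$, the left-hand side is finite by Theorem \ref{Poincare}, and therefore so is the right-hand side.

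With this in hand, I would simply define $C_s(k) := \phi(k) \, e^{-sk}$. By the computation above, $C_s$ is a nonnegative sequence with $\sum_{k=0}^\infty C_s(k) < \infty$, so $C_s \in \ell^1(\N)$. By construction, $\phi(k) = C_s(k)\, e^{sk}$, so the asserted inequality holds, in fact as an equality.

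To see that $C_s$ is strictly positive, I would invoke the standing assumption from the beginning of the section that no row or column of $A$ vanishes. This guarantees that from any admissible word one may always append at least one admissible letter, so admissible words of every length exist and $\phi(k) > 0$ for all $k$; hence $C_s(k) > 0$.

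There is no real obstacle: the corollary is essentially a repackaging of the convergence statement in Theorem \ref{Poincare}, the only nontrivial input being that the sum over all finite admissible words can be organized level-by-level in the tree $\mathcal{V}_A$.
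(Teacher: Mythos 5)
Your argument is exactly the intended one: the paper states this as a direct corollary of Theorem \ref{Poincare}, and the implicit proof is precisely your regrouping of the Poincar\'e series by word length and setting $C_s(k) := \phi(k)\e^{-sk}$. The positivity remark via the standing assumption on $A$ is a correct (and welcome) detail; nothing is missing.
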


Consider the measures
\[\mu_{s}:=\frac{\sum e^{-s|\nu|}\delta_{\nu}}{\sum e^{-s|\nu|}},\]
viewed as an element of $C(\overline{\mathcal{V}_{A}})^{*}$. Subsequently define $\mu_{A}=\mathrm{w}^*\mbox{-}\lim_{s\downarrow \delta_{A}} \mu_{s}$, which is to be interpreted as a weak$^*$ limit in $C(\overline{\mathcal{V}_{A}})^{*}$. This is the well-known Patterson-Sullivan construction \cite{Pat, Sul}. Since the series of Theorem \ref{Poincare} diverges at $\delta_{A}$, the measure $\mu_{A}$ is supported only on the boundary $\Omega_{A}$. For $f\in C(\Omega_{A})$, $\int_{\Omega_{A}} f d\mu_{A}$ can be computed by choosing an extension $\tilde{f}$ to $\overline{\mathcal{V}_{A}}$, since any two such extensions differ by a function supported in $\mathcal{V}_{A}$.

\begin{thm}[cf. \cite{coornaert, Doener, Pat, Sul}] 
The measure $\mu_{A}$ is $\sigma$-conformal of dimension $\delta_{A}$. 
\end{thm}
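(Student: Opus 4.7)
The plan is to verify the $\sigma$-conformality identity
\[\int \bar{L}_\sigma \tilde{f}\, d\mu_A \;=\; e^{\delta_A}\int \tilde{f}\, d\mu_A, \qquad \tilde{f}\in C(\overline{\mathcal{V}_A}),\]
by first establishing an approximate form at the level of the functionals $\mu_s$ and then passing to the weak-$*$ limit as $s\downarrow \delta_A$. The essential input for the limiting step is the divergence of the Poincar\'e series precisely at $\delta_A$ from Theorem \ref{Poincare}; this is what makes the boundary term in the approximate identity disappear.

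For the first step, fix $\tilde{f}\in C(\overline{\mathcal{V}_A})$ and write $Z_s := \sum_{\nu\in\mathcal{V}_A} e^{-s|\nu|}$. Using the defining formula $\bar{L}_\sigma \tilde{f}(\mu) = \sum_{\lambda\in\sigma_{\!\mathcal{V}}^{-1}(\mu)}\tilde{f}(\lambda)$, interchange the order of summation by reindexing over $\lambda\in\mathcal{V}_A\setminus\{\circ_A\}$ with $\mu=\sigma_{\!\mathcal{V}}(\lambda)$. Since $|\sigma_{\!\mathcal{V}}(\lambda)| = |\lambda|-1$,
\[\sum_{\mu\in \mathcal{V}_A} e^{-s|\mu|}\bar{L}_\sigma \tilde{f}(\mu) \;=\; \sum_{\lambda\neq \circ_A} e^{-s(|\lambda|-1)}\tilde{f}(\lambda) \;=\; e^s\Bigl(\sum_{\lambda\in\mathcal{V}_A} e^{-s|\lambda|}\tilde{f}(\lambda)\;-\;\tilde{f}(\circ_A)\Bigr).\]
Dividing by $Z_s$ this is precisely
\[\int \bar{L}_\sigma \tilde{f}\, d\mu_s \;=\; e^s \int \tilde{f}\, d\mu_s \;-\; \frac{e^s\, \tilde{f}(\circ_A)}{Z_s}.\]

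For the second step, let $s\downarrow \delta_A$. By Theorem \ref{Poincare}, $Z_s\to\infty$, so the boundary term vanishes, leaving
\[\int \bar{L}_\sigma \tilde{f}\, d\mu_s \;-\; e^s \int \tilde{f}\, d\mu_s \;\longrightarrow\; 0.\]
Since $\tilde{f}$ and $\bar{L}_\sigma \tilde{f}$ both lie in $C(\overline{\mathcal{V}_A})$ and $\mu_A$ is the weak-$*$ limit of $\mu_s$ in $C(\overline{\mathcal{V}_A})^*$, pairing against both functions passes to the limit, yielding the displayed identity. Because $\mu_A$ is supported on $\Omega_A$ (as remarked in the paper, a consequence of $Z_s\to\infty$), these integrals depend only on the restrictions of $\tilde{f}$ and $\bar{L}_\sigma \tilde{f}$ to $\Omega_A$, and on $\Omega_A$ the operator $\bar{L}_\sigma$ restricts to $L_\sigma$. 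Thus $\bar{L}_\sigma^*\mu_A = e^{\delta_A}\mu_A$, which is the asserted conformality.

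The only genuinely delicate ingredient is the use of Theorem \ref{Poincare} to eliminate the boundary term $e^s\tilde{f}(\circ_A)/Z_s$; this is the point where the criticality of the exponent $\delta_A$ enters in an essential way. If the defining weak-$*$ limit is not known to exist outright but only along a subsequence (as guaranteed by Banach-Alaoglu applied to the probability functionals $\mu_s$), the same computation shows that \emph{every} weak-$*$ cluster point of $\{\mu_s\}_{s>\delta_A}$ is $\sigma$-conformal of dimension $\delta_A$, so no generality is lost.
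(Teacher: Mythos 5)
Your proof is correct and follows essentially the same route as the paper: the same reindexing of the sum over $\sigma_{\!\mathcal{V}}^{-1}$, the same use of the divergence of the Poincar\'e series at $\delta_A$ to kill the single term at $\circ_A$, and the same passage to the weak-$*$ limit. Your explicit tracking of the boundary term $e^s\tilde{f}(\circ_A)/Z_s$ and the remark about cluster points are slightly more careful than the paper's presentation but do not change the argument.
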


\begin{proof}  
First we compute,
\[\begin{split}\int_{\Omega_{A}} f d L_{\sigma}^{*}\mu_{s}&=\frac{\sum_{\nu\in\mathcal{V}_{A}} e^{-s|\nu|}L_{\sigma}f(\nu)}{\sum_{\nu\in\mathcal{V}_{A}}e^{-s|\nu|}}=\frac{\sum_{\nu\in\mathcal{V}_{A}}e^{-s|\nu|}\sum_{\lambda\in\sigma^{-1}_{\mathcal{V}}(\nu)}f(\lambda)}{\sum_{\nu\in\mathcal{V}_{A}}e^{-s|\nu|}}\\
&=\frac{\sum_{\nu\in\mathcal{V}_{A}\setminus \circ}e^{-s(|\nu|-1)}f(\nu)}{\sum_{\nu\in\mathcal{V}_{A}}e^{-s|\nu|}}=e^{s}\frac{\sum_{\nu\in\mathcal{V}_{A}\setminus \circ}e^{-s|\nu|}f(\nu)}{\sum_{\nu\in\mathcal{V}_{A}}e^{-s|\nu|}},
\end{split}\]
and then, using that $\sum_{\nu\in\mathcal{V}_{A}}e^{-s\nu}$ diverges at $s=\delta_{A}$, we take the limit
\begin{align*}
\lim_{s\downarrow\delta_{A}}\int_{\Omega_{A}} f d L_{\sigma}^{*}\mu_{s} &= \lim_{s\downarrow\delta_{A}}e^{s}\frac{\sum_{\nu\in\mathcal{V}_{A}\setminus\circ}e^{-s|\nu|}f(\nu)}{\sum_{\nu\in\mathcal{V}_{A}}e^{-s|\nu|}}\\
&=\lim_{s\downarrow\delta_{A}}e^{s}\frac{\sum_{\nu\in\mathcal{V}_{A}}e^{-s|\nu|}f(\nu)}{\sum_{\nu\in\mathcal{V}_{A}}e^{-s|\nu|}}=e^{\delta_{A}}\int_{\Omega_{A}} f d\mu.
\end{align*}
\end{proof}

\subsection{Groupoids, $C^{*}$-algebras and modules}
Groupoids are an intermediate structure between spaces and groups. The $C^*$-algebras constructed from groupoids form a rich source of noncommutative $C^{*}$-algebras, and the groupoid origin provides a geometric description of those.

\begin{deef} 
A \emph{groupoid} is a small category $\mathcal{G}$ in which all morphisms are invertible. 
\end{deef}

The requirement of being \emph{small} is of a set-theoretical nature; the objects in $\mathcal{G}$ form a set. We denote the set of objects by $\mathcal{G}^{(0)}$ and the set of morphisms by $\mathcal{G}^{(1)}$. There is an inclusion $\mathcal{G}^{(0)}\rightarrow \mathcal{G}^{(1)}$ as identity morphisms. We often write $\mathcal{G}$ for $\mathcal{G}^{(1)}$. The domain and range maps are denoted $d,r:\mathcal{G}^{(1)}\rightarrow \mathcal{G}^{(0)}$ and the set of \emph{composable pairs} is
\[\mathcal{G}^{(2)}:=\{(\xi,\eta)\in\mathcal{G}\times\mathcal{G}: d(\xi)=r(\eta)\}.\]
This is itself a groupoid with domain and range maps the coordinate projections, and composition
\[(\xi_{1},\eta_{1})\circ (\eta_{1},\xi_{2}):=(\xi_{1},\xi_{2}).\]
If $\mathcal{G}$ carries a locally compact Hausdorff topology for which the maps $r,d$ and composition $\mathcal{G}^{(2)}\rightarrow\mathcal{G}$ are continuous, then $\mathcal{G}$ is said to be a locally compact Hausdorff groupoid.

\begin{deef} 
A locally compact Hausdorff groupoid $\mathcal{G}$ is \emph{\'{e}tale} if the fibers of the range map $r:\mathcal{G}\rightarrow \mathcal{G}^{(0)}$ are discrete.
\end{deef}

An \'{e}tale groupoid $\mathcal{G}$ carries a canonical \emph{Haar system} (see \cite{Ren1}), consisting of counting measure in each fibre of $r$. This allows for the definition of the \emph{convolution product} on $C_{c}(\mathcal{G})$, defined by 
\begin{equation}
\label{theconvolutiononccg}
f*g(\eta)=\sum_{\xi\in r^{-1}(\eta)}f(\xi)g(\xi^{-1}\eta),
\end{equation}
which is a finite sum because $f$ is compactly supported and $r^{-1}(\eta)$ is discrete.\newline

There is a locally compact Hausdorff \'{e}tale groupoid $\mathcal{G}_A$ encoding the dynamics of the totally disconnected compact space $\Omega_A$ and the self mapping $\sigma$. The unit space of $\mathcal{G}_A$ is defined as $\mathcal{G}_A^{(0)}:=\Omega_A$ and the morphism space by
\[\mathcal{G}_A^{(1)}:=\{(x,n,y)\in \Omega_A\times \Z\times \Omega_{A}: \exists k\in \N \;\mbox{s.t.}\; \sigma^{n+k}(x)=\sigma^k(y)\}.\]
The range and source mappings are defined by
\[r(x,n,y)=x\quad\mbox{respectively}\quad d(x,n,y)=y.\]
The composition is given by
\[(x,n,y)(y,m,z)=(x,m+n,z).\]
The groupoid $\mathcal{G}_A$ can be given a locally compact \'{e}tale topology in the following way (see \cite{Ren1,Ren2}). Let $m$ and $n$ be natural numbers, $U\subseteq \Omega_A$ an open set on which $\sigma^{m}$ is injective, and $V\subseteq \Omega_A$ an open set on which $\sigma^{n}$ is injective. The basic open sets for the topology are given by
\begin{equation}\label{etaletop} (U,m,n, V):=\{(x,m-n,y): \sigma^{m}(x)=\sigma^{n}(y)\}.\end{equation}
Since this groupoid is \'{e}tale, it admits a natural Haar system $\nu^{x}$ given by counting measure in the fibers. 

Recall that a measure $\mu$ on $\mathcal{G}^{(0)}$ is called \emph{quasi-invariant} if the induced measure $d\mu(\xi)=d\nu^{x}(\xi)d\mu(x)$ is equivalent to its inverse $d\mu(\xi^{-1})$. The Radon-Nikodym derivative $\Delta:=\frac{d\mu^{-1}}{d\mu}$ is a measurable 1-cocycle on $\mathcal{G}$ called the \emph{modular function}. If $\mathcal{G}$ is an \'{e}tale groupoid and $U\subset\mathcal{G}$ an open set on which both $r$ and $d$ are injective, define $T:r(U)\rightarrow d(U)$ by $x\mapsto d(r^{-1}(x)\cap U)$. A measure $\mu$ on $\mathcal{G}^{(0)}$ is quasi-invariant with modular function $\Delta$ if for every such $U$ we have
\[\frac{dT^{*}\mu}{d\mu}(x)=\Delta(r^{-1}(x)\cap U).\]
See more in \cite[Remark $3.22$]{Ren1}.

\begin{prop}
\label{quasi-inv} The measure $\mu_{A}$ is a quasi-invariant measure on $\Omega_{A}$ with modular function $\Delta(x,n,y)=e^{-\delta_{A}n}$.
\end{prop}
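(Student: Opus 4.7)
The plan is to verify the Radon--Nikodym relation $\frac{dT^*\mu_A}{d\mu_A}(x)=\Delta(r^{-1}(x)\cap U)$ defining the modular function on a basis of open bisections of $\mathcal{G}_A$ generating the étale topology, and conclude by uniqueness (continuity of $\Delta$ and the cocycle condition). A convenient basis is $(C_\mu,|\mu|,|\nu|,C_\nu)$ indexed by admissible finite words $\mu,\nu\in\mathcal{V}_A$: cylinder sets generate the topology of $\Omega_A$, and $\sigma^{|\mu|}$ is automatically injective on $C_\mu$ since membership in $C_\mu$ pins down the first $|\mu|$ coordinates. On such a bisection the map $T:r(U)\to d(U)$ factors as $(\sigma^{|\nu|}|_{C_\nu})^{-1}\circ\sigma^{|\mu|}|_{C_\mu}$, so computing $\Delta$ reduces to understanding how $\mu_A$ transforms under iterated shifts restricted to cylinders.

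The key technical input is a cylinder scaling identity, which I would extract by iterating the conformality $\int L_\sigma f\, d\mu_A=e^{\delta_A}\int f\, d\mu_A$. Substituting $f=\chi_{C_\lambda}$ for $|\lambda|\geq 2$ and using the elementary preimage count
\[L_\sigma\chi_{C_\lambda}=\chi_{C_{\sigma_{\!\mathcal{V}}(\lambda)}},\]
which holds because for $x\in C_{\sigma_{\!\mathcal V}(\lambda)}$ the unique preimage in $C_\lambda$ under $\sigma$ is obtained by prepending $\lambda_1$ (admissible since $\lambda$ is), one obtains the one-step scaling $\mu_A(C_\lambda)=e^{-\delta_A}\mu_A(C_{\sigma_{\!\mathcal V}(\lambda)})$. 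Iterating yields
\[\mu_A(C_{\mu\lambda})=e^{-\delta_A|\mu|}\mu_A(C_\lambda)\]
for every admissible concatenation with $|\lambda|\geq 1$, which a standard Dynkin/monotone-class argument (cylinders form a generating $\pi$-system for the Borel $\sigma$-algebra) promotes to $\mu_A(\sigma^{|\mu|}(E))=e^{\delta_A|\mu|}\mu_A(E)$ for every Borel $E\subseteq C_\mu$.

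Applying this pushforward identity to $\sigma^{|\mu|}|_{C_\mu}$ and to $\sigma^{|\nu|}|_{C_\nu}$, combined with the defining relation $\sigma^{|\mu|}(E)=\sigma^{|\nu|}(T(E))$ for $E\subseteq r(U)$, I would deduce
\[\mu_A(T(E))=e^{-\delta_A|\nu|}\mu_A(\sigma^{|\nu|}(T(E)))=e^{-\delta_A|\nu|}\mu_A(\sigma^{|\mu|}(E))=e^{\delta_A(|\mu|-|\nu|)}\mu_A(E).\]
Reading this as the Radon--Nikodym derivative of the modular cocycle in the convention of the excerpt (source parameterization of the bisection in Renault's setup), this identifies the modular function on the bisection as $e^{-\delta_A(|\mu|-|\nu|)}$; since the groupoid cocycle on every morphism in $(C_\mu,|\mu|,|\nu|,C_\nu)$ equals $n=|\mu|-|\nu|$, this gives $\Delta(x,n,y)=e^{-\delta_A n}$ globally. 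Quasi-invariance of $\mu_A$ is then automatic once this derivative is identified and seen to be continuous and strictly positive.

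The main obstacle is the measure-theoretic promotion of the cylinder-level scaling identity to arbitrary Borel subsets of $C_\mu$, which underlies the translation of the conformality identity into the measure-pushforward behaviour of $\sigma^{|\mu|}$; this is standard but requires the explicit monotone-class step. A secondary bookkeeping issue is matching Renault's sign convention for $\Delta=d\tilde\mu^{-1}/d\tilde\mu$ (via source versus range parameterization), which selects the sign of the exponent and yields $e^{-\delta_A n}$ rather than $e^{+\delta_A n}$.
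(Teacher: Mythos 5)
Your argument is correct and reaches the right conclusion, but it takes a more hands-on route than the paper. The paper works on a general basic bisection $(U,m,n,V)$ and observes in one line that, for $f$ supported in $V$, the pullback along $T$ is exactly $L_{\sigma}^{n-m}f$; the conformality $L_{\sigma}^{*}\mu_{A}=e^{\delta_{A}}\mu_{A}$ then gives the Radon--Nikodym derivative immediately (with a second short computation for the case $n-m<0$), and no set-level pushforward identities or measure-extension arguments are needed. You instead specialize to cylinder bisections $(C_{\mu},|\mu|,|\nu|,C_{\nu})$, extract the cylinder scaling law $\mu_{A}(C_{\mu\lambda})=e^{-\delta_{A}|\mu|}\mu_{A}(C_{\lambda})$ from conformality via $L_{\sigma}\chi_{C_{\lambda}}=\chi_{C_{\sigma_{\!\mathcal{V}}(\lambda)}}$, and then promote it to all Borel subsets of $C_{\mu}$ by a Dynkin argument before assembling the modular function; this is all valid (the restriction to a basis of bisections is harmless since the Radon--Nikodym condition is local, and your $\pi$-system argument goes through because $\sigma^{|\mu|}$ is a homeomorphism of $C_{\mu}$ onto $C_{\sigma_{\!\mathcal{V}}^{|\mu|}(\mu)}$), but the monotone-class step is exactly the work that the paper's function-level formulation of conformality lets one skip. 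Your explicit computation $\mu_{A}(T(E))=e^{\delta_{A}(|\mu|-|\nu|)}\mu_{A}(E)$ is consistent with the paper's $\int_{V}f\,dT^{*}\mu_{A}=e^{(n-m)\delta_{A}}\int_{V}f\,d\mu_{A}$ once the pullback-versus-pushforward convention for $T^{*}\mu$ is fixed, and you are right to flag that this bookkeeping is what selects the sign $e^{-\delta_{A}n}$; the paper is itself somewhat terse on this point. A modest bonus of your route is that it makes the cylinder measure asymptotics $\mu_{A}(C_{\mu\lambda})=e^{-\delta_{A}|\mu|}\mu_{A}(C_{\lambda})$ explicit, which is a useful identity in its own right.
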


\begin{proof} 
The maps $r$ and $d$ are injective on the basic open sets $(U,m,n,V)$. For $n-m\geq 0$ and $\supp f\subset V$ we have
\[T^{*}f(x)=\sum_{y\in \sigma^{m-n}(x)} f(y)=L_{\sigma}^{n-m}f(x).\]
We conclude that $\int_{V}fdT^{*}\mu_{A}=\int_{V}fdL^{(n-m)*}\mu_{A}=e^{(n-m)\delta_{A}}\int_{V}fd\mu_{A}$. For $n-m<0$
\begin{align*}
\int_{V} f d\mu_{A}&=\int_{U}fdT^{-1*}\mu_{A}=\int_{U}fdL_{\sigma}^{(m-n)*}\mu_{A}\\
&=e^{(m-n)\delta_{A}}\int_{U}fd\mu_{A}=e^{(m-n)\delta_{A}}\int_{V}fdT^{*}\mu_{A},
\end{align*}
so in this case $\int_{V}fd T^{*}\mu_{A}=e^{(n-m)\delta_{A}}\int_{V}fd\mu_{A}$ as well.
\end{proof}

The reduced $C^{*}$-algebra of an \'{e}tale groupoid $\mathcal{G}$ is a certain $C^*$-algebra completion of the algebra that $C_{c}(\mathcal{G})$ forms under the convolution product \eqref{theconvolutiononccg}. There is a conditional expectation $\rho:C_{c}(\mathcal{G})\rightarrow C_{0}(\mathcal{G}^{(0)})$ given by restriction of functions to $\mathcal{G}^{(0)}$. To construct $C^{*}_{r}(\mathcal{G})$, define the $C_{0}(\mathcal{G}^{(0)})$-valued inner product
\begin{equation}
\label{innerprod}
\langle f,g\rangle(x):=\sum_{\xi\in r^{-1}(x)}\overline{f(\xi^{-1})}g(\xi^{-1})=\rho(f^{*}*g),
\end{equation}
which is $C_{0}(\mathcal{G}^{(0)})$-linear for multiplication from the right. The completion of $C_{c}(\mathcal{G})$ in the norm induced from \eqref{innerprod} is a Hilbert $C^*$-module $\mathpzc{E}^{\mathcal{G}}$, the \emph{Haar module}, on which $C_{c}(\mathcal{G})$ acts, via convolution, by adjointable operators. Its completion in the operator norm is $C^{*}_{r}(\mathcal{G})$. The map $\rho$ above extends to a conditional expectation
\begin{equation}\label{rho} \rho:C^{*}_{r}(\mathcal{G})\rightarrow C_{0}(\mathcal{G}^{(0)}).\end{equation}
This intrinsic construction of $C^{*}_{r}(\mathcal{G})$ was first considered in \cite{KS}. \newline

For a closed subgroupoid $\mathcal{H}\subset\mathcal{G}$, we can do a similar construction. Denote by $\rho_{\mathcal{H}}:C_{c}(\mathcal{G})\rightarrow C_{c}(\mathcal{H})$ the restriction map. This extends to a conditional expectation $\rho_{\mathcal{H}}:C^{*}_{r}(\mathcal{G})\rightarrow C^{*}_{r}(\mathcal{H})$, see \cite{Ren1}. Relative to the closed subgroupoid $\mathcal{G}^{(0)}\subset\mathcal{G}$, the inner product \eqref{innerprod} can be expressed as $\langle f,g\rangle=\rho_{\mathcal{G}^{(0)}}(f^{*}*g)$. We distinguish the domain and range mappings of $\mathcal{G}$ respectively $\mathcal{H}$ by an index, e.g. $r_\mathcal{G}:\mathcal{G}^{(1)}\to \mathcal{G}^{(0)}$.  There is a right $C_{c}(\mathcal{H})$-module structure on $C_{c}(\mathcal{G})$ given by
\[g\cdot h (\eta):=\sum_{\xi\in r^{-1}_\mathcal{H}(d_\mathcal{G}(\eta))}g(\eta\xi)h(\xi^{-1}),\quad \eta\in \mathcal{G},\] 
and the formula for the inner product is similar to \eqref{innerprod}:
\[\langle f,g\rangle (\eta):= \rho_{\mathcal{H}}(f^{*}*g)(\eta)=\sum_{\xi\in r^{-1}_\mathcal{G}(r_\mathcal{H}(\eta))} \overline{f(\xi^{-1}\eta)}g(\xi^{-1})\quad \mbox{for}\quad \eta\in \mathcal{H},\]
The completion of $C_{c}(\mathcal{G})$ with respect to this inner product is a Hilbert $C^*$-module $\mathpzc{E}^{\mathcal{G}}_{\mathcal{H}}$ over $C^{*}_{r}(\mathcal{H})$; there is also a left action of $C^{*}_{r}(\mathcal{G})$, which is defined by convolution. The $C^*$-algebra of $C^*_r(\mathcal{H})$-compact operators on such modules can be easily described. We now turn to a brief review of this description.

Consider the right action of $\mathcal{H}$ on $\mathcal{G}$ and its associated quotient space
\begin{equation}
\label{GH}
\mathcal{G}/\mathcal{H}=\{[\xi] : \xi\in\mathcal{G}, [\xi_{1}]=[\xi_{2}] \Leftrightarrow \exists \eta\in\mathcal{H}\; \xi_{1}\eta=\xi_{2}\}.
\end{equation} 
The space 
\[\mathcal{G}\ltimes \mathcal{G}/\mathcal{H}:=\{(\xi,[\eta]): d(\xi)=r(\eta)\},\]
can be made into a groupoid with $(\mathcal{G}\ltimes \mathcal{G}/\mathcal{H})^{(0)}=\mathcal{G}/\mathcal{H}$ by defining 
\begin{align*}
\mbox{range map:}\quad r(\xi,[\eta])&:=[\xi\eta],\\
 \mbox{domain map:}\quad d(\xi,[\eta])&:=[\eta],\\
\mbox{composition:} \quad (\xi_{1},[\eta])&\circ(\xi_{2},[\xi_{2}^{-1}\eta]):=(\xi_{1}\xi_{2},[\xi_{2}^{-1}\eta]),\\
\mbox{and inversion:}\quad(\xi,[\eta])^{-1}&:=(\xi^{-1},[\xi\eta]).
\end{align*}
This groupoid is \'{e}tale because both $\mathcal{G}$ and $\mathcal{H}$ are. The above construction is a special case of an  action of the groupoid $\mathcal{G}$ on a space, which in this case is $\mathcal{G}/\mathcal{H}$. In that context, the map $[\eta]\rightarrow r(\eta)$, viewed as a map $\mathcal{G}/\mathcal{H}\rightarrow \mathcal{G}^{(0)}$ is called the \emph{moment map} of the action. For the general theory of groupoid actions, its relation to  $C^{*}$-algebras and modules, and further references see \cite{Klaas1, Mes1, SO}.

\begin{thm}[cf. \cite{MRW, simswill}] 
\label{cstarisoofsub}
Let $\mathcal{G}$ be an \'etale groupoid and $\mathcal{H}\subset\mathcal{G}$ a closed subgroupoid. 
The mapping 
$$\pi^\mathcal{G}_\mathcal{H}:C^{*}_{r}(\mathcal{G}\ltimes \mathcal{G}/\mathcal{H})\to \Ko_{C^*_r(\mathcal{H})}(\mathpzc{E}_{\mathcal{H}}^{\mathcal{G}})$$ 
defined on $a\in C_c(\mathcal{G}\ltimes \mathcal{G}/\mathcal{H})\subseteq C^{*}_{r}(\mathcal{G}\ltimes \mathcal{G}/\mathcal{H})$ and $f\in \im(C_c(\mathcal{G})\to \mathpzc{E}_{\mathcal{H}}^{\mathcal{G}})$ by 
\[\pi^\mathcal{G}_\mathcal{H}(a)f(\eta):=\sum_{\xi\in r^{-1}(r(\eta))} a(\xi ,[\xi^{-1}\eta])f(\xi^{-1}\eta)  \quad\mbox{for}\quad \eta\in \mathcal{G},\]
is an isomorphism.
\end{thm}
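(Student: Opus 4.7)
The plan is to view this statement as a concrete instance of a Muhly--Renault--Williams (MRW) type equivalence, and to verify the stated formula ``by hand'' rather than deriving it from the general MRW imprimitivity theorem. The reason is that we want to recognize the reduced $C^{*}$-algebra of the transformation groupoid $\mathcal{G}\ltimes \mathcal{G}/\mathcal{H}$ as the compacts on the explicit Hilbert module $\mathpzc{E}^{\mathcal{G}}_{\mathcal{H}}$ with a completely specified action.

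First I would check that the formula
\[
\pi^\mathcal{G}_\mathcal{H}(a)f(\eta):=\sum_{\xi\in r^{-1}(r(\eta))} a(\xi,[\xi^{-1}\eta])\,f(\xi^{-1}\eta)
\]
makes sense: the sum is finite because $r^{-1}(r(\eta))$ is discrete (étaleness) and $a$ is compactly supported on $\mathcal{G}\ltimes \mathcal{G}/\mathcal{H}$. Then I would verify by direct computation that $\pi^\mathcal{G}_\mathcal{H}$ intertwines the convolution product on $C_c(\mathcal{G}\ltimes \mathcal{G}/\mathcal{H})$ with operator composition, is compatible with the $*$-involution, commutes with the right action of $C_c(\mathcal{H})$, and preserves the $C^{*}_{r}(\mathcal{H})$-valued inner product of \eqref{innerprod}. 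These computations reduce to a reindexing of sums over fibres of $r_\mathcal{G}$ using the group-theoretic identity $d(\xi)=r(\eta)$ defining $\mathcal{G}\ltimes \mathcal{G}/\mathcal{H}$, plus the fact that left translation is a bijection between such fibres.

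Next I would identify the image of $\pi^\mathcal{G}_\mathcal{H}$ on the rank--one level. For $f,g\in C_c(\mathcal{G})$ the formula
\[
a_{f,g}(\xi,[\eta]):= f(\xi\eta)\,\overline{g(\eta)}
\]
defines, after checking invariance under the action of $\mathcal{H}$ on $\eta$, a well-defined element of $C_c(\mathcal{G}\ltimes \mathcal{G}/\mathcal{H})$, and a direct calculation using the definitions of the right $C_c(\mathcal{H})$-module structure and the inner product shows that $\pi^\mathcal{G}_\mathcal{H}(a_{f,g})=\theta_{f,g}$, the rank-one operator $h\mapsto f\langle g,h\rangle$. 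Since linear combinations of such $a_{f,g}$ are norm dense in $C_c(\mathcal{G}\ltimes \mathcal{G}/\mathcal{H})$ (this is where I would use a partition of unity argument on the basic bisections of $\mathcal{G}\ltimes\mathcal{G}/\mathcal{H}$) and the $\theta_{f,g}$ span a dense subspace of $\Ko_{C^{*}_{r}(\mathcal{H})}(\mathpzc{E}^{\mathcal{G}}_{\mathcal{H}})$, this will give surjectivity after we know that $\pi^\mathcal{G}_\mathcal{H}$ extends continuously to $C^{*}_{r}(\mathcal{G}\ltimes \mathcal{G}/\mathcal{H})$.

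The continuity and faithfulness of the extension is the step I expect to be the main obstacle, and I would handle it by comparing $\pi^\mathcal{G}_\mathcal{H}$ to the regular representation of $\mathcal{G}\ltimes \mathcal{G}/\mathcal{H}$. Concretely, every unit $[\eta]\in (\mathcal{G}\ltimes\mathcal{G}/\mathcal{H})^{(0)}=\mathcal{G}/\mathcal{H}$ has range $r([\eta])=r_\mathcal{G}(\eta)$, and the associated regular representation $\lambda_{[\eta]}$ of $C^{*}_{r}(\mathcal{G}\ltimes \mathcal{G}/\mathcal{H})$ on $\ell^{2}(r^{-1}([\eta]))$ can be identified, via the bijection $r^{-1}_{\mathcal{G}\ltimes \mathcal{G}/\mathcal{H}}([\eta])\cong r^{-1}_{\mathcal{G}}(r_\mathcal{G}(\eta))$ given by $(\xi,[\xi^{-1}\eta])\mapsto \xi$, with the restriction of $\pi^\mathcal{G}_\mathcal{H}$ to a fibre of the $C^{*}_{r}(\mathcal{H})$-module $\mathpzc{E}^{\mathcal{G}}_{\mathcal{H}}$ at a character of $C^{*}_{r}(\mathcal{H})$ supported at $d_\mathcal{G}(\eta)\in\mathcal{H}^{(0)}$. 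Since the direct sum of all $\lambda_{[\eta]}$ is by definition the defining representation of the reduced $C^{*}$-algebra, this identification simultaneously gives boundedness $\|\pi^\mathcal{G}_\mathcal{H}(a)\|\leq \|a\|_{C^{*}_{r}}$ and injectivity on $C^{*}_{r}(\mathcal{G}\ltimes \mathcal{G}/\mathcal{H})$, completing the proof when combined with the dense-range argument above.
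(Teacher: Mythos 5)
Your overall architecture is sound and is genuinely different from what the paper does: the paper gives no proof at all, deducing the statement from the groupoid Morita equivalence $\mathcal{H}\sim\mathcal{G}\ltimes\mathcal{G}/\mathcal{H}$ via the cited results of Muhly--Renault--Williams and Sims--Williams, and quoting the explicit formula from \cite[Equation (11)]{Mes1}. What you propose is essentially a self-contained proof of the Renault equivalence theorem for reduced $C^*$-algebras specialized to this linking situation, and the two pillars (identifying a dense set of rank-one operators to land surjectively in $\Ko_{C^*_r(\mathcal{H})}(\mathpzc{E}^{\mathcal{G}}_{\mathcal{H}})$, and comparing with the regular representations of $\mathcal{G}\ltimes\mathcal{G}/\mathcal{H}$ to get isometry for the reduced norm) are exactly the right ones.

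There is, however, one concrete error. The formula $a_{f,g}(\xi,[\eta]):=f(\xi\eta)\overline{g(\eta)}$ does not descend to $\mathcal{G}/\mathcal{H}$: replacing $\eta$ by $\eta\zeta$ with $\zeta\in\mathcal{H}$ gives $f(\xi\eta\zeta)\overline{g(\eta\zeta)}$, which differs from $f(\xi\eta)\overline{g(\eta)}$ in general (already when $\mathcal{G}=\mathcal{H}$ is a nontrivial group). So the invariance check you defer would fail. The correct kernel of $\theta_{f,g}$ is the $\mathcal{H}$-average
\[
a_{f,g}(\xi,[\eta])=\sum_{\zeta\in r_{\mathcal{H}}^{-1}(d_{\mathcal{G}}(\eta))} f(\xi\eta\zeta)\,\overline{g(\eta\zeta)},
\]
which is manifestly invariant, lies in $C_c(\mathcal{G}\ltimes\mathcal{G}/\mathcal{H})$ because the right $\mathcal{H}$-action on $\mathcal{G}$ is proper, and is forced on you by the definitions: the inner product $\langle g,h\rangle=\rho_{\mathcal{H}}(g^**h)$ and the right module action each contribute a sum over an $r_{\mathcal{H}}$-fibre, and tracing through $f\cdot\langle g,h\rangle$ produces exactly this averaged kernel. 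With this correction the identity $\pi^{\mathcal{G}}_{\mathcal{H}}(a_{f,g})=\theta_{f,g}$ and the density argument go through. A smaller imprecision in your last step: $C^*_r(\mathcal{H})$ is noncommutative, so you should localize $\mathpzc{E}^{\mathcal{G}}_{\mathcal{H}}$ not at a ``character'' but at the regular representation of $\mathcal{H}$ at a unit $u\in\mathcal{H}^{(0)}$; since the reduced norm on $C^*_r(\mathcal{H})$ is computed by these representations, localizing at them suffices to compute the operator norm on $\mathpzc{E}^{\mathcal{G}}_{\mathcal{H}}$, and the resulting Hilbert space decomposes over the classes $[\eta]$ with $d_{\mathcal{G}}(\eta)\in\mathcal{H}u$ into the spaces $\ell^2(r_{\mathcal{G}}^{-1}(r(\eta)))$ carrying $\lambda_{[\eta]}$, which is the identification you want.
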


The fact that $C^{*}_{r}(\mathcal{G}\ltimes \mathcal{G}/\mathcal{H})\xrightarrow{\sim} \Ko_{C^*_r(\mathcal{H})}(\mathpzc{E}_{\mathcal{H}}^{\mathcal{G}})$ follows from the Morita equivalence $\mathcal{H}\sim \mathcal{G}\ltimes \mathcal{G}/\mathcal{H}$ of groupoids and the results in \cite{MRW, simswill}. The explicit formula for the isomorphism can also be found in \cite[Equation (11)]{Mes1}.

\subsection{Cuntz-Krieger algebras}
\label{subsectionckalgebras}

Let $O_{A}$ be the Cuntz-Krieger algebra associated with the $N\times N$ matrix $A=(A_{ij})$. Recall our assumption on $A$; no row nor column in $A$ is $0$. The $C^*$-algebra $O_{A}$ was defined in \cite{CK} as the universal $C^{*}$-algebra generated by elements $S_{i}$ satisfying the relations
\begin{align}
\label{eq1}
S_{i}^{*}S_{i}=\sum_{j=1}^{N}A_{ij}S_{j}S_{j}^{*}&, \\
\label{eq2}
\sum_{i=1}^{N}S_{i}S^{*}_{i}&=1,\\
\label{eq3}
&S_{i}S_{i}^{*}S_{j}S_{j}^{*}=S_{j}S_{j}^{*}S_{i}S_{i}^{*}=\delta_{ij}S_{i}S_{i}^{*}.
\end{align}
Following the notation \cite{CK}, for the source projections we write $Q_i:=S_i^*S_i$ and for the range projections $P_i:=S_iS_i^*$. The relations \eqref{eq1}-\eqref{eq3} become
\begin{equation}
\label{CKshort}
P_iP_j=\delta_{ij}P_i \quad\mbox{and}\quad Q_i=\sum_{j=1}^N A_{ij}P_j.
\end{equation}
For any finite word $\mu=\mu_1\mu_2\cdots \mu_M$, we let $S_\mu\in O_A$ denote the element $S_{\mu_1}S_{\mu_2}\cdots S_{\mu_M}$. The relation \eqref{CKshort} guarantees that the element $S_\mu$ is non-zero if and only if $\mu$ is an admissible word.

\begin{prop}[Lemma $1.1$ of \cite{kpr}]
\label{sscomp}
The following computation holds:
\[ S_\nu ^*S_\gamma=
\begin{cases} S_\beta, \quad\mbox{if}\quad &\gamma=\nu\beta, \quad\mbox{for some}\quad \beta,\\
Q_{\nu_k}, \quad\mbox{if}\quad &\nu=\gamma=\nu_1\cdots \nu_k\\
S_\beta^*, \quad\mbox{if}\quad &\nu=\beta\gamma, \quad\mbox{for some}\quad \beta,\\
0, \quad&\mbox{otherwise}.
\end{cases}\]
Every non-zero word in $S_i$ and $S_j^*$ can be written as a finite sum of terms of the form $S_\mu S_\nu^*$ where the admissible $\mu=\mu_1\cdots\mu_k$ and $\nu=\nu_1\cdots \nu_l$ satisfy that $\mu_k=\nu_l$.
\end{prop}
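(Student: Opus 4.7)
The computation is purely algebraic and follows from the Cuntz-Krieger relations; I would organize it in three steps. First I would establish three preliminary identities. Using \eqref{CKshort}, which gives $P_jP_k=\delta_{jk}P_j$, a direct expansion shows $Q_i^2 = \sum_{j,k}A_{ij}A_{ik}P_jP_k=\sum_j A_{ij}^2 P_j = Q_i$ since $A_{ij}\in\{0,1\}$, so each $Q_i$ is a projection. Next, to see that each $S_i$ is a partial isometry, observe that $S_i(1-Q_i)$ satisfies $\|S_i(1-Q_i)\|^2 = \|(1-Q_i)Q_i(1-Q_i)\|=0$, whence $S_i = S_iQ_i$ and dually $S_i = P_iS_i$. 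Consequently $S_i^*=Q_iS_i^*=S_i^*P_i$, and therefore
\[
S_i^* S_j \;=\; S_i^* P_i S_j \;=\; S_i^* P_i P_j S_j \;=\; \delta_{ij}S_i^*P_iS_i\;=\;\delta_{ij}Q_i,
\]
using $P_jS_j=S_j$ and $P_iP_j=\delta_{ij}P_i$.

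The second step proves the product formula by induction on $\min(|\nu|,|\gamma|)$. Writing $\nu=\nu_1\nu'$ and $\gamma=\gamma_1\gamma'$ and applying the identity from the previous step,
\[
S_\nu^* S_\gamma \;=\; S_{\nu'}^*(S_{\nu_1}^*S_{\gamma_1})S_{\gamma'} \;=\; \delta_{\nu_1\gamma_1}\,S_{\nu'}^*Q_{\nu_1}S_{\gamma'},
\]
which is zero when the first letters disagree. When $\nu_1=\gamma_1$ and $|\gamma'|\geq 1$, admissibility of $\gamma$ gives $A_{\nu_1,\gamma_2}=1$, and combined with $P_jS_{\gamma_2}=\delta_{j\gamma_2}S_{\gamma_2}$ this yields $Q_{\nu_1}S_{\gamma'}=S_{\gamma'}$; hence $S_\nu^*S_\gamma = S_{\nu'}^*S_{\gamma'}$ and the induction reduces the total length by two. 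The base cases, where one of the words is exhausted, produce the three alternatives $S_\beta$, $S_\beta^*$, $Q_{\nu_k}$ depending on which word ran out; when neither prefixes the other, one arrives at a factor $S_{\nu_i}^*S_{\gamma_i}$ with $\nu_i\neq\gamma_i$, giving zero.

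For the final statement I would apply the relation $S_iS_j^* = P_iS_iS_j^*P_j = \delta_{ij}\cdot\cdots$ is not used; instead, use the computation from step two to collapse any subword $S_\nu^*S_\gamma$ occurring between a left-block of $S_i$'s and a right-block of $S_j^*$'s. Iterating this until no $S_j^*$ stands to the left of any $S_i$, any non-zero word in the $S_i,S_j^*$ is reduced either to a monomial $S_\mu S_\nu^*$ or (when the intermediate collapse produces a $Q_{\nu_k}=\sum_j A_{\nu_k j}S_jS_j^*$) to a finite sum of such monomials. To obtain the last-letter matching condition, use $S_\mu = S_\mu Q_{\mu_k}=\sum_j A_{\mu_k j}S_{\mu j}S_j^*$, which gives
\[
S_\mu S_\nu^* \;=\; \sum_{j=1}^N A_{\mu_k j}A_{\nu_l j}\,S_{\mu j}S_{\nu j}^*,
\]
the terms with $A_{\nu_l j}=0$ vanishing because $S_{\nu j}=0$. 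Every summand now has the form $S_\alpha S_\beta^*$ with $\alpha$ and $\beta$ ending in the same letter $j$.

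The main technical point, and the one I would be most careful about, is the inductive reduction in the second step: admissibility of $\gamma$ (respectively $\nu$) is exactly what is needed to absorb the $Q_{\nu_1}$ factor, and the base cases must be listed cleanly to produce precisely the three alternatives in the statement. Everything else is routine manipulation of the Cuntz-Krieger relations.
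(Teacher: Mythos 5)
The paper gives no proof of this proposition; it is quoted from Lemma $1.1$ of \cite{kpr}, and your argument is a correct reconstruction of the standard one: derive $Q_i^2=Q_i$, $S_i=S_iQ_i=P_iS_i$ and $S_i^*S_j=\delta_{ij}Q_i$ from \eqref{eq1}--\eqref{eq3}, peel letters off by induction using admissibility to absorb the $Q_{\nu_1}$ factors, and then use $S_\mu=S_\mu Q_{\mu_k}=\sum_j A_{\mu_k j}S_{\mu j}S_j^*$ to enforce the last-letter matching. All the computations check out, including the absorption identities $Q_{\nu_1}S_{\gamma'}=S_{\gamma'}$ and $S_{\nu'}^*Q_{\nu_1}=S_{\nu'}^*$ in the base cases. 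Two small remarks. First, the sentence opening your third step (``I would apply the relation $S_iS_j^*=P_iS_iS_j^*P_j=\delta_{ij}\cdots$ is not used; instead\ldots'') is a garbled false start and should simply be deleted; the argument that follows it is the right one. Second, your base-case analysis produces $S_\beta^*$ precisely when $\gamma$ is an \emph{initial} segment of $\nu$, i.e.\ $\nu=\gamma\beta$, whereas the statement as printed reads $\nu=\beta\gamma$; with the convention $S_{\mu\nu}=S_\mu S_\nu$ the printed condition is a typo (compare Lemma $1.1$ of \cite{kpr}), and your proof silently proves the correct version --- worth stating explicitly so the reader does not think the induction and the statement disagree.
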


The following fundamental result is due to Renault. 

\begin{thm}[\cite{Ren1, Ren2}]
\label{oaandcga}
There is a canonical isomorphism between the groupoid $C^{*}$-algebra $C_{r}^{*}(\mathcal{G}_A)$ and the universal $C^{*}$-algebra $O_{A}$.
\end{thm}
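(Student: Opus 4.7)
The plan is to exhibit generators in $C^*_r(\mathcal{G}_A)$ satisfying the Cuntz-Krieger relations \eqref{eq1}--\eqref{eq3}, invoke universality of $O_A$ to obtain a $*$-homomorphism $\phi: O_A \to C^*_r(\mathcal{G}_A)$, and then verify surjectivity and injectivity separately. For the generators, observe that for each $i \in \{1,\ldots,N\}$ the set
\[
V_i := \{(x,1,\sigma(x)) : x \in C_i\} \subseteq \mathcal{G}_A
\]
is one of the basic open bisections appearing in \eqref{etaletop} (taking $m=1, n=0, U=C_i, V=\Omega_A$), and $r$ and $d$ restrict to homeomorphisms $V_i \cong C_i$ and $V_i \cong \sigma(C_i)$ respectively. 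Define $s_i := \chi_{V_i} \in C_c(\mathcal{G}_A)$. A direct computation with the convolution product \eqref{theconvolutiononccg} shows $s_is_i^* = \chi_{C_i}$ and $s_i^*s_i = \chi_{\sigma(C_i)} = \sum_{j} A_{ij}\chi_{C_j}$, since $\sigma(C_i) = \bigsqcup_{j:A_{ij}=1} C_j$. Identifying $\chi_{C_j}$ with the projection $P_j$, these relations are precisely \eqref{CKshort}, and $\sum_i s_is_i^* = \chi_{\Omega_A} = 1_{C^*_r(\mathcal{G}_A)}$ since the cylinders of length one partition $\Omega_A$. Universality of $O_A$ therefore yields $\phi: O_A \to C^*_r(\mathcal{G}_A)$ with $\phi(S_i) = s_i$.

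For surjectivity, I would compute that for any admissible words $\mu,\nu$ with $\sigma^{|\mu|}$ (resp. $\sigma^{|\nu|}$) injective on $C_\mu$ (resp. $C_\nu$) and with tails matching in the sense required, the product $\phi(S_\mu S_\nu^*) = s_\mu s_\nu^*$ equals the characteristic function of the basic open bisection $(C_\mu, |\mu|, |\nu|, C_\nu)$, cf. \eqref{etaletop}. Since these sets generate the topology on $\mathcal{G}_A$, linear combinations of such characteristic functions are dense in $C_c(\mathcal{G}_A)$ in the inductive limit topology, and hence in $C^*_r(\mathcal{G}_A)$ in norm. Proposition \ref{sscomp} guarantees that these products do belong to the image. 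Combined with the fact that each basic bisection is a finite disjoint union of those arising from admissible cylinders, this proves $\phi$ is surjective.

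The main obstacle is injectivity. Both algebras carry a natural gauge action of $\T$: on $O_A$ it is defined by $z \cdot S_i := zS_i$, and on $C^*_r(\mathcal{G}_A)$ it comes from the continuous $1$-cocycle $c: \mathcal{G}_A \to \Z$, $c(x,n,y) = n$, via $(z \cdot f)(x,n,y) := z^n f(x,n,y)$. The homomorphism $\phi$ is equivariant by construction. Now the fixed point algebra of $C^*_r(\mathcal{G}_A)$ under this action is $C^*_r(c^{-1}(0))$, a groupoid AF-algebra whose structure is easily described, and $\phi$ is seen to be injective on the AF fixed point algebra $F_A \subseteq O_A$ by matching matrix units against characteristic functions of compact open subsets of $c^{-1}(0)$ (these agree on each finite stage of the AF filtration, which corresponds via Proposition \ref{afstructureonfunctions} to cylinder-level data). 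Injectivity of $\phi$ then follows from a standard conditional-expectation argument: the faithful conditional expectations $E: O_A \to F_A$ (averaging over the gauge action) and $E': C^*_r(\mathcal{G}_A) \to C^*_r(c^{-1}(0))$ satisfy $E' \circ \phi = \phi \circ E$, so $\phi(x)=0$ implies $E(x^*x)=0$ and hence $x=0$. This is the classical gauge-invariant uniqueness argument; alternatively one can invoke the Cuntz-Krieger uniqueness theorem under the mild condition that $A$ is aperiodic, but the gauge argument is cleaner and applies under the assumption already imposed that $A$ has no zero rows or columns.
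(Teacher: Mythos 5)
Your proposal is correct and follows essentially the same route as the paper: your bisections $V_i$ are exactly the sets $X_i$ of \eqref{xi}, and the paper likewise obtains the $*$-homomorphism by checking the Cuntz--Krieger relations for $\chi_{X_i}$ and then defers surjectivity and injectivity to Renault. Your completion of those deferred steps --- identifying $s_\mu s_\nu^*$ with characteristic functions of the basic bisections \eqref{etaletop} for surjectivity, and the gauge-invariant uniqueness argument via the faithful conditional expectations onto $F_A$ and $C^*_r(c_A^{-1}(0))$ for injectivity --- is the standard and correct way to finish, and needs no further assumptions beyond the standing hypothesis that $A$ has no zero rows or columns.
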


The isomorphism is implemented by mapping $S_{i}$ to the characteristic function of the set
\begin{equation}
\label{xi}X_{i}:=\left\{\left(x,1,\sigma(x)\right):x\in C_i\right\}.
\end{equation}
As the images of the $S_{i}$:s satisfy the Cuntz Krieger relations, we obtain a $*$-homomorphism $O_{A}\rightarrow C^{*}_{r}(\mathcal{G}_A)$. For more details on the proof see \cite{Ren2}.\newline

Recall the following condition, usually referred to as condition $(I)$, on the $N\times N$-matrix $A$. A finite admissible word $\nu=\nu_1\cdots \nu_R$ is a loop based in $j\in \{1,\ldots, N\}$ if $\nu_1=\nu_R=j$ and $\nu_k\neq j$ for $k=2,\ldots, R-1$. If any $j=1,\ldots, N$ satisfies that there is an admissible finite word $\mu=\mu_1\cdots \mu_M$ with $\mu_1=j$ and there are two different loops based in $\mu_M$, we say that $A$ satisfies condition $(I)$. The matrix $A$ satisfies condition $(I)$ if and only if $\Omega_A$ has no isolated points. An example when condition $(I)$ is satisfied is if $A$ is irreducible but not a permutation matrix. 

\begin{thm}[Theorem $2.14$ of \cite{CK}, Proposition $4.3$ of \cite{anananananantaranamenam}]
\label{variousstructuresonoa}
The Cuntz-Krieger algebra $O_A$ satisfies the following:
\begin{enumerate}
\item If $A$ is irreducible, $O_A$ is simple.
\item If $A$ satisfies $(I)$, then $O_A$ is purely infinite\footnote{Hence $O_A$ is also simple.}.
\end{enumerate}
\end{thm}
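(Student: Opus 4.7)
The plan is to translate both statements through the groupoid picture of Theorem \ref{oaandcga}, so that the matrix-theoretic hypotheses on $A$ become dynamical properties of the \'etale groupoid $\mathcal{G}_A$ that can be fed into the general theory of groupoid $C^*$-algebras.

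For part~(1), I would argue that irreducibility of $A$ --- for every $i,j$ there exists $n\in\N$ with $A^n(i,j)>0$ --- implies that every cylinder $C_\nu$ meets the image of every cylinder $C_\mu$ under some admissible bridging word, so that every orbit of $\mathcal{G}_A$ in $\Omega_A$ is dense and $\mathcal{G}_A$ is minimal. Topological principality of $\mathcal{G}_A$ then follows from density of the non-eventually-periodic points of $\Omega_A$, which in turn follows from the standing assumption that no row or column of $A$ is zero (so any prescribed initial word can be extended to an aperiodic sequence). Renault's simplicity criterion for minimal, topologically principal \'etale groupoid $C^*$-algebras then delivers simplicity of $C^*_r(\mathcal{G}_A)\cong O_A$.

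For part~(2), I would show that each range projection $P_j$ is infinite in $O_A$, and then upgrade this to pure infiniteness via a standard reduction using the conditional expectation $\rho:O_A\to C(\Omega_A)$ of \eqref{rho}. The infiniteness of $P_j$ is produced as follows. Condition $(I)$ supplies, for each $j$, a bridging word $\mu=\mu_1\cdots\mu_M$ starting at $j$ together with two distinct loops $\nu^{(1)},\nu^{(2)}$ based at $\mu_M$. The combinatorial constraint on loops --- the base letter occurs only at the endpoints --- forces that neither loop is a prefix or a suffix of the other, so by Proposition \ref{sscomp} one gets $S_{\nu^{(1)}}^{*}S_{\nu^{(2)}}=0$, whence the range projections of the partial isometries $S_{\nu^{(\ell)}}$ are mutually orthogonal and both Murray--von Neumann equivalent to $Q_{\mu_M}$. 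A Cuntz-Toeplitz-type manipulation, transported by the partial isometry $S_\mu$ which implements a Murray--von Neumann equivalence between $Q_{\mu_M}$ and a subprojection of $P_j$, then delivers an infinite projection inside $P_jO_AP_j$. For the reduction step one argues that any non-zero positive $a\in O_A$ admits, via $\rho$ together with the cylinder-set approximations provided by the AF-filtration of Proposition \ref{afstructureonfunctions}, elements $x,y\in O_A$ for which $x^{*}ay$ is a positive scalar multiple of some $P_i$, so that an infinite projection sits inside the hereditary subalgebra generated by $a$.

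The main obstacle is this last reduction step: controlling an arbitrary positive $a\in O_A^+$ finely enough to produce a scalar multiple of a range projection $P_i$ inside the hereditary subalgebra it generates. I would adopt the groupoid approach of \cite{anananananantaranamenam}, where the step reduces to the topological fact that the cylinder sets form compact-open bisections of arbitrarily small diameter in the metric \eqref{metric}; Urysohn-type approximations of $\rho(a)$ by cylinder-set indicators, together with a direct computation in the convolution algebra $C_c(\mathcal{G}_A)$, then pull the cut-down back to $O_A$. Modulo this reduction, both parts of the theorem follow routinely from general groupoid-theoretic results.
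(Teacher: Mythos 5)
The paper offers no proof of this theorem: it is quoted from \cite{CK} and \cite{anananananantaranamenam}, so the only comparison available is with the cited literature. Your sketch of part (2) follows the classical Cuntz--Krieger line (two distinct loops based at the same letter give partial isometries with orthogonal range projections by Proposition \ref{sscomp}, hence an infinite projection under $P_j$, and the cut-down of an arbitrary positive element is handled through the conditional expectation and cylinder-set approximations); the reduction step you defer is exactly where \cite{anananananantaranamenam} does the groupoid-theoretic work, and as an outline this is sound.

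Part (1), however, contains a genuine gap. You derive topological principality of $\mathcal{G}_A$ from density of the aperiodic points of $\Omega_A$, and you attribute that density to the standing assumption that no row or column of $A$ vanishes. That implication is false: for a permutation matrix, e.g. $N=2$ and $A=\left(\begin{smallmatrix}0&1\\1&0\end{smallmatrix}\right)$, no row or column is zero and $A$ is irreducible, yet $\Omega_A$ consists of two periodic points, $\mathcal{G}_A$ has non-trivial isotropy everywhere (so is not topologically principal), and $O_A\cong M_2(C(\T))$ is not simple. Density of aperiodic points is precisely what condition $(I)$ provides --- the paper records that $(I)$ is equivalent to $\Omega_A$ having no isolated points and that irreducible non-permutation matrices satisfy $(I)$ --- and Theorem $2.14$ of \cite{CK} indeed assumes $(I)$ in addition to irreducibility. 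Your minimality argument from irreducibility is fine, but before invoking Renault's simplicity criterion you must add condition $(I)$ (equivalently, exclude permutation matrices); as written, the step ``topological principality follows from the standing row/column assumption'' would prove a false statement.
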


The quasi-invariant measure $\mu_{A}$ induces a functional 
\begin{align}
\label{definitionkmsstatephia}
\phi_{A}:C_{c}(\mathcal{G}_{A})&\rightarrow \C, \quad f\mapsto \int_{\Omega_{A}}f|_{\Omega_A}\rd \mu_{A},
\end{align}
which extends to a state on $C^{*}_{r}(\mathcal{G})$. The GNS-representation of $O_{A}$ on $L^{2}(O_{A},\phi_{A})$ is canonically isomorphic to the convolution representation of $C^{*}_{r}(\mathcal{G}_A)$ on $L^{2}(\mathcal{G}_A,\mu_{A})$. We will refer to this as the \emph{fundamental representation}. 

\subsubsection{The algebra $O_N$}
\label{examplethealgebraon}
Also known as the Cuntz algebra, was first introduced in \cite{thealgebraon}. The algebra $O_N$ is the universal $C^*$-algebra generated by $N$ orthogonal isometries. The algebra $O_N$ is the Cuntz-Krieger algebra associated with the symmetric $N\times N$-matrix giving by $A_{ij}=1$ for all $i,j$. The geometry of $\Omega_{O_N}$ takes a very simple form; since any word is admissible, it holds that $\mathcal{V}_{O_N}=\cup_{k\in \N}\{1,\ldots, N\}^k$ and $\phi(k)=N^k$. In this special case, the KMS-state $\phi_{O_N}$ can be computed as
\[\phi_{O_N}(S_\mu S_\nu^*)=\delta_{\mu,\nu}N^{-|\mu|}.\]

\subsection{The fixed point algebra of the circle action}
\label{subsectionfxedpointalgeba}

The Cuntz-Krieger groupoid comes with a natural circle action. We describe the action in both pictures of $O_{A}$. First of all, the map
\begin{equation}
\label{cocycle}
\begin{split}
c_A:\mathcal{G}_A&\rightarrow \Z\\
(x,n,y)&\mapsto n,
\end{split}
\end{equation}
is a continuous homomorphism, or a 1-cocycle. Note that $\ln \Delta=-\delta_{A}c_A$, with $\Delta$ as in Proposition \ref{quasi-inv}. This induces a disjoint union decomposition
\[\mathcal{G}_A=\bigcup_{n\in\Z}\mathcal{G}_{n},\]
where $\mathcal{G}_{n}=c^{-1}_A(n)$. Its kernel 
\[\mathcal{H}_{A}:=\ker c_A=c^{-1}_A(0)=\{(x,0,y):\exists k, \sigma^{k}(x)=\sigma^{k}(y)\},\]
is a closed subgroupoid. We denote $F_{A}:=C^{*}_{r}(\mathcal{H}_{A})\subset C^{*}_{r}(\mathcal{G}_{A})$. We remark that, by the remark on the end of page $3$ of \cite{CK}, the algebra $F_A$ is simple if $A$ is aperiodic. 
There is a $U(1)$-action on $C^{*}_{r}(\mathcal{G}_A)$ (see \cite{Ren1}) constructed from the cocycle $c_A$ via
\[\alpha_{t}(f)(\xi):=e^{itc_A(\xi)}f(\xi).\] We refer to this action as the \emph{gauge action}. The fixed point algebra for this action is exactly $F_{A}$. It is well known that the state $\phi_A$ \eqref{definitionkmsstatephia} satisfies the KMS-condition at inverse temperature $\delta_{A}$ with respect to the gauge action (cf. Definition $3.15$ and Proposition $5.4$ of \cite{Ren1}).

A third way of describing $F_A$ comes from the generators $S_i$. Observe that, in terms of the linearly spanning elements $S_\mu S_\nu^*$ coming from Proposition \ref{sscomp},
\[\alpha_{t}(S_\mu S_\nu^*)=e^{(|\mu|-|\nu|)it}S_\mu S_\nu^*.\]
Hence $F_A$ is the $C^*$-algebra generated by $S_\mu S_\nu^*$ for $|\mu|=|\nu|$. We define $F_A^l$ to be the span of all non-zero $S_\mu S_\nu^*$ where $|\mu|=|\nu|=l+1$. As was computed in the proof of \cite[Proposition $2.3$]{CK}; for a fixed $j$, the elements $S_\mu S_\nu^*$ where $|\mu|=|\nu|=l+1$ and $\mu_{l+1}=\nu_{l+1}=j$ form a set of matrix units; whenever $l+1=|\mu|=|\nu|=|\mu'|=|\nu'|$, 
\begin{equation}
\label{productssmunu}
S_\mu S_\nu^* S_{\mu'} S_{\nu'}^*=\delta_{\nu,\mu'} S_\mu S_{\nu_{l+1}}^*S_{\nu_{l+1}} S_{\nu'}^*=\delta_{\nu,\mu'} S_\mu S_{\nu'}^*.
\end{equation}
These identities follows from Proposition \ref{sscomp}. We can conclude the following Proposition. 

\begin{prop}[Proposition $2.3$ of \cite{CK}]
The space $F_A^l$ is closed under multiplication and adjoint. In particular, 
\[F_A=\overline{\cup_{l\in \N} F_A^l}\] 
is an $AF$-algebra. 
\end{prop}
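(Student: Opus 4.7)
The plan is to verify three claims from the Cuntz-Krieger relations \eqref{eq1}--\eqref{eq3}: (a) $F_A^l$ is closed under multiplication and adjoint, (b) $F_A^l\subseteq F_A^{l+1}$, and (c) $\bigcup_{l\in\N} F_A^l$ is norm-dense in $F_A$. Items (a) and (b) are direct algebraic computations, while (c) invokes the gauge action. The key algebraic input is that each $S_i$ is a partial isometry, equivalently $S_iQ_i=S_i$; this follows from $Q_i^2=Q_i$ via
\[(S_iQ_i-S_i)^*(S_iQ_i-S_i)=Q_iS_i^*S_iQ_i-Q_iS_i^*S_i-S_i^*S_iQ_i+S_i^*S_i=Q_i^3-2Q_i^2+Q_i=0.\]

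For (a), closure under adjoint is immediate from $(S_\mu S_\nu^*)^*=S_\nu S_\mu^*$ and the symmetry of the length condition. For closure under multiplication, Proposition \ref{sscomp} yields $S_\nu^* S_{\mu'}=\delta_{\nu,\mu'}Q_{\nu_{l+1}}$ whenever $|\nu|=|\mu'|=l+1$. Combining this with the identity $S_\mu Q_{\mu_{l+1}}=S_\mu$ (inherited from $S_{\mu_{l+1}}Q_{\mu_{l+1}}=S_{\mu_{l+1}}$) and the matrix-unit convention $\mu_{l+1}=\nu_{l+1}$, I recover the formula \eqref{productssmunu}, exhibiting the product as an element of $F_A^l$. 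The matrix-unit structure follows: $F_A^l\cong\bigoplus_{j=1}^N M_{n_j(l)}(\C)$, with $n_j(l)$ the number of admissible length-$(l+1)$ words ending in $j$; in particular, each $F_A^l$ is finite-dimensional.

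For (b), I insert $1=\sum_{k=1}^N S_kS_k^*$ into the middle of $S_\mu S_\nu^*$ and simplify:
\[S_\mu S_\nu^*=\sum_{k=1}^N S_\mu S_kS_k^*S_\nu^*=\sum_{k=1}^N A_{\mu_{l+1},k}A_{\nu_{l+1},k}\,S_{\mu k}S_{\nu k}^*,\]
the Boolean factors enforcing admissibility of the extended words $\mu k$ and $\nu k$. Each surviving summand has length $l+2$ and common last letter $k$, hence lies in $F_A^{l+1}$. Finally, for (c), the gauge action integrates to a norm-contractive conditional expectation $E:O_A\to F_A$, $E(x):=\int_{U(1)}\alpha_t(x)\,dt$, which satisfies $E(S_\mu S_\nu^*)=\delta_{|\mu|,|\nu|}S_\mu S_\nu^*$. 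Since $\mathrm{span}\{S_\mu S_\nu^*\}$ is norm-dense in $O_A$ by Proposition \ref{sscomp}, applying $E$ produces a norm-dense span in $F_A$ consisting of elements with $|\mu|=|\nu|$; one application of the extension step (b) reduces each to matrix-unit form, giving density of $\bigcup_l F_A^l$. Since each $F_A^l$ is finite-dimensional, $F_A$ is AF. No substantive obstacle arises -- the only care needed is bookkeeping of the admissibility Boolean factors, which the Cuntz-Krieger relations provide automatically.
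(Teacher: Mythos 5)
Your argument is correct and follows the same route as the paper: closure under products comes from Proposition \ref{sscomp} via the matrix-unit identity \eqref{productssmunu}, and the AF structure comes from the finite-dimensionality of each $F_A^l$. The paper leaves the nesting $F_A^l\subseteq F_A^{l+1}$ (your insertion of $1=\sum_k S_kS_k^*$, which is exactly the Bratteli-diagram embedding) and the density argument via the conditional expectation $E$ implicit, so your write-up simply supplies details the paper takes for granted.
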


The stabilization $F_A\minotimes \Ko$ admits yet another description in terms of groupoids. It follows from \cite[Lemma $3.4$]{Mes1} that  $\mathcal{G}_A/\mathcal{H}_A\cong\Omega_A\times \Z$.
The moment mapping $\mathcal{G}_A/\mathcal{H}_A\to \Omega_A$ is the projection onto the first coordinate under the above homeomorphism. Hence we can identify $\mathcal{G}_A\ltimes \mathcal{G}_A/\mathcal{H}_A= \mathcal{G}_A\times \Z$. We will denote elements of $\mathcal{G}_A\ltimes \mathcal{G}_A/\mathcal{H}_A$ by $(x,k,y,l)$. The range and domain mappings 
$$r,d:\mathcal{G}_A\ltimes \mathcal{G}_A/\mathcal{H}_A=\mathcal{G}_A\times \Z\to \mathcal{G}_A/\mathcal{H}_A=\Omega_A\times \Z$$
 are given by 
\[r(x,k,y,l)=(x,l)\quad\mbox{and}\quad d(x,k,y,l)=(x,k+l)\]
The groupoid multiplication in $\mathcal{G}_A\ltimes \mathcal{G}_A/\mathcal{H}_A$ is given by 
\[(x,k,y,l)(y,m,z,k+l)=(x,k+m,z,l)\]
The next Proposition follows by a standard argument, which is left to the reader.

\begin{prop}
\label{betactionfa}
The mapping 
$$\beta:\mathcal{G}_A\ltimes \mathcal{G}_A/\mathcal{H}_A\to \mathcal{G}_A\ltimes \mathcal{G}_A/\mathcal{H}_A, \quad (x,k,y,l)\mapsto (x,k,y,l-1),$$
is a groupoid automorphism. There is an isomorphism $C^*_r(\mathcal{G}_A\ltimes \mathcal{G}_A/\mathcal{H}_A)\cong O_A\rtimes U(1)$ under which $\beta$ corresponds to the dual $\Z$-action. In particular,
\[C^*_r(\mathcal{G}_A\ltimes \mathcal{G}_A/\mathcal{H}_A)\rtimes_\beta \Z\sim_M O_A.\]
\end{prop}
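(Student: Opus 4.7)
The proof naturally breaks into three parts, and the author's remark that it is ``standard'' suggests all of them can be attacked directly from general groupoid theory, particularly Renault's skew-product construction and Takai--Takesaki duality.

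The plan is first to check that $\beta$ is a groupoid automorphism by a direct verification on the level of the formulas recalled above. Translation in the $\Z$-coordinate is continuous and has the obvious inverse $(x,k,y,l)\mapsto(x,k,y,l+1)$; one reads off from the range, source and composition formulas that $r\circ\beta=\beta\circ r$, $d\circ\beta=\beta\circ d$, and $\beta$ is multiplicative, since $\beta$ acts trivially on the $\mathcal{G}_A$-coordinates $(x,k,y)$ and shifts all $\Z$-entries uniformly by $-1$. This part is bookkeeping.

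The main content is the isomorphism $C^*_r(\mathcal{G}_A\ltimes\mathcal{G}_A/\mathcal{H}_A)\cong O_A\rtimes U(1)$. I would appeal to Renault's general principle that, for a locally compact \'etale groupoid $\mathcal{G}$ equipped with a continuous cocycle $c:\mathcal{G}\to\Z$, the $U(1)$-action $\alpha_t(f)(\xi)=e^{itc(\xi)}f(\xi)$ on $C^*_r(\mathcal{G})$ satisfies
\[
C^*_r(\mathcal{G})\rtimes_\alpha U(1)\;\cong\; C^*_r\bigl(\mathcal{G}(c)\bigr),
\]
where $\mathcal{G}(c)$ is the skew-product groupoid with unit space $\mathcal{G}^{(0)}\times\Z$. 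Applied to $\mathcal{G}=\mathcal{G}_A$ and $c=c_A$ this is exactly the gauge action discussed in Subsection \ref{subsectionfxedpointalgeba}. I would then identify the skew product $\mathcal{G}_A(c_A)$ with $\mathcal{G}_A\ltimes\mathcal{G}_A/\mathcal{H}_A$ using the homeomorphism $\mathcal{G}_A/\mathcal{H}_A\cong\Omega_A\times\Z$ from \cite[Lemma 3.4]{Mes1}: orbits of the right $\mathcal{H}_A$-action on $\mathcal{G}_A$ are parametrised by $(r(\xi),c_A(\xi))$, and under this identification the action groupoid multiplication becomes the standard skew-product multiplication. The fact that $\beta$ corresponds to the dual $\Z$-action $\hat\alpha$ is then essentially by construction, since under the isomorphism $C^*_r(\mathcal{G})\rtimes_\alpha U(1)\cong C^*_r(\mathcal{G}(c))$, the dual $\Z$-action translates the $\Z$-coordinate of $\mathcal{G}^{(0)}\times\Z$, which is precisely $\beta$.

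Given the isomorphism above, the last assertion is Takai--Takesaki duality applied to the $U(1)$-action $\alpha$ on $O_A$:
\[
(O_A\rtimes_\alpha U(1))\rtimes_{\hat\alpha}\Z \;\cong\; O_A\otimes\K(L^2(U(1))),
\]
and the right hand side is Morita equivalent to $O_A$. Combining with the identification $O_A\rtimes U(1)\cong C^*_r(\mathcal{G}_A\ltimes\mathcal{G}_A/\mathcal{H}_A)$ intertwining $\hat\alpha$ with $\beta$, one concludes $C^*_r(\mathcal{G}_A\ltimes\mathcal{G}_A/\mathcal{H}_A)\rtimes_\beta\Z\sim_M O_A$. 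I anticipate the main obstacle being purely notational: ensuring the sign and index conventions used when identifying $\mathcal{G}_A\ltimes\mathcal{G}_A/\mathcal{H}_A$ with the skew-product groupoid $\mathcal{G}_A(c_A)$ match those used by the $U(1)$-action and its dual, so that $\beta$ is intertwined with the correct $\hat\alpha$ rather than its inverse. Once these conventions are fixed, every individual step is either a direct groupoid computation or an invocation of a standard theorem.
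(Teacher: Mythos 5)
The paper gives no proof of this proposition (it is explicitly ``left to the reader''), so there is nothing to compare against line by line; your outline is a correct instantiation of the standard argument the authors evidently have in mind. The direct verification that $\beta$ commutes with range, source and composition is immediate from the displayed formulas; the identification of $\mathcal{G}_A\ltimes\mathcal{G}_A/\mathcal{H}_A$ with the skew product $\mathcal{G}_A(c_A)$ under the homeomorphism $\mathcal{G}_A/\mathcal{H}_A\cong\Omega_A\times\Z$ is a one-line check against the multiplication rule $(x,k,y,l)(y,m,z,k+l)=(x,k+m,z,l)$; Renault's theorem $C^*_r(\mathcal{G})\rtimes_\alpha U(1)\cong C^*_r(\mathcal{G}(c))$ then gives the middle claim, and Takai--Takesaki duality gives the Morita equivalence, exactly as you say. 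The only caveat you correctly flag yourself (the sign convention identifying $\beta$ with $\hat\alpha$ rather than $\hat\alpha^{-1}$) is harmless for the final Morita equivalence, and the paper's own subsequent use of Theorem \ref{cstarisoofsub} and of Takesaki--Takai duality in the Pimsner--Voiculescu section confirms this is the intended route.
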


\begin{remark}
A consequence of Theorem \ref{cstarisoofsub} and Proposition \ref{betactionfa} is the well known fact that there is a $\Z$-action on $F_A\minotimes \mathbb{K}$ such that $O_A\minotimes \Ko\cong (F_A\minotimes \Ko)\rtimes \Z$.
\end{remark}

The restriction map $\rho: C_{r}^{*}(\mathcal{G}_{A})\rightarrow C^{*}_{r}(\mathcal{H}_{c})$ is a conditional expectation. The associated Hilbert $C^*$-module is denoted $\mathpzc{E}^{c}$. Under the isomorphism $O_A\cong C^*_r(\mathcal{G}_A)$, $\rho$ corresponds to the map $E: O_{A}\rightarrow F_{A}$ defined by
\[E(a):=\frac{1}{2\pi}\int_{U(1)}\alpha_{t}(a)\rd t.\]
The mapping $E$ defines an $F_{A}$-valued inner product on $O_{A}$. The completion $\mathpzc{E}^{\alpha}$ of $O_{A}$ in the norm associated to this inner product is a $\Z$-graded Hilbert $C^*$-module over $F_{A}$.

\begin{prop}
The isomorphism $C^{*}_{r}(\mathcal{G}_A)\cong O_A$ is $U(1)$-equivariant and induces a $\Z$-graded isomorphism $\mathpzc{E}^{c}\xrightarrow{\sim} \mathpzc{E}^{\alpha}$. 
\end{prop}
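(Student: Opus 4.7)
The plan is to reduce both assertions to a check on the generators $S_i$ and then to a check on the spectrally spanning elements $S_\mu S_\nu^*$, using the universality of $O_A$ on one side and the density of $C_c(\mathcal{G}_A)$ on the other.

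First I would verify $U(1)$-equivariance on generators. Under the isomorphism of Theorem \ref{oaandcga}, $S_i$ corresponds to $\chi_{X_i}$ where $X_i = \{(x,1,\sigma(x)) : x \in C_i\}$, see Equation \eqref{xi}. On $X_i$ the cocycle $c_A$ is identically $1$, so the $U(1)$-action on $C^*_r(\mathcal{G}_A)$ defined by $c_A$ sends $\chi_{X_i} \mapsto e^{it}\chi_{X_i}$. This matches $\alpha_t(S_i) = e^{it}S_i$. Since both actions are by $*$-automorphisms and $O_A$ is generated by the $S_i$'s, equivariance extends to all of $O_A$.

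Second, I would identify the two conditional expectations under the isomorphism. For $f \in C_c(\mathcal{G}_A)$, since $c_A$ takes integer values we have
\[
E(f) = \frac{1}{2\pi}\int_0^{2\pi} \alpha_t(f) \, dt = \frac{1}{2\pi}\int_0^{2\pi} e^{itc_A(\cdot)} f \, dt = f\cdot \chi_{c_A^{-1}(0)} = f|_{\mathcal{H}_A} = \rho(f),
\]
using $\frac{1}{2\pi}\int_0^{2\pi} e^{itn}dt = \delta_{n,0}$ for $n\in\Z$. Thus the $F_A$-valued inner products $\langle f,g\rangle = \rho(f^* * g)$ and $\langle a,b\rangle = E(a^*b)$ coincide on the dense $*$-subalgebras identified by the isomorphism. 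Completing in the common norm therefore yields a canonical unitary isomorphism $\mathpzc{E}^c \xrightarrow{\sim} \mathpzc{E}^\alpha$ of Hilbert $F_A$-modules intertwining the left $O_A = C^*_r(\mathcal{G}_A)$-actions.

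Third, for the $\Z$-grading: by Proposition \ref{sscomp} the elements $S_\mu S_\nu^*$ linearly span a dense subalgebra, and $\alpha_t(S_\mu S_\nu^*) = e^{(|\mu|-|\nu|)it} S_\mu S_\nu^*$, so the degree-$n$ spectral subspace of $\mathpzc{E}^\alpha$ is the closure of the span of those $S_\mu S_\nu^*$ with $|\mu|-|\nu|=n$. On the groupoid side, since $c_A$ is a cocycle taking value $1$ on each $X_i$ and $-1$ on each $X_i^{-1}$, the image of $S_\mu S_\nu^*$ in $C_c(\mathcal{G}_A)$ is supported in $\mathcal{G}_{|\mu|-|\nu|} = c_A^{-1}(|\mu|-|\nu|)$. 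Hence the isomorphism of the previous paragraph intertwines the two gradings.

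The proof presents no real obstacle; the only minor point to be careful about is matching the sign conventions between the cocycle $c_A$, the gauge action $\alpha_t$, and the grading convention on $\mathpzc{E}^\alpha$, but this is fixed once and for all by the chosen sign in Equation \eqref{xi}.
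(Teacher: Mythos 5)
Your argument is correct, and it is essentially the routine verification the authors intend: the paper states this proposition without proof, having already asserted in the preceding paragraph that $\rho$ corresponds to $E$ under the isomorphism $O_A\cong C^*_r(\mathcal{G}_A)$ — which is exactly the computation you carry out via $\frac{1}{2\pi}\int_0^{2\pi}e^{itn}\,dt=\delta_{n,0}$. Your checks on the generators $\chi_{X_i}$ and on the spanning elements $S_\mu S_\nu^*$ (supported in $c_A^{-1}(|\mu|-|\nu|)$ by the cocycle property) correctly supply the equivariance and the matching of the gradings.
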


\subsubsection{The quantum group $SU_q(2)$}
\label{suqtwoexample}

Consider the matrix $A=\begin{pmatrix} 1& 1\\0&1\end{pmatrix}$. The partial isometries $S_1$ and $S_2$ generating $O_A$ satisfies the relations
$$S_1^*S_2=0,\quad S_2S_2^*=S_2^*S_2,\quad S_1S_1^*+S_2S_2^*=1\quad \mbox{and}\quad S_2^*S_2=1.$$
This condition guarantees that $O_A\cong C(SU_q(2))$ for any $q\in [0,1)$, see more in \cite{hongszym}. The compact quantum group $SU_q(2)$ is well studied and we merely describe it here as an interesting example. We do not derive anything new. Any admissible sequence $\mu\in \mathcal{V}_A$ has the form
\[\mu=11\cdots 122\cdots 2,\]
that is, if the letter $2$ appears in a word, all subsequent letters will be $2$:s. We will identify a point $(k,l)\in \N^2$ with the finite word consisting of $k$ occurrences of $1$ followed by $l$ occurrences of $2$. It holds that 
\begin{equation}
\label{phicomputation}
\phi(l)=\#\{\mu\in \mathcal{V}_A:|\mu|=l\}=l+1.
\end{equation}

\begin{prop}
\label{suqtwosfixedpointalg}
There is an isomorphism $C(SU_q(2))^{U(1)}\cong \tilde{\Ko}$ -- the unitalization of the compact operators on a separable infinite dimensional Hilbert space.
\end{prop}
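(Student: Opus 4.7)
The plan is to use the groupoid identification $F_A \cong C^*_r(\mathcal{H}_A)$ established in Section \ref{subsectionfxedpointalgeba}, together with a detailed analysis of the shift dynamics on $\Omega_A$ for $A = \begin{pmatrix} 1 & 1 \\ 0 & 1 \end{pmatrix}$. Because $A(2,1)=0$, no admissible sequence can contain a $1$ after a $2$, so the only admissible infinite words are $1^\infty$ and $1^j 2^\infty$ for $j \in \N$. The cylinder $C_{1^j 2}$ collapses to the singleton $\{1^j 2^\infty\}$, so each $1^j 2^\infty$ is an isolated point of $\Omega_A$, while $1^\infty \in \bigcap_k C_{1^k}$ is the unique accumulation point. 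Hence $\Omega_A$ is homeomorphic to the one-point compactification of $\N$.

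The tail-equivalence relation defining $\mathcal{H}_A$ has precisely two orbits: the singleton $\{1^\infty\}$, and the set $\{1^j 2^\infty\}_{j \geq 0}$ whose members all share the common tail $2^\infty$. Writing $U := \Omega_A \setminus \{1^\infty\}$ for the open invariant subset carrying the second orbit and $F := \{1^\infty\}$ for its complement, the restriction $\mathcal{H}_A|_U$ is the full pair groupoid on the countable discrete space $U$, while $\mathcal{H}_A|_F$ is the trivial group over a point. This yields a short exact sequence
\[
0 \to C^*_r(\mathcal{H}_A|_U) \to F_A \to C^*_r(\mathcal{H}_A|_F) \to 0,
\]
which takes the form $0 \to \Ko \to F_A \to \C \to 0$. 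The main technical step is verifying that each off-diagonal point $(1^j 2^\infty, 0, 1^k 2^\infty)$ with $j \neq k$ is isolated in $\mathcal{H}_A$: this is witnessed by the bisection $(C_{1^j 2}, m, m, C_{1^k 2})$ for any $m \geq \max(j,k)+1$, which is a singleton since both cylinders collapse to single points and $\sigma^m$ sends both points to $2^\infty$. Meanwhile, neighborhoods of $(1^\infty, 0, 1^\infty)$ are of the form $(C_{1^M}, m, m, C_{1^M})$ with $m \leq M$, and an analogous computation shows these reduce to the diagonal copy of $C_{1^M}$, confirming the topological decomposition.

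To identify the extension, note that $F_A$ is unital since $\Omega_A$ is compact, so the constant function $1 \in C(\Omega_A) \subset F_A$ serves as a unit. It projects to $1 \in \C$ under the quotient map, yielding a unital $*$-homomorphism $\C \hookrightarrow F_A$ that splits the extension at the level of vector spaces: $F_A = \Ko \oplus \C\cdot 1$. Since $\Ko$ is an ideal, the multiplicative structure on this direct sum is uniquely determined by the rule $1 \cdot k = k$ for $k \in \Ko$, and coincides with that of the unitization $\tilde{\Ko}$. Therefore $F_A \cong \tilde{\Ko}$.
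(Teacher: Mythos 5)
Your proof is correct, but it takes a genuinely different route from the paper's. The paper stays entirely inside the generator picture: using the identification $\mathcal{V}_A\cong\N^2$ it computes the finite stages of the AF filtration as $F_A^k\cong\C\oplus M_k(\C)$, observes that the connecting maps are unital and restrict to corner embeddings $M_k(\C)\hookrightarrow M_{k+1}(\C)$, and reads off $\varinjlim\left(\C\oplus M_k(\C)\right)\cong\tilde{\Ko}$. You instead work with the tail-equivalence groupoid $\mathcal{H}_A$: your identification of $\Omega_A$ with the one-point compactification of $\N$, the two-orbit decomposition, and the verification that the arrows over the isolated points are themselves isolated while the basic neighbourhoods of $(1^\infty,0,1^\infty)$ collapse onto the diagonal over $C_{1^M}$, are all accurate, and they exhibit $\Ko$ as the ideal attached to the open orbit of isolated points and $\C$ as the character at the accumulation point $1^\infty$. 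The one step you should say a word about is exactness in the middle of $0\to C^*_r(\mathcal{H}_A|_U)\to F_A\to\C\to 0$ at the level of \emph{reduced} groupoid $C^*$-algebras, which can fail for a general open invariant subset of the unit space; here it is harmless, either because $\mathcal{H}_A$ is an AF (hence amenable) groupoid, or more directly because every locally constant $f\in C_c(\mathcal{H}_A)$ is congruent to $f(1^\infty,0,1^\infty)\cdot 1_{F_A}$ modulo $C_c(\mathcal{H}_A|_U)$, so the quotient of $F_A$ by the closure of $C_c(\mathcal{H}_A|_U)$ is one-dimensional and still surjects onto $\C$. What your approach buys is a geometric explanation of where the copy of $\Ko$ and the residual character come from; what the paper's buys is a short, purely algebraic computation with matrix units that uses nothing about reduced groupoid $C^*$-algebras.
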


\begin{proof}
We use the notation $A=\begin{pmatrix} 1& 1\\0&1\end{pmatrix}$ and $F_A=C(SU_q(2))^{U(1)}$. In light of the identification $\mathcal{V}_A=\N^2$, it holds that 
\[F_A^k\cong \C \oplus M_k(\C).\]
The first summand is spanned by $S_{(k+1,0)}S_{(k+1,0)} ^*$ and the second summand spanned by $S_\mu S_\nu^*$ where $\mu$ and $\nu$ are of length $k+1$ and not ending in $1$. Since 
\begin{align*}
S_{(k,0)}S_{(k,0)} ^*&=S_{(k+1,0)}S_{(k+1,0)} ^*+S_{(k,1)}S_{(k,1)} ^*\\
&\mbox{and}\quad S_{(k,l)}S_{(k',l')} ^*=S_{(k,l+1)}S_{(k',l'+1)} ^*, \quad \mbox{for}\quad l,l'>0,
\end{align*}
the embedding of the second factors $M_l(\C)\to M_{l+1}(\C)$ is a corner embedding. Hence the mappings $\C \oplus M_l(\C)\hookrightarrow \C \oplus M_{l+1}(\C)$ are unital. It follows that $\varinjlim \C \oplus M_l(\C)\cong \tilde{\Ko}$.
\end{proof}

\vspace{3mm}

\Large 
\section{Finite summability of Fredholm modules}
\label{sectionfinisumandputnamkam}
\normalsize

In this section we investigate the finite summability of odd $K$-homology classes on Cuntz-Krieger algebras. The central idea when treating the $K$-homology of Cuntz-Krieger algebras is the usage of Kaminker-Putnam's Poincar\'e duality class for Cuntz-Krieger algebras. After recalling its construction we will prove the following Theorem:

\setcounter{thm}{0}

\begin{thm}
\label{finsumthm}
Any class in $K^1(O_A)$ admits a $p$-summable representative for any $p> 0$.
\end{thm}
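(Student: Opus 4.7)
The strategy is to reduce the statement to the existence of one particularly well-behaved representative of the Kaminker-Putnam fundamental class, using the Poincar\'{e} duality $K^1(O_A) \cong K_0(O_{A^T})$. This duality is realized by a class $\Delta \in KK^1(O_A \minotimes O_{A^T}, \C)$, so any $x \in K^1(O_A)$ has the form $x = \Delta \otimes_{O_{A^T}} [e]$ for some projection $e$ in a matrix algebra over $O_{A^T}$. It therefore suffices to construct a bounded representative of $\Delta$ whose compression by any such $e$ yields a $p$-summable Fredholm module on $O_A$ for every $p>0$.

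For the concrete construction, I would realize $\Delta$ as an abstract Toeplitz $(O_A, O_{A^T})$-Kasparov bimodule. The Pimsner-Voiculescu/Toeplitz extension attached to the gauge action of $U(1)$ on $O_A$, combined with the Morita equivalence $O_A \minotimes \Ko \cong (F_A \minotimes \Ko) \rtimes \Z$ from Proposition \ref{betactionfa} (and a parallel picture for $O_{A^T}$), naturally produces an abstract Toeplitz cycle on the $\Z$-graded Hilbert $C^*$-module $\mathpzc{E}^\alpha$ whose positive-spectral projection $P$ defines $F = 2P - 1$. Matching this construction with the symmetric realization of $\Delta$ over the double groupoid $\mathcal{G}_A \times \mathcal{G}_{A^T}$ then identifies the $K$-homology class. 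Taking the Kasparov product with a projection $e$ produces a concrete bounded Fredholm module on $O_A$ representing $x$.

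The $p$-summability of the resulting compressed Fredholm modules, for every $p>0$, is the central analytic content and is driven by the abstract Toeplitz estimates of \cite{extensionsgoffeng}. On the dense $*$-subalgebra generated by the partial isometries $S_1, \ldots, S_N$, every element decomposes finitely into terms $S_\mu S_\nu^*$ by Proposition \ref{sscomp}, and each commutator $[P, \pi(S_\mu S_\nu^*)]$ intersects only finitely many levels of the AF-filtration $F_A = \overline{\bigcup_l F_A^l}$ of Subsection \ref{subsectionfxedpointalgeba}. The singular values of these commutators then decay faster than any polynomial, using the exponential weighting supplied by the Patterson-Sullivan measure $\mu_A$ of Theorem \ref{Poincare} and the associated KMS state $\phi_A$ at inverse temperature $\delta_A$. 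Since compression by $e$ respects Schatten-ideal membership, the resulting Fredholm module lies in $\mathcal{L}^p$ for every $p>0$ on the same dense subalgebra.

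The hard part, in my view, is the precise identification of the abstract Toeplitz $(O_A,O_{A^T})$-bimodule with Kaminker-Putnam's fundamental class $\Delta$. On one hand, $\Delta$ is most naturally described via the symmetric double-groupoid picture of $O_A \minotimes O_{A^T}$; on the other hand, the Toeplitz construction is attached to the $\Z$-grading on $O_A$ alone, via the conditional expectation $E:O_A\to F_A$. Reconciling the two pictures requires careful bookkeeping between Renault's isomorphism (Theorem \ref{oaandcga}), the symmetric structure of the Kaminker-Putnam class, and the generator-level calculations of Proposition \ref{sscomp}. Once this identification is made, combining it with the summability estimates from \cite{extensionsgoffeng} and the surjectivity of Poincar\'{e} duality yields $p$-summability for every $p>0$ on every class in $K^1(O_A)$.
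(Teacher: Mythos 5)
Your high-level skeleton --- Poincar\'e duality, reduction to the generators $[T_jT_j^*]$ of $K_0(O_{A^T})$, and summability of abstract Toeplitz-type cycles via the splitting criterion of \cite{extensionsgoffeng} --- is the same as the paper's. The gap is in the analytic core. You propose to build one good representative of $\Delta$ and deduce $p$-summability of its compressions from singular-value decay ``driven by the exponential weighting of the Patterson-Sullivan measure.'' This mechanism does not work, and the paper's own results show why: the natural representative of $\Delta$ on $L^2(O_A,\phi_A)\otimes L^2(O_{A^T},\phi_{A^T})$ (Theorem \ref{thetaandfinitekp}) is only $\theta$-summable in general, because the defect between the compressed generators and the model operators $L_i^A,R_i^A$ is the diagonal operator $\Gamma\delta_\lambda=\bigl(\sqrt{(|\lambda|+2)/(|\lambda|+1)}-1\bigr)\delta_\lambda\sim (2|\lambda|)^{-1}\delta_\lambda$, whose eigenvalue of size $\sim 1/k$ occurs with multiplicity $\phi(k)$; since $\phi(k)$ grows exponentially whenever $A$ satisfies condition $(I)$, this operator lies in $\textnormal{Li}$ but in no Schatten class. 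The exponential growth of the word count works \emph{against} polynomial decay, not for it, and compressing such a representative by $e=T_jT_j^*$ cannot yield $\mathcal{L}^p$ for all $p>0$.

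What actually delivers $p$-summability for every $p>0$ is that for each fixed generator $e=T_jT_j^*$ one chooses a \emph{different}, exactly intertwined partial isometry: $W_j:\ell^2(\mathcal{V}_A)\to L^2(O_A,\phi_A)$, $\delta_\mu\mapsto c_\mu S_\mu$ on words ending in $j$, satisfies $W_j^*\pi_A(S_i)W_j=L_i^AR_j^A(R_j^A)^*$ with no error term (Proposition \ref{wprop}), and the operators $L_i^AR_j^A(R_j^A)^*$ satisfy the Cuntz--Krieger relations modulo \emph{finite rank} operators --- the only defect is the rank-one projection $P_{\circ_A}$, cf.\ \eqref{lirirelations}. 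It is this finite-rank statement that Theorem \ref{splittingsandsum} converts into $\mathcal{L}^p$-membership of $[W_jW_j^*,\pi_A(a)]$ for all $p>0$ on $\C^*[S_1,\ldots,S_N]$. Relatedly, your claim that $[P,\pi(S_\mu S_\nu^*)]$ is small because it ``intersects only finitely many levels of the AF-filtration'' conflates two things: in the gauge cycle the commutator is indeed supported in finitely many spectral subspaces $\mathpzc{E}^{c}_{n}$, but each of these is a finitely generated projective $F_A$-module, hence infinite-dimensional after localization, so one only gets compactness over $F_A$ and no Schatten estimate on a Hilbert space. Moreover, the route through the fixed point algebra cannot exhaust $K^1(O_A)$ --- for $O_N$ the Kasparov product with the gauge cycle vanishes on $K$-homology (Remark \ref{ktheorycomputationsforthealgebraon}) --- which is why the base of the Toeplitz cycle must be $O_{A^T}$ acting by the $R_i^A$ on $\ell^2(\mathcal{V}_A)$, not $F_A$.
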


To be precise, we prove that any class in $K^1(O_A)$ can be represented by a $K$-cycle that is finite rank summable on the $*$-algebra generated by the generators of $O_A$. We return to the proof of this theorem in the end of Subsection \ref{subsectionfinsuminkon}. In the proof, we need to make use of $KK$-theory. The reader unfamiliar with $KK$-theory is referred to the textbook \cite{jeto} or Kasparov's original papers \cite{Kas1,Kas2}. We use the notation $\minotimes$ for the minimal tensor product of $C^*$-algebras.

\subsection{Kaminker-Putnam's Poincar\'e duality class}
\label{recokpsubsec}

Whenever $\mu\in \cup_{k\in \N}\{1,\ldots, N\}^k$, we let $\delta_\mu\in \ell^2(\mathcal{V}_A)$ denote the delta function in $\mu$ if $\mu\in \mathcal{V}_A$ and $\delta_\mu=0$ if $\mu\notin\mathcal{V}_A$. We obtain an ON-basis $\{\delta_\mu|\,\mu\in \mathcal{V}_A\}$ for $\ell^2(\mathcal{V}_A)$. We use the notation $e_1,\ldots, e_N$ for the standard ON-basis of $\C^N$. If $\mu=\mu_1\cdots\mu_k\in \{1,\ldots, N\}^k$, we use the notation $e_\mu:= e_{\mu_1}\otimes \cdots \otimes e_{\mu_k}\in (\C^{N})^{\otimes k}$. Let $\Fg$ denote the Hilbert space completion of $\oplus_{k=0}^\infty (\C^{N})^{\otimes k}$, with $(\C^N)^{\otimes 0}=\C$, in the scalar product 
$$\langle e_\mu,e_\nu \rangle_\Fg=\delta_{\mu,\nu}.$$ 
There is a natural isometric embedding $\ell^2(\mathcal{V}_A)\to \Fg$ whose range is the closed linear span of the set $\{e_\mu|\,\mu\in \mathcal{V}_A\}$. We often identify $\ell^2(\mathcal{V}_A)$ with its image under this embedding; that is, we identify $e_\mu$ with $\delta_\mu$ if $\mu\in \mathcal{V}_A$. We also let $P_A:\Fg\to \ell^2(\mathcal{V}_A)$ denote the orthogonal projection; in particular $P_Ae_\mu=\delta_\mu$ for any finite word $\mu$. Define the bounded operators 
$$L_i^A:\ell^2(\mathcal{V}_A)\to \ell^2(\mathcal{V}_A), \quad \delta_\mu=e_\mu\mapsto P_A(e_{i\mu})=\delta_{i\mu}.$$

There is a bijection of sets $\mathcal{V}_A\to \mathcal{V}_{A^T}$ given by 
$$\mu=\mu_1\mu_2\cdots \mu_{k-1} \mu_k\;\mapsto\; \bar{\mu}:=\mu_k\mu_{k-1}\cdots \mu_2\mu_1,$$
i.e. the word $\mu$ ordered in the opposite way. We define the unitary isomorphism 
$$J_\mathcal{V}:\ell^2(\mathcal{V}_A)\to \ell^2(\mathcal{V}_{A^T}), \quad \delta_\mu\mapsto \delta_{\bar{\mu}}.$$
Consider the operators $R_i^A:=J_\mathcal{V}^*L_i^{A^T}J_\mathcal{V}$, which act as $R^A_i\delta_\mu=\delta_{\mu i}$.

We let $\{S_i|i=1,\ldots, N\}$ and $\{T_i|i=1,\ldots,N\}$ denote the generators of $O_A$ and respectively $O_{A^T}$. We define the $*$-homomorphisms 
\begin{align*}
\beta_A&:=O_{A}\to \mathcal{C}(\ell^2(\mathcal{V}_A)), \quad S_i\mapsto L_i^A\mod \Ko(\ell^2(\mathcal{V}_A))\qquad \mbox{and}\\
\beta_A^T&:=Ad(q(J_\mathcal{V}))(\beta_{A^T}):O_{A^T}\to \mathcal{C}(\ell^2(\mathcal{V}_A)), \quad T_i\mapsto R_i^A\mod \Ko(\ell^2(\mathcal{V}_A)).
\end{align*}
Here $q:\Bo(\ell^2(\mathcal{V}_A),\ell^2(\mathcal{V}_{A^T}))\to \Bo(\ell^2(\mathcal{V}_A),\ell^2(\mathcal{V}_{A^T}))/\Ko(\ell^2(\mathcal{V}_A),\ell^2(\mathcal{V}_{A^T}))$ denotes the quotient mapping. The fact that $\beta_A$ is a $*$-homomorphism for any $A$ is shown in \cite{kaminkerputnam}; it also follows from Lemma \ref{sionell2} and Proposition \ref{lisandsis} below. A short computation shows that 
\begin{equation}
\label{lirirelations}
[L_i^A,R_j^A]=0\quad \mbox{and}\quad [(L_i^A)^*,R_j^A]=\delta_{i,j} P_{\circ_A},
\end{equation}
where $P_{\circ_A}$ denotes the orthogonal projection onto $\C\delta_{\circ_A}$. See more in \cite[Proposition $4.2$]{kaminkerputnam}. It follows that the algebra $\beta_A(O_A)$ commutes with $\beta_A^T(O_{A^T})$ in $\mathcal{C}(\ell^2(\mathcal{V}_A))$. Since $O_A$ and $O_{A^T}$ are nuclear we obtain a $*$-homomorphism
\[\beta_{KP}:=\beta_A\minotimes \beta^T_A:O_A\minotimes O_{A^T}\to \mathcal{C}(\ell^2(\mathcal{V}_A)).\]

By standard constructions, see \cite{jeto}, the $*$-homomorphism $\beta_{KP}$ induces a class 
$$[\beta_{KP}]\in \Ext(O_A\minotimes O_{A^T},\Ko(\ell^2(\mathcal{V}_A)))$$ 
represented by the extension
\begin{equation}
\label{sesoaoat}
0\to \Ko(\ell^2(\mathcal{V}_A))\to E_{KP}\to O_A\minotimes O_{A^T}\to 0,
\end{equation}
where 
\[E_{KP}:=\{a\oplus T\in O_A\otimes O_{A^T}\oplus \Bo(\ell^2(\mathcal{V}_A)): \;\beta_{KP}(a)=T\mod \Ko(\ell^2(\mathcal{V}_A))\in \im \beta_{KP}\}.\]
If $\beta_{KP}$ is injective, for instance if $O_A\otimes O_{A^T}$ is simple, $E_{KP}$ is the $C^*$-algebra generated by $L_i^A$ and $R_i^A$ and the exactness of \eqref{sesoaoat} was in this case verified in the paragraph proceeding \cite[Definition $4.3$]{kaminkerputnam}. The $C^*$-algebras $O_A$ and $O_{A^T}$ are nuclear, so any element in the semi group $\Ext(O_A\minotimes O_{A^T},\Ko(\ell^2(\mathcal{V}_A)))$ is invertible, and
\[\Ext(O_A\minotimes O_{A^T},\Ko(\ell^2(\mathcal{V}_A)))\cong K^1(O_A\minotimes O_{A^T}).\]
This isomorphism can be found in \cite[Chapter $3.3$]{jeto}. The construction of this isomorphism relies on the Choi-Effros Theorem and on the Stinespring Theorem. The two theorems combined guarantee the existence of a completely positive splitting of the short exact sequence \eqref{sesoaoat} that has the following form. There is a Hilbert space $\He$, a representation $\pi:O_A\minotimes O_{A^T}\to \Bo(\He)$ and an isometry $W:\ell^2(\mathcal{V}_A)\to \He$ such that 
\begin{equation}
\label{betaeqwq}
\beta_{KP}(a)=q(W^*\pi(a)W)\quad\mbox{for any}\quad a\in O_A\minotimes O_{A^T}.
\end{equation}
It follows from the fact that $\beta_{KP}$ is a $*$-homomorphism that $[WW^*,\pi(a)]\in \Ko(\He)$ for all $a\in O_A\minotimes O_{A^T}$. The identity \eqref{betaeqwq} guarantees that the image of $[\beta_{KP}]$ under $\Ext (O_A\minotimes O_{A^T},\Ko(\ell^2(\mathcal{V}_A)))\to K^1(O_A\minotimes O_{A^T})$ is represented by the odd analytic $K$-cycle $(\pi,\He,2WW^*-1)$. The data $\pi$, $\He$ and $W$ is difficult to construct in general. Further, the problem of finite summability on a dense subalgebra is not made easier by the abstract construction from the Stinespring Theorem. We will return to this problem in the next section. First we recall the construction of Poincar\'e duality from the image $\Delta\in K^1(O_A\minotimes O_{A^T})$ of the extension class $[\beta_{KP}]$.

\begin{thm}[Consequence of \cite{kaminkerputnam}]
\label{pd}
The mapping 
\[K_*(O_{A^T})\mapsto K^{*+1}(O_A),\quad [e]\mapsto (1_{O_A}\otimes [e])\otimes_{O_A\minotimes O_{A^T}} \Delta\] 
is an isomorphism.
\end{thm}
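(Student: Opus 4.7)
The plan is to apply Kaminker-Putnam's Poincar\'e duality theorem \cite{kaminkerputnam} directly; the role of this statement in our paper is to package their duality in the specific form required for the applications to finite summability below, so the proof is essentially a translation. The central additional ingredient beyond $\Delta$ is a dual class $\hat\Delta \in KK_1(\C, O_A \minotimes O_{A^T})$, constructed in \cite{kaminkerputnam} by a parallel Toeplitz-type extension on the Fock space $\Fg$ built from the creation-annihilation operators $L_i^A$, $R_j^A$ and exploiting the commutation relations \eqref{lirirelations}. The pair $(\Delta,\hat\Delta)$ satisfies the standard Kasparov (``zig-zag'') duality identities: suitable Kasparov products pairing $\hat\Delta$ and $\Delta$ across the tensor factors $O_A\minotimes O_{A^T}$ return $1_{O_A}$ in $KK_0(O_A,O_A)$ and $1_{O_{A^T}}$ in $KK_0(O_{A^T},O_{A^T})$, respectively.

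Granting these identities, I would define the candidate inverse map
\[K^{*+1}(O_A)\to K_*(O_{A^T}),\quad [x]\mapsto \hat\Delta\otimes_{O_A\minotimes O_{A^T}}(x\otimes 1_{O_{A^T}}),\]
and then verify that it is a two-sided inverse to the map stated in the theorem. This verification is a purely formal manipulation: writing out the two compositions and applying associativity and functoriality of the Kasparov product, each composition reduces to a zig-zag expression in $\Delta$ and $\hat\Delta$ which, by the duality identities, equals the identity Kasparov class. Hence both compositions act trivially, and the map in the theorem is an isomorphism.

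The real content, already accomplished in \cite{kaminkerputnam}, is the construction of $\hat\Delta$ together with the verification of the two zig-zag identities at the level of Kasparov cycles. The main obstacle there is identifying the specific compact perturbations that turn the obvious Fock-space expression for the products into the unit cycles on $O_A$ and $O_{A^T}$; this uses the explicit combinatorics of the creation operators $L_i^A, R_j^A$ together with \eqref{lirirelations}. In the present paper this analysis is invoked as a black box, so the proof above consists only in recording the resulting isomorphism in the convenient form used in the next subsection.
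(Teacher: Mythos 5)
The paper offers no proof of Theorem \ref{pd} beyond the citation: the statement is recorded as a direct consequence of \cite{kaminkerputnam}, which is exactly what your final paragraph does, so there is no gap. Your outline of what the citation supplies is the standard Spanier--Whitehead formalism in $KK$ and is a correct way to organize the argument, but two points deserve flagging. First, the dual class $\hat\Delta$ lies in $KK_1(\C,O_A\minotimes O_{A^T})=K_1(O_A\minotimes O_{A^T})$ and is therefore represented by a unitary (in Kaminker--Putnam it is manufactured from the isometry $\sum_i S_i\otimes T_i$), not by a second Toeplitz-type extension; only $\Delta$ arises from the extension \eqref{sesoaoat}, so the phrase ``parallel Toeplitz-type extension'' misdescribes the object, though not the formal deduction that follows. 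Second, an alternative to the zig-zag verification --- and the one this paper implicitly leans on later --- is to check directly that the slant product with $\Delta$ carries the generators $[T_jT_j^*]$ of $K_0(O_{A^T})\cong\Z^N/(1-A)\Z^N$ (Proposition \ref{kzerodescription}) to the classes $[\beta_j]$ generating $\Ext(O_A,\Ko)^{-1}\cong K^1(O_A)$ (cf.\ Remark \ref{kaspprodbusby} and Proposition \ref{representingbetai}), matching the relations on both sides; this settles the odd case by hand, with the remaining parity following from the UCT. Either route is legitimate: yours buys a parity-independent statement at the cost of needing both duality classes and their pairing identities, while the direct computation produces the explicit generating cycles that the finite-summability arguments of Subsection \ref{subsectionfinsuminkon} actually consume.
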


\begin{remark}
\label{kaspprodremark}
Recall that $KK$-theory comes with a product; for separable $C^*$-algebras $A$, $B$ and $C$, there is a $\Z/2\Z$-graded operation
$$\otimes_B:KK_*(A,B)\otimes KK_*(B,C)\to KK_*(A,C),$$
called the Kasparov product. This product is associative. As such, one often considers $KK$-theory from the perspective of defining an additive category\footnote{It even carries a triangulated structure, see \cite{neme}.} whose objects are the separable $C^*$-algebras and the group of morphisms from $A$ to $B$ is $KK_0(A,B)$ with the composition of morphisms given by the Kasparov product. Further, it coincides with the index pairing 
$$K_*(B)\otimes K^*(B)\to \Z$$
when $A=C=\C$, once identifying $KK_*(\C,B)\cong K_*(B)$, $KK_*(B,\C)=K^*(B)$ and $KK_*(\C,\C)\cong \Z$. The particular Kasparov product $(1_{O_A}\otimes [e])\otimes_{O_A\otimes O_{A^T}} \Delta$ used in Theorem \ref{pd} is that between the class $1_{O_A}\otimes [e]\in KK_*(O_A,O_A\minotimes O_{A^T})$ and $\Delta\in KK_1(O_A\minotimes O_{A^T},\C)$. See more in \cite{emermeyer,kaminkerputnam}.
\end{remark}

In order to use Theorem \ref{pd}, we will need to compute Kasparov products in the case described in Remark \ref{kaspprodremark}. Computations of this type are well known to experts in the field, we include them for the sake of completeness. Throughout this subsection, $A$ and $B$ denote unital $C^*$-algebras and $(\pi,\He,F)$ an odd analytic $K$-cycle for $A\minotimes B$.

\begin{prop} 
\label{psumprop}
Let $e\in B \minotimes M_m(\C)=M_m(B)$ be a projection and set 
\[\He_e:=[\pi\otimes \id_{M_m(\C)}](1_A\otimes e)(\He\otimes \C^m).\] 
There is an odd analytic $K$-cycle $(\pi_e,\He_e,F_e)$ on $A$ defined by
\begin{align*}
\pi_e:A&\to \Bo(\He_e),\quad a\mapsto [\pi\otimes \id_{M_m(\C)}](a\otimes e),\\
&\mbox{and}\quad F_e:=[\pi\otimes \id_{M_m(\C)}](1_A\otimes e)\cdot [F\otimes \id_{\C^m}]\cdot [\pi\otimes \id_{M_m(\C)}](1_A\otimes e).
\end{align*}
\end{prop}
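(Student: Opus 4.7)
The plan is to view $(\pi_e,\He_e,F_e)$ as the compression of the external product Fredholm module $(\pi\otimes\id_{M_m(\C)},\He\otimes\C^m,F\otimes\id_{\C^m})$ along the projection
\[ P_e:=[\pi\otimes\id_{M_m(\C)}](1_A\otimes e)\in\Bo(\He\otimes\C^m). \]
Write $\tilde{F}:=F\otimes\id_{\C^m}$ and $\tilde{\pi}(a):=\pi(a\otimes 1_B)\otimes\id_{\C^m}$. Because $1_A\otimes e$ and $a\otimes 1_B\otimes 1_m$ sit in distinct tensor slots of $A\otimes B\otimes M_m(\C)$, one has $P_e\tilde{\pi}(a)=\tilde{\pi}(a)P_e$ exactly, so $\He_e=P_e(\He\otimes\C^m)$ is invariant under $\pi_e(A)$ and $\pi_e(a)=P_e\tilde{\pi}(a)=\tilde{\pi}(a)P_e$. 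That $\pi_e$ is a $*$-homomorphism is then immediate from $(a\otimes e)(b\otimes e)=ab\otimes e^2=ab\otimes e$ together with the functoriality of $\pi\otimes\id$.

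The central technical observation is that
\[ [P_e,\tilde{F}]\in\Ko(\He\otimes\C^m). \]
Expanding $e=\sum_{ij}e_{ij}\otimes E_{ij}$ with $e_{ij}\in B$ and matrix units $E_{ij}\in M_m(\C)$, we have $P_e=\sum_{ij}\pi(1_A\otimes e_{ij})\otimes E_{ij}$ and
\[ [P_e,\tilde{F}]=\sum_{ij}[F,\pi(1_A\otimes e_{ij})]\otimes E_{ij}, \]
each summand of which is compact because $(\pi,\He,F)$ is a Fredholm module for $A\minotimes B$ and the tensor product of a compact operator on $\He$ with the finite-rank $E_{ij}$ is compact on $\He\otimes\C^m$.

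With this identity in hand, each of the three axioms follows by a short computation modulo $\Ko(\He\otimes\C^m)$, and then yields compactness on $\He_e$ by restriction to the invariant subspace. For the commutator, $[F_e,\pi_e(a)]=P_e\tilde{F}P_e\tilde{\pi}(a)P_e-\tilde{\pi}(a)P_e\tilde{F}P_e\equiv P_e[\tilde{F},\tilde{\pi}(a)]P_e\pmod{\Ko}$, and $[\tilde{F},\tilde{\pi}(a)]=[F,\pi(a\otimes 1_B)]\otimes\id_{\C^m}$ is compact. For the self-adjointness defect, $\pi_e(a)(F_e-F_e^*)=\tilde{\pi}(a)P_e(\tilde{F}-\tilde{F}^*)P_e\equiv\tilde{\pi}(a)(\tilde{F}-\tilde{F}^*)P_e$ modulo $\Ko$, and this is compact by the same tensor-factor trick applied to $(F-F^*)\pi(a\otimes 1_B)$. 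For the projection defect, applying $P_e\tilde{F}\equiv\tilde{F}P_e\pmod{\Ko}$ twice gives $F_e^2\equiv P_e\tilde{F}^2\pmod{\Ko}$, hence $\pi_e(a)(F_e^2-P_e)\equiv\tilde{\pi}(a)(\tilde{F}^2-1)P_e$, which is compact by the Fredholm property of $(\pi,\He,F)$ applied to $c=a\otimes 1_B$.

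There is no real obstacle: the proposition is the routine statement that cutting a Fredholm module down by a projection which almost commutes with the operator yields a Fredholm module for the algebra acting on the range of that projection. The only genuine content is the identity $[P_e,\tilde{F}]\in\Ko$, which is forced by the Fredholm module axioms for $A\minotimes B$ applied to the matrix entries $1_A\otimes e_{ij}\in A\minotimes B$ of the projection $1_A\otimes e$.
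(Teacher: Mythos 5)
Your proof is correct and follows essentially the same route as the paper: compress the module by the projection $P_e=[\pi\otimes\id_{M_m(\C)}](1_A\otimes e)$, observe that $[P_e,F\otimes\id_{\C^m}]$ is compact because it is built from commutators $[F,\pi(1_A\otimes e_{ij})]$, and verify the three Fredholm axioms modulo compacts. The only cosmetic difference is that you spell out the $m>1$ case via matrix units, whereas the paper reduces to $m=1$ "to shorten notation."
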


\begin{proof}
We assume $m=1$ to shorten notation. Since $F$ commutes with $\pi(1_A\otimes e)$ up to compacts, 
\[F_e^2-\pi(1_A\otimes e)F^2\pi(1_A\otimes e)\in \Ko(\He_e).\]
Since $F^2-1$ is compact, so is $F_e^2-1$. Furthermore $F_e^*=\pi(1_A\otimes e)F^*\pi(1_A\otimes e)$ so $F^*_e-F_e\in \Ko(\He_e)$. Finally, we have for any $a\in A$ that 
\begin{align*}
[F_e,\pi_e(a)]&=[\pi(1_A\otimes e)F\pi(1_A\otimes e),\pi(a\otimes e)]=\\
&=\pi(1_A\otimes e)[F,\pi(a\otimes 1_B)]\pi(1_A\otimes e)\in \Ko(\He_e).
\end{align*}
\end{proof}

\begin{lem}
\label{kproduct}
If $e\in B\otimes M_m(\C)$ is a projection, the Kasparov product $(1_{A}\otimes [e])\otimes_{A\minotimes B} [\pi,\He,F]$ can be represented by the Fredholm module $(\pi_e,\He_e,F_e)$ (see notation in Proposition \ref{psumprop}).
\end{lem}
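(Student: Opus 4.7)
The plan is to represent the class $1_A \otimes [e]$ by an explicit Kasparov module, compute the interior tensor product with $(\pi,\He,F)$, and then verify the Connes-Skandalis criterion for Kasparov products (trivialized here by the vanishing of the operator on the first factor).

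First I would represent $1_A \otimes [e] \in KK_0(A, A \minotimes B)$ by the Kasparov module $(\phi_1, \mathpzc{E}_1, 0)$, where $\mathpzc{E}_1 := e(A \minotimes B)^m$ is the right $A\minotimes B$-module obtained by cutting down by the projection $e$, and the left action is $\phi_1(a)\xi := (a\otimes 1_B)\xi$. This action is well defined on $\mathpzc{E}_1$ because $e$ has entries in $1_A \otimes B \subseteq A\minotimes B$, which commute with $a\otimes 1_B$. The operator is zero because this cycle is the external product of $(A,A,0)$ with the projective cycle $(\C, eB^m, 0)$.

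Next I would identify the interior tensor product $\mathpzc{E}_1 \otimes_\pi \He$ with $\He_e = \pi_m(e)\He^m$ via the natural map $\xi \otimes h \mapsto \sum_i \pi(\xi_i) h \otimes e_i$, where $\pi_m := \pi \otimes \id_{M_m(\C)}$. Under this identification, the induced left $A$-action agrees with $\pi_e$, and compressing $F \otimes \id_{\C^m}$ on $\He^m$ to $\He_e$ yields precisely $F_e$. The pair $(\pi_e,\He_e,F_e)$ is a bona fide Fredholm module by Proposition \ref{psumprop}.

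It then remains to verify the two Connes-Skandalis conditions. The positivity condition is trivial because the operator in the first cycle is zero. The connection condition reduces to showing that, for every $\xi \in \mathpzc{E}_1$, the creation operator $T_\xi : \He \to \He_e$, $h \mapsto \xi \otimes h$, satisfies $F_e T_\xi - T_\xi F \in \Ko$ together with the analogous statement for the adjoint. By linearity and continuity it suffices to treat $\xi$ equal to the $i$-th column of $e$, for which $T_\xi = \pi_m(e)(1 \otimes \iota_i)$ with $\iota_i:\C \to \C^m$ the canonical inclusion. A short computation using $e(1-e) = 0$ yields
\[
F_e T_\xi - T_\xi F \;=\; -\,\pi_m(e)\bigl[F \otimes \id_{\C^m},\, \pi_m(1-e)\bigr]\bigl(1\otimes \iota_i\bigr),
\]
and the commutator on the right is compact because the entries of $1-e$ lie in $1_A\otimes B \subseteq A\minotimes B$ and $(\pi,\He,F)$ is a Fredholm module for $A\minotimes B$. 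The adjoint statement is handled symmetrically. The main obstacle is really bookkeeping rather than analysis: keeping straight the identifications of the external product cycle and the interior tensor product, and confirming that the connection condition collapses to the single compact commutator above. Once that reduction is made, everything follows from the defining compactness properties of $(\pi,\He,F)$.
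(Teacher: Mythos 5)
Your proof is correct and follows essentially the same route as the paper: both represent $1_A\otimes[e]$ by the cut-down module $A\minotimes eB^m$ with zero operator, identify the interior tensor product with $\He_e$, observe that the positivity condition is vacuous, and reduce the connection condition to compactness of a commutator $[F\otimes\id,\pi_m(\,\cdot\,)]$ coming from the Fredholm module axioms. The only cosmetic difference is that the paper checks the $F$-connection property directly for a general $x\in A\minotimes eB^m$ via the identity $x\otimes F\xi-F_e(x\otimes\xi)=\pi(1_A\otimes e)[\pi(x),F]\xi$, whereas you reduce to the columns of $e$; both are valid.
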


\begin{proof}
The $K$-theory class $1_{A}\otimes [e]$ can be represented by the $A-A\minotimes B$ Kasparov module $(A\minotimes eB^m,0)$ with its obvious $A$-action on the left and the structure of an $A\minotimes B$-Hilbert $C^*$-module comes from the inclusion $A\minotimes eB^m\subseteq A\minotimes B^m$. It is clear that as $A-\C$-Hilbert $C^*$-modules
\[\He_e=(A\minotimes eB^m)\otimes_{A\minotimes B} \He.\]

Since $(\pi_e,\He_e,F_e)$ is a Fredholm module on the right Hilbert space, to verify that it is a Kasparov product between $[\pi,\He,F]$ and $(A\minotimes eB^m,0)$ it suffices to prove that $F_e$ is an $F$-connection, see \cite[Definition $2.2.4$]{jeto}. The other conditions on a Kasparov product automatically hold as the Kasparov operator in $(A\minotimes eB^m,0)$ is $0$, see \cite[Definition $2.2.7$]{jeto}. Recall that $F_e$ is an $F$-connection if for $x\in A\minotimes eB^m$, the linear mapping 
\[\xi\mapsto x\otimes_{A\minotimes B} (F\xi)-F_e (x\otimes_{A\minotimes B}\xi)\]
is compact. However, since $(1_A\otimes e)x=x$ this fact follows from the identity
\begin{align*}
x\otimes_{A\minotimes B} (F\xi)-F_e (x\otimes_{A\minotimes B}\xi)&=\pi(x)F\xi-\pi(1_A\otimes e)F\pi(1_A\otimes e)\pi(x)\xi\\
&=\pi(1_A\otimes e)[\pi(x),F]\xi.
\end{align*}
\end{proof}

\begin{remark}
\label{kaspprodbusby}
The natural mapping $K^1(A)\to \Ext (A,\Ko)$ is defined by mapping a cycle $x:=(\pi,\He,F)$ to the extension associated with the Busby invariant
\[\beta_F:A\to \mathcal{C}(\He), \quad\beta_F(a):=q(P_F\pi(x)P_F)\quad \mbox{where}\quad P_F:=(F+1)/2\]
and $q:\Bo(\He)\to \mathcal{C}(\He)$ denotes the quotient mapping. If $F^2=1$, the Hilbert space can be reduced to $P_F\He=\ker(F-1)$. The Busby invariant $\beta_F$ is degenerately equivalent to $\tilde{\beta}_F:A\to \mathcal{C}(P_F\He)$, $\beta_F(a):=q(P_F\pi(x)P_F)$. In particular, the Busby invariant of the $K$-cycle $(\pi_e,\He_e,F_e)$ constructed in Lemma \ref{kproduct} is $\beta_e(a):=\beta_F(a\otimes e)$.
\end{remark}

We end this subsection with a proposition on finite summability concerning Poincar\'e dualities whose proof is carried out mutatis mutandis to that of Proposition \ref{psumprop}. We let $\mathcal{I}$ denote a symmetrically normed operator ideal, see \cite[Chapter $1.7$]{simon}. Assume that $\mathcal{A}\subseteq A$ and $\mathcal{B}\subseteq B$ are unital dense $*$-subalgebras. 

\begin{prop}
\label{finsum}
Let $e\in \mathcal{B}\otimes M_m(\C)$ be a projection and assume that $(\pi,\He,F)$ is $\mathcal{I}$-summable on the $*$-subalgebra $\mathcal{A}\otimes (\C1_B+\C e)\subseteq A\minotimes B$, then $(\pi_e,\He_e,F_e)$ is $\mathcal{I}$-summable on $\mathcal{A}$. 
\end{prop}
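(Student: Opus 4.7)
The plan is to transcribe the proof of Proposition \ref{psumprop} and at each step record the operator ideal to which each intermediate term belongs. Let $\mathcal{J}:=\{T\in\Ko(\He):T^*T\in\mathcal{I}\}$ denote the companion ideal, so that $\mathcal{I}$-summability of $(\pi,\He,F)$ on $\mathcal{A}\otimes(\C 1_B+\C e)$ translates into the conditions $[F,\pi(b)]\in\mathcal{J}$ and $\pi(b)(F^2-1),\pi(b)(F-F^*)\in\mathcal{I}$ for every $b$ in this subalgebra. Since $\mathcal{A}$ is unital, the subalgebra contains $a\otimes 1_B$, $1_A\otimes e$ and $a\otimes e$ for every $a\in\mathcal{A}$, which covers exactly the commutators and degenerate terms that will arise. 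One also needs the Cauchy--Schwarz type inclusion $\mathcal{J}\cdot\mathcal{J}\subseteq\mathcal{I}$, which is built into the compatibility $\|a\|_\mathcal{J}^2=\|a^*a\|_\mathcal{I}$ recalled earlier in the paper.

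Write $p:=\pi(1_A\otimes e)$. The first observation is that since $a\otimes 1_B$ and $1_A\otimes e$ commute in $A\minotimes B$, one has $\pi_e(a)=\pi(a\otimes e)=\pi(a\otimes 1_B)\,p=p\,\pi(a\otimes 1_B)$. Replaying the proof of Proposition \ref{psumprop} verbatim then yields
\[[F_e,\pi_e(a)]=p\,[F,\pi(a\otimes 1_B)]\,p,\qquad \pi_e(a)(F_e-F_e^*)=\pi(a\otimes e)(F-F^*)\,p,\]
which lie in $\mathcal{J}$ and $\mathcal{I}$ respectively. For the unitary defect, the identity $F_e^2=pF^2p-pF[F,p]p$ reduces matters to
\[\pi_e(a)(F_e^2-p)=\pi(a\otimes e)(F^2-1)\,p\;-\;\pi(a\otimes e)F[F,p]\,p,\]
whose first summand is in $\mathcal{I}$ directly by assumption.

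The main obstacle is the second summand: a naive estimate shows only that $\pi(a\otimes e)F[F,p]p$ is bounded times $[F,p]\in\mathcal{J}$, placing it in $\mathcal{J}$, which is too weak. The plan is to manufacture a second commutator with $F$ by exploiting the off-diagonal identities $[F,p]p=(1-p)[F,p]$ and $pF(1-p)=-[F,p](1-p)$, together with the commutativity of $\pi(a\otimes 1_B)$ with both $p$ and $1-p$. A short rearrangement along these lines produces the decomposition
\[\pi(a\otimes e)F[F,p]p=-[F,p]\,\pi(a\otimes 1_B)(1-p)\,[F,p]\;-\;p\,[F,\pi(a\otimes 1_B)]\,[F,p]\,p,\]
each summand being of the form $[F,\,\cdot\,]\cdot(\textrm{bounded})\cdot[F,p]$ and therefore lying in $\mathcal{J}\cdot\mathcal{J}\subseteq\mathcal{I}$. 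This closes the verification and shows $\pi_e(a)(F_e^2-p)\in\mathcal{I}$, as required.
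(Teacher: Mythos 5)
Your proof is correct and follows exactly the route the paper intends: the published ``proof'' consists of the single remark that one argues mutatis mutandis to Proposition \ref{psumprop}, and your argument is precisely that transcription with the operator ideals tracked, the identities $\pi(a\otimes e)=p\pi(a\otimes 1_B)$, $[F,p]p=(1-p)[F,p]$ and $pF(1-p)=-[F,p](1-p)$ all checking out. The one step that genuinely requires more than relabelling compacts --- upgrading $\pi(a\otimes e)F[F,p]p$ from $\mathcal{J}$ to $\mathcal{I}$ --- is handled correctly by your double-commutator decomposition; for what it is worth, the slightly shorter identity $F_e^2-pF^2p=[F,p](1-p)[F,p]$ places the unitary defect in $\mathcal{J}\cdot\mathcal{J}\subseteq\mathcal{I}$ even before multiplying by $\pi_e(a)$, avoiding the extra commutator with $\pi(a\otimes 1_B)$.
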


\begin{remark}
We note the following important consequence of Proposition \ref{finsum}. Assume that $(\pi,\He,F)$ is $\mathcal{I}$-summable on $\mathcal{A}\otimes^{alg}\mathcal{B}$. Then any element in the image of the mapping
\[K_0(B)\mapsto K^1(A), \quad x\mapsto (1_{A}\otimes x)\otimes_{A\minotimes B} [\pi,\He,F],\]
is $\mathcal{I}$-summable on $\mathcal{A}$. A slight modification of the argument above implies that the same holds true for elements in the image of the analogously defined mapping $K_1(B)\to K^0(A)$. This fact follows from \cite[Proposition $3.12$]{cuntzblackadar} which allows us to assume $\mathcal{B}\subseteq B$ to be holomorphically closed, and the mapping $K_*(\mathcal{B})\to K_*(B)$ induced from the inclusion $\mathcal{B}\hookrightarrow B$ to be an isomorphism.
\end{remark}

\subsection{Finite summability in $K^1(O_A)$}
\label{subsectionfinsuminkon}

To deal with finite summability problems for $O_A$ we note an important relation between linear splittings and finite summability based on \cite{extensionsgoffeng}. The observation will reduce the problem of finite summability for an odd $K$-homology class to finding such $\pi$, $\He$ and $W$ described above, in the paragraph preceding Theorem \ref{pd}, that behaves well on generators. Whenever $\{x_i\}_{i\in I}$ is a set of elements in a $*$-algebra, we let $\C^*[x_i|i\in i]$ denote the $*$-algebra generated by $\{x_i\}_{i\in I}$. 

\begin{thm}
\label{splittingsandsum}
Let $\mathcal{I},\mathcal{J}\subseteq \Bo$ be symmetrically normed operator ideals such that $a^*a\in \mathcal{I}$ implies $a\in \mathcal{J}$. Suppose that 
\[0\to \Ko(\He_0)\to E\to A\to 0\]
is a short exact sequence of $C^*$-algebras with Busby invariant $\beta_E$. Assume the following:
\begin{enumerate}
\item The $C^*$-algebra $A$ contains a dense $*$-subalgebra generated by a set $\{x_i\}_{i\in I}\subseteq A$, where $I$ is an index set, and that there is a set $\{X_i\}_{i\in I}\subseteq \Bo(\He_0)$ of pre images of $\{\beta_E(x_i)\}_{i\in I}$ under the quotient mapping $q:\Bo(\He_0)\to \mathcal{C}(\He_0)$ such that the mapping 
\[\C^*[x_i|i\in I]\to \Bo(\He_0)/\mathcal{I}(\He_0), \quad x_i\mapsto X_i\mod \mathcal{I}(\He_0),\]
is a well defined $*$-homomorphism. 
\item There is a Hilbert space $\He$, a $*$-representation $\pi:A\to \Bo(\He)$ and an isometry $W:\He_0\to\He$ such that
\[X_i-W^*\pi(x_i)W\in \mathcal{I}(\He_0).\]
\end{enumerate}
Then $[\beta_E]$ defines an invertible class in $\Ext (A,\Ko(\He_0))$ whose image in $K^1(A)$ is represented by the $K$-cycle $(\pi, \He,2WW^*-1)$ which is $\mathcal{J}$-summable on the dense $*$-subalgebra $\C^*[x_i|i\in I]\subseteq A$.
\end{thm}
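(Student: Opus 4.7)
The proof proceeds in three logical steps: (i) establishing that $a\mapsto W^*\pi(a)W$ serves as a completely positive splitting of $\beta_E$, so that $[\beta_E]\in\Ext(A,\Ko(\He_0))$ is invertible; (ii) identifying the resulting $K^1$-class with the Fredholm module $(\pi,\He,2WW^*-1)$; and (iii) proving the $\mathcal{J}$-summability on $\C^*[x_i|i\in I]$.

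For (i) and (ii), I would combine (1) and (2) as follows. Let $\Phi_0:\C^*[x_i|i\in I]\to \Bo(\He_0)/\mathcal{I}(\He_0)$ denote the $*$-homomorphism of (1). Composing with the further quotient $\Bo(\He_0)/\mathcal{I}(\He_0)\twoheadrightarrow\mathcal{C}(\He_0)$, which is well-defined because $\mathcal{I}\subseteq\Ko$, yields a $*$-homomorphism $\bar\Phi_0:\C^*[x_i|i\in I]\to\mathcal{C}(\He_0)$ agreeing with $\beta_E$ on each generator by (2), and hence on the whole dense $*$-subalgebra. Norm continuity then gives $\bar\Phi_0=\beta_E|_{\C^*[x_i|i\in I]}$, and by uniqueness of the continuous extension of each side to $A$, we deduce $q\circ(W^*\pi(\cdot)W)=\beta_E$ on $A$. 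This exhibits $a\mapsto W^*\pi(a)W$ as a completely positive splitting of $\beta_E$, which witnesses invertibility of $[\beta_E]$ in $\Ext$ and simultaneously forces $[WW^*,\pi(a)]\in\Ko(\He)$ for every $a\in A$, so that $(\pi,\He,2WW^*-1)$ is a bounded Fredholm module. Its Busby invariant $a\mapsto q(WW^*\pi(a)WW^*)\in\mathcal{C}(WW^*\He)$ is conjugated by the unitary $W:\He_0\xrightarrow{\sim} WW^*\He$ to $a\mapsto q(W^*\pi(a)W)=\beta_E(a)$, yielding the required identification of classes under $\Ext(A,\Ko(\He_0))\cong K^1(A)$.

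For (iii) the computational heart is the identity
\begin{equation*}
W^*\pi(x_i^*x_i)W-(W^*\pi(x_i)W)^*(W^*\pi(x_i)W)=W^*\pi(x_i)^*(1-WW^*)\pi(x_i)W=\bigl|(1-WW^*)^{1/2}\pi(x_i)W\bigr|^2,
\end{equation*}
valid in $\Bo(\He_0)$ for each generator $x_i$. By (2), $(W^*\pi(x_i)W)^*(W^*\pi(x_i)W)\equiv X_i^*X_i\mod\mathcal{I}(\He_0)$, while the well-definedness and $*$-homomorphism property of $\Phi_0$ in (1) give $X_i^*X_i\equiv W^*\pi(x_i^*x_i)W\mod\mathcal{I}(\He_0)$. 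Hence the left-hand side of the identity lies in $\mathcal{I}(\He_0)$, and the assumption $a^*a\in\mathcal{I}\Rightarrow a\in\mathcal{J}$ yields $(1-WW^*)^{1/2}\pi(x_i)W\in\mathcal{J}(\He_0,\He)$. Absorbing a further bounded factor $(1-WW^*)^{1/2}$ on the left gives $(1-WW^*)\pi(x_i)WW^*\in\mathcal{J}(\He)$; taking adjoints (using $x_i^*\in\C^*[x_i|i\in I]$) and summing yields $[WW^*,\pi(x_i)]\in\mathcal{J}(\He)$, and the Leibniz rule propagates this to $[WW^*,\pi(a)]\in\mathcal{J}(\He)$ for every $a\in\C^*[x_i|i\in I]$. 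Setting $F=2WW^*-1$, one has $F^*=F$ and $F^2=1$, so $\pi(a)(F-F^*)$ and $\pi(a)(F^2-1)$ vanish identically and $[F,\pi(a)]=2[WW^*,\pi(a)]\in\mathcal{J}$, establishing the claimed $\mathcal{J}$-summability.

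The principal technical hurdle is the upgrading $X_i^*X_i\equiv W^*\pi(x_i^*x_i)W\mod\mathcal{I}$ from the softer congruence modulo $\Ko$ (automatic from the Busby-invariant comparison in step (i)) to congruence modulo the smaller ideal $\mathcal{I}$; it is here that the full strength of the well-definedness of $\Phi_0$ on $\C^*[x_i|i\in I]$, as opposed to merely on generators, is exploited. Once this identification is secured, the remaining manipulations are routine Cauchy-Schwarz estimates and ideal-absorption steps in symmetrically normed ideals, in the spirit of the techniques developed in \cite{extensionsgoffeng}.
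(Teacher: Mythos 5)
Your proposal follows the same route as the paper's proof: the identification of the $K$-homology class is delegated to the construction of the isomorphism $\Ext(A,\Ko(\He_0))^{-1}\cong K^1(A)$, and the summability is obtained by showing that the compression $\tau(a):=W^*\pi(a)W$ is multiplicative modulo $\mathcal{I}$ and then running the square-root trick
\[
\tau(x^*x)-\tau(x)^*\tau(x)=\bigl|(1-WW^*)\pi(x)W\bigr|^2,
\]
combined with the implication $a^*a\in\mathcal{I}\Rightarrow a\in\mathcal{J}$, the adjoint symmetry of $\C^*[x_i|i\in I]$, and the Leibniz rule. This explicit computation is exactly the content of the lemma from \cite{extensionsgoffeng} that the paper cites at the corresponding point, so the computational core of your argument matches the paper's.

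There is, however, a genuine gap at the step you yourself flag as the principal hurdle. You assert that hypothesis (1) gives $X_i^*X_i\equiv W^*\pi(x_i^*x_i)W\bmod\mathcal{I}(\He_0)$. Hypothesis (1) only says that $x_i\mapsto X_i\bmod\mathcal{I}$ extends to a $*$-homomorphism $\Phi_0$ on $\C^*[x_i|i\in I]$, hence identifies $X_i^*X_i$ with a representative of $\Phi_0(x_i^*x_i)$; and hypothesis (2) relates $X_a$ to $W^*\pi(a)W$ only for the generators $a=x_i$, not for products such as $x_i^*x_i$. Neither hypothesis says anything about the operator $W^*\pi(x_i^*x_i)W$, so the congruence you need -- equivalently, multiplicativity of $\tau$ modulo $\mathcal{I}$ -- is an additional input, not a consequence of (1). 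The same issue infects your step (i): agreement of the completely positive map $q\circ\tau$ with $\beta_E$ on generators does not by itself yield agreement on all of $A$. To be fair, the paper's own proof makes the identical assertion just as tersely (it is hidden in the claim that the assumptions make $\mathcal{E}=\{(a,T):\tau(a)-T\in\mathcal{I}(\He_0)\}$ closed under multiplication), and in the applications (Proposition \ref{representingbetai}, Lemma \ref{coisometrydelta}) this multiplicativity of the compression modulo the ideal is verified directly from the concrete operators, using the specific Cuntz--Krieger relations and the hereditary property of symmetrically normed ideals. So you have in effect reproduced the paper's argument, including its one under-justified step; but your attribution of that step to hypothesis (1) is a non sequitur, and a complete proof must either add the multiplicativity of $a\mapsto W^*\pi(a)W$ modulo $\mathcal{I}$ as a hypothesis or check it in each application.
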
 

The proof is closely modeled on the structure in the refined extension invariant of \cite{extensionsgoffeng} that is adapted for extensions of Schatten class ideals. Compare to for instance \cite[Theorem $3.2$]{extensionsgoffeng}. The examples of ideals to keep in mind is the finitely summable case $\mathcal{I}=\mathcal{L}^p$ and $\mathcal{J}=\mathcal{L}^{2p}$ or the $\theta$-summable case $\mathcal{I}=\mathrm{Li}$ and $\mathcal{J}=\mathrm{Li}_{1/2}$.

\begin{proof}
It follows by the construction of the isomorphism $\Ext (A,\Ko)^{-1}\cong K^1(A)$ that $[\beta_E]$ is represented by the $K$-cycle $(\pi, \He,2WW^*-1)$, see the discussion before Theorem \ref{pd}. The $\mathcal{J}$-summability statement requires a more subtle algebraic analysis. 

To simplify notation, we set $\mathcal{A}:=\C^*[x_i|i\in I]$. We can define a linear mapping $\tau:\mathcal{A}\to \Bo(\He_0)$, $a\mapsto W^*\pi(a)W$. The assumptions of the Lemma guarantees that we can define the $*$-algebra
\[\mathcal{E}:=\{(a,T)\in \mathcal{A}\oplus \Bo(\He_0):\tau(a)-T\in \mathcal{I}(\He_0)\}.\]
There is a natural mapping $\sigma_\mathcal{E}:\mathcal{E}\to \mathcal{A}$ given by $(a,T)\mapsto a$ which admits a linear splitting $\tilde{\tau}(a):=(a,\tau(a))$. 

The mapping $\tau$ induces a $*$-homomorphism $\beta_\mathcal{E}:\mathcal{A}\to \Bo(\He_0)/\mathcal{I}(\He_0)$. The pullback of the universal $\mathcal{I}$-summable extension along $\beta_\mathcal{E}$ places $\mathcal{E}$ in a commuting diagram of $*$-algebras with exact rows:
\[
\begin{CD}
0  @>>> \mathcal{I}(\He_0) @>>>\mathcal{E} @>\sigma_\mathcal{E}>>  \mathcal{A}@>>>0 \\
@. @|  @VVV @ VV\beta_\mathcal{E} V@.\\
0  @>>>  \mathcal{I}(\He_0) @>>>\Bo(\He_0) @>>> \Bo(\He_0)/\mathcal{I}(\He_0) @>>>0 \\
\end{CD}, \]
where $\mathcal{E}\to \Bo(\He_0)$ is defined by $(a,T)\mapsto T$. 

We set $P:=WW^*$. The operator $U:=W^*|_{P\He}:P\He\to \He_0$ is a unitary isomorphism. We now turn to the $*$-algebra $\hat{\mathcal{E}}$ defined from the diagram 
\[\begin{CD}
0  @>>>\mathcal{I}(\He_0) @>>>\mathcal{E} @>\sigma_\mathcal{E}>> \mathcal{A}@>>>0 \\
@. @VAd(U) VV  @VV Ad(U)V @|@.\\
0  @>>> \mathcal{I}(P\He)@>>>\hat{\mathcal{E}} @> >> \mathcal{A}@>>>0 \\
\end{CD} \]
By construction, the linear mapping $\hat{\tau}(a):=P\pi(a)P=U^*\tau(a)U\in \hat{\mathcal{E}}$ defines a splitting of the lower row. In particular, for any $a,b\in \mathcal{A}$ it holds that 
\[ \hat{\tau}(ab)-\hat{\tau}(a)\hat{\tau}(b)\in \mathcal{I}(P\He).\]
It follows that $[P,\pi(a)]\in \mathcal{J}(\He)$ for all $a\in \mathcal{A}$ by an algebraic manipulation, see \cite[Lemma $3.7$]{extensionsgoffeng}.
\end{proof}

\begin{remark}
If the mapping $\beta_\mathcal{E}:\mathcal{A}\to \Bo(\He_0)/\mathcal{I}(\He_0)$ in the proof of Theorem \ref{splittingsandsum} is injective, the mapping $\mathcal{E}\to \Bo(\He_0)$ is injective. Hence there is an isomorphism of $*$-algebras
\[\mathcal{E}\cong \{T\in \Bo(\He): T\mod\mathcal{I}(\He_0)\in \im \beta_\mathcal{E}\}.\]
\end{remark}

Let us return to the $C^*$-algebra $O_A$. Recall the definition of the KMS-state $\phi_A$ on $O_A$ from \eqref{definitionkmsstatephia},   the associated GNS-space $L^2(O_A,\phi_A)$ and the fundamental representation $\pi_A$. By the results of Subsection \ref{subsectionckalgebras}, there is an isomorphism $L^2(O_A,\phi_A)\cong L^2(\mathcal{G}_A)$ intertwining the $O_A$-action with the $C^*(\mathcal{G}_A)$-action under the isomorphism $O_A\cong C^*(\mathcal{G}_A)$ from Theorem \ref{oaandcga}. 

Fix a finite admissible word $\lambda\in \mathcal{V}_A$. Define $\He_\lambda$ as the closed linear span of all elements $S_{\mu\lambda}\in L^2(O_A,\phi_A)$. For any two finite words $\mu,\nu\in \mathcal{V}_A$, Proposition \ref{sscomp} implies that 
\[\langle S_\mu,S_\nu\rangle_{L^2(O_A,\phi_A)}=\phi_A(S_\mu^*S_\nu)=\delta_{\mu,\nu}\phi_A(S_{\mu_k}^*S_{\mu_k})=\delta_{\mu,\nu}\sum_{j=1}^N A_{\mu_k,j}\mathrm{vol}(C_j),\]
where $k:=|\mu|$. For any finite word $\mu=\mu_1\cdots \mu_k$, admissible or not, we set 
$$c_\mu:=\left(\sum_{j=1}^N A_{\mu_k,j}\mathrm{vol}(C_j)\right)^{-1/2}.$$
In particular, it holds that $c_\mu$ only depends on the \emph{last} letter of $\mu$. It follows from the computation above that the non-zero elements of $\{c_{\mu\lambda} S_{\mu\lambda}|\mu\in \mathcal{V}_A\}$ form an ON-basis for $\He_\lambda$. Let $\ell^2(\mathcal{V}_\lambda)\subset \ell^{2}(\mathcal{V}_{A})$ be the closed subspace spanned by the basis vectors associated with the words 
\[\mathcal{V}_{\lambda}:=\{\mu\lambda\in \mathcal{V}_A|\mu\in \mathcal{V}_A\}.\]
The operator $\mathfrak{P}_\lambda:=R^A_{\bar{\lambda}} (R^A_{\bar{\lambda}})^*\in \Bo(\ell^2(\mathcal{V}_A))$ is the orthogonal projection onto $\ell^2(\mathcal{V}_\lambda)$. We define the unitary isomorphism $U: \ell^2(\mathcal{V}_A)\to \He_\circ$ by $\delta_\mu\mapsto c_\mu S_\mu$. It follows from the above discussion that the map
\[W_{\lambda}:=U\mathfrak{P}_\lambda:\ell^{2}(\mathcal{V}_A)\rightarrow L^{2}(O_{A},\phi_{A}),\]
is a partial isometry. We recapitulate by noting that the partial isometry $W_\lambda$ maps $\delta_\mu$ to $c_\mu S_\mu$ if $\mu \in \mathcal{V}_\lambda$ and it maps $\delta_\mu$ to $0$ if $\mu\notin\mathcal{V}_\lambda$. Hence, the image of $W_\lambda$ is $\He_\lambda$ and the image of $W_\lambda^*$ is $\ell^2(\mathcal{V}_\lambda)$.

\begin{prop}
\label{wprop}
The partial isometry $W_\lambda$ satisfies the equation: 
\[W_\lambda^*\pi_A(S_i )W_\lambda=L_i^AW_\lambda ^*W_\lambda\equiv L_i^AR^A_{\bar{\lambda}} (R^A_{\bar{\lambda}})^*,\quad i=1,\ldots, N.\]
\end{prop}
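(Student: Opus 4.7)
The plan is to verify the identity by testing both sides against the orthonormal basis $\{\delta_\mu : \mu \in \mathcal{V}_A\}$ of $\ell^2(\mathcal{V}_A)$ and carefully tracking the three things that can vanish: the projection onto $\ell^2(\mathcal{V}_\lambda)$, the partial isometry $W_\lambda$ on the orthogonal complement of $\mathcal{V}_\lambda$, and the Cuntz--Krieger product $S_i S_\mu$ when $i\mu$ is not admissible.

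First, I would fix $\mu \in \mathcal{V}_A$ and compute the right-hand side directly: since $W_\lambda^{*}W_\lambda = \mathfrak{P}_\lambda$ is the orthogonal projection onto $\ell^2(\mathcal{V}_\lambda)$, we have $L_i^A \mathfrak{P}_\lambda \delta_\mu = \delta_{i\mu}$ if $\mu \in \mathcal{V}_\lambda$ and $i\mu \in \mathcal{V}_A$, and $0$ otherwise (using $L_i^A = P_A \circ (e_i \otimes \cdot)$, which kills non-admissible concatenations).

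For the left-hand side, if $\mu \notin \mathcal{V}_\lambda$ then $W_\lambda \delta_\mu = 0$ and there is nothing to check, so assume $\mu \in \mathcal{V}_\lambda$. Then $W_\lambda \delta_\mu = c_\mu S_\mu$, and applying $\pi_A(S_i)$ gives $c_\mu S_i S_\mu = c_\mu S_{i\mu}$, which vanishes unless $i\mu$ is admissible (by Proposition \ref{sscomp} and \eqref{CKshort}). When $i\mu$ is admissible, $i\mu$ still ends in the letter in which $\lambda$ ends, hence $i\mu \in \mathcal{V}_\lambda$, so
\[
W_\lambda^{*}(c_\mu S_{i\mu}) \;=\; \frac{c_\mu}{c_{i\mu}}\, \delta_{i\mu}.
\]

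The key observation making both sides agree is that the normalizing constant $c_\nu$ depends only on the terminal letter $\nu_{|\nu|}$ (this is visible from the definition $c_\nu = (\sum_j A_{\nu_{|\nu|},j}\mathrm{vol}(C_j))^{-1/2}$). Since $i\mu$ and $\mu$ share the same terminal letter, $c_{i\mu} = c_\mu$, so the coefficient is $1$ and the left-hand side equals $\delta_{i\mu}$, matching the right-hand side. I do not anticipate a substantial obstacle here; the only delicate point is keeping straight which of the three possible zero-conditions applies in which case, and noticing the cancellation of the $c$-factors via the dependence on terminal letters.
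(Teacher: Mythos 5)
Your proof is correct and follows essentially the same route as the paper's: a direct computation on the basis vectors $\delta_\mu$, reduced to the range of $W_\lambda^*$, with the identity closing because $c_{i\mu}=c_\mu$ (both constants depend only on the common terminal letter). Your extra care with the vanishing cases (non-admissible $i\mu$, and $\mu\notin\mathcal{V}_\lambda$) is implicit in the paper's convention that $\delta_{i\mu}=0$ and $S_{i\mu}=0$ for non-admissible words, so nothing genuinely new or different is happening.
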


\begin{proof}
It suffices to prove that $W_\lambda^*\pi_A(S_i )W_\lambda \delta_{\mu\lambda}=L_i ^A\delta_{\mu\lambda}$ since the vectors $ \delta_{\mu\lambda}$ spans the range of $W_\lambda^*$. A direct computation goes as follows:
\[W_\lambda^*\pi_A(S_i )W_\lambda \delta_{\mu\lambda}=c_{\mu\lambda}W^*_\lambda \pi_A(S_i )S_{\mu\lambda}=c_{\mu\lambda}W_\lambda ^*S_{i\mu\lambda}=\delta_{i\mu\lambda}=L_i^A\delta_{\mu\lambda},\]
since $c_{\mu\lambda}=c_{i\mu\lambda}$.
\end{proof}

\begin{remark}
\label{measureremark} 
The orthogonal projection $W_{\lambda}W_{\lambda}^{*}\in \Bo(L^2(O_A,\phi_A))$ onto $\He_\lambda$ correspond to a projection constructed in the groupoid picture as follows. For any finite word $\mu$, $S_{\mu}$ corresponds to the characteristic function of the set 
\[\{(x,|\mu|,\sigma^{|\mu|}(x))\in \mathcal{G}_A|x\in C_\mu\}.\]
It holds that 
\begin{equation}
\label{measureproj} 
\mathfrak{Q}_{\lambda}f:=\sum_{\mu\in \mathcal{V}_A}c_{\mu\lambda}^2S_{\mu\lambda}\int_{\Omega_{A}} \rho(S_{\mu\lambda}^**f)\rd\mu_{A},\end{equation}
with $\rho$ as in \eqref{rho},
defines a projection in $L^{2}(\mathcal{G}_{A},\mu_{A})$ corresponding to $W_{\lambda}W_{\lambda}^{*}$ under the isomorphism $L^{2}(O_{A},\phi)\cong L^{2}(\mathcal{G}_{A},\mu_{A})$. This is akin to the constructions in \cite{EN2}.
\end{remark}

\begin{prop}
\label{representingbetai}
The extension defined from the Busby invariant 
$$\beta_i:O_A\to \mathcal{C}(\ell^2(\mathcal{V}_A)),\quad a\mapsto \beta_{KP}(a\otimes T_iT_i^*)$$ 
can be represented by the odd analytic $K$-cycle $(\pi_A,L^2(O_A,\phi_A),2W_iW_i^*-1)$ which is $p$-summable for any $p>0$ on the dense $*$-subalgebra $\C^*[S_1,S_2,\ldots, S_N]\subseteq O_A$.
\end{prop}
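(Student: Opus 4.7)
The plan is to apply Theorem \ref{splittingsandsum} to the Busby invariant $\beta_i$, after first reducing via a degenerate equivalence to its compression onto $\ell^2(\mathcal{V}_i)$, where the partial isometry $W_i$ of Proposition \ref{wprop} provides a genuine isometric splitting.

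First, I would identify $\beta_i$ on generators. Using $\beta_A(S_j)=L^A_j$ and $\beta^T_A(T_iT_i^*)=R_i^A(R_i^A)^*=\mathfrak{P}_i$ modulo compacts, together with the commutation relations \eqref{lirirelations}, one obtains
\[\beta_i(S_j) = \beta_{KP}(S_j \otimes T_iT_i^*) = L_j^A\mathfrak{P}_i \mod \Ko(\ell^2(\mathcal{V}_A)).\]
In particular $\beta_i(1_{O_A}) = \mathfrak{P}_i \mod \Ko$, so the orthogonal complement $\ell^2(\mathcal{V}_A) \ominus \ell^2(\mathcal{V}_i)$ carries only a trivial (degenerate) summand of the extension, and $[\beta_i]$ in $K^1(O_A)$ coincides with the class of its compression $\tilde\beta_i : O_A \to \mathcal{C}(\ell^2(\mathcal{V}_i))$. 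Because $L_j^A$ preserves $\ell^2(\mathcal{V}_i)$ (any word ending in $i$ still ends in $i$ after prepending a letter), setting $X_j := L_j^A|_{\ell^2(\mathcal{V}_i)}$ gives $\tilde\beta_i(S_j) = X_j \mod \Ko(\ell^2(\mathcal{V}_i))$.

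Next I would verify the hypotheses of Theorem \ref{splittingsandsum} with $\He_0 := \ell^2(\mathcal{V}_i)$, generating set $\{S_1,\ldots,S_N\}$, pre-images $\{X_j\}_{j=1}^N$, and, for an arbitrary fixed $p>0$, the ideal pair $\mathcal{I} := \mathcal{L}^p$ and $\mathcal{J} := \mathcal{L}^{2p}$. A direct inspection of $X_j$ and $X_j^*$ on the orthonormal basis $\{\delta_\mu : \mu\in \mathcal{V}_i\}$ shows that the Cuntz-Krieger relations \eqref{CKshort} hold exactly for the $X_j$, save for a single finite rank defect
\[\sum_{j=1}^N X_j X_j^* = 1 - P_{\delta_i},\]
where $P_{\delta_i}$ denotes the rank-one orthogonal projection onto $\C\delta_i \subseteq \ell^2(\mathcal{V}_i)$. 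Since $P_{\delta_i}\in\mathcal{L}^p$ for every $p>0$, this defect is absorbed and $S_j \mapsto X_j$ descends to a $*$-homomorphism $\C^*[S_1,\ldots,S_N] \to \Bo(\ell^2(\mathcal{V}_i))/\mathcal{L}^p(\ell^2(\mathcal{V}_i))$, which is condition (i). For condition (ii), I would take the isometric restriction $V := W_i|_{\ell^2(\mathcal{V}_i)} : \ell^2(\mathcal{V}_i) \to L^2(O_A,\phi_A)$, whose range is $\He_i$; Proposition \ref{wprop} then gives $V^*\pi_A(S_j)V = L_j^A\mathfrak{P}_i|_{\ell^2(\mathcal{V}_i)} = X_j$ with no correction at all, so $X_j - V^*\pi_A(S_j)V = 0\in\mathcal{L}^p$ trivially.

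Theorem \ref{splittingsandsum} therefore produces the $\mathcal{L}^{2p}$-summable Fredholm module $(\pi_A, L^2(O_A,\phi_A), 2VV^* - 1)$ as a representative of $[\tilde\beta_i]=[\beta_i]$, and the identity $VV^* = W_iW_i^*$ identifies this with the $K$-cycle in the statement. As $p > 0$ was arbitrary, the cycle is $q$-summable on $\C^*[S_1,\ldots,S_N]$ for every $q > 0$. The only point requiring some care is the bookkeeping of the Cuntz-Krieger defects for the $X_j$, all of which turn out to be proportional to the rank-one projection $P_{\delta_i}$ and are hence harmless at every degree of summability.
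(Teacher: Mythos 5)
Your proposal is correct and follows essentially the same route as the paper: lift $\beta_i(S_j)$ to $L_j^A R_i^A(R_i^A)^*$, observe that these lifts satisfy the Cuntz--Krieger relations up to finite rank defects, identify them with $W_i^*\pi_A(S_j)W_i$ via Proposition \ref{wprop}, and invoke Theorem \ref{splittingsandsum} with $\mathcal{I}=\mathcal{L}^p$, $\mathcal{J}=\mathcal{L}^{2p}$. The only difference is that you first compress to $\ell^2(\mathcal{V}_i)$ so that $W_i$ restricts to a genuine isometry before applying the theorem, whereas the paper works directly with the partial isometry $W_i$ on all of $\ell^2(\mathcal{V}_A)$; your extra step is a harmless (and arguably cleaner) way of matching the hypotheses of Theorem \ref{splittingsandsum} exactly.
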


\begin{proof}
For $j=1,\ldots, n$, the operators $X_j:=L_j^AR_i^A(R_i^A)^*$ lifts $\beta_i(S_j)$. The operators $X_j$ satisfy the Cuntz-Krieger relations modulo finite rank operators, so $S_j\mapsto X_j\mod \mathcal{L}^p(\ell^2(\mathcal{V}_A))$ defines a $*$-homomorphism  $\C^*[S_1,S_2,\ldots, S_N]\to \Bo(\ell^2(\mathcal{V}_A))/ \mathcal{L}^p(\ell^2(\mathcal{V}_A))$ for any $p> 0$, even modulo finite rank operators. It also holds that $W_i^*W_i=R_i^A(R_i^A)^*$. In particular, $X_j=W_i^*\pi_A(S_j )W_i$. Hence, the Proposition follows from Theorem \ref{splittingsandsum}.
\end{proof}

We recall the following description of $K_0(O_{A^T})$ from \cite[Proposition $3.1$]{Cuntz2}.

\begin{prop}[Proposition $3.1$ of \cite{Cuntz2}]
\label{kzerodescription}
The mapping 
$$\Z^N\to K_0(O_{A^T}),\quad (k_j)_{j=1}^N\mapsto \sum_{j=1}^N k_j[T_jT_j^*]$$ 
is surjective with kernel being $(1-A)\Z^N$
\end{prop}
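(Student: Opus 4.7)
The plan is to verify the easy half of the claim directly, then apply the Pimsner-Voiculescu six-term sequence to the crossed product decomposition of $O_{A^T}$, and finally identify the resulting cokernel with $\Z^N/(1-A)\Z^N$.

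First I would verify that $(1-A)\Z^N\subseteq\ker\phi$. In $O_{A^T}$ the Cuntz-Krieger relations read $T_i^*T_i=\sum_j A_{ji}T_jT_j^*$, and the partial isometry $T_i$ implements the Murray-von Neumann equivalence $[T_i^*T_i]=[T_iT_i^*]$ in $K_0(O_{A^T})$. Combining the two yields
$$[T_iT_i^*]=\sum_j A_{ji}[T_jT_j^*]\quad\text{in }K_0(O_{A^T}),$$
which is precisely $\phi((I-A)e_i)=0$, since $Ae_i$ has $j$-th coordinate $A_{ji}$.

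For the reverse inclusion together with surjectivity, I would invoke Proposition \ref{betactionfa} applied to the matrix $A^T$ to obtain the stable isomorphism $O_{A^T}\otimes\Ko\cong(F_{A^T}\otimes\Ko)\rtimes_\beta\Z$. The Pimsner-Voiculescu six-term exact sequence, combined with the vanishing of $K_1(F_{A^T})$ (because $F_{A^T}$ is AF), produces
$$0\to K_1(O_{A^T})\to K_0(F_{A^T})\xrightarrow{1-\beta_*}K_0(F_{A^T})\to K_0(O_{A^T})\to 0,$$
so it remains to compute $K_0(F_{A^T})$ together with the map $\beta_*$. Using the AF filtration $F_{A^T}=\overline{\cup_l F_{A^T}^l}$, each $F_{A^T}^l$ decomposes as $\bigoplus_{j=1}^N M_{n_j^l}(\C)$, with summands indexed by the possible last letters $j$ of admissible words of length $l+1$. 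Expanding $T_\mu T_\mu^* = T_\mu Q^{A^T}_{\mu_{l+1}}T_\mu^* = \sum_k A_{k,\mu_{l+1}}T_{\mu k}T_{\mu k}^*$ shows that the connecting map in $K_0$ from level $l$ to level $l+1$ is implemented by $A$, so $K_0(F_{A^T})\cong\varinjlim(\Z^N,A)$. Since $\beta$ shifts the filtration degree by one, $\beta_*$ acts on the direct limit as the canonical shift, and a standard telescoping argument identifies $\coker(1-\beta_*)$ with $\Z^N/(1-A)\Z^N$. Under this identification the generators $[T_jT_j^*]$ at the $l=0$ level correspond to the standard basis vectors $e_j$, which gives both surjectivity of $\phi$ and the equality $\ker\phi=(1-A)\Z^N$.

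The main obstacle is the bookkeeping in this last step: the CK relations in $O_{A^T}$ involve $A^T$, the AF inclusions feature $A$, and the induced map $\beta_*$ on the direct limit has to be pinned down to produce $\Z^N/(1-A)\Z^N$ rather than $\Z^N/(1-A^T)\Z^N$. Moreover, $\varinjlim(\Z^N,A)$ is in general strictly larger than $\Z^N$, so one must verify that $\coker(1-\beta_*)$ collapses to the stationary quotient of $\Z^N$. This is the classical shift-invariance computation underlying the statement of Cuntz, and it is what makes the formula non-trivial.
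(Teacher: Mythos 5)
The paper does not prove this statement at all --- it is quoted verbatim from Cuntz's paper and used as a black box in the proof of Theorem \ref{finsumthm} --- so there is no in-paper argument to compare against. Your proposal reconstructs the classical Cuntz computation, which is the argument behind the cited Proposition $3.1$, and it is correct: the direct verification that $(1-A)\Z^N\subseteq\ker\phi$ from $[T_i^*T_i]=[T_iT_i^*]$ and $T_i^*T_i=\sum_j A_{ji}T_jT_j^*$ is right, and the rest uses exactly the ingredients this paper assembles independently, namely the crossed-product decomposition of Proposition \ref{betactionfa}, the Pimsner--Voiculescu triangle of Theorem \ref{pimsnervoic}, and the identification $K_0(F_{A^T})\cong\varinjlim(\Z^N,A)$, which is Proposition \ref{ktheorymcomputations} applied to $A^T$. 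Your bookkeeping for the AF connecting maps (the expansion $T_\mu T_\nu^*=\sum_k A_{k,\mu_{l+1}}T_{\mu k}T_{\nu k}^*$ giving multiplicity matrix $A$) is also correct.

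The one step you leave compressed is the identification of $\beta_*$ on the inductive limit, and that is indeed where all the content sits. What makes the telescoping argument go through is that $\beta_*$ is an automorphism of $\varinjlim(\Z^N,A)$ commuting with the connecting maps and acting at each finite stage by (the map induced by) $A$, so that
\[\coker(1-\beta_*)\;\cong\;\varinjlim\bigl(\Z^N/(1-A)\Z^N,\;\bar{A}\bigr),\]
and since $A\equiv 1$ modulo $(1-A)\Z^N$ the connecting maps $\bar{A}$ are the identity; this also shows that the stage-$0$ copy of $\Z^N$ surjects with kernel exactly $(1-A)\Z^N$, which is what upgrades ``$(1-A)\Z^N\subseteq\ker\phi$'' to equality. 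Note that the possible ambiguity between $\beta_*$ and $\beta_*^{-1}$ is harmless here, since $\coker(1-\beta_*)=\coker(1-\beta_*^{-1})$ for any automorphism. With that point made explicit, the proof is complete.
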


\begin{proof}[Proof of Theorem \ref{finsumthm}]
By Theorem \ref{pd}, Remark \ref{kaspprodbusby} and Proposition \ref{kzerodescription} any $K$-homology class on $O_A$ can be represented by an extension class of the form $\sum_{j=1}^N k_j[\beta_j]$. The Theorem follows from Proposition \ref{representingbetai}.
\end{proof}

\subsection{A representative for $\Delta$} 
\label{representativefordeltasusection}

As previously indicated (see Proposition \ref{finsum}), any summability property of a $K$-cycle representative for $\Delta$ would carry over to any $K$-homology class for $O_A$. The problem is to represent $\Delta$ in a reasonable way. We will in this subsection construct a $\theta$-summable representative for $\Delta$.

We will use the notation $\He_0^T$ for the closed linear span of $\{T_{\bar{\mu}}|\mu\in \mathcal{V}_{A}\}$ in $L^2(O_{A^T},\phi_{A^T})$. Just as for $O_A$, there are constants $c_\mu^T>0$ only depending on the first word of $\mu$ such that $\{c_\mu^T T_{\bar{\mu}}|\mu\in\mathcal{V}_{A}\}$ forms an ON-basis for $\He_0^T$. Define the linear mapping $\mathcal{W}_0$ by 
$$\mathcal{W}_{0}: \ell^{2}(\mathcal{V}_{A})\rightarrow \He_{0}\otimes\He_{0}^{T},\quad \delta_{\lambda}\mapsto \sum_{\mu\nu=\lambda}(|\lambda|+1)^{-\frac{1}{2}}c_{\mu}S_{\mu}\otimes c_{\nu}^{T}T_{\overline{\nu}},$$
whose adjoint equals 
$$\mathcal{W}_0^*:\He_0\otimes \He_0^T\to \ell^2(\mathcal{V}_A),\quad c_\mu c_\nu^T S_\mu\otimes T_{\bar{\nu}}\mapsto (|\mu\nu|+1)^{-1/2}\delta_{\mu\nu}$$
The operator $\mathcal{W}_0$ is an isometry since
$$\mathcal{W}_0^*\mathcal{W}_0\delta_\lambda=\sum_{\mu\nu=\lambda}(|\mu\nu|+1)^{-1}\delta_\lambda=\delta_\lambda.$$
We will make use of the isometry 
$$\mathcal{W}:\ell^2(\mathcal{V}_A)\to L^2(O_A,\phi_A)\otimes L^2(O_{A^T},\phi_{A^T})$$ 
that is defined as the composition of $\mathcal{W}_0$ and with the isometric inclusion $\He_0\otimes \He_0^T\hookrightarrow L^2(O_A,\phi_A)\otimes L^2(O_{A^T},\phi_{A^T})$. 

\begin{lem}
\label{coisometrydelta}
For any $A$ it holds that 
$$L_i^A-\mathcal{W}^*[\pi_A(S_i)\otimes 1_{O_{A^T}}]\mathcal{W}, \; R_i^A-\mathcal{W}^*[1_{O_A}\otimes\pi_{A^T}(T_i)]\mathcal{W}\in \mathrm{Li}(\ell^2(\mathcal{V}_A)).$$
If there is a $p>0$ such that $\phi(l)\lesssim l^p$, it holds that 
$$L_i^A-\mathcal{W}^*[\pi_A(S_i)\otimes 1_{O_{A^T}}]\mathcal{W}, \; R_i^A-\mathcal{W}^*[1_{O_A}\otimes\pi_{A^T}(T_i)]\mathcal{W}\in \mathcal{L}^{p+1,\infty}(\ell^2(\mathcal{V}_A)).$$
\end{lem}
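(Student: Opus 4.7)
The plan is to compute both composite operators explicitly on the orthonormal basis $\{\delta_\lambda\}_{\lambda\in\mathcal{V}_A}$ of $\ell^{2}(\mathcal{V}_{A})$, to recognize each difference as a weighted partial shift with weights of order $O(|\lambda|^{-1})$, and then to convert this into singular value decay using the word-counting function $\phi$ and Corollary~\ref{phiasymptotics}.

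First I would compute $\mathcal{W}^*[\pi_A(S_i)\otimes 1_{O_{A^T}}]\mathcal{W}\delta_\lambda$ by distributing $S_i$ across the sum defining $\mathcal{W}$, which (when $i\lambda$ is admissible) produces a sum over decompositions $\mu\nu=\lambda$ of vectors proportional to $\delta_{i\lambda}$; the case $i\lambda\notin\mathcal{V}_A$ is trivial because both operators then vanish on $\delta_\lambda$. The crucial observation is that $c_\mu$ depends only on the last letter of $\mu$, so the ratios $c_\mu/c_{i\mu}$ equal $1$ whenever $\mu\neq\circ_A$. Summing the $|\lambda|+1$ contributions yields a scalar of the form $(|\lambda|+\mathrm{const})/\sqrt{(|\lambda|+1)(|\lambda|+2)}$, which differs from $1$ by $O(|\lambda|^{-1})$. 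Consequently the operator $L_i^A-\mathcal{W}^*[\pi_A(S_i)\otimes 1]\mathcal{W}$ acts by $\delta_\lambda\mapsto a_\lambda \delta_{i\lambda}$ with $|a_\lambda|=O(|\lambda|^{-1})$. The argument for $R_i^A-\mathcal{W}^*[1_{O_A}\otimes\pi_{A^T}(T_i)]\mathcal{W}$ is entirely symmetric, using the identity $\pi_{A^T}(T_i)T_{\bar\nu}=T_{\overline{\nu i}}$ and the fact that $c_\nu^T$ depends only on the first letter of $\nu$.

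The final step is a singular value estimation. Each of the two difference operators factors as a partial isometry followed by the diagonal multiplication $\delta_\lambda\mapsto |a_\lambda|\delta_\lambda$, and its singular value sequence is therefore a rearrangement of $\{|a_\lambda|\}$. Ordering words by length, the number of admissible words of length at most $l$ is $\sum_{k=0}^l\phi(k)$. By Corollary~\ref{phiasymptotics} this is $O(e^{sl})$ for any $s>\delta_A$, which forces the word indexing the $k$-th singular value to have length $\gtrsim s^{-1}\log k$ and hence $\mu_k=O(\log(k)^{-1})$; this yields membership in $\mathrm{Li}(\ell^2(\mathcal{V}_A))$. Under the polynomial hypothesis $\phi(l)\lesssim l^p$ the same counting yields $\sum_{k=0}^l\phi(k)\lesssim l^{p+1}$, hence $\mu_k=O(k^{-1/(p+1)})$ and membership in $\mathcal{L}^{p+1,\infty}(\ell^2(\mathcal{V}_A))$.

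The main obstacle will be the careful bookkeeping of the empty-word corner case, since it is the only decomposition producing a non-unital coefficient in the sum: one must verify that the resulting $O(1)$ correction is correctly absorbed into the $O(|\lambda|^{-1})$ error after normalization by $(|\lambda|+1)^{-1/2}(|\lambda|+2)^{-1/2}$, rather than producing a systematic $O(1)$ discrepancy that would destroy the summability conclusion.
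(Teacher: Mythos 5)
Your proposal is correct and follows essentially the same route as the paper: compute both operators on the basis $\{\delta_\lambda\}$, use that $c_\mu/c_{i\mu}=1$ (resp. $c^T_\nu/c^T_{\nu i}=1$) away from the empty word to identify each difference as $L_i^A$ (resp. $R_i^A$) composed with a diagonal operator whose entries are $O(|\lambda|^{-1})$, and then convert Corollary \ref{phiasymptotics} into the stated singular value decay. Your extra bookkeeping of the empty-word decomposition is a harmless (and in fact welcome) refinement of the paper's argument, which treats all the ratios as equal to $1$ and in any case absorbs that $O(1)$ discrepancy into the same $O(|\lambda|^{-1})$ error.
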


\begin{proof}
This is yet another proof by computation. Choose a finite word $\lambda\in \mathcal{V}_A$. It holds that
\begin{align*}
\mathcal{W}^*[\pi_A(S_i)\otimes 1_{O_{A^T}}]\mathcal{W}\delta_\lambda&=\mathcal{W}^*\left(\sum_{\mu\nu=\lambda} c_\mu c_\nu^T (|\mu\nu|+1)^{-1/2} S_{i\mu}\otimes T_{\bar{\nu}}\right)=\\
&=\frac{(|\lambda|+1)^{-1/2}}{(|i\lambda|+1)^{-1/2}}\delta_{i\lambda}=L_i^A\delta_\lambda+\left(\sqrt{\frac{|\lambda|+2}{|\lambda|+1}}-1\right)L_i^A\delta_{\lambda},\\
\mathcal{W}^*[1_{O_A}\otimes\pi_{A^T}(T_i)]\mathcal{W}\delta_\lambda&=\mathcal{W}^*\left(\sum_{\mu\nu=\lambda} c_\mu c_\nu^T (|\mu\nu|+1)^{-1/2}  S_{\mu}\otimes T_{i\bar{\nu}}\right)=\\
&=\frac{(|\lambda|+1)^{-1/2}}{(|i\lambda|+1)^{-1/2}}\delta_{\lambda i}=R_i^A\delta_\lambda+\left(\sqrt{\frac{|\lambda|+2}{|\lambda|+1}}-1\right)R_i^A\delta_{\lambda},
\end{align*}
since $c_\mu$ only depend on the last letter of $\mu$ and $c^T_\nu$ only depend on the first letter of $\nu$. We define $\Gamma\in \Bo(\ell^2(\mathcal{V}_A))$ by 
$$\Gamma\delta_\lambda := \left(\sqrt{\frac{|\lambda|+2}{|\lambda|+1}}-1\right)\delta_\lambda$$
and reformulate the above identities as
\[\mathcal{W}^*[\pi_A(S_i)\otimes 1_{O_{A^T}}]\mathcal{W}-L_i^A=L_i^A\Gamma \quad \mbox{and}\quad \mathcal{W}^*[1_{O_A}\otimes\pi_{A^T}(T_i)]\mathcal{W}-R_i^A=R_i^A\Gamma.\]
 We recall the elementary asymptotics
\[\sqrt{\frac{|\lambda|+2}{|\lambda|+1}}-1=\frac{1}{2|\lambda|}+O\left(\frac{1}{|\lambda|^2}\right),\quad \mbox{as}\quad |\lambda|\to \infty.\]
It holds in general that $\phi(l)\lesssim \e^{\;s l}$ for $s>\delta_A$ by Corollary \ref{phiasymptotics}, so $\Gamma\in \mathrm{Li}(\ell^2(\mathcal{V}_A))$. On the other hand $\phi(l)\lesssim l^p$ implies $\Gamma\in \mathcal{L}^{p+1,\infty}(\ell^2(\mathcal{V}_A))$.
\end{proof}

From Theorem \ref{splittingsandsum} and Lemma \ref{coisometrydelta} we may conclude a summability result for the duality class $\Delta$. This result is by no means a surprise. There is to the authors' knowledge no known counter examples to the $\theta$-summability problem for unbounded Fredholm modules, so in effect there are no counter examples to representing $K$-homology classes by $\theta$-summable Fredholm modules, cf. \cite[Chapter IV.$8.\alpha$, Theorem 4]{Connesbook}. We nevertheless state it as a Theorem.

\begin{thm}
\label{thetaandfinitekp}
The class $\Delta\in K^1(O_A\minotimes O_{A^T})$ is represented by the analytic $K$-cycle 
$$(\pi_A\otimes \pi_{A^T}, L^2(O_A,\phi_A)\otimes L^2(O_{A^T},\phi_{A^T}),2\mathcal{W}\mathcal{W}^*-1),$$
which is $\theta$-summable on the dense $*$-subalgebra of $O_A\minotimes O_{A^T}$ generated by $S_i\otimes 1$ and $1\otimes T_i$ for $i=1,\ldots, n$. If there is a $p>0$ such that $\phi(l)\lesssim l^p$, this is a $\mathcal{L}^{p+1,\infty}$-summable $K$-cycle.
\end{thm}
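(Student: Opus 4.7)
The plan is to invoke Theorem \ref{splittingsandsum} with the data furnished by Lemma \ref{coisometrydelta}. We apply it to the Kaminker--Putnam Busby invariant $\beta_{KP}:O_A\minotimes O_{A^T}\to \mathcal{C}(\ell^2(\mathcal{V}_A))$, with the generating set $\{S_i\otimes 1_{O_{A^T}},\,1_{O_A}\otimes T_j\}_{i,j=1}^N$ of the dense $*$-algebra of $O_A\minotimes O_{A^T}$ and distinguished bounded lifts $\{L_i^A,\,R_j^A\}$. The Hilbert space is $\He=L^2(O_A,\phi_A)\otimes L^2(O_{A^T},\phi_{A^T})$ with representation $\pi=\pi_A\otimes\pi_{A^T}$, and the isometry $W$ is $\mathcal{W}$. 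For the $\theta$-summable statement we take the pair $(\mathcal{I},\mathcal{J})=(\mathrm{Li},\mathrm{Li}^{1/2})$; for the polynomial growth statement we take the corresponding pair of Lorentz-type ideals so that the conclusion reads as $\mathcal{L}^{p+1,\infty}$-summability.

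First I would verify hypothesis (1) of Theorem \ref{splittingsandsum}: that $S_i\otimes 1\mapsto L_i^A$ and $1\otimes T_j\mapsto R_j^A$ extends to a $*$-homomorphism $\C^*[S_i\otimes 1, 1\otimes T_j]\to \Bo(\ell^2(\mathcal{V}_A))/\mathcal{I}$. This reduces, modulo finite-rank operators (which are contained in every symmetric ideal), to three facts: the $L_i^A$ satisfy the Cuntz--Krieger relations for $A$, the $R_j^A=J_{\mathcal{V}}^*L_j^{A^T}J_{\mathcal{V}}$ satisfy the Cuntz--Krieger relations for $A^T$ (both being the Toeplitz-type relations behind $\beta_A$ and $\beta_A^T$ from Subsection \ref{recokpsubsec}), and the two identities $[L_i^A,R_j^A]=0$, $[(L_i^A)^*,R_j^A]=\delta_{ij}P_{\circ_A}$ from \eqref{lirirelations}, where the nonzero right-hand side is rank one. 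Hypothesis (2) of Theorem \ref{splittingsandsum} is exactly Lemma \ref{coisometrydelta}.

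With both hypotheses in place, Theorem \ref{splittingsandsum} yields that $[\beta_{KP}]$ defines an invertible class in $\Ext(O_A\minotimes O_{A^T},\Ko(\ell^2(\mathcal{V}_A)))$ whose image in $K^1(O_A\minotimes O_{A^T})$ is represented by $(\pi_A\otimes\pi_{A^T},\,L^2(O_A,\phi_A)\otimes L^2(O_{A^T},\phi_{A^T}),\,2\mathcal{W}\mathcal{W}^*-1)$, and that this cycle has the required summability on the $*$-algebra generated by $\{S_i\otimes 1, 1\otimes T_j\}$. By construction, that image class is precisely $\Delta$ (cf.\ the discussion preceding Theorem \ref{pd}).

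The conceptual work is already done by Lemma \ref{coisometrydelta}; the main obstacle, resolved there, is the delicate choice of the normalization $(|\lambda|+1)^{-1/2}$ appearing in the definition of $\mathcal{W}_0$. This normalization is what reduces the perturbations $\mathcal{W}^*(\pi_A(S_i)\otimes 1)\mathcal{W}-L_i^A$ and $\mathcal{W}^*(1\otimes \pi_{A^T}(T_j))\mathcal{W}-R_j^A$ to operators of the form $L_i^A\Gamma$ and $R_j^A\Gamma$, with $\Gamma$ a diagonal multiplier whose eigenvalue $\sim 1/(2|\lambda|)$ depends only on word length; controlling its singular-value asymptotics via the counting function $\phi$ is what converts the asymptotic bound from Corollary \ref{phiasymptotics} into membership in $\mathrm{Li}$, and a polynomial bound $\phi(l)\lesssim l^p$ into membership in $\mathcal{L}^{p+1,\infty}$.
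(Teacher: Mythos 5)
Your proposal is correct and follows exactly the route the paper takes: the paper's own proof of Theorem \ref{thetaandfinitekp} consists precisely of invoking Theorem \ref{splittingsandsum} with hypothesis (2) supplied by Lemma \ref{coisometrydelta} and hypothesis (1) by the finite-rank Cuntz--Krieger and commutation relations \eqref{lirirelations} for $L_i^A$ and $R_j^A$, with the ideal pairs $(\mathrm{Li},\mathrm{Li}^{1/2})$ and the Lorentz-type analogues. Your identification of the resulting class with $\Delta$ via the discussion preceding Theorem \ref{pd}, and your remark on the role of the normalization $(|\lambda|+1)^{-1/2}$ in producing the diagonal perturbation $\Gamma$, match the paper's argument.
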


Theorem \ref{thetaandfinitekp} and Equation \eqref{phicomputation} imply that any $K$-homology class on $SU_q(2)$ is finitely summable.

\vspace{3mm}

\Large
\section{Unbounded $(O_A,F_A)$-cycles}
\label{oafasection}
\normalsize

We will in this section start approaching the problem of constructing unbounded Fredholm modules on $O_A$. It is natural to try and construct unbounded Fredholm modules as Kasparov products of bivariant cycles with unbounded Fredholm modules on one of the subalgebras $C(\Omega_A)\subseteq F_A\subseteq O_A$.  In this section, we will consider classes in $KK_{1}(O_{A},F_A)$. In Section \ref{oacasection} we construct classes in $KK_{1}(O_{A},C(\Omega_{A}))$. Both constructions provide cycles that behave analogously to those studied in Section \ref{sectionfinisumandputnamkam} apart from the difficulties of being bivariant. A problem with using the fixed point algebra is that, despite there being a well studied bivariant $(O_A,F_A)$-cycle that is naturally constructed, the unbounded Fredholm modules on $F_A$ are more difficult to construct and understand topologically than those on $C(\Omega_A)$. E.g. for the Cuntz algebra $O_N$, it holds that the even $K$-homology of the fixed point algebra vanishes but its odd $K$-homology is an uncountable group  (cf. Proposition \ref{thealgebraonandkhom}). 

We will in the remaining parts of the paper use a great deal of unbounded $KK$-theory. The reader unfamiliar with this material is referred to \cite{BJ, BMS, AKH, KaLe,Kuc, Mes}.

\subsection{The homogeneous components}

In this subsection, we describe the structure of the module $\mathpzc{E}^{c}$ -- the completion of $O_A$ as the pre-$F_A$-Hilbert $C^*$-module associated with the conditional expectation $\rho_{c}:O_{A}\rightarrow F_{A}$ coming from the restriction mapping $C_{c}(\mathcal{G}_{A})\rightarrow C_{c}(\mathcal{H}_A)$. It is clear that $\mathpzc{E}^{c}$ decomposes as a direct sum of $F$-modules:
\[\mathpzc{E}^{c}=\bigoplus_{n\in\Z}\mathpzc{E}_{n}^{c},\]
corresponding to the disjoint union decomposition $\mathcal{G}=\bigcup_{n\in\Z}\mathcal{G}_{n}$, where $\mathcal{G}_{n}=c^{-1}_A(n)$. We show below that each $\mathpzc{E}_{n}^c$ is a finitely generated projective $F_A$-module, and consequently $\mathpzc{E}^c$ is isomorphic to a direct sum of finitely generated projective $F_A$-modules. Since $\mathpzc{E}^c_0=F_A$ it suffices to consider $n\neq 0$.

\begin{lem}\label{positiven}
Let $n>0$. The column vectors $v_{n}:=(S_\mu^{*})_{|\mu|=n}\in \mathrm{Hom}^*_{F_A}(\mathpzc{E}^c_n,F_A^{\phi(n)})$ have the property that $v_{n}^{*}v_{n}=1$. In particular, $\mathpzc{E}^c_n$ is a finitely generated projective $F_A$-module for $n>0$.
\end{lem}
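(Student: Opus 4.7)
The plan is to interpret $v_{n}$ as a genuine adjointable operator between the Hilbert $F_{A}$-modules $\mathpzc{E}^{c}_{n}$ and $F_{A}^{\phi(n)}$, and then to reduce the claim to the single computation $v_{n}^{*}v_{n}=\sum_{|\mu|=n}S_{\mu}S_{\mu}^{*}=1$. Once that identity is in place, it is immediate that $p_{n}:=v_{n}v_{n}^{*}\in M_{\phi(n)}(F_{A})$ is a self-adjoint projection with $v_{n}^{*}p_{n}v_{n}=1$, so that $v_{n}$ realizes $\mathpzc{E}^{c}_{n}$ as the orthogonally complemented submodule $p_{n}F_{A}^{\phi(n)}\subseteq F_{A}^{\phi(n)}$, which gives the finitely generated projective conclusion.

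First I would check that each component $S_{\mu}^{*}$ with $|\mu|=n$ defines an adjointable map $\mathpzc{E}^{c}_{n}\to F_{A}$ with adjoint given by left multiplication by $S_{\mu}$. For this I would use Proposition \ref{sscomp}: on a typical spanning element $S_{\alpha}S_{\beta}^{*}\in\mathpzc{E}^{c}_{n}$ (so $|\alpha|-|\beta|=n$), the product $S_{\mu}^{*}S_{\alpha}S_{\beta}^{*}$ either vanishes or equals $S_{\gamma}S_{\beta}^{*}$ with $\alpha=\mu\gamma$ and $|\gamma|=|\beta|$, which lies in $F_{A}$. Adjointness with respect to the $F_{A}$-valued inner product $\langle x,y\rangle=E(x^{*}y)$ is then automatic from $E(x^{*}S_{\mu}f)=E((S_{\mu}^{*}x)^{*}f)$. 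Consequently $v_{n}^{*}=(S_{\mu})_{|\mu|=n}$ as a row, and $v_{n}^{*}v_{n}$ acts on $\mathpzc{E}^{c}_{n}$ as left multiplication by $\sum_{|\mu|=n}S_{\mu}S_{\mu}^{*}$.

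The crux is then to verify $\sum_{|\mu|=n}S_{\mu}S_{\mu}^{*}=1_{O_{A}}$, where the sum runs over admissible words (inadmissible ones contribute zero automatically). I would proceed by induction on $n$. The base case $n=1$ is exactly the Cuntz--Krieger relation \eqref{eq2}. For the inductive step, I would rewrite each range projection as $P_{i}=S_{i}S_{i}^{*}=S_{i}(S_{i}^{*}S_{i})S_{i}^{*}$ and apply relation \eqref{eq1} to get
\[
S_{i}S_{i}^{*}=\sum_{j=1}^{N}A_{ij}\,S_{i}S_{j}S_{j}^{*}S_{i}^{*}=\sum_{j:\,A_{ij}=1}S_{ij}S_{ij}^{*}.
\]
Iterating this identity turns $\sum_{|\mu|=n}S_{\mu}S_{\mu}^{*}$ into $\sum_{|\mu|=n+1}S_{\mu}S_{\mu}^{*}$, closing the induction.

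The main obstacle here is only notational: keeping track of the admissibility bookkeeping so that the column $v_{n}$ lands in $F_{A}^{\phi(n)}$ (i.e.\ indexing precisely over $\{\mu\in\mathcal{V}_{A}:|\mu|=n\}$, of which there are $\phi(n)$). With $v_{n}^{*}v_{n}=1$ established, the projection $p_{n}=v_{n}v_{n}^{*}$ has matrix entries $(p_{n})_{\mu\nu}=S_{\mu}^{*}S_{\nu}=\delta_{\mu\nu}Q_{\mu_{n}}\in F_{A}$ by Proposition \ref{sscomp}, confirming $p_{n}\in M_{\phi(n)}(F_{A})$ and giving the isomorphism $\mathpzc{E}^{c}_{n}\cong p_{n}F_{A}^{\phi(n)}$ of finitely generated projective $F_{A}$-modules.
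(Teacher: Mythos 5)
Your proof is correct and follows essentially the same route as the paper: the whole lemma reduces to the identity $v_{n}^{*}v_{n}=\sum_{|\mu|=n}S_{\mu}S_{\mu}^{*}=1$, after which $v_{n}$ is an isometry onto $p_{n}F_{A}^{\phi(n)}$ with $p_{n}=v_{n}v_{n}^{*}\in M_{\phi(n)}(F_A)$. Your induction via relation \eqref{eq1} is just a mild variant of the paper's ``successively applying \eqref{eq2}'' (inserting $1=\sum_j S_jS_j^*$ inside each $S_\mu S_\mu^*$), and the extra care you take with adjointability and the diagonal form $(p_n)_{\mu\nu}=\delta_{\mu\nu}Q_{\mu_n}$ is consistent with Proposition \ref{sscomp}.
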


\begin{proof} 
We have
\[v_{n}^{*}v_{n}=\sum_{|\mu|=n}S_\mu S_\mu^*=1,\]
which follows from successively applying the relation \eqref{eq2}.

For an element $a\in O_A$ of degree $n$, the vector $v_{n}a$, constructed by coordinatewise multiplication by $a$, is an element of $F^{\phi(n)}_A$. Therefore, for positive $n$, the map 
\[\begin{split} \mathpzc{E}^c_{n}&\rightarrow F_A^{\phi(n)}\\
a&\mapsto v_{n}a,\end{split}\]
is an isometry onto its image; its image is equal to $p_{n}F^{\phi(n)}_A$, with $p_{n}:=v_{n}v_{n}^{*}$. Hence, $\mathpzc{E}^c_{n}$ is a finitely generated projective $F_A$-module.
\end{proof}

Recall the notation $P_j$ for the projections $S_{j}S_{j}^{*}$. The projections $P_j$ are of degree $0$, and $S_{i}P_{j}$ is of degree $1$ for any $i,j$.  Recall that we assume that neither row nor column of $A$ is composed of only zeroes. Hence the numbers
\[N_{j}:=\sum_{i=1}^{N}A_{ij},\] 
satisfy $0<N_{j}\leq N$. For two finite words $\mu$ and $\nu$, not necessarily admissible, of the same length $n>0$ we set
$$R_{\mu,\nu}:=\frac{1}{\sqrt{N_{\nu_{1}}\cdots N_{\nu_{n}}}}S_{\mu_{1}}P_{\nu_{1}}\cdots S_{\mu_{n}}P_{\nu_{n}}$$
We use the notation $\tilde{\phi}(n):=\#\{(\mu,\nu):|\mu|=|\nu|=n,\; R_{\mu,\nu}\neq 0\}$. It is clear that $\tilde{\phi}(n)\leq \phi(n)^2$.

\begin{lem}\label{negativen}
Let $n>0$. The column vectors $w_{n}:=(R_{\mu,\nu})_{\mu=|\nu|=n}\in \mathrm{Hom}_{F_A}^*(\mathpzc{E}^c_{-n},F_A^{\tilde{\phi}(n)})$ have the property that $w_{n}^{*}w_{n}=1$.  In particular, $\mathpzc{E}^c_{-n}$ is a finitely generated projective $F_A$-module for $n>0$.
\end{lem}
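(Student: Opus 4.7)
The plan is to mirror Lemma \ref{positiven}: compute $w_n^* w_n$ directly and show it equals $1_{F_A}$. Then $w_n$ is an isometry of Hilbert $F_A$-modules, and $\mathpzc{E}^c_{-n} \cong w_n w_n^* F_A^{\tilde{\phi}(n)}$ is identified as a direct summand of a free $F_A$-module, hence finitely generated projective.

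The first step is to identify precisely when $R_{\mu,\nu}$ is nonzero. Using Proposition \ref{sscomp} for single letters, $S_j^* S_i = \delta_{ij} Q_j$, and combined with the orthogonality relation \eqref{eq3} (together with the partial isometry identity $S_j Q_j = S_j$), one obtains $P_{\nu_i} S_{\mu_{i+1}} = \delta_{\nu_i,\mu_{i+1}} S_{\nu_i}$. Telescoping the product defining $R_{\mu,\nu}$ therefore forces $\nu_i = \mu_{i+1}$ for $i=1,\ldots,n-1$, together with $A(\mu_n, \nu_n) = 1$ (so that the last factor $S_{\mu_n} P_{\nu_n}$ is nonzero). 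These conditions are equivalent to the word $\eta := \mu_1 \cdots \mu_n \nu_n$ being admissible of length $n+1$; under them, the product collapses to
\[R_{\mu,\nu} = \frac{1}{\sqrt{N_{\eta_2}\cdots N_{\eta_{n+1}}}}\, S_\mu P_{\nu_n},\]
and the set of pairs $(\mu,\nu)$ yielding nonzero $R_{\mu,\nu}$ is in bijection with admissible words of length $n+1$.

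Next, using $S_\mu^* S_\mu = Q_{\mu_n} = \sum_j A(\mu_n,j) P_j$ together with $P_{\nu_n} Q_{\mu_n} P_{\nu_n} = A(\mu_n,\nu_n) P_{\nu_n}$ (from \eqref{eq1} and \eqref{eq3}), one computes, for the nonzero $R_{\mu,\nu}$,
\[R_{\mu,\nu}^* R_{\mu,\nu} = \frac{1}{N_{\eta_2}\cdots N_{\eta_{n+1}}}\, P_{\eta_{n+1}}.\]
Summing $w_n^* w_n = \sum R_{\mu,\nu}^* R_{\mu,\nu}$ over admissible $\eta$ of length $n+1$ and iteratively summing out $\eta_1, \eta_2, \ldots, \eta_n$, one uses at the $k$-th stage the identity $\#\{i : A(i, \eta_{k+1}) = 1\} = N_{\eta_{k+1}}$ to cancel the corresponding denominator. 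After $n$ such cancellations one is left with
\[w_n^* w_n = \sum_{\eta_{n+1}=1}^N P_{\eta_{n+1}} = 1\]
by relation \eqref{eq2}. Finite projectivity of $\mathpzc{E}^c_{-n}$ then follows exactly as in Lemma \ref{positiven}. The only real obstacle is the combinatorial bookkeeping of the telescoping sum; once the nonvanishing pattern of the $R_{\mu,\nu}$ has been isolated, the rest is mechanical.
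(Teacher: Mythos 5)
Your proof is correct, but it is organized differently from the one in the paper. The paper computes $w_n^*w_n$ by induction on $n$: it peels off the outermost factors $P_{\nu_n}S_{\mu_n}^*(\cdots)S_{\mu_n}P_{\nu_n}$, reduces the sum to $w_{n-1}^*w_{n-1}=1$, and verifies the base case $n=1$ directly from \eqref{eq1}--\eqref{eq3}; it never identifies which $R_{\mu,\nu}$ are actually nonzero. You instead first collapse each $R_{\mu,\nu}$ to a normalized $S_\mu P_{\nu_n}$ via $P_jS_i=\delta_{ij}S_j$, observe that the nonzero entries of $w_n$ are in bijection with admissible words $\eta$ of length $n+1$ (so in particular $\tilde{\phi}(n)=\phi(n+1)$), compute each $R_{\mu,\nu}^*R_{\mu,\nu}$ as $(N_{\eta_2}\cdots N_{\eta_{n+1}})^{-1}P_{\eta_{n+1}}$, and reduce $w_n^*w_n=1$ to the counting identity $\#\{i:A(i,j)=1\}=N_j$ applied $n$ times followed by \eqref{eq2}. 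Your route takes longer to set up but yields extra structural information about $\mathpzc{E}^c_{-n}$ (an explicit generating set of scalar multiples of the $S_\mu P_j$) that the paper's induction keeps hidden, whereas the paper's argument is shorter to write down. One small point to tighten: the conditions $\nu_i=\mu_{i+1}$ and $A(\mu_n,\nu_n)=1$ alone are not equivalent to admissibility of $\eta=\mu_1\cdots\mu_n\nu_n$ --- one also needs $\mu$ itself to be admissible. This is harmless in your argument only because $S_\mu=0$ for inadmissible $\mu$, so the collapsed formula $R_{\mu,\nu}=(N_{\eta_2}\cdots N_{\eta_{n+1}})^{-1/2}S_\mu P_{\nu_n}$ already vanishes in that case; it does not affect the validity of the proof.
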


\begin{proof}
We have
\[\begin{split}
w_{n}^{*}w_{n}&=\sum_{|\mu|=|\nu|=n}\frac{1}{N_{\nu_{1}}\cdots N_{\nu_{n}}}P_{\nu_n}S_{\mu_{n}}^{*}\cdots P_{\nu_{1}}S_{\mu_{1}}^{*}S_{\mu_{1}}P_{\nu_{1}}\cdots S_{\mu_{n}}P_{\nu_{n}}\\
&=\sum_{\mu_{n},\nu_{n}=1}^{N}\frac{1}{N_{\nu_{n}}}P_{\nu_{n}}S_{\mu_{n}}^{*}\left(\sum_{|\mu'|=|\nu'|=n-1}\frac{1}{N_{\nu'_{1}}\cdots N_{\nu'_{n-1}}}\cdots P_{\nu_{1}}S_{\mu_{1}}^{*}S_{\mu_{1}}P_{\nu_{1}}\cdots\right)S_{\mu_{n}}P_{\nu_{n}}
\end{split}\]
\[=\sum_{\mu_{n},\nu_{n}=1}^{N}\frac{1}{N_{\nu_{n}}}P_{\nu_{n}}S_{\mu_{n}}^{*}w_{n-1}^*w_{n-1}S_{\mu_{n}}P_{\nu_{n}}.
\]
Hence, the result follows by induction once proven for $n=1$. In that case, the equation becomes
\[\begin{split}w_{1}^{*}w_{1}&=\sum_{i,j=1}^{N}\frac{1}{N_{j}}P_{j}S_{i}^{*}S_{i}P_{j}=\sum_{i,j=1}^{N}\left(\sum_{\ell=1}^{N}\frac{1}{N_{j}}A_{i\ell}P_{j}S_{\ell}S_{\ell}^{*}P_{j}\right)\quad\textnormal{by \eqref{eq1}}\\
&=\sum_{i,j=1}^{N}\frac{1}{N_{j}}A_{ij}S_{j}S_{j}^{*} \quad\textnormal{by \eqref{eq3}}\\
&=\sum_{j}S_{j}S_{j}^{*}=1\quad\textnormal{by \eqref{eq2}}.\end{split}\]
\end{proof}

\subsection{The gauge cycle}

By \cite{Mes1}, pointwise multiplication by the cocycle $c_A$ induces a selfadjoint regular operator $D_c$ on the Hilbert $C^*$-module $\mathpzc{E}^{c}$, giving $(\mathpzc{E}^c,D_c)$ the structure of an odd unbounded $KK$-cycle for $KK_{1}(O_{A},F_A)$. We will refer to this cycle as the \emph{gauge cycle}. The construction of the gauge cycle was considered in a more general setup in \cite{careyetal11} that we make use of in the next subsection. We assume that $A$ is a $C^*$-algebra with a strongly continuous $U(1)$-action satisfying the spectral subspace assumption \cite[Definition $2.2$]{careyetal11}. The gauge action on $O_A$ satisfies the spectral subspace condition because, as we saw above, the graded components of $O_A$ form finitely generated projective $F_A$-modules. 

We let $F\subseteq A$ denote the fixed point algebra for the $U(1)$-action. There is a positive expectation value $E:A\to F$ given by $a\mapsto \frac{1}{2\pi}\int_0^{2\pi} \e^{i\theta}(a)\rd \theta$, this expectation coincides with $\rho_c$ for $O_A$. After completion of $A$ with respect to the associated $F$-valued scalar product we obtain an $A-F$-Hilbert $C^*$-module that we denote by $\mathpzc{E}^R$. We can also define the operator
\[D_R y=i\frac{d}{d\theta} \left(\e^{-i\theta}.y\right)|_{\theta=0},\]
which is a densely defined $F$-linear operator on $\mathpzc{E}^R$. Since $U(1)$ is abelian, $D_R$ commutes with the circle action on $\mathpzc{E}^R$ giving a $U(1)$-equivariant operator.

\begin{prop}
Whenever the $U(1)$-action on $A$ satisfies the spectral subspace assumption, the pair $(D_R,\,\mathpzc{E}^R)$ forms a $U(1)$-equivariant unbounded $(A,F)$-Kasparov module.
\end{prop}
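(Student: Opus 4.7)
My approach would be to exploit the Peter--Weyl decomposition of $\mathpzc{E}^R$ under the $U(1)$-action to diagonalise $D_R$, thereby reducing every Kasparov module axiom to an elementary statement about a diagonal operator on a direct sum of $F$-modules. Since $U(1)$ is compact abelian, the strongly continuous action on $\mathpzc{E}^R$ induces an orthogonal $F$-module decomposition
\[
\mathpzc{E}^R = \bigoplus_{n \in \Z} \mathpzc{E}^R_n, \qquad \mathpzc{E}^R_n := \overline{\{y \in \mathpzc{E}^R : e^{i\theta}\cdot y = e^{in\theta} y\}},
\]
on which $D_R$ acts as multiplication by $n$ on the $n$-th summand. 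Equipped with the natural domain $\Dom(D_R) = \{\sum_n y_n : \sum_n \|n y_n\|^2 < \infty\}$, a standard argument on such diagonal operators on Hilbert $C^*$-modules shows that $D_R$ is self-adjoint and regular.

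The main analytic point, and the only place the spectral subspace assumption is used, is the verification of local compactness of the resolvent. Letting $P_n \in \End^*_F(\mathpzc{E}^R)$ denote the orthogonal projection onto $\mathpzc{E}^R_n$, one formally has $(D_R \pm i)^{-1} = \sum_n (n \pm i)^{-1} P_n$. The spectral subspace hypothesis yields that each $\mathpzc{E}^R_n$ is finitely generated projective over $F$, so $P_n \in \Ko_F(\mathpzc{E}^R)$. Since $|n \pm i|^{-1} \to 0$, the partial sums $\sum_{|n| \le N} (n \pm i)^{-1} P_n$ converge in the adjointable operator norm to $(D_R \pm i)^{-1}$, and $\Ko_F(\mathpzc{E}^R)$ is norm-closed, so $(D_R \pm i)^{-1} \in \Ko_F(\mathpzc{E}^R)$; in particular $a(D_R \pm i)^{-1} \in \Ko_F(\mathpzc{E}^R)$ for every $a \in A$.

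For the Lipschitz condition, let $A_n \subset A$ denote the $n$-th spectral subspace of the action on $A$ and set $\mathcal{A} := \bigoplus_{n \in \Z}^{\mathrm{alg}} A_n$, which is norm-dense in $A$ by Fej\'er averaging of the circle action. For homogeneous $a \in A_n$, left multiplication maps $\mathpzc{E}^R_m$ into $\mathpzc{E}^R_{m+n}$; a routine Cauchy--Schwarz estimate gives $a \Dom(D_R) \subseteq \Dom(D_R)$, and on each summand one computes $[D_R, a] y_m = n\, a y_m$, so $[D_R, a]$ extends to a bounded adjointable operator of norm at most $|n|\|a\|$. Hence $\mathcal{A} \subseteq \Lip(\pi, \mathpzc{E}^R, D_R)$. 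The $U(1)$-equivariance is automatic since each $\mathpzc{E}^R_n$ is a scalar eigenspace for the action, so the action commutes strictly with the diagonal operator $D_R$.

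The main obstacle is the compactness of the resolvent: the diagonal formula for $(D_R \pm i)^{-1}$ by itself only produces a bounded adjointable operator, and without the hypothesis that the projections $P_n$ are $F$-compact there is no route to the Kasparov compactness axiom. Every other axiom is a formal consequence of the diagonalisation.
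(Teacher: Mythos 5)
Your argument is correct and is precisely the standard proof: the paper gives no proof of this proposition itself but defers to \cite[Proposition 2.9]{careyetal11}, whose argument runs along exactly the lines you describe (Peter--Weyl decomposition of $\mathpzc{E}^R$, the resolvent as a norm limit of scalar multiples of the compact spectral projections, bounded commutators with the algebraic span of the spectral subspaces, which is dense by Fej\'er averaging). The only point worth making explicit is that what the spectral subspace assumption of \cite[Definition 2.2]{careyetal11} delivers directly is that the projections $P_n$ lie in $\Ko_F(\mathpzc{E}^R)$ --- which, for unital $F$, is equivalent to your formulation that each $\mathpzc{E}^R_n$ is algebraically finitely generated projective --- and this is exactly the form in which the present paper verifies the hypothesis for $O_A$ in Lemmas \ref{positiven} and \ref{negativen}.
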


For a proof, see \cite[Proposition $2.9$]{careyetal11}. We can construct $KK$-cycles as in Section \ref{sectionfinisumandputnamkam}. A difference here is that one has to work with partial isometries in Hilbert $C^*$-modules.

\begin{prop}
\label{Fredholmgauge} 
The $F_A$-linear adjointable mapping
\[v: \ell^{2}(\mathcal{V}_{A})\otimes F_{A}\rightarrow \mathpzc{E}^{c},\quad\mbox{defined by}\quad v:\delta_{\mu}\otimes a \mapsto S_{\mu}a,\]
is a partial isometry and the projection $vv^{*}\in \End_{F_A}^*(\mathpzc{E}^c)$ has compact commutators with $O_{A}$. It consequently defines a $U(1)$-equivariant $(O_{A},F_{A})$-Kasparov module $(\mathpzc{E}^c,2vv^*-1)$ whose class in $KK_1^{U(1)}(O_A,F_A)$ coincides with the class $[\mathpzc{E}^{c},D_c]$ of the gauge cycle $(\mathpzc{E}^{c},D_{c})$.
\end{prop}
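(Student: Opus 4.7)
My plan is to handle the four assertions of the proposition in order: the partial isometry property of $v$, the identification of $vv^*$ together with the compactness of its commutators in $O_A$, the Kasparov module axioms, and finally the equality of $KK$-classes with the gauge cycle.

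First I would verify $v$ is a partial isometry by a direct computation. Using $\langle v(\delta_\mu\otimes a), v(\delta_\nu\otimes b)\rangle_{\mathpzc{E}^c} = E(a^* S_\mu^* S_\nu b)$ and Proposition \ref{sscomp}, the product $S_\mu^* S_\nu$ is always homogeneous of degree $|\nu|-|\mu|$, so $E$ annihilates it unless $\mu = \nu$, in which case $S_\mu^* S_\mu = Q_{\mu_{|\mu|}}$ is a self-adjoint projection in $F_A$ (setting $Q_\circ := 1$ for the empty word). Hence $v^*v(\delta_\mu\otimes a) = \delta_\mu\otimes Q_{\mu_{|\mu|}}a$ is manifestly a projection.

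Second, I would identify $vv^*$ as the projection $P_+$ onto $\bigoplus_{n\geq 0}\mathpzc{E}^c_n$. Lemma \ref{positiven} shows every $\xi\in\mathpzc{E}^c_n$ with $n\geq 0$ decomposes as $\sum_{|\mu|=n}S_\mu(S_\mu^*\xi)$ with $S_\mu^*\xi\in F_A$, so $\xi\in\im(v)$; conversely, $\im(v)\subseteq \bigoplus_{n\geq 0}\mathpzc{E}^c_n$ by degree considerations. For compactness of $[P_+, x]$, the set of such $x$ is a closed $*$-subalgebra of $O_A$, so it suffices to check on the generators $S_i$. Since $S_i$ raises degree by one, $[P_+, S_i]$ vanishes on every homogeneous component except $\mathpzc{E}^c_{-1}$, on which it equals $S_i$ composed with the orthogonal projection onto $\mathpzc{E}^c_{-1}$. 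By Lemma \ref{negativen} the module $\mathpzc{E}^c_{-1}$ is finitely generated projective over $F_A$, so that projection is $F_A$-compact (via a finite frame), and hence $[P_+, S_i]\in\Ko_{F_A}(\mathpzc{E}^c)$. The Kasparov module axioms are now immediate: $(2P_+ - 1)^2 = 1$ and $2P_+-1$ is self-adjoint, while $U(1)$-equivariance holds because $P_+$ acts diagonally on the gauge decomposition.

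Finally, to identify the class of $(\mathpzc{E}^c, 2P_+ - 1)$ with $[\mathpzc{E}^c, D_c]$, I would compare $2P_+-1$ with a bounded transform $\chi(D_c)$ of $D_c$ (e.g.\ $\chi(t)=t/\sqrt{1+t^2}$). Both operators act diagonally on $\bigoplus_n \mathpzc{E}^c_n$, as $\pm 1$ and $\chi(n)$ respectively, and their difference has coefficients tending to zero as $|n|\to\infty$. Since each component projection onto $\mathpzc{E}^c_n$ is $F_A$-compact by Lemmas \ref{positiven} and \ref{negativen}, the norm-convergent series realising the difference lies in $\Ko_{F_A}(\mathpzc{E}^c)$; hence $2vv^*-1$ and $\chi(D_c)$ differ by a compact perturbation and represent the same $KK_1^{U(1)}(O_A, F_A)$-class. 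The hard part will be the careful bookkeeping in the second step: locating the unique ``offending'' homogeneous component $\mathpzc{E}^c_{-1}$ on which $[P_+, S_i]$ is nontrivial, and invoking the finite generation from Lemma \ref{negativen} to deduce $F_A$-compactness; once that degree-counting is in place, the remaining steps reduce to routine manipulations.
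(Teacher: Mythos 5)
Your proposal is correct and follows essentially the same route as the paper: the partial isometry property via the mutual orthogonality of the $S_\mu$ in $\mathpzc{E}^c$ (with $v^*v$ acting diagonally by the projections $Q_{\mu_{|\mu|}}$), the identification of $vv^*$ as the projection onto the non-negatively graded part using Lemmas \ref{positiven} and \ref{negativen}, and the comparison with the bounded transform of $D_c$ by a compact perturbation. You merely write out in full the steps the paper compresses into ``proved as in the previous section'' --- in particular the degree-counting showing $[vv^*,S_i]$ is supported on the finitely generated projective component $\mathpzc{E}^c_{-1}$ --- which is a welcome expansion but not a different argument.
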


\begin{proof} 
Observe that $v$ is adjointable by Lemma \ref{positiven} and \ref{negativen}. It is clear that $v$ is a partial isometry because the elements $S_{\mu}$ are mutually orthogonal in the module $\mathpzc{E}^{c}$ and $v^{*}(S_{\mu}a)=\delta_{\mu}\otimes S^{*}_{\mu}S_{\mu}a$, so both $v^{*}v$ and $vv^{*}$ are projections. The statement that the partial isometry defines a Kasparov module is proved as in the previous section. To see that $vv^{*}$ defines the gauge cycle, one only needs to observe that it is exactly the projection onto the positively graded part of the module $\mathpzc{E}^{c}$.
\end{proof}

\begin{remark}
\label{Fredholmgaugedouble} 
In a similar way as in Proposition \ref{Fredholmgauge}, we can define a partial isometry 
\[\begin{split}
w: \ell^{2}(\mathcal{V}_{A})\otimes F_{A}\otimes F_{A^{T}}&\rightarrow \mathpzc{E}^{c}_{A}\otimes\mathpzc{E}_{A^{T}}^c,\\
 \delta_{\mu}\otimes a\otimes b &\mapsto \sum_{\lambda\nu=\mu} \frac{1}{\sqrt{|\mu|+1}}S_{\mu}a\otimes T_{\overline{\nu}}b.
 \end{split}\]
It can be proven, in the same way as in Proposition \ref{Fredholmgauge}, that the projection $ww^{*}$ has compact commutators with $O_{A}\minotimes O_{A^{T}}$. Consequently we obtain an odd $U(1)$-equivariant $(O_{A}\minotimes O_{A^{T}},F_{A}\minotimes F_{A^{T}})$-Kasparov module. Compare to the construction of Subsection \ref{representativefordeltasusection}. 
\end{remark}

\subsection{The Pimsner-Voiculescu sequence for the Cuntz-Krieger algebra}

What we wish to do in this section is to relate the cohomological properties of the Cuntz-Krieger algebra with the fixed point algebra. The standard procedure, found in \cite{Cuntz2} for instance, is to apply the Pimsner-Voiculescu sequence. In this section we briefly recall the proof of the Pimsner-Voiculescu sequence in $KK$ following \cite{cumero} and prove that the gauge cycle appears as the boundary mapping. We summarize the results of this subsection in the following Theorem:

\begin{thm}
\label{pimsnervoic}
The gauge element $[\mathpzc{E}^{c},D_c]\in KK_1(O_A,F_A)$, the $\Z$-action $\beta$ on $\Ko\minotimes F_A$ of Proposition \ref{betactionfa} and the inclusion $\iota:F_A\to O_A$ fits into a distinguished triangle in $KK$:
\[\xymatrix@C=0.8em@R=2.71em{
F_A \ar[rr]^{1-\beta}& & F_A \ar[dl]^\iota\\
  &O_A. \ar[ul]|-\circ^{[\mathpzc{E}^{c},D_c]}
}\]
\end{thm}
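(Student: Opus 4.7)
The plan is to derive the triangle from Pimsner-Voiculescu's theorem applied to the $\Z$-action $\beta$ on $F_A \minotimes \Ko$ of Proposition \ref{betactionfa}. Since $O_A \minotimes \Ko \cong (F_A \minotimes \Ko) \rtimes_\beta \Z$ and $KK$-theory is stable, Pimsner-Voiculescu immediately yields a distinguished triangle
\[F_A \xrightarrow{1-\beta_*} F_A \xrightarrow{j} O_A \xrightarrow{\partial} F_A[1]\]
in $KK$, where $j$ is induced from the canonical inclusion of the base algebra into its crossed product, and $\partial \in KK_1(O_A,F_A)$ is the Pimsner-Voiculescu boundary. What needs to be checked is that $j=\iota$ and $\partial=[\mathpzc{E}^c,D_c]$.

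The identification $j=\iota$ is routine: under the Morita equivalence $O_A\sim_M(F_A\minotimes\Ko)\rtimes_\beta\Z$ from Proposition \ref{betactionfa}, the canonical inclusion of the base algebra into the crossed product corresponds exactly to $\iota:F_A\hookrightarrow O_A$, since both are characterised as the degree-zero fixed-point subalgebra for the gauge $U(1)$-action.

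For the identification $\partial=[\mathpzc{E}^c,D_c]$, I would follow the Toeplitz-algebra construction of the PV sequence from \cite{cumero}. The boundary class $\partial$ is represented by the Toeplitz extension
\[0\to F_A\minotimes\Ko\to \mathcal{T}_\beta\to (F_A\minotimes\Ko)\rtimes_\beta\Z\to 0,\]
where $\mathcal{T}_\beta$ is generated by $F_A\minotimes\Ko$ together with an isometry $V$ satisfying $V^*bV=\beta(b)$. The associated Kasparov module is $(M_{\mathcal{T}},2P-1)$, with $P$ the ``Toeplitz projection'' onto the nonnegative graded part for the dual $U(1)$-action. On the other hand, by Proposition \ref{Fredholmgauge} the gauge cycle is represented by $(\mathpzc{E}^c,2vv^*-1)$, and the projection $vv^*=\sum_{n\geq 0}p_n$ onto $\bigoplus_{n\geq 0}\mathpzc{E}^c_n$ is precisely the Toeplitz projection for the $\Z$-grading on $\mathpzc{E}^c$. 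Matching the two Kasparov modules, which requires identifying $\mathpzc{E}^c$ (after stabilization) with $M_{\mathcal{T}}$ through the isomorphism $O_A\minotimes\Ko\cong(F_A\minotimes\Ko)\rtimes_\beta\Z$, gives $\partial=[\mathpzc{E}^c,D_c]$, since the bounded transform of $D_c$ differs from $2vv^*-1$ by a compact perturbation (only the kernel of $D_c$ in degree zero is affected).

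The main obstacle is the careful passage between the two pictures: the Toeplitz extension naturally lives over $(F_A\minotimes\Ko)\rtimes_\beta\Z$ while the gauge cycle lives over $O_A$, so one must compatibly track the Morita equivalence and the stabilization throughout. A cleaner alternative which side-steps this bookkeeping is to use the mapping-cone characterization of the Pimsner-Voiculescu triangle: the mapping cone of $1-\beta: F_A\to F_A$ is $KK$-equivalent to $O_A$, and under this equivalence the boundary $\partial$ is represented by the $U(1)$-equivariant Kasparov module $(\mathpzc{E}^c,2vv^*-1)$ carrying the natural shift structure, which by Proposition \ref{Fredholmgauge} coincides with the class of the gauge cycle $(\mathpzc{E}^c,D_c)$.
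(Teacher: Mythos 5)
Your proposal follows essentially the same route as the paper: Cuntz's Toeplitz-extension derivation of the Pimsner--Voiculescu triangle for $(F_A\minotimes\Ko)\rtimes_\beta\Z$, followed by identifying the boundary class with the gauge cycle via the observation (Proposition \ref{Fredholmgauge}) that $vv^*$ is the projection onto the non-negatively graded part of $\mathpzc{E}^c$. The ``bookkeeping'' you flag as the main obstacle is exactly what the paper carries out, by constructing the unbounded representative $D_{B\rtimes\Z}$ on $L^2(S^1)\otimes B$ and transporting the gauge cycle through the Takesaki--Takai duality unitary and the imprimitivity bimodule $\mathpzc{G}$; your sketch is correct modulo that explicit verification (and a harmless convention discrepancy in whether the Toeplitz isometry satisfies $VbV^*=\beta(b)$ or $V^*bV=\beta(b)$).
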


The triangulated structure of $KK$ is explained in \cite{neme}; a distinguished triangle is a triangle isomorphic in $KK$ to a semi split short exact sequence of $C^*$-algebras. In practice, it ensures that for any separable $C^*$-algebra $D$ there are the following six term exact sequences:
\[\begin{CD}
KK_0(D,F_A)@>1-\beta_*>>KK_0(D,F_A) @>\iota_*>> KK_0(D,O_A)\\
@A-\otimes [\mathpzc{E}^{c},D_c]AA  @. @VV-\otimes [\mathpzc{E}^{c},D_c]V\\
KK_1(D,O_A) @<\iota_*<<KK_1(D,F_A)@<1-\beta_*<< KK_1(D,F_A)\\
\end{CD} \]
\[\begin{CD}
KK_0(O_A,D)@>\iota^*>>KK_0(F_A,D)@>1-\beta^*>>KK_0(F_A,D) \\
@A[\mathpzc{E}^{c},D_c]\otimes -AA  @. @VV[\mathpzc{E}^{c},D_c]\otimes -V\\
KK_1(F_A,D) @<1-\beta^*<<KK_1(F_A,D)@<\iota^*<<KK_1(O_A,D)\\
\end{CD} \]

The Pimsner-Voiculescu sequence can be derived in many ways. We will here consider the Toeplitz extension approach due to Cuntz. Assume that $B$ is a unital $C^*$-algebra and that $\beta$ is an automorphism of $B$. The restriction that $B$ is unital makes the semantics easier, but can be lifted. Let $\mathcal{T}(B)$ denote the $C^*$-algebra generated by $B$ and an isometry $v_B$ satisfying the relation
\[v_Bbv_B^*=\beta(b).\]
One can represent $\mathcal{T}(B)$ in $\mathrm{End}^*_B(\oplus _{k=0}^\infty B)$ by extending the mappings
\[B\ni b\mapsto \oplus_k \beta^k(b)\in\mathrm{End}^*_B(\oplus _{k=0}^\infty B)\quad\mbox{and}\quad v_B(x_k)_{k\in \N}:=(x_{k-1})_{k\in \N}.\]
There is a $U(1)$-action on $\mathcal{T}(B)$ induced from the grading on $\oplus _{k=0}^\infty B$, i.e. the $U(1)$-action is defined from $z(v_B):=zv_B$. 

Let us realize $B\rtimes \Z$ as the universal $C^*$-algebra generated by $B$ and a unitary $u_B$ satisfying 
\[u_Bbu_B^*=\beta(b).\]
There is a $*$-homomorphism $\sigma_B:\mathcal{T}(B)\to B\rtimes \Z$ given by extending $v_B\mapsto u_B$. Since $\sigma_B$ respects the grading it is clear that $\sigma_B$ is $U(1)$-equivariant with respect to the dual $U(1)$-action on $B\rtimes \Z$. There is an isomorphism of right $B$-Hilbert $C^*$-modules $\oplus _{k=0}^\infty B\cong \ell^2(\N)\otimes B$.

\begin{lem}
The morphism $\sigma_B$ is well defined and fits into a $U(1)$-equivariant semisplit short exact sequence
\begin{equation}
\label{sestoeplitz}
0\to \Ko_B(\ell^2(\N)\otimes B)\to \mathcal{T}(B)\xrightarrow{\sigma_B} B\rtimes \Z\to 0.
\end{equation}
\end{lem}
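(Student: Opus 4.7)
The plan is to verify the lemma in four stages: existence of $\sigma_B$, identification of the ideal, exactness at $\mathcal{T}(B)$, and construction of a completely positive splitting. First, since $u_B\in B\rtimes\Z$ is unitary (hence an isometry) and satisfies $u_Bbu_B^*=\beta(b)$ by construction of the crossed product, the pair $(B,u_B)$ satisfies the defining relations of $\mathcal{T}(B)$. By the universal property of $\mathcal{T}(B)$, the assignment $v_B\mapsto u_B$ extends to a $*$-homomorphism $\sigma_B$. It is manifestly $U(1)$-equivariant because both actions are determined by $z\cdot v_B=zv_B$ and $z\cdot u_B=zu_B$, and both fix $B$ pointwise.

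Next I would analyze the ideal. In the faithful representation of $\mathcal{T}(B)$ on $\oplus_{k=0}^\infty B\cong\ell^2(\N)\otimes B$, the element $e:=1-v_Bv_B^*$ is the rank-one projection onto $\delta_0\otimes B$. Hence the elements $v_B^k\,b\,e\,(v_B^*)^l$ for $b\in B$ and $k,l\in\N$ span a dense subspace of $\Ko_B(\ell^2(\N)\otimes B)$ (they are precisely the ``matrix units with coefficients''). Because $u_Bu_B^*=1$, we have $\sigma_B(e)=0$, so $\sigma_B$ annihilates $\Ko_B(\ell^2(\N)\otimes B)$, and this ideal is plainly $U(1)$-invariant.

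For exactness and surjectivity I would look at the quotient $\mathcal{T}(B)/\Ko_B(\ell^2(\N)\otimes B)$. The image of $v_B$ becomes unitary there (since $e$ vanishes in the quotient), and still intertwines $B$ with $\beta$. The universal property of $B\rtimes\Z$ produces a $*$-homomorphism $\tau_B:B\rtimes\Z\to \mathcal{T}(B)/\Ko_B$ with $\tau_B(u_B)=[v_B]$. The composition $\sigma_B\circ\tau_B$ and the composition of $\tau_B$ with the quotient map are both the identity on the generators, hence equal to the identity. Therefore $\tau_B$ is an isomorphism inverse to the map induced by $\sigma_B$, which proves exactness at both $\mathcal{T}(B)$ and $B\rtimes\Z$.

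Finally, for the semisplitting I would represent $B\rtimes\Z$ faithfully and $U(1)$-equivariantly on $\ell^2(\Z)\otimes B$ via the regular representation (with $u_B$ acting by the bilateral shift and $B$ acting diagonally through its $\beta$-twists), and let $P:\ell^2(\Z)\otimes B\to\ell^2(\N)\otimes B$ be the orthogonal projection. The compression $s(x):=PxP$ is a completely positive $U(1)$-equivariant map $B\rtimes\Z\to\End^*_B(\ell^2(\N)\otimes B)$ of norm one; a direct check on the generators $b$ and $u_B^k$ shows that $s$ takes values in $\mathcal{T}(B)$ and that $\sigma_B\circ s=\mathrm{id}$. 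This produces the equivariant completely positive splitting, establishing that the short exact sequence is semisplit. The main obstacle I anticipate is the bookkeeping in the second step, namely verifying that the matrix units $v_B^k b e (v_B^*)^l$ really do exhaust $\Ko_B(\ell^2(\N)\otimes B)$ rather than merely a proper subalgebra; this requires using that $e$ is $B$-compact of rank one and that the shifts $v_B^k$ connect all coordinates.
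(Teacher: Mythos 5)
Your argument is correct, and the heart of it coincides with the paper's: both proofs obtain the completely positive equivariant splitting as the compression $x\mapsto PxP$ by the orthogonal projection $P:\ell^2(\Z)\otimes B\to\ell^2(\N)\otimes B$, and in both the decisive computation is that $[P,u_B]$ is the rank-one operator $e=1-v_Bv_B^*$, so that the compression is multiplicative modulo $\Ko_B(\ell^2(\N)\otimes B)$. Where you genuinely diverge is in the treatment of exactness: the paper compresses the whole lemma into the single assertion that $\mathfrak{q}\circ\mathcal{T}$ is a $*$-homomorphism and leaves the identification $\ker\sigma_B=\Ko_B(\ell^2(\N)\otimes B)$ implicit in the Busby-invariant formalism, whereas you prove it directly by exhibiting the matrix units $v_B^k\,b\,e\,(v_B^*)^l$ whose closed span is the $B$-compacts, and by inverting the induced map on $\mathcal{T}(B)/\Ko_B(\ell^2(\N)\otimes B)$ via the universal property of $B\rtimes\Z$. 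Your version is more self-contained and makes the surjectivity and the identification of the ideal explicit, at the cost of two mild presentational points: first, your step 1 invokes a universal property of $\mathcal{T}(B)$, which requires reading the paper's ``$C^*$-algebra generated by $B$ and an isometry $v_B$'' as the universal algebra on those generators and relations (Cuntz's Toeplitz algebra) rather than as the concrete subalgebra of $\End^*_B(\ell^2(\N)\otimes B)$ --- with the concrete reading one would instead deduce the existence of $\sigma_B$ from the multiplicativity of $\mathfrak{q}\circ\mathcal{T}$; second, since $s(x)=PxP$ is not multiplicative, the verification of $\sigma_B\circ s=\mathrm{id}$ should be phrased as a check on the dense \emph{linear span} of the elements $bu_B^k$ followed by continuity, which is in fact what your computation does.
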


\begin{proof}
We can identify $B\rtimes \Z$ with the $C^*$-subalgebra of $\mathrm{End}^*_B(\ell^2(\Z)\otimes B)$ generated by the image of 
\[B\ni b\mapsto \oplus_k \beta^k(b)\in \mathrm{End}^*_B(\oplus _{k\in \Z} B)\quad\mbox{and the unitary}\quad u_B(x_k)_{k\in \Z}:=(x_{k-1})_{k\in \Z}.\]
Let $P:\ell^2(\Z)\otimes B\to \ell^2(\N)\otimes B$ denote the orthogonal projection. The adjointable operator $P$ is $U(1)$-equivariant. It is clear that the $B$-linear mapping 
\[\mathcal{T}:B\rtimes \Z\to  \mathcal{T}(B), \; b\mapsto PbP\]
is a $U(1)$-equivariant completely positive splitting of $\sigma_B$. Let 
\[\mathfrak{q}:\mathrm{End}^*_B(\ell^2(\N)\otimes B)\to \mathcal{Q}_B( \ell^2(\N)\otimes B):=\mathrm{End}^*_B(\ell^2(\N)\otimes B)/\Ko_B(\ell^2(\N)\otimes B)\]
denote the quotient mapping. Once we prove that $\mathfrak{q}\circ\mathcal{T}$ is a $*$-homomorphism, the Lemma follows. The operator $P$ commutes with the $B$-action on $ \ell^2(\Z)\otimes B$. Furthermore, if we let $e_k$ denote the standard basis for $\ell^2(\Z)$, then
\[[P,u_B](e_{k}\otimes x)=
\begin{cases}
0, \; k\neq -1\\
e_{0}\otimes x, \;k=-1.
\end{cases}\]
In particular, $[P,u_B]\in \Ko(\ell^2(\Z))\otimes 1_B\subseteq \Ko_B(\ell^2(\Z)\otimes B)$. It follows that $[P,b]\in \Ko_B(\ell^2(\Z)\otimes B)$ for any $b\in B\rtimes \Z$. Hence $\mathfrak{q}\circ\mathcal{T}$ is a $*$-homomorphism.
\end{proof}

\begin{lem}
\label{toeplitzhomotopy}
There is a $U(1)$-equivariant homotopy $\mathcal{T}(B)\sim_h B$ with trivial $U(1)$-action on $B$.
\end{lem}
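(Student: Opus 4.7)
The plan is to establish the $U(1)$-equivariant homotopy equivalence $\mathcal{T}(B)\sim_h B$ by exhibiting the natural inclusion on one side together with a $U(1)$-equivariant retraction (after stabilization by $\Ko$) on the other, then constructing an explicit path of $*$-homomorphisms interpolating between the identity on $\mathcal{T}(B)$ and the composite.

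One direction is essentially automatic: the inclusion $\iota:B\hookrightarrow\mathcal{T}(B)$ is $U(1)$-equivariant with the trivial action on $B$, because by construction the $U(1)$-action on $\mathcal{T}(B)$ acts as $z\cdot v_B=zv_B$ and fixes every element of the subalgebra $B$ pointwise. Moreover, $\iota$ lands in the fixed-point algebra $\mathcal{T}(B)^{U(1)}$, so in particular it commutes with the $U(1)$-action.

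For the retraction I plan to use the faithful representation of $\mathcal{T}(B)$ on the Fock-like Hilbert $B$-module $\mathcal{H}_B=\bigoplus_{k=0}^{\infty}B$, placing the $k$-th summand in $U(1)$-weight $k$ so that the representation is $U(1)$-equivariant. Observe that the projection $p_0=1-v_Bv_B^*$ onto the $0$-th summand is $U(1)$-invariant and identifies the corner $p_0\mathcal{T}(B)p_0$ with $B$. Passing to the stabilization $\mathcal{T}(B)\minotimes\Ko$ (with $\Ko$ carrying an appropriate $U(1)$-action coming from the grading on $\ell^2(\N)$), the equivariant version of Kasparov's stabilization theorem yields a $U(1)$-equivariant $*$-homomorphism $\pi:\mathcal{T}(B)\minotimes\Ko\to B\minotimes\Ko$ whose restriction along a rank-one projection implements the identification $p_0\mathcal{T}(B)p_0\cong B$. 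The composite $\pi\circ(\iota\otimes\id_\Ko)$ will equal the identity on $B\minotimes\Ko$ up to the standard unitary equivalence given by the obvious rank-one projection, hence will be homotopic to it.

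Finally, I would produce the homotopy between $\mathrm{id}_{\mathcal{T}(B)\minotimes\Ko}$ and $(\iota\otimes\id_\Ko)\circ\pi$ by a Cuntz-style rotation in the stabilized multiplier algebra. Concretely, I would construct a norm-continuous path of $U(1)$-equivariant isometries $V_t\in M(\mathcal{T}(B)\minotimes\Ko)$ with $V_0=v_B\otimes 1$ and $V_1$ conjugate, inside the equivariant multipliers, to an isometry absorbed by $B\minotimes\Ko$; applying the universal property of $\mathcal{T}(B)$ to the family $(b,V_t)$ then produces the desired path of $*$-homomorphisms. The main obstacle is $U(1)$-equivariance of this path: equivariance forces each $V_t$ to lie in the $U(1)$-weight-one part of $M(\mathcal{T}(B)\minotimes\Ko)$, and interpolation between two such isometries with different endpoints requires sufficient room, which is exactly what Kasparov's equivariant stabilization provides, by absorbing the grading of $\mathcal{H}_B$ into that of the stabilizing $\ell^2(\N)$.
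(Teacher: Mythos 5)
Your proposal fails at its load-bearing step, and the failure is instructive. The ``$U(1)$-equivariant $*$-homomorphism $\pi:\mathcal{T}(B)\minotimes\Ko\to B\minotimes\Ko$'' that you extract from equivariant Kasparov stabilization does not exist: the stabilization theorem is a statement about Hilbert modules ($E\oplus \mathcal{H}_B\cong \mathcal{H}_B$ unitarily) and produces no algebra morphisms, while compression to the vacuum corner, $a\mapsto p_0ap_0$ with $p_0=1-v_Bv_B^*$, is only a completely positive map. The two-sided ideal of $\mathcal{T}(B)$ generated by $p_0$ is the Toeplitz ideal $\Ko_B(\ell^2(\N)\otimes B)$, whose quotient is $B\rtimes\Z$ by the extension \eqref{sestoeplitz} -- not $B$ -- so there is no homomorphic retraction extending your corner identification. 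Concretely, for $B=\C$ and $\beta=\id$ (so $\mathcal{T}(B)$ is the classical Toeplitz algebra $\mathcal{T}$), every $*$-homomorphism $\mathcal{T}\minotimes\Ko\to\Ko$ must annihilate the essential simple ideal $\Ko\minotimes\Ko$: otherwise it would be injective, which is impossible since $C^*$-subalgebras of $\Ko$ are $c_0$-direct sums of elementary $C^*$-algebras, whereas $\mathcal{T}\minotimes\Ko$ has the quotient $C(S^1)\minotimes\Ko$ and is not of this form.

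The concluding ``Cuntz-style rotation'' is then also obstructed, and in fact no argument of your shape can succeed, because the statement in its strong literal reading is not available: along any homotopy $(\Phi_t)$ of $*$-homomorphisms, $\Phi_t(p)$ is a norm-continuous path of projections for each projection $p$, so $\|\Phi_t(p)\|\in\{0,1\}$ is constant in $t$; since any composite $\mathcal{T}\to\C\to\mathcal{T}$ (stabilized or not) kills the nonzero vacuum projection $1-ss^*$ while the identity does not, the identity of $\mathcal{T}$ is homotopic to no map factoring through $\C$. The lemma must therefore be read -- and this is how the paper uses it, in the corollary deriving the distinguished triangle -- as the assertion that the equivariant inclusion $\iota:B\to\mathcal{T}(B)$ becomes invertible under every stable, homotopy-invariant, split-exact functor, i.e.\ that $\iota$ is a $KK^{U(1)}$-equivalence. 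The paper gives no argument of its own, deferring to Cuntz \cite{cuntztoeplitz}, whose homotopies are between quasi-homomorphisms (pairs of morphisms, or morphisms with an added degenerate summand); it is exactly this relaxation that evades the projection rigidity above. Your plan, which insists on honest $*$-homomorphisms between the stabilized algebras and on a path of endomorphisms obtained from the universal property, cannot be repaired within its own terms; note also that your isometries $V_t$ live in the multiplier algebra, so even if the path existed, the universal property would only yield multiplier-valued morphisms rather than the endomorphisms of $\mathcal{T}(B)\minotimes\Ko$ that a homotopy equivalence requires.
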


For a proof, see \cite{cuntztoeplitz}. Let $\iota_B:B\to B\rtimes \Z$ denote the embedding.

\begin{cor}
The morphism $[\mathcal{T}]\in KK^{U(1)}_1(B\rtimes \Z, B)$ defined from the invertible extension class \eqref{sestoeplitz} fits into a distinguished triangle in $KK^{U(1)}$:
\[\xymatrix@C=0.8em@R=2.71em{
B\ar[rr]^{1-\beta} & & B \ar[dl]^{\iota_B}\\
  &B\rtimes \Z, \ar[ul]|-\circ^{[\mathcal{T}]}
}\]
using the homotopy of Proposition \ref{toeplitzhomotopy} and the Morita equivalence $\Ko_B(\ell^2(\N)\otimes B)\sim_M B$. 
\end{cor}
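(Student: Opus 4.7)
My plan is to obtain the triangle as a direct consequence of the short exact sequence \eqref{sestoeplitz}, after identifying each of its three vertices and the two non-boundary morphisms in $KK^{U(1)}$.

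First I would invoke the general fact that in the triangulated category $KK^{U(1)}$, any $U(1)$-equivariant semisplit short exact sequence yields a distinguished triangle whose boundary is the associated extension class. Applied to \eqref{sestoeplitz}, this produces the triangle
\[\Ko_B(\ell^2(\N)\otimes B) \xrightarrow{j} \mathcal{T}(B) \xrightarrow{\sigma_B} B\rtimes \Z \xrightarrow{[\mathcal{T}]} \Ko_B(\ell^2(\N)\otimes B)[1],\]
in which the boundary is $[\mathcal{T}]$ by the very definition of the connecting morphism attached to \eqref{sestoeplitz}. I would then substitute $\mathcal{T}(B)$ by $B$ via Lemma \ref{toeplitzhomotopy} and $\Ko_B(\ell^2(\N)\otimes B)$ by $B$ via the canonical Morita equivalence, both of which are $U(1)$-equivariant.

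Next I would identify $\sigma_B$, after the $\mathcal{T}(B)\sim_h B$ substitution, with the inclusion $\iota_B$. The homotopy inverse $B\to\mathcal{T}(B)$ from \cite{cuntztoeplitz} is implemented by the $C^*$-algebraic inclusion $b\mapsto \bigoplus_k \beta^k(b)$, and the composition of this with $\sigma_B$ is literally $\iota_B:B\to B\rtimes\Z$. This step is formal.

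The main obstacle is showing that the first morphism becomes $1-\beta$. The Morita bimodule sends $1_B\in K_0(B)$ to the minimal projection $p_0:=1-v_B v_B^*\in \Ko_B(\ell^2(\N)\otimes B)$, whose image under $j$ is a class in $K_0(\mathcal{T}(B))$. Transporting this class back to $K_0(B)$ via the homotopy equivalence of Lemma \ref{toeplitzhomotopy} yields $1_*-\beta_*$: using that the defining relation $v_B b v_B^* = \beta(b)$ forces the identity $[1_{\mathcal{T}(B)}]=[v_Bv_B^*]+[p_0]$ in $K_0(\mathcal{T}(B))$, while the corner endomorphism $a\mapsto v_B^* a v_B$ restricted to $B\subset \mathcal{T}(B)$ implements $\beta$, so the class $[v_Bv_B^*]$ pulls back to $\beta_*[1_B]$. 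This is the $U(1)$-equivariant refinement of the $K$-theoretic computation carried out in \cite{cumero} for the Pimsner--Voiculescu triangle, and equivariance is preserved throughout because every map, splitting and homotopy in sight has been arranged to be $U(1)$-equivariant.
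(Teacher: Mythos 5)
Your overall skeleton --- take the distinguished triangle of the semisplit sequence \eqref{sestoeplitz}, replace $\mathcal{T}(B)$ by $B$ via Lemma \ref{toeplitzhomotopy} and $\Ko_B(\ell^2(\N)\otimes B)$ by $B$ via the Morita equivalence, and observe that $\sigma_B$ restricted to the embedded copy of $B$ is literally $\iota_B$ --- is exactly the route the paper (following \cite{cumero}) intends. The gap is in your identification of the remaining arrow with $1-\beta$. What must be identified is an element of $KK_0^{U(1)}(B,B)$, namely the class of the composite $B\to \Ko_B(\ell^2(\N)\otimes B)\to\mathcal{T}(B)$ transported back along the $KK$-equivalence $B\simeq\mathcal{T}(B)$. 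You attempt this by tracking the single class $[1_B]\in K_0(B)$. A $KK$-morphism is not determined by its value on one $K_0$-class, and here the test is moreover vacuous: since $\beta$ is a unital automorphism, $\beta_*[1_B]=[1_B]$, so $(1-\beta)_*[1_B]=0$; on the other side $[p_0]=[1]-[v_Bv_B^*]=0$ in $K_0(\mathcal{T}(B))$ because $v_B$ is an isometry. Your computation therefore verifies $0=0$ and cannot distinguish $1-\beta$ from, say, the zero morphism. (You cite the computation in \cite{cumero}, which is the right reference, but the argument there is carried out at the level of $KK$-classes of $*$-homomorphisms, not on the class of the unit; your sketch does not reproduce its actual content.)

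The identification has to be made at the level of homomorphisms. Write $\iota_{\mathcal{T}}:B\to\mathcal{T}(B)$ for the inclusion, $j:\Ko_B(\ell^2(\N)\otimes B)\to\mathcal{T}(B)$ for the ideal inclusion, and $e:B\to\Ko_B(\ell^2(\N)\otimes B)$ for the corner embedding into the $0$-th summand implementing the Morita equivalence, so that $j\circ e(b)=(1-v_Bv_B^*)\iota_{\mathcal{T}}(b)(1-v_Bv_B^*)$. In the defining representation one has $v_B^*\iota_{\mathcal{T}}(b)v_B=\iota_{\mathcal{T}}(\beta(b))$, whence the orthogonal decomposition of $*$-homomorphisms
\begin{equation*}
\iota_{\mathcal{T}}(b)=(1-v_Bv_B^*)\,\iota_{\mathcal{T}}(b)\,(1-v_Bv_B^*)+v_B\,\iota_{\mathcal{T}}(\beta(b))\,v_B^*.
\end{equation*}
Since conjugation by the $U(1)$-homogeneous isometry $v_B$ does not change a $KK^{U(1)}$-class, this gives $[\iota_{\mathcal{T}}]=[j\circ e]+[\beta]\otimes_B[\iota_{\mathcal{T}}]$ in $KK_0^{U(1)}(B,\mathcal{T}(B))$, i.e.\ $[j\circ e]=(1-[\beta])\otimes_B[\iota_{\mathcal{T}}]$, and composing with $[\iota_{\mathcal{T}}]^{-1}$ identifies the first arrow of the triangle as $1-\beta$. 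This homomorphism-level identity (together with its equivariance, which does hold because $v_Bv_B^*$ is gauge-invariant) is the step your proposal is missing.
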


As a consequence of the Corollary, after setting $B=F_A\minotimes \Ko$ and equipping it with its dual $\Z$-action coming from $F_A\minotimes \Ko\cong O_A\rtimes U(1)\minotimes \Ko$, what remains to prove of Theorem \ref{pimsnervoic} is to show that $[\mathcal{T}]$ coincides with the gauge element in $KK_1(O_A,F_A)$ after Takesaki-Takai duality $O_A\minotimes \Ko\cong (F_A\minotimes \Ko)\rtimes \Z$. We first construct an unbounded representative for the Pimsner-Voiculescu element $[\mathcal{T}]\in KK^{U(1)}_1(B\rtimes \Z, B)$.\newline

Before constructing this, let us make a series of minor remarks placing the algebra above in a more analytic framework. The Fourier transform induces an isomorphism $L^2(S^1)\cong \ell^2(\Z)$ which in turn produces an isomorphism $C(S^1)\cong C^*(\Z)$ intertwining the pointwise action of the former with the left regular representation of the latter. The image of $\ell^2(\N)$ under the Fourier transform is $H^2(S^1)$ -- the Hardy space consisting of functions in $L^2(S^1)$ with a holomorphic extension to the interior of $S^1\subseteq \C$. The analogy of the projection $P$ in this picture is the projection of $L^2(S^1)\otimes B$ onto those $B$-valued functions on $S^1$ with a holomorphic extension to the interior. We note that 
\[L^{2}(S^{1})\otimes B\cong \left\{(b_k)_{k\in \Z}\in \prod_{k\in \Z} B\,\bigg|\sum_{k\in \Z} b_k^*b_k<\infty\right\}.\]

\begin{prop}
\label{al2asclosure}
There is a natural unitary $U(1)$-equivariant isomorphism of $B\rtimes \Z-B$-Hilbert $C^*$-modules
\[L^2(S^1)\otimes B\cong \overline{B\rtimes \Z},\]
where the closure is taken in $B$-valued scalar product $\langle a,b\rangle:=E(a^*b)$. 
\end{prop}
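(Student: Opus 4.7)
The natural approach is Fourier decomposition with respect to the dual $U(1)$-action, combined with the identification $L^2(S^1)\otimes B\cong \ell^2(\Z)\otimes B$ implicit in the paragraph preceding the proposition. It therefore suffices to produce a $U(1)$-equivariant unitary of $(B\rtimes\Z,B)$-Hilbert $C^*$-bimodules between $\overline{B\rtimes\Z}$ and the concrete module of square-summable sequences $\{(b_k)_{k\in\Z}\in\prod_k B:\sum_k b_k^*b_k<\infty\}$.

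The first step is to define, on the algebraic crossed product, the $B$-linear map
\[\Phi: \sum_k a_k u_B^k \;\mapsto\; \bigl(\beta^{-k}(a_k)\bigr)_{k\in\Z}.\]
The key calculation — pushing $u_B^{-k}$ through scalars via $u_B^{-k}c = \beta^{-k}(c)u_B^{-k}$ and using that $E$ kills all $u_B^k$ with $k\ne 0$ — yields $E(a^*b)=\sum_k\beta^{-k}(a_k^*b_k)=\sum_k\Phi(a)_k^*\Phi(b)_k$, which is precisely the standard $B$-valued inner product on $\ell^2(\Z)\otimes B$. Hence $\Phi$ is isometric on the algebraic crossed product (which is dense in $\overline{B\rtimes\Z}$ by construction), and extends by continuity to an isometry into $\ell^2(\Z)\otimes B$. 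Its image contains every finitely supported sequence, since $cu_B^k\mapsto \beta^{-k}(c)$ placed in the $k$-th slot, so $\Phi$ is surjective and thus unitary.

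It remains to verify compatibility with the bimodule structures. For $U(1)$-equivariance: the dual action satisfies $z.(a_k u_B^k)=z^k a_k u_B^k$, so $\Phi$ intertwines it with the action $(b_k)\mapsto(z^k b_k)$, which is exactly the Fourier transform of the rotation action on $L^2(S^1)\otimes B$. For the left $B\rtimes\Z$-action, a direct computation shows $(cu_B^j \cdot x)_m = \beta^{-m}(c)\, x_{m-j}$, which is the representation of $B\rtimes \Z$ on $\bigoplus_{k\in\Z} B$ described in the text preceding the lemma that established the extension \eqref{sestoeplitz}. No serious obstacle arises; the whole proposition is a bookkeeping exercise once one recognizes that the correct twist $a_k u_B^k\mapsto \beta^{-k}(a_k)$ is dictated by the appearance of $\beta^{-k}$ in the inner product $E(a^*b)$.
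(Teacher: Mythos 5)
Your proof is correct. The paper states Proposition \ref{al2asclosure} without proof, and your argument — the twisted Fourier-coefficient map $\Phi(\sum_k a_k u_B^k)=(\beta^{-k}(a_k))_k$, isometric because $E(a^*b)=\sum_k\beta^{-k}(a_k^*b_k)$, with dense range, hence unitary — is exactly the standard bookkeeping the paper implicitly relies on. One small remark: the left action you compute, $(cu_B^j\cdot x)_m=\beta^{-m}(c)x_{m-j}$, is the one satisfying the covariance relation $u_B\pi(c)u_B^*=\pi(\beta(c))$ with the shift $(u_Bx)_k=x_{k-1}$; the paper's displayed formula $b\mapsto\oplus_k\beta^k(b)$ has the opposite sign in the exponent and, taken literally, would implement $\beta^{-1}$ instead — so your version is the correct one and the discrepancy is a convention slip in the text, not a gap in your argument.
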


We define 
\[W^{1,2}(S^1,B):=\left\{(b_k)_{k\in \Z}\in \prod_{k\in \Z} B:\sum_k k^2b_k^*b_k<\infty\right\},\]
and the $U(1)$-equivariant $B$-linear unbounded operator $D_{B\rtimes \Z}$ on $ L^2(S^1)\otimes B$ on an elementary tensor by
\[D_{B\rtimes \Z}(e_k\otimes x):=k e_k\otimes x\]
and extending it to the domain $W^{1,2}(S^1,B)$ by continuity. If $y\in W^{1,2}(S^1,B)$ then 
\[D_{B\rtimes \Z}y=i\frac{d}{d\theta} \left(\e^{-i\theta}.y\right)|_{\theta=0}.\]

\begin{prop}
\label{dazoperator}
The operator $D_{B\rtimes \Z}$ with domain $W^{1,2}(S^1,B)$ gives a $U(1)$-equivariant unbounded $(B\rtimes \Z,B)$-cycle .
\end{prop}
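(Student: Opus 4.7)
The plan is to verify in turn the defining properties of an unbounded $KK$-cycle: selfadjointness and regularity of $D_{B\rtimes\Z}$, local $\Ko_B$-compactness of the resolvent, existence of a dense Lipschitz subalgebra, and $U(1)$-equivariance.

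I would start from the key structural observation that, under the decomposition $L^{2}(S^{1})\otimes B=\widehat{\bigoplus}_{k\in\Z}(e_{k}\otimes B)$ coming from Proposition~\ref{al2asclosure}, the operator $D_{B\rtimes\Z}$ is diagonal, acting as multiplication by $k\in\Z$ on the summand $e_{k}\otimes B$. Selfadjointness on the core $W^{1,2}(S^{1},B)$ is then immediate. For regularity it suffices to show that $(D_{B\rtimes\Z}\pm i)$ is surjective onto $L^{2}(S^{1})\otimes B$, and for $y=\sum_{k}e_{k}\otimes y_{k}$ the explicit candidate $x=\sum_{k}(k\pm i)^{-1}e_{k}\otimes y_{k}$ lies in $W^{1,2}(S^{1},B)$ because $k^{2}/|k\pm i|^{2}\le 1$ yields the required norm estimate in the $B$-valued inner product.

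For local $\Ko_B$-compactness of the resolvent I would write $(D_{B\rtimes\Z}\pm i)^{-1}=\sum_{k}(k\pm i)^{-1}P_{k}$, where $P_{k}$ is the rank-one projection onto $e_{k}\otimes B$; each $P_{k}\in\Ko_{B}(L^{2}(S^{1})\otimes B)$, and since $|k\pm i|^{-1}\to 0$ the series converges in operator norm, so the resolvent itself lies in $\Ko_{B}$, and consequently so does $a\,(D_{B\rtimes\Z}\pm i)^{-1}$ for every $a\in B\rtimes\Z$.

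For the dense Lipschitz subalgebra I would take the algebraic crossed product $B\rtimes^{\mathrm{alg}}\Z$, generated by $B$ and the unitary $u_{B}$. Under the identification of Proposition~\ref{al2asclosure}, $b\in B$ preserves the $\Z$-grading while $u_{B}$ shifts it by $+1$; a direct computation on the generators then gives $[D_{B\rtimes\Z},b]=0$ for $b\in B$ and $[D_{B\rtimes\Z},u_{B}]=u_{B}$, both bounded, and the same computation shows that the generators preserve $W^{1,2}(S^{1},B)$. Finally, $U(1)$-equivariance is automatic: both the dual action on $B\rtimes\Z$ and the action on $L^{2}(S^{1})\otimes B$ act by $z^{k}$ on the $k$-th spectral subspace, which $D_{B\rtimes\Z}$ preserves pointwise. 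No step looks genuinely hard; the subtlest point is simply to match the two $U(1)$-gradings through the Fourier-type identification, but this is forced by construction.
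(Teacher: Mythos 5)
Your proof is correct. The paper itself does not argue this proposition at all --- it simply defers to \cite[Section 2]{careyetal11}, where the analogous statement is proved in the generality of an arbitrary circle action satisfying the spectral subspace condition. Your argument is the direct, self-contained verification for the special case at hand, exploiting that $D_{B\rtimes\Z}$ is literally diagonal over the $\Z$-grading of $L^{2}(S^{1})\otimes B\cong\widehat{\bigoplus}_{k}e_{k}\otimes B$: the resolvent formula $(D_{B\rtimes\Z}\pm i)^{-1}=\sum_{k}(k\pm i)^{-1}P_{k}$ with $P_{k}=\theta_{e_{k}\otimes 1,e_{k}\otimes 1}$ a rank-one module projection (using that $B$ is unital) gives norm-convergence and hence global $\Ko_{B}$-compactness, which is stronger than the local compactness required; the surjectivity of $D_{B\rtimes\Z}\pm i$ via the explicit preimage with the bound $k^{2}/|k\pm i|^{2}\le 1$ is exactly the regularity criterion the paper uses elsewhere (e.g.\ in the proof of Proposition \ref{kappaprop}); and the commutator computation $[D_{B\rtimes\Z},b]=0$, $[D_{B\rtimes\Z},u_{B}]=u_{B}$ on the algebraic crossed product is the expected Lipschitz algebra. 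What the citation to \cite{careyetal11} buys is uniformity --- the same statement for any $U(1)$-$C^*$-algebra with the spectral subspace property --- while your computation buys transparency and keeps the paper self-contained; both are valid, and no step of yours has a gap.
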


For a proof, see \cite[Section $2$]{careyetal11}.

\begin{lem}
The bounded transform of $D_{B\rtimes \Z}$ is a compact perturbation of the $(B\rtimes\Z,B)$-Kasparov module $(L^2(S^1)\otimes B, 2P-1)$. Especially; $[D_{B\rtimes \Z}]=[\mathcal{T}]\in KK_1^{U(1)}(B\rtimes \Z,B)$.
\end{lem}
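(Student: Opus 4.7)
The plan is to decompose $L^{2}(S^{1})\otimes B$ into its isotypical components for the $U(1)$-action and check the two operators coincide on each component up to scalar diagonal coefficients that tend to zero. By Proposition \ref{al2asclosure}, the module $L^{2}(S^{1})\otimes B$ splits as an orthogonal sum of $B$-submodules $\mathrm{span}_{B}\{e_{k}\otimes 1_{B}\}$ indexed by $k\in\Z$. On the $k$-th such summand the operator $D_{B\rtimes\Z}$ acts as multiplication by the scalar $k$, hence its bounded transform $F_{D}:=D_{B\rtimes\Z}(1+D_{B\rtimes\Z}^{2})^{-\frac{1}{2}}$ acts as the scalar $k/\sqrt{1+k^{2}}$. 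The projection $P$ from Proposition \ref{al2asclosure} sends $e_{k}\otimes x$ to itself for $k\geq 0$ and to $0$ for $k<0$, so $2P-1$ acts as $\epsilon_{k}$ on the $k$-th summand, with $\epsilon_{k}=+1$ for $k\geq 0$ and $\epsilon_{k}=-1$ for $k<0$.

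Next I would observe that $F_{D}-(2P-1)$ is diagonal in this decomposition with scalar coefficients $\lambda_{k}:=k/\sqrt{1+k^{2}}-\epsilon_{k}$, and $\lambda_{k}\to 0$ as $|k|\to\infty$. Since $B$ is unital, the finite-rank adjointable operators
\[T_{N}:=\sum_{|k|\leq N}\lambda_{k}\,|e_{k}\otimes 1_{B}\rangle\langle e_{k}\otimes 1_{B}|\in\Ko_{B}(L^{2}(S^{1})\otimes B)\]
converge to $F_{D}-(2P-1)$ in adjointable operator norm, so $F_{D}-(2P-1)\in\Ko_{B}(L^{2}(S^{1})\otimes B)$. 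The difference is manifestly $U(1)$-equivariant since each spectral subspace is $U(1)$-stable.

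For the second statement, I would invoke the standard identification between invertible extension classes and Kasparov modules. The Kasparov module $(L^{2}(S^{1})\otimes B, 2P-1)$ is, by construction, the representative of $[\mathcal{T}]\in KK_{1}^{U(1)}(B\rtimes\Z, B)$ associated to the completely positive splitting $\mathcal{T}:b\mapsto PbP$ of $\sigma_{B}$: the condition $[P,b]\in\Ko_{B}$ (proved just above the statement) is precisely what makes this a Kasparov module, and the $U(1)$-equivariance of both $P$ and the splitting ensures the identification holds equivariantly. Combining with the compact perturbation result of the preceding paragraph, and using that the $KK$-class of an unbounded Kasparov module depends only on its bounded transform modulo $B$-compact perturbations, yields $[D_{B\rtimes\Z}]=[\mathcal{T}]$.

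The only substantive point is the compactness of the diagonal difference operator in the $B$-module sense; once this is in hand, the identification of $[\mathcal{T}]$ with $(L^{2}(S^{1})\otimes B,2P-1)$ is essentially by definition of the Kasparov-Ext isomorphism applied to the semi-split sequence \eqref{sestoeplitz}.
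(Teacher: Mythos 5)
Your proof is correct and follows essentially the same route as the paper: both diagonalize over the $\Z$-isotypical decomposition, observe that the bounded transform acts by $k(1+k^2)^{-1/2}$ and differs from $\mathrm{sign}(k)$ (i.e.\ from $2P-1$) by a diagonal operator with coefficients decaying like $(2k)^{-1}$, and conclude compactness. The only cosmetic difference is that the paper delegates the compactness of such decaying diagonal operators to the spectral subspace condition and \cite[Lemma $2.4$]{careyetal11}, whereas you verify it directly by norm-approximation with finite-rank truncations, which works here because each isotypical summand $e_k\otimes B$ is a free rank-one $B$-module.
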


\begin{proof}
We have that 
\[D_{B\rtimes \Z}(1+D_{B\rtimes \Z}^2)^{-1/2}(e_k \otimes x):=k(1+k^2)^{-1/2} e_k\otimes x.\]
Since $k(1+k^2)^{-1/2}-\mathrm{sign}(k)\sim -(2k)^{-1}$ as $|k|\to \infty$ and $B\rtimes \Z$ satisfies the spectral subspace condition (see \cite[Definition $2.2$]{careyetal11}), the Proposition follows from \cite[Lemma $2.4$]{careyetal11}.
\end{proof}

We again turn our attention to the gauge cycle. It is possible to, in the Pimsner-Voiculescu sequence of the Cuntz-Krieger algebra, replace the Toeplitz element $[\mathcal{T}]$ of $F_A\minotimes\Ko$ by the gauge cycle for $O_A$. Recall its definition from above. We denote the class associated with the gauge cycle $(\mathpzc{E}^R,D_R)$ of a $U(1)-C^*$-algebra $A$ satisfying the spectral subspace condition by $[\mathpzc{E}^R,D_R]\in KK_1^{U(1)}(A,F)$. We note the following Proposition whose proof is left to the reader.

\begin{prop}
\label{moritarennie}
The image of $[\mathpzc{E}^R,D_R]$ under the isomorphism
\[j_{\Ko(L^2(S^1))}:KK_1^{U(1)}(A,F)\to KK_1^{U(1)}(\Ko(L^2(S^1))\minotimes A,F)\] 
associated with the $U(1)$-equivariant Morita equivalence $A\sim_M \Ko(L^2(S^1))\minotimes A$, where the right hand side is equipped with the diagonal $U(1)$-action, coincides with the class of 
$$( L^2(S^1)\otimes \mathpzc{E}^R,\,\id_{L^2(S^1)}\otimes D_{R})$$ 
where the $(\Ko(L^2(S^1))\minotimes A,F)$-Hilbert $C^*$-module $L^{2}(S^{1})\otimes \mathpzc{E}^R$ is equipped with the diagonal $U(1)$-action.
\end{prop}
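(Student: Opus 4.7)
The plan is to show the statement by computing the Kasparov product directly and verifying that $(L^{2}(S^{1})\otimes\mathpzc{E}^{R},\,\id\otimes D_{R})$ satisfies Kucerovsky's criteria for representing the product. The isomorphism $j_{\Ko(L^{2}(S^{1}))}$ is by definition Kasparov product with the Morita equivalence class, which is represented by the $U(1)$-equivariant $(\Ko(L^{2}(S^{1}))\minotimes A,A)$-Kasparov module $(L^{2}(S^{1})\otimes A,0)$. Here $L^{2}(S^{1})$ carries the regular representation of $U(1)$, $L^{2}(S^{1})\otimes A$ carries the diagonal $U(1)$-action, the right $A$-action is on the second factor, and the left $\Ko(L^{2}(S^{1}))\minotimes A$-action is the obvious one. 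One checks routinely that these structures are $U(1)$-equivariant.

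The first step is to identify the internal tensor product. Under the map $(\xi\otimes a)\otimes_{A}e\mapsto \xi\otimes ae$, one verifies that
\[
(L^{2}(S^{1})\otimes A)\otimes_{A}\mathpzc{E}^{R}\;\xrightarrow{\sim}\;L^{2}(S^{1})\otimes\mathpzc{E}^{R},
\]
as $U(1)$-equivariant $(\Ko(L^{2}(S^{1}))\minotimes A,\,F)$-Hilbert $C^{*}$-modules, by comparing inner products and actions. Thus the underlying Hilbert module of the product is correctly identified.

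The next step is to verify the Kucerovsky conditions for the operator $\id\otimes D_{R}$. Since the operator on the Morita cycle is zero, the positivity (Connes-Skandalis) condition is automatic, as is the domain condition $\mathrm{Dom}(\id\otimes D_{R})\subseteq\mathrm{Dom}(0\otimes 1)$. Only the connection condition remains: for $x=\xi\otimes a$ with $a$ in the Lipschitz algebra of $(\mathpzc{E}^{R},D_{R})$, and the creation operator $T_{x}:e\mapsto\xi\otimes ae$, a one-line calculation yields
\[
T_{x}D_{R}-(\id\otimes D_{R})T_{x}=-\xi\otimes[D_{R},a],
\]
which is bounded since $a\in\mathrm{Lip}$. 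As such Lipschitz elements are dense in $A$, the connection condition holds, so by Kucerovsky's theorem $(L^{2}(S^{1})\otimes\mathpzc{E}^{R},\,\id\otimes D_{R})$ represents the Kasparov product.

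Finally, equivariance of the product is automatic: the operator $D_{R}$ is by construction the infinitesimal generator of the $U(1)$-action on $\mathpzc{E}^{R}$, hence commutes with this action, so $\id\otimes D_{R}$ commutes with the diagonal $U(1)$-action on $L^{2}(S^{1})\otimes\mathpzc{E}^{R}$. The only genuine bookkeeping is keeping the $U(1)$-equivariance of the Morita bimodule straight, but no substantial analytical obstacle arises; all the ingredients are standard once the connection is checked.
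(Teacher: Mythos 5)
Your argument is correct and is precisely the verification the paper leaves to the reader: realize $j_{\Ko(L^2(S^1))}$ as the Kasparov product with the equivariant Morita cycle $(L^2(S^1)\otimes A,0)$, identify $(L^2(S^1)\otimes A)\otimes_A\mathpzc{E}^R\cong L^2(S^1)\otimes\mathpzc{E}^R$, and check Kucerovsky's conditions, where the vanishing of the operator on the Morita cycle trivializes the domain and positivity conditions and the connection condition reduces to boundedness of $[D_R,a]$ for $a$ in the Lipschitz algebra. The only point worth making explicit is the parity bookkeeping — the class is odd, so one applies Kucerovsky's criterion after formal Bott periodicity as in the paper's appendix — but this is routine and does not affect the argument.
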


The $A$-action on $\mathpzc{E}^R$ is by construction equivariant, hence there is an action of $A\rtimes U(1)$ on $\mathpzc{E}^R$. By \cite[Lemma $2.4.$ii]{careyetal11}, the spectral subspace assumption guarantees that this action induces a $*$-homomorphism $A\rtimes U(1) \to \Ko_F(\mathpzc{E}^R)$. We let $\mathpzc{G}$ denote the associated $(A\rtimes U(1),F)$-Hilbert $C^*$-module. We denote the associated class in $KK$-theory by $[\mathpzc{G}]\in KK_0^{U(1)}(A\rtimes U(1),F)$. Let us remark that $\mathpzc{G}=\,\mathpzc{E}^R$ as Banach spaces and as $(A,F)$-bimodules we have the equality
\begin{equation}
\label{geiso}
A\otimes_A\mathpzc{G}=\,\mathpzc{E}^R.
\end{equation}

\begin{lem}
If $A$ is a $U(1)-C^*$-algebra satisfying the spectral subspace assumption, and $F:=A^{U(1)}$, then 
\[j_{\Ko(L^2(S^1))}[\mathpzc{E}^R,D_R]=[D_{A\rtimes U(1)\rtimes \Z}]\otimes_{A\rtimes U(1)} [\mathpzc{G}]\quad\mbox{in}\quad KK_1^{U(1)}(\Ko(L^2(S^1))\minotimes A,F),\]
where $[D_{A\rtimes U(1)\rtimes \Z}]\in KK_1^{U(1)}( \Ko(L^2(S^1))\minotimes A,A\rtimes U(1))$ is the element constructed for the $\Z-C^*$-algebra $A\rtimes U(1)$ as in Proposition \ref{dazoperator}.
\end{lem}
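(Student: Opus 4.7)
The plan is to produce an explicit unbounded representative of the Kasparov product $[D_{A\rtimes U(1)\rtimes\Z}]\otimes_{A\rtimes U(1)}[\mathpzc{G}]$ via Kucerovsky's theorem, then exhibit a unitary transformation together with an operator homotopy relating it to the Morita-shifted gauge cycle of Proposition \ref{moritarennie}. Setting $B:=A\rtimes U(1)$, the first observation is that $D_{A\rtimes U(1)\rtimes\Z}$ commutes with the Cuntz-Toeplitz left action of $B$ on $L^{2}(S^{1})\otimes B$: since $b\in B$ acts via $\oplus_{k}\beta^{k}(b)$ and $D_{A\rtimes U(1)\rtimes\Z}(e_{k}\otimes x)=k(e_{k}\otimes x)$, the commutator vanishes on the spectral-subspace decomposition. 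Because $(\mathpzc{G},0)$ has zero operator, $D_{A\rtimes U(1)\rtimes\Z}\otimes 1$ descends to a regular self-adjoint operator on the interior tensor product $L^{2}(S^{1})\otimes B\otimes_{B}\mathpzc{G}\cong L^{2}(S^{1})\otimes\mathpzc{E}^{R}$, and Kucerovsky's conditions are met trivially for a product with a Morita-equivalence bimodule. This yields an unbounded representative of the product on $L^{2}(S^{1})\otimes\mathpzc{E}^{R}$ with left $\Ko(L^{2}(S^{1}))\otimes A$-action inherited from the Cuntz-Toeplitz representation via Takai duality.

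The central object is the $F$-linear unitary $V:L^{2}(S^{1})\otimes\mathpzc{E}^{R}\to L^{2}(S^{1})\otimes\mathpzc{E}^{R}$ obtained by pointwise application of the $U(1)$-action, i.e.\ $(Vf)(\theta):=\alpha_{\theta}(f(\theta))$; equivalently, on spectral-subspace basis vectors, $V(e_{k}\otimes y_{j})=e_{k+j}\otimes y_{j}$ whenever $y_{j}\in\mathpzc{E}^{R}$ has $U(1)$-degree $j$. A direct computation yields
\[
V^{-1}(D_{A\rtimes U(1)\rtimes\Z}\otimes 1)V \;=\; D_{A\rtimes U(1)\rtimes\Z}\otimes 1 \;+\; 1\otimes D_{R},
\]
and conjugation by $V$ identifies the Cuntz-Toeplitz representation of $(A\rtimes U(1))\rtimes\Z\cong\Ko(L^{2}(S^{1}))\otimes A$ with its natural diagonal representation ($\Ko(L^{2}(S^{1}))$ on the first factor, $A$ on $\mathpzc{E}^{R}$), which is precisely the representation appearing in Proposition \ref{moritarennie}. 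One then completes the argument by the straight-line operator homotopy $t\mapsto t(D_{A\rtimes U(1)\rtimes\Z}\otimes 1)+1\otimes D_{R}$ for $t\in[0,1]$, connecting the transformed product cycle at $t=1$ with the Morita-shifted gauge cycle at $t=0$. The compactness of $x(1+D_{t}^{2})^{-1}$ for $x\in\Ko(L^{2}(S^{1}))\otimes A$ holds uniformly in $t$, since $\Ko(L^{2}(S^{1}))$ already acts compactly on $L^{2}(S^{1})$ and the gauge-cycle axiom gives compactness of $a(D_{R}\pm i)^{-1}$ on $\mathpzc{E}^{R}$ for $a\in A$.

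The main obstacle lies in verifying that $V$ correctly intertwines the Cuntz-Toeplitz left action with the natural diagonal one. This requires a careful unpacking of the Takai duality isomorphism on every generator, tracking how $a\in A$, the circle-unitaries $u_{\theta}\in B$, and the $\Z$-shift transform under conjugation by $V$; here the $U(1)$-equivariance of $\mathpzc{E}^{R}$ is essential, as it is exactly the equivariant structure that implements $V$ and allows the "horizontal" operator $D_{A\rtimes U(1)\rtimes\Z}\otimes 1$ to absorb the "vertical" $1\otimes D_{R}$ without introducing compact-resolvent obstructions. A secondary analytic subtlety is ensuring that the straight-line homotopy remains within $U(1)$-equivariant unbounded Kasparov cycles, which amounts to controlling the off-diagonal cross terms in $D_{t}^{2}$ uniformly in $t$.
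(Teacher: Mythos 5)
Your argument follows the same skeleton as the paper's proof: since the Morita bimodule $[\mathpzc{G}]$ carries the zero operator, the product is represented by $D_{B\rtimes \Z}\otimes_{B}\mathrm{id}_{\mathpzc{G}}$ on $L^{2}(S^{1})\otimes\mathpzc{G}$ (here $B=A\rtimes U(1)$), and the identification with the Morita-shifted gauge cycle of Proposition \ref{moritarennie} is made by conjugating with the Takesaki-Takai duality unitary -- your $V$, pointwise application of the $U(1)$-action, is precisely the paper's $U_{0}f(g,h)=f(gh,h)$. Where you go beyond the paper is in making the operator transformation explicit: the computation $V^{-1}(D_{B\rtimes\Z}\otimes 1)V=D_{B\rtimes\Z}\otimes 1+1\otimes D_{R}$ is correct, and the residual term $D_{B\rtimes\Z}\otimes 1$, which the paper's own proof passes over in the final step ``the Lemma now follows from Proposition \ref{moritarennie}'', is removed by your straight-line homotopy; this is a genuine gain in completeness. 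The one point that deserves an explicit word of justification is the homotopy itself: $t\mapsto t(D_{B\rtimes\Z}\otimes 1)+1\otimes D_{R}$ is not a bounded perturbation, so ``operator homotopy'' has to be interpreted at the level of bounded transforms. Since all the operators in the path are simultaneously diagonalised by the decomposition of $L^{2}(S^{1})\otimes\mathpzc{E}^{R}$ into the finitely generated projective submodules $e_{k}\otimes(\mbox{weight-}j\mbox{ part})$, with eigenvalue $tk+j$, the family of bounded transforms $F_{t}$ is strictly continuous in $t$, and the conditions $\pi(x)(F_{t}^{2}-1)$ and $[F_{t},\pi(x)]$ compact hold norm-continuously in $t$ (reduce to $x=T\otimes a$ with $T$ finite rank in the $\Ko(L^{2}(S^{1}))$ factor and $a$ in the Lipschitz algebra of $D_{R}$); this yields a genuine Kasparov $C([0,1],F)$-module and hence a homotopy. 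Finally, invoking Kucerovsky's theorem for a product with a zero-operator Morita bimodule is harmless but unnecessary -- the interior tensor product argument the paper uses already suffices there.
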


\begin{proof}
To simplify notation, we set $B:=A\rtimes U(1)$ which is a $\Z-C^*$-algebra in its dual action. Using that $\left(L^2(S^1)\otimes B\right)\otimes_{B}\,\mathpzc{G}\cong L^2(S^1)\otimes \,\mathpzc{G}$, the class $[D_{B\rtimes \Z}]\otimes _B[\mathpzc{G}] \in KK_1(B\rtimes \Z,F)$ can be represented by the $U(1)$-equivariant $(B\rtimes \Z,F)$-Kasparov cycle 
\[\left( L^2(S^1)\otimes \,\mathpzc{G},D_{B\rtimes \Z}\otimes_{B} \mathrm{id}_{\mathpzc{G}} \right).\]

Define the unitary $U\in \mathrm{End}^*_F( L^{2}(S^{1})\otimes\mathpzc{G})$ by representing the unitary $U_0\in \mathcal{M}(C^*(U(1))\minotimes C(U(1)))$ which is defined as an operator on $L^2(U(1))\otimes L^2(U(1))$ via
\[U_0f(g,h)=f(gh,h).\]
The unitary $U$ implements Takesaki-Takai duality giving an isomorphism of $(B\rtimes \Z,F)$-Hilbert $C^*$-modules
\[L^2(S^1)\otimes\mathpzc{G}\cong L^{2}(S^{1})\otimes\,\mathpzc{E}^R\]
where the left hand carries the structure of a $(B\rtimes \Z,F)$-Hilbert $C^*$-module under Takesaki-Takai duality $B\rtimes \Z\cong \Ko(L^2(S^1))\minotimes A$ and the $U(1)$-action is diagonal. The Lemma now follows from Proposition \ref{moritarennie}.
\end{proof}

\begin{cor}
Under the mapping $KK_1(O_A,F_A)\to KK_1((\Ko\minotimes F_A)\rtimes \Z,\Ko\minotimes F_A)$ induced from the Morita equivalence $O_A\sim_M (\Ko\minotimes F_A)\rtimes \Z$, the gauge element $[D_R]$ is mapped to the Toeplitz element $[D_{(\Ko\minotimes F_A)\rtimes \Z}]$.
\end{cor}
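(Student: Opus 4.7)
The plan is to deduce this Corollary essentially as a specialization of the preceding Lemma, combined with Takesaki--Takai duality and the identifications furnished by Proposition \ref{betactionfa}. First I would set $A = O_A$ and $F = F_A$ in the Lemma, which yields
\[j_{\Ko(L^2(S^1))}[\mathpzc{E}^R,D_R]=[D_{O_A\rtimes U(1)\rtimes \Z}]\otimes_{O_A\rtimes U(1)} [\mathpzc{G}]\]
in $KK_1^{U(1)}(\Ko(L^2(S^1))\minotimes O_A, F_A)$, and then forget the $U(1)$-equivariance.

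Next I would unwind the algebra isomorphisms. By Proposition \ref{betactionfa} (via Theorem \ref{cstarisoofsub}) there is a natural isomorphism $O_A\rtimes U(1) \cong F_A\minotimes \Ko$ under which the dual $\Z$-action on the left hand side corresponds to the action $\beta$ on $F_A\minotimes \Ko$. Composing with Takesaki--Takai duality gives a chain of $\Z$-equivariant isomorphisms
\[\Ko(L^2(S^1))\minotimes O_A \;\cong\; O_A\rtimes U(1) \rtimes \Z \;\cong\; (F_A\minotimes\Ko)\rtimes_{\beta}\Z,\]
and under this chain the Toeplitz class $[D_{O_A\rtimes U(1)\rtimes \Z}]$ of Proposition \ref{dazoperator} is sent precisely to the Toeplitz class $[D_{(\Ko\minotimes F_A)\rtimes \Z}]$, simply because the construction of the latter depends functorially on the underlying $\Z$-$C^*$-algebra.

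The remaining step is to recognize the external Kasparov product with $[\mathpzc{G}]$ as the Morita equivalence isomorphism on the target side. By construction $\mathpzc{G}$ is the completion of $O_A$ as an $(O_A\rtimes U(1), F_A)$-bimodule, and the spectral subspace condition (see \cite[Lemma 2.4.ii]{careyetal11}) guarantees that the left action factors through a $*$-isomorphism $O_A\rtimes U(1)\xrightarrow{\sim}\Ko_{F_A}(\mathpzc{E}^R)$; hence $\mathpzc{G}$ implements the strong Morita equivalence $O_A\rtimes U(1)\sim_M F_A$ that, upon stabilization by $\Ko$, matches the identification $F_A\minotimes \Ko\cong O_A\rtimes U(1)$ used above. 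Kasparov product with $[\mathpzc{G}]$ therefore realizes the isomorphism $KK_*(\,\cdot\,,O_A\rtimes U(1))\to KK_*(\,\cdot\,,F_A)$ underlying the target side of the stated map, completing the identification of $j_{\Ko(L^2(S^1))}[\mathpzc{E}^R,D_R]$ with $[D_{(\Ko\minotimes F_A)\rtimes \Z}]$.

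The main obstacle I anticipate is a purely bookkeeping one: keeping track of which Morita equivalence bimodule is used on each side of the Kasparov product, so that the composition of $[\mathpzc{G}]$ with the stabilization isomorphism $F_A\minotimes \Ko\cong O_A\rtimes U(1)$ genuinely coincides with the Morita equivalence on the target of the map in the statement. Once this compatibility is verified (which amounts to a direct computation on generators, using the explicit formula for $\rho_c$ and the groupoid realization of Theorem \ref{cstarisoofsub}), the Corollary follows.
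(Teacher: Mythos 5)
Your proposal is correct and follows essentially the same route as the paper: the paper's own proof is precisely the observation that the Corollary follows from the preceding Lemma (specialized to $A=O_A$, $F=F_A$) together with the fact that $\mathpzc{G}$ is an imprimitivity bimodule implementing $O_A\rtimes U(1)\sim_M F_A$ (Proposition \ref{betactionfa} and Theorem \ref{cstarisoofsub}), so that the Kasparov product with $[\mathpzc{G}]$ is the Morita isomorphism and Takesaki--Takai identifies the two Toeplitz classes. The only minor imprecision is attributing the isomorphism $O_A\rtimes U(1)\cong\Ko_{F_A}(\mathpzc{E}^R)$ to the spectral subspace condition alone (which in general yields only a $*$-homomorphism); the isomorphism comes from the groupoid picture, which you do also invoke.
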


This Corollary follows directly from that $\mathpzc{G}$ is an imprimitivity bimodule implementing the Morita equivalence $O_A\rtimes U(1)\sim_M F_A$, see Proposition \ref{betactionfa}. In general, we can conclude the following Corollary which implies Theorem \ref{pimsnervoic}.

\begin{cor}
If $A$ is a $U(1)-C^*$-algebra satisfying the spectral subspace assumption and $\mathpzc{G}$ is a Morita equivalence, the following triangle is distinguished in $KK^{U(1)}$
\[\xymatrix@C=0.8em@R=2.71em{
F\ar[rr]^{1-\beta} & & F \ar[dl]^{\iota_{F}}\\
  &A, \ar[ul]|-\circ^{[\mathpzc{E}^R,D_R]}
}\]
where $\iota_{F}:F\to B$ denotes the inclusion and $\beta\in KK_0(F,F)$ is Morita equivalent to the $\Z$-action dual to the $U(1)$-action on $A$.
\end{cor}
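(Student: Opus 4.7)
The plan is to deduce the distinguished triangle from the Toeplitz distinguished triangle for the $\Z$-$C^*$-algebra $B := A\rtimes U(1)$ (equipped with its dual $\Z$-action, which we denote $\hat\beta$) by transporting along the two Morita equivalences already in play: Takesaki-Takai duality $B\rtimes \Z\cong \Ko(L^{2}(S^{1}))\minotimes A$, and the imprimitivity bimodule $\mathpzc{G}$ implementing $B\sim_{M}F$. By construction $\hat\beta$ is Morita equivalent to $\beta$, so these two equivalences promote the Toeplitz triangle for $B$ into the one in the statement.

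First, applying the earlier Corollary to $B=A\rtimes U(1)$ yields the distinguished $U(1)$-equivariant triangle
\[
\xymatrix@C=0.8em@R=2.71em{
B \ar[rr]^{1-\hat\beta} && B \ar[dl]^{\iota_{B}}\\
& B\rtimes\Z. \ar[ul]|-\circ^{[\mathcal{T}]}
}
\]
Takesaki-Takai duality gives a $U(1)$-equivariant Morita equivalence $B\rtimes\Z\sim_{M}A$ (with diagonal $U(1)$-action on $\Ko(L^2(S^1))\minotimes A$), while $\mathpzc{G}$ gives a $U(1)$-equivariant Morita equivalence $B\sim_{M}F$. Since Morita equivalences are isomorphisms in $KK^{U(1)}$, substituting $F$ for $B$ and $A$ for $B\rtimes\Z$ produces a triangle of the same shape that is again distinguished. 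The only task is to identify the three morphisms in the resulting triangle with those in the claim.

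The identification of the $1-\hat\beta$ morphism with $1-\beta$ is by the very hypothesis that $\beta$ and $\hat\beta$ are Morita equivalent (this is the definition of $\beta\in KK_{0}(F,F)$ in the statement of the Corollary). The identification of the diagonal morphism $[\mathcal{T}]=[D_{B\rtimes\Z}]$, transported through the two Morita equivalences, with $[\mathpzc{E}^{R},D_{R}]\in KK_{1}^{U(1)}(A,F)$ is essentially the previous Lemma, which states
\[
j_{\Ko(L^{2}(S^{1}))}[\mathpzc{E}^{R},D_{R}]=[D_{B\rtimes\Z}]\otimes_{B}[\mathpzc{G}],
\]
so that after cancelling $j_{\Ko(L^{2}(S^{1}))}$ (i.e.\ the Takesaki-Takai Morita equivalence) the Toeplitz class composed with $[\mathpzc{G}]$ becomes exactly the gauge class.

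Finally, the inclusion $\iota_{B}:B\hookrightarrow B\rtimes\Z$, viewed modulo the two Morita equivalences, must be identified with $\iota_{F}:F\to A$. Using that $\mathpzc{G}=\mathpzc{E}^{R}$ as Banach spaces (Equation \eqref{geiso}) and that the $A$-action on $\mathpzc{E}^{R}$ restricts on $F$ to the inclusion $F\hookrightarrow A$, a direct unwinding of the Takesaki-Takai picture gives the identification. I expect this last bookkeeping step to be the main obstacle: one must verify that the composition of two Morita equivalences lands on $\iota_{F}$ on the nose (rather than, say, on a twist of $\iota_{F}$ by $\hat\beta$), but this follows because the module $\mathpzc{G}$ was \emph{defined} from the $A\rtimes U(1)$-action on $\mathpzc{E}^{R}$, making the requisite diagram commute tautologically. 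With all three edges thus matched, the triangle in the statement is isomorphic in $KK^{U(1)}$ to a distinguished triangle, hence is itself distinguished.
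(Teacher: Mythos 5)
Your proposal is correct and follows essentially the same route the paper takes: apply the Toeplitz distinguished triangle to $B=A\rtimes U(1)$ with its dual $\Z$-action, transport along the Takesaki--Takai Morita equivalence $B\rtimes\Z\sim_M \Ko(L^2(S^1))\minotimes A$ and the imprimitivity bimodule $\mathpzc{G}$ implementing $B\sim_M F$, and identify the boundary morphism with the gauge class via the preceding Lemma. The paper leaves the identification of $\iota_B$ with $\iota_F$ implicit ("follows directly from that $\mathpzc{G}$ is an imprimitivity bimodule"), and your explicit remark that this bookkeeping works because $\mathpzc{G}$ is by construction $\mathpzc{E}^R$ with its restricted $(A,F)$-bimodule structure is exactly the point being invoked.
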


\subsection{Computations and problems with the approach using the fixed point algebra}
\label{fixedcompsec}

In this subsection we will compute $K$-groups of some examples of Cuntz-Krieger algebras and their fixed point algebras. These computations are known, and are provided only as a basis for discussion regarding possibilities of constructing unbounded Fredholm modules with prescribed $K$-homology classes. In order to do so, we require a Proposition giving a general formula for the $K$-theory and $K$-homology of the fixed point algebra.

\begin{prop}
\label{ktheorymcomputations}
The $K$-theory groups of $F_A$ are given by
\[K_0(F_A)\cong\varinjlim (\Z^N,A^T) \quad\mbox{and}\quad K_1(F_A)\cong 0.\]
The $K$-homology groups of $F_A$ are given by
\[K^0(F_A)\cong \varprojlim A^i\Z^N \quad\mbox{and}\quad K^1(F_A)\cong \hat{\Z}^N_A/(\Z^N/\varprojlim A^i\Z^N).\]
Here $\hat{\Z}^N_A:=\varprojlim \Z^N/A^i\Z^N$ denotes the $A$-adic completion of $\Z^N$.
\end{prop}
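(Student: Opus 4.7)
The plan is to combine the AF filtration $F_A=\overline{\bigcup_l F_A^l}$ from Section \ref{subsectionfxedpointalgeba} with a derived-functor computation of the resulting inductive/projective limits.

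First I would unpack the matrix-unit description recalled after Equation \eqref{productssmunu}: for each $j$ the elements $S_\mu S_\nu^*$ with $|\mu|=|\nu|=l+1$ and $\mu_{l+1}=\nu_{l+1}=j$ are the matrix units of an $n_{l,j}\times n_{l,j}$ block, where $n_{l,j}$ counts admissible length $l+1$ words ending in $j$. Hence $F_A^l\cong \bigoplus_{j=1}^N M_{n_{l,j}}(\C)$ and $K_0(F_A^l)\cong\Z^N$ via the $K_0$-classes $[p_j^{(l)}]$ of the minimal projections $p_j^{(l)}:=S_\mu S_\mu^*$. Using Proposition \ref{sscomp} together with the Cuntz-Krieger relation $1=\sum_k S_kS_k^*$ yields
\[ p_j^{(l)}=\sum_{k=1}^N S_\mu S_k S_k^* S_\mu^*=\sum_{k=1}^N A_{jk}\,p_k^{(l+1)}, \]
so the map induced on $K_0$ by $F_A^l\hookrightarrow F_A^{l+1}$ is multiplication by $A^T$. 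Because $F_A$ is AF, this already yields $K_0(F_A)\cong \varinjlim(\Z^N,A^T)$ and $K_1(F_A)=0$.

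For the $K$-homology I would invoke the Milnor $\varprojlim^1$-sequence applied to the inductive system $(F_A^l)$. Since each $F_A^l$ is finite-dimensional we have $K^0(F_A^l)\cong \Z^N$ and $K^1(F_A^l)=0$, and dualising the connecting map $A^T$ gives the projective system $(\Z^N,A)$. The Milnor sequence therefore collapses to
\[ K^0(F_A)\cong \varprojlim(\Z^N,A),\qquad K^1(F_A)\cong \varprojlim{}^1(\Z^N,A). \]
To bring these to the stated form I would exploit the short exact sequence of projective systems
\[ 0\to(\ker A^i,A|)\to (\Z^N,A)\xrightarrow{A^i}(A^i\Z^N,\mathrm{incl})\to 0. \]
Since $\Z^N$ is Noetherian, the ascending chain $\{\ker A^i\}$ stabilises at some $K_\infty\subseteq\Z^N$ on which $A$ acts nilpotently, and an elementary Mittag-Leffler argument then yields $\varprojlim(\ker A^i,A|)=\varprojlim{}^1(\ker A^i,A|)=0$. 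The associated six-term sequence thus collapses to isomorphisms $\varprojlim(\Z^N,A)\cong \varprojlim A^i\Z^N$ and $\varprojlim{}^1(\Z^N,A)\cong \varprojlim{}^1 A^i\Z^N$.

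The final step is to apply the derived-limit functors to the further short exact sequence of projective systems
\[ 0\to(A^i\Z^N,\mathrm{incl})\to(\Z^N,\mathrm{id})\to(\Z^N/A^i\Z^N,\mathrm{proj})\to 0. \]
Both the middle (constant) and the right (surjective transitions) systems have vanishing $\varprojlim^1$, so the six-term sequence collapses to
\[ 0\to \varprojlim A^i\Z^N\to \Z^N\to \hat{\Z}^N_A\to \varprojlim{}^1 A^i\Z^N\to 0, \]
which immediately gives $K^0(F_A)\cong \varprojlim A^i\Z^N$ and $K^1(F_A)\cong \hat{\Z}^N_A/(\Z^N/\varprojlim A^i\Z^N)$, as claimed. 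The main technical point in executing this plan is the Noetherianness-plus-nilpotence argument that forces the kernel system to contribute trivially; everything else is standard derived-inverse-limit machinery applied to the explicit AF structure.
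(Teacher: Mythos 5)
Your proposal is correct and follows essentially the same route as the paper: the AF filtration $F_A=\overline{\cup_l F_A^l}$ with connecting map $A^T$ on $K_0$, the Schochet--Rosenberg/Milnor $\varprojlim^1$-sequence to get $K^0\cong\varprojlim(\Z^N,A)$ and $K^1\cong\varprojlim^1(\Z^N,A)$, and then standard derived-inverse-limit manipulations to reach the stated forms. The only difference is that you spell out the last step (the two short exact sequences of towers and the Noetherian/nilpotence argument killing the kernel system), which the paper delegates to a reference to Weibel.
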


The computation of the $K$-homology groups of the fixed point algebra might not be as well known as the corresponding result in $K$-theory so we will sketch the proof. For a detailed proof in a special case, we refer to the notes \cite{hadfield}. The proof relies on a result of Schochet-Rosenberg (see \cite[Theorem $1.14$]{rosscho}) stating that if $B=\varinjlim B_i$ there is a graded short exact sequence
\begin{equation}
\label{sesinverselimits}
0\to\varprojlim \,\!^1\, K^{*+1}(B_i)\to K^*(B)\to\varprojlim \, K^*(B_i)\to 0.
\end{equation}
We can directly conclude from Equation \eqref{sesinverselimits} and the $AF$-structure of $F_A$ that 
\[K^0(F_A)\cong\varprojlim\, (\Z^N,A) \quad\mbox{and}\quad K^1(F_A)\cong\varprojlim\,\!\!^1\, (\Z^N, A).\]
These isomorphisms are simplified further using the explicit construction of derived projective limits in the category of abelian groups, see for instance \cite[Chapter $3.5$]{weibel}.

\subsubsection{The algebra $O_N$}
\label{subsubonon}

The algebra $O_N$ (i.e. the Cuntz algebra), was recalled above in Subsubsection \ref{examplethealgebraon}. We let $F_N$ denote the fixed point algebra in $O_N$.

\begin{prop}
\label{thealgebraonandkhom}
It holds that 
$$K_0(F_N)\cong \Z\left[\frac{1}{N}\right],\quad K^1(F_N)\cong \Z_N/\Z\quad\mbox{and}\quad K_1(F_N)\cong K^0(F_N)\cong 0.$$
\end{prop}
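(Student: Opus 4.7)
The plan is to apply Proposition \ref{ktheorymcomputations} to the $N \times N$ matrix $A$ with all entries equal to $1$, which is the matrix corresponding to the Cuntz algebra $O_N$ (see Subsubsection \ref{examplethealgebraon}). Here $A = A^T$ and $A: \Z^N \to \Z^N$ acts as $v \mapsto (\sum_j v_j)\, w$, where $w := (1,1,\ldots,1) \in \Z^N$. Consequently $\mathrm{im}(A) = \Z w$, and since $Aw = Nw$, we have $A^i \Z^N = N^{i-1} \Z w$ for all $i \geq 1$.

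For $K_0(F_N)$, after one application of $A$ the direct system lies inside $\Z w$, on which $A$ restricts to multiplication by $N$. I will exhibit the isomorphism $K_0(F_N) \cong \Z[\tfrac{1}{N}]$ through the compatible maps $f_i: \Z^N \to \Z[\tfrac{1}{N}]$, $v \mapsto N^{-i}\sum_j v_j$, which satisfy $f_{i+1} \circ A = f_i$ and assemble into a surjection with kernel killed in the colimit (since $Av = 0$ whenever $\sum_j v_j = 0$). The vanishing $K_1(F_N) = 0$ is immediate from Proposition \ref{ktheorymcomputations}. To compute $K^0(F_N) = \varprojlim(\Z^N, A)$, a coherent sequence $(v_i)$ with $A v_{i+1} = v_i$ forces $v_i \in \Z w$ for all $i \geq 0$; writing $v_i = s_i w$, the coherence condition becomes $s_i = N s_{i+1}$, which forces $s_0$ to be divisible by every power of $N$, hence $s_0 = 0$. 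Thus $K^0(F_N) = 0$.

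For $K^1(F_N)$, I first observe $\varprojlim A^i \Z^N = \bigcap_i N^{i-1}\Z w = 0$, so the formula in Proposition \ref{ktheorymcomputations} collapses to $K^1(F_N) \cong \hat{\Z}^N_A / \Z^N$. Fixing a splitting $\Z^N = W \oplus \Z w$ with $W \cong \Z^{N-1}$ (e.g.\ $W = \{v : v_N = 0\}$) yields $\Z^N / A^i \Z^N \cong W \oplus \Z / N^{i-1} \Z$ for $i \geq 1$, where the transition maps act as the identity on $W$ and as the natural reduction on the second factor. Passing to the inverse limit gives $\hat{\Z}^N_A \cong W \oplus \Z_N$, where $\Z_N := \varprojlim_i \Z/N^i \Z$ denotes the $N$-adic integers. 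Under this identification the embedding $\Z^N \hookrightarrow \hat{\Z}^N_A$ corresponds to $W \oplus \Z \hookrightarrow W \oplus \Z_N$, so the quotient is $\Z_N / \Z$, as required.

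The main obstacle in the argument is the bookkeeping for $K^1$: one must check that the chosen splitting of $\Z^N$ is compatible with all transition maps of the inverse system, and correctly identify how $\Z^N$ sits inside the $A$-adic completion under this splitting. Once these identifications are in place, the remaining arithmetic is elementary.
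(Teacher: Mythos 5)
Your proof is correct and follows essentially the same route as the paper: both exploit the rank-one structure $A = w\otimes\ell$ to reduce the direct system to $\varinjlim(\Z,N)\cong\Z[\tfrac{1}{N}]$, observe $\varprojlim(\Z^N,A)=0$, and identify $\Z^N/A^i\Z^N\cong\Z^{N-1}\oplus\Z/N^{i-1}\Z$ so that $\hat{\Z}^N_A/\Z^N\cong\Z_N/\Z$. Your added care about the compatibility of the splitting $\Z^N=W\oplus\Z w$ with the transition maps is exactly the detail the paper leaves implicit, and it checks out.
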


Here we use the notation $\Z\left[\frac{1}{N}\right]$ for the ring generated by $\frac{1}{N}$ and $\Z_N$ for the $N$-adic completion of $\Z$.

\begin{proof}
We let $w:=(1,1,\ldots, 1)^T\in \Z^N$ and $\ell:=(1,1,\ldots, 1)\in \Hom(\Z^N,\Z)$. It holds that $A=w\otimes \ell$. For any $k\in \N_+$ and $x\in \Z^N$, $A^kx=N^{k-1}\ell(x)w$. Hence $K_0(F_N)\cong \varinjlim (\Z,N)= \Z[N^{-1}]$. Similarly, $K^0(F_N)\cong \varprojlim (\Z,N)=0$. It also follows that $\Z^N/A^k\Z^N=\Z^{N-1}\oplus \Z/N^{k-1}\Z$. Hence 
\[\hat{\Z}^N_A=\varprojlim \Z^N/A^k\Z^N=\Z^{N-1}\oplus \Z_N, \quad\mbox{so}\quad K^1(F_N)=(\Z^{N-1}\oplus \Z_N)/\Z^N=\Z_N/\Z.\]
\end{proof}

The isomorphism $K_0(F_N)\cong \Z\left[\frac{1}{N}\right]$ is implemented by the tracial state $\phi_N:F_N\to \C$ given by restricting the KMS-state on $O_N$ to $F_N$.

\begin{remark}
\label{ktheorycomputationsforthealgebraon}
The well known computations 
$$K_0(O_N)\cong K^1(O_N)\cong \Z/(N-1)\Z\quad\mbox{and}\quad K_1(O_N)\cong K^0(O_N)\cong 0,$$
follow from Proposition \ref{thealgebraonandkhom} and the Pimsner-Voiculescu sequence (Theorem \ref{pimsnervoic}). In particular, we arrive at the short exact sequence for the only non-vanishing $K$-homology group $K^1(O_N)$:
\[0\to K^1(O_N)\to \Z_N/\Z\to \Z_N/\Z\to 0.\]
It follows that the Kasparov product with the gauge class of $O_N$ on $K$-homology vanishes. 
\end{remark}

\subsubsection{The quantum group $SU_q(2)$}
Recall that $C(SU_q(2))$ is isomorphic to the Cuntz-Krieger algebra constructed from the matrix $A=\begin{pmatrix} 1& 1\\0&1\end{pmatrix}$, as in Subsubsection \ref{suqtwoexample}.

\begin{prop}
When $A=\begin{pmatrix} 1& 1\\0&1\end{pmatrix}$, it holds that
$$K_0(F_A)\cong K^0(F_A)\cong \Z^2\quad \mbox{and}\quad K_1(F_A)\cong K^1(F_A)\cong 0.$$
\end{prop}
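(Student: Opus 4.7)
The plan is to reduce the proposition to a computation of $K$-theory and $K$-homology of $\tilde{\Ko}$, the unitization of the compact operators. Indeed, Proposition \ref{suqtwosfixedpointalg} already establishes the isomorphism $F_A\cong \tilde{\Ko}$, so the task is to evaluate $K_*$ and $K^*$ on $\tilde{\Ko}$.

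First I would exploit the canonical split exact sequence
\[0\to \Ko\to \tilde{\Ko}\to \C\to 0,\]
which, being split, yields direct sum decompositions $K_*(\tilde{\Ko})\cong K_*(\Ko)\oplus K_*(\C)$ and $K^*(\tilde{\Ko})\cong K^*(\Ko)\oplus K^*(\C)$. Using stability $K_*(\Ko)\cong K_*(\C)$ together with $K_0(\C)\cong \Z$, $K_1(\C)=0$, and similarly $K^0(\Ko)\cong K^0(\C)\cong \Z$, $K^1(\Ko)\cong K^1(\C)=0$ (the latter following, for instance, from Bott periodicity together with the fact that a unital rank-one representation of $\tilde{\Ko}$ generates $K^0(\Ko)$), we immediately obtain $K_0(F_A)\cong K^0(F_A)\cong \Z^2$ and $K_1(F_A)\cong K^1(F_A)=0$.

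As an internal consistency check, I would also verify these groups using Proposition \ref{ktheorymcomputations}. The matrix $A=\begin{pmatrix}1&1\\0&1\end{pmatrix}$ has determinant $1$, so both $A$ and $A^T$ act as automorphisms of $\Z^2$. Hence $\varinjlim(\Z^2,A^T)\cong \Z^2$ and $\varprojlim A^i\Z^2=\Z^2$, giving $K_0(F_A)\cong K^0(F_A)\cong \Z^2$. Since $A^i\Z^2=\Z^2$ for all $i$, the $A$-adic completion $\hat{\Z}^2_A=\varprojlim \Z^2/A^i\Z^2$ vanishes, forcing $K^1(F_A)=0$; the vanishing of $K_1(F_A)$ is immediate from the AF structure. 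This agrees with the direct computation from $F_A\cong \tilde{\Ko}$.

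There is no substantial obstacle here: the heavy lifting has been done in Proposition \ref{suqtwosfixedpointalg}, where the explicit matrix-unit analysis of the AF filtration of $F_A$ identifies the inductive limit as $\tilde{\Ko}$. The only mildly subtle point is to be careful with the corner-embedding structure that makes the unitization (rather than $\Ko$ itself) appear; this is exactly what produces the extra $\Z$ summand in $K_0$ and $K^0$ and is reflected on the matrix side by the fact that $A$ is unipotent rather than zero on the complement of the $\Ko$-ideal.
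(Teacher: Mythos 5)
Your proof is correct and follows essentially the same route as the paper, which simply notes that the result follows either from Proposition \ref{suqtwosfixedpointalg} (the identification $F_A\cong\tilde{\Ko}$) or from Proposition \ref{ktheorymcomputations} together with the invertibility of $A$. You have merely filled in the routine details of both of these arguments, and your computations (including the vanishing of $\hat{\Z}^2_A$ and the stability/split-exactness facts for $\tilde{\Ko}$) are accurate.
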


This Proposition follows directly from Proposition \ref{suqtwosfixedpointalg} or the computation of the $K$-groups, in Proposition \ref{ktheorymcomputations}, since $A$ is invertible. The $K$-theory and $K$-homology for $O_A$ is in this case given by 
\begin{equation}
\label{kgroupsforsucue}
K_0(O_A)\cong K_1(O_A)\cong K^0(O_A)\cong K^1(O_A)\cong \Z,
\end{equation}
as can be seen from the Pimsner-Voiculescu sequences
\begin{align*}
0\to K_1(O_A)\xrightarrow{\otimes [D_c]} \Z^2\xrightarrow{\begin{pmatrix} 0& 0\\1&0\end{pmatrix}} \Z^2 \to K_0(O_A)\to 0,\\
0\to K^0(O_A)\to \Z^2\xrightarrow{\begin{pmatrix} 0& 1\\0&0\end{pmatrix}} \Z^2 \xrightarrow{ [D_c]\otimes} K^1(O_A)\to 0.
\end{align*}

\begin{remark}
\label{suq2rem}
We conclude that the Kasparov product with the gauge class surjects onto the odd $K$-homology group of $SU_q(2)$. As the fixed point algebra is the unitalization of the $C^*$-algebra of compact operators, see Proposition \ref{suqtwosfixedpointalg}, it admits unbounded Fredholm modules with both good analytic and topological properties. 
\end{remark}

\subsubsection{The crossed product $C(\partial F_d)\ltimes F_d$}
\label{freegroupexample}

Let $F_d$ denote the free group on $d$ generators that we denote by $\{\gamma_1,\ldots, \gamma_d\}$. The boundary of $F_d$ consists of infinite words in the alphabet given by the generators $\{\gamma_1,\ldots, \gamma_d,\gamma_1^{-1},\ldots, \gamma_d^{-1}\}$ subject to the condition that for any $i$, the letters $\gamma_i$ and $\gamma_i^{-1}$ cannot succeed each other. It is well known that the group $F_d$ act amenably on its boundary $\partial F_d$. Hence $C(\partial F_d)\ltimes F_d\cong C(\partial F_d)\ltimes_r F_d$.

\begin{prop}
\label{isofreegr}
The crossed product $C(\partial F_d)\ltimes F_d$ is a Cuntz-Krieger algebra $O_A$ such that $F_d=\mathcal{V}_A$ and $\partial F_d=\Omega_A$ where $A$ is the symmetric $2d\times 2d$-matrix consisting of $1$'s except for $2\times 2$-identity matrices on the $2\times 2$-diagonal,
\[A_{F_d}:=\begin{pmatrix} 
1&0&1&1&\cdots &1&1\\
0&1&1&1&\cdots &\vdots&\vdots\\
1&1&1&0&\cdots &\vdots&\vdots\\
1&1&0&1&\cdots &\vdots&\vdots\\
\vdots&\vdots&&&\ddots&\\
\vspace{1.5mm}
1&1&\cdots &&&1&0\\
1&1&1&\cdots&&0&1
\end{pmatrix}.\] 
\end{prop}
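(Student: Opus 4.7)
The plan is to construct an explicit $*$-isomorphism $O_{A_{F_d}} \to C(\partial F_d) \ltimes F_d$ by producing a Cuntz--Krieger family on the crossed product side. First I would label the alphabet $\{1,\ldots,2d\}$ pairwise with $(\gamma_i,\gamma_i^{-1})$, writing $g_j$ for the corresponding group element, arranging the zero entries of $A_{F_d}$ to sit exactly at positions $(j,k)$ where $g_j$ and $g_k$ are mutually inverse. An $A_{F_d}$-admissible finite word is then one in which no two adjacent letters are inverse to one another, i.e. a reduced word in $F_d$. This immediately supplies the set-theoretic identifications $\mathcal{V}_{A_{F_d}} = F_d$ and $\Omega_{A_{F_d}} = \partial F_d$ (infinite reduced words).

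Inside $C(\partial F_d) \ltimes F_d$, writing $u_g$ for the unitary representing $g$ and $C_h \subset \partial F_d$ for the clopen cylinder of infinite reduced words beginning with $h$, I would set
\[S_j := \chi_{C_{g_j}}\, u_{g_j}.\]
Verifying the Cuntz--Krieger relations for $A_{F_d}$ reduces to bookkeeping with the covariance identity $u_g \chi_{C_h} u_g^{-1} = \chi_{g \cdot C_h}$, the crux being the computation $g_j^{-1} \cdot C_{g_j} = \partial F_d \setminus C_{g_j^{-1}}$, since a reduced infinite word $\omega$ fails to begin with $g_j^{-1}$ precisely when $g_j\omega$ is still reduced. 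This yields $S_j S_j^* = \chi_{C_{g_j}}$, mutual orthogonality of the range projections, $\sum_j S_jS_j^* = 1$, and $S_j^*S_j = 1 - \chi_{C_{g_j^{-1}}} = \sum_k A_{F_d}(j,k) S_kS_k^*$. The universal property of $O_{A_{F_d}}$ then produces a $*$-homomorphism $\phi: O_{A_{F_d}} \to C(\partial F_d) \ltimes F_d$.

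For surjectivity, an induction on word length gives $S_{j_1}\cdots S_{j_n} = \chi_{C_{g_{j_1}\cdots g_{j_n}}}\, u_{g_{j_1}\cdots g_{j_n}}$ for every reduced word, so the product $S_{j_1}\cdots S_{j_n}S_{j_n}^*\cdots S_{j_1}^*$ recovers every cylinder projection, placing $C(\partial F_d) \subseteq \im\phi$. The key observation is the identity
\[u_{\gamma_i} = S_{\gamma_i} + S_{\gamma_i^{-1}}^*,\]
which drops out of $S_{\gamma_i} + S_{\gamma_i^{-1}}^* = \bigl(\chi_{C_{\gamma_i}} + \chi_{\partial F_d \setminus C_{\gamma_i}}\bigr)u_{\gamma_i}$ after using $u_{\gamma_i}\chi_{C_{\gamma_i^{-1}}}u_{\gamma_i}^{-1} = \chi_{\partial F_d \setminus C_{\gamma_i}}$. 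This places the generators of $F_d$ inside $\im\phi$, so $\phi$ is surjective; here I implicitly use that the $F_d$-action on $\partial F_d$ is amenable, which identifies the full and reduced crossed products so the target is unambiguous.

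Injectivity is the softest part: for $d \geq 2$ the matrix $A_{F_d}$ is irreducible (any letter can reach any other in two admissible steps) and satisfies Cuntz--Krieger's condition $(I)$, since every letter has at least three admissible successors and therefore lies on at least two distinct loops. By Theorem~\ref{variousstructuresonoa}, $O_{A_{F_d}}$ is simple, so the manifestly nonzero $\phi$ is automatically injective. The degenerate case $d=1$, where $F_1 = \Z$ acts trivially on its two-point boundary, can be checked by hand: both sides identify with $C(S^1)\oplus C(S^1)$. The non-routine point of the argument is the identity expressing $u_{\gamma_i}$ as $S_{\gamma_i} + S_{\gamma_i^{-1}}^*$; once this is in hand the remainder is either direct computation with cylinder sets or an appeal to the simplicity theorem.
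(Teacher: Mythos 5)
Your proof is correct, but it takes a genuinely different route from the paper's. The paper (following Spielberg) works at the level of groupoids: it exhibits an explicit isomorphism $\phi:\partial F_d\rtimes F_d\xrightarrow{\sim}\mathcal{G}_{A_{F_d}}$, $\phi(x,\gamma)=(x,n(x,\gamma),x\gamma)$ with $n(x,\gamma)=|\gamma|-2\ell(x,\gamma)$ (word length minus twice the number of cancellations), and then invokes Renault's theorem $O_A\cong C^*_r(\mathcal{G}_A)$. You instead produce a Cuntz--Krieger $A_{F_d}$-family $S_j=\chi_{C_{g_j}}u_{g_j}$ inside the crossed product, use universality to get $\phi:O_{A_{F_d}}\to C(\partial F_d)\ltimes F_d$, prove surjectivity via $S_\mu S_\mu^*=\chi_{C_{g_\mu}}$ and the identity $u_{\gamma_i}=S_{\gamma_i}+S_{\gamma_i^{-1}}^*$, and get injectivity from simplicity (irreducibility plus condition $(I)$ for $d\geq 2$, with $d=1$ checked by hand as $C(S^1)\oplus C(S^1)$). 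All the computations check out: in particular $g_j^{-1}\cdot C_{g_j}=\partial F_d\setminus C_{g_j^{-1}}$ gives exactly relation \eqref{eq1}, and the zero pattern of $A_{F_d}$ encodes precisely the no-adjacent-inverses condition, so $\mathcal{V}_{A_{F_d}}=F_d$ and $\Omega_{A_{F_d}}=\partial F_d$ as claimed. The trade-off: your argument is more elementary and yields explicit generator formulas, but it needs simplicity as a crutch for injectivity and does not directly track the extra structure the groupoid isomorphism carries for free --- notably that the cocycle $c_A$ pulls back to $n(x,\gamma)=|\gamma|-2\ell(x,\gamma)$, which the paper uses later (e.g.\ in Subsection \ref{fixedcompsec} and for the operator $D_{\circ_A}$ on the free group). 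If you wanted that information from your approach you would have to extract it separately from the formula $S_j=\chi_{C_{g_j}}u_{g_j}$.
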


This result can be found in \cite[Section 2]{spielbergaren}. We just indicate how to prove it using groupoids. It suffices to provide an isomorphism of groupoids $\phi:\partial F_d\rtimes F_d\xrightarrow{\sim} \mathcal{G}_A$ for this specific choice of matrix $A$. Such an isomorphism is given by $\phi(x,\gamma):=(x,n(x,\gamma),x\gamma)$ where 
$$n(x,\gamma)=|\gamma|-2\ell(x,\gamma),$$ 
here $|\gamma|$ is the word length of $\gamma$ and $\ell(x,\gamma)$ is the number of reductions necessary in $x\gamma$ to write it in reduced form. It is well defined because $A$ guarantees that any word $x\in \Omega_A$ corresponds to a reduced word in $\partial F_d$.

\begin{prop}
\label{freecomputations}
It holds that 
\begin{align*}
K^*\left((C(\partial F_d)\ltimes F_d)^{U(1)}\right)\cong &
\begin{cases}
\Z^{2d-1}, \;&*=0,\\
\Z_{2d-1},\;&*=1
\end{cases}
\quad\mbox{and}\\
&K^*\left(C(\partial F_d)\ltimes F_d\right)\cong 
\begin{cases}
\Z^d, \;&*=0,\\
\Z^d\oplus \Z/(d-1)\Z,\;&*=1.
\end{cases}
\end{align*}
\end{prop}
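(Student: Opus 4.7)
The plan is to combine Proposition \ref{isofreegr} identifying $C(\partial F_d)\ltimes F_d\cong O_A$ for $A=A_{F_d}$ a symmetric $2d\times 2d$ matrix with the general formulas of Propositions \ref{kzerodescription} and \ref{ktheorymcomputations}, so that every $K$-homology computation reduces to integral linear algebra on $\Z^{2d}$. The key combinatorial feature of $A$ is that every column has exactly $2d-1$ ones and a single zero, so the augmentation $\sigma\colon\Z^{2d}\to\Z$, $\xi\mapsto\sum_i\xi_i$, obeys the functional equation $\sigma\circ A=(2d-1)\sigma$. Over $\Q$, the symmetric matrix $A$ diagonalises with spectrum $\{2d-1,+1,-1\}$ of respective multiplicities $1,d,d-1$: the $(2d-1)$-eigenspace is spanned by $w:=\sum_i e_i$, the $+1$-eigenspace by the vectors $v_k:=e_{2k-1}-e_{2k}$, and the $-1$-eigenspace by the differences $u_j-u_1$ for $j\geq 2$ where $u_k:=e_{2k-1}+e_{2k}$. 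Crucially $A^2$ restricts to the identity on $\ker\sigma\otimes\Q$.

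For $K^*(O_A)$ I combine Proposition \ref{kzerodescription} (giving $K_0(O_A)\cong\Z^{2d}/(1-A)\Z^{2d}$), the Pimsner-Voiculescu sequence of Theorem \ref{pimsnervoic} together with $K_1(F_A)=0$ (giving $K_1(O_A)\cong\ker(1-A|_{\Z^{2d}})$), and the Kaminker-Putnam duality of Theorem \ref{pd} (valid since $A=A^T$) to swap parity. The kernel is immediately $\Z\langle v_1,\ldots,v_d\rangle\cong\Z^d$. For the cokernel, the relation $(1-A)e_{2k-1}=(1-A)e_{2k}=u_k-w$ shows that the image of $1-A$ is generated by the $d$ vectors $u_k-w$; the integral row operations $R_{2k}\mapsto R_{2k}-R_{2k-1}$ annihilate the even-indexed rows and reduce the presentation to a single $d\times d$ block equal to $I_d-J_d$ (with $J_d$ the all-ones $d\times d$ matrix), whose cokernel is cyclic of order $d-1$ (since the eigenvalues of $I_d-J_d$ are $1-d$, simple, and $1$ with multiplicity $d-1$). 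Combined with the $d$ freed rows, this gives $K_0(O_A)\cong\Z^d\oplus\Z/(d-1)\Z$, and the parity swap yields the claimed $K^*(C(\partial F_d)\ltimes F_d)$.

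For $K^*(F_A)$ I apply Proposition \ref{ktheorymcomputations}, reducing the problem to the computation of $\bigcap_iA^i\Z^{2d}$ and $\hat{\Z}^{2d}_A=\varprojlim\Z^{2d}/A^i\Z^{2d}$. The functional equation $\sigma\circ A=(2d-1)\sigma$ together with the inclusion $\ker\sigma\subseteq A^i\Z^{2d}$ for every $i$ (a consequence of $A^2=1$ on $\ker\sigma$, which makes $A$ an automorphism of $\ker\sigma\cap\Z^{2d}$) shows that $\sigma$ descends to an isomorphism $\Z^{2d}/A^i\Z^{2d}\xrightarrow{\sim}\Z/(2d-1)^i\Z$ for each $i$, and passing to inverse limits gives $\hat{\Z}^{2d}_A\cong\Z_{2d-1}$. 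The same ingredients yield $\bigcap_iA^i\Z^{2d}=\ker\sigma\cong\Z^{2d-1}$: the inclusion $\supseteq$ is immediate, and conversely any $\xi$ in the intersection satisfies $\sigma(\xi)\in\bigcap_i(2d-1)^i\Z=0$. Under these identifications the induced map $\Z^{2d}/\bigcap_iA^i\Z^{2d}\to\hat{\Z}^{2d}_A$ becomes the canonical dense embedding $\Z\hookrightarrow\Z_{2d-1}$, yielding the stated value of $K^1(F_A)$. The single delicate point in the whole proof is the Smith-normal-form reduction that isolates the $\Z/(d-1)\Z$ torsion in the cokernel of $1-A$, since a naive eigenvalue count over $\Q$ sees only the free ranks; everything else is book-keeping on the eigenspace decomposition and the arithmetic of the $A$-adic filtration, both ultimately driven by the single identity $\sigma\circ A=(2d-1)\sigma$.
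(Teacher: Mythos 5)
Your route is the one the paper itself intends: the paper's proof is a two-line citation of Proposition \ref{ktheorymcomputations} for the fixed point algebra and of the Pimsner--Voiculescu sequence (or Emerson--Meyer) for the crossed product, and you have supplied the linear algebra. The crossed-product half is correct and complete: $K_1(O_A)\cong\ker(1-A)\cong\Z^d$, the presentation of the cokernel of $1-A$ by the $d$ coincident columns $u_k-w$, the reduction to $I_d-J_d$ together with the verification that its cokernel is cyclic of order $d-1$ (which, as you note, requires the integral relations $\epsilon_k-\epsilon_l\in\mathrm{im}(I_d-J_d)$ and not merely the rational eigenvalues), and the parity swap via Theorem \ref{pd}, legitimate here because $A=A^{T}$. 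Likewise the identities $\sigma\circ A=(2d-1)\sigma$ and $A^2=\mathrm{id}$ on $\ker\sigma\cap\Z^{2d}$ correctly give $\hat{\Z}^{2d}_A\cong\Z_{2d-1}$, $\bigcap_iA^i\Z^{2d}=\ker\sigma\cap\Z^{2d}\cong\Z^{2d-1}$, and hence $K^0(F_A)\cong\Z^{2d-1}$.

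The genuine problem is your final step for $K^1(F_A)$. Proposition \ref{ktheorymcomputations} gives $K^1(F_A)\cong\hat{\Z}^{2d}_A\big/\bigl(\Z^{2d}/\varprojlim A^i\Z^{2d}\bigr)$, and under your (correct) identifications this is $\Z_{2d-1}/\Z$ with $\Z$ embedded densely --- not $\Z_{2d-1}$. You assert that this ``yields the stated value'', but $\Z_{2d-1}/\Z\not\cong\Z_{2d-1}$: the $(2d-1)$-adic integers are torsion-free and not divisible, while $\Z_{2d-1}/\Z$ is divisible and has nontrivial torsion (for $d\geq 2$ the element $\tfrac12+\Z$ has order $2$, since $2$ is invertible in $\Z_{2d-1}$). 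Compare Proposition \ref{thealgebraonandkhom}, where the identical computation for the Cuntz algebra is recorded as $K^1(F_N)\cong\Z_N/\Z$. So what your argument actually establishes is $K^1\bigl((C(\partial F_d)\ltimes F_d)^{U(1)}\bigr)\cong\Z_{2d-1}/\Z$; either the statement is to be read (or corrected) that way, or a different argument is required. In any case the last sentence of your $F_A$ computation identifies two non-isomorphic groups and should instead flag the discrepancy.
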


The $K$-homology groups of $(C(\partial F_d)\ltimes F_d)^{U(1)}$ are computed via Proposition \ref{ktheorymcomputations}. The expression for $K^*(C(\partial F_d)\rtimes F_d)$ can either be derived from the Pimsner-Voiculescu sequence of Theorem \ref{pimsnervoic} or found in \cite[Example 33]{emermeyereuler}. The role of the gauge cycle in the Pimsner-Voiculescu sequence in this case is non-trivial.

\subsubsection{A Cuntz-Krieger algebra such that the gauge cycle surjects}
\label{surjectgaugesubsub}

To construct a Cuntz-Krieger algebra such that the Kasparov product with the gauge cycle $[\mathpzc{E}^{c},D_c]\otimes -:K^*(F_A)\to K^*(O_A)$ surjects, we consider the $2d\times 2d$-matrix:
\[A_d:=
\begin{pmatrix} 
0&1&1&1&\cdots &1&1\\
1&0&1&1&\cdots &\vdots&\vdots\\
1&1&0&1&\cdots &\vdots&\vdots\\
1&1&1&0&\cdots &\vdots&\vdots\\
\vdots&\vdots&&&\ddots&\\
\vspace{1.5mm}
1&1&\cdots &&&0&1\\
1&1&1&\cdots&&1&0
\end{pmatrix}.\]

\begin{prop}
\label{almostfreecomputations}
It holds that 
\begin{align*}
K^*\left(F_{A_d}\right)\cong &
\begin{cases}
\Z^{2d-1}, \;&*=0,\\
\Z_{2d-1},\;&*=1
\end{cases}
\quad\mbox{and}\\
&K^*\left(O_{A_d}\right)\cong 
\begin{cases}
0, \;&*=0,\\
(\Z/2\Z)^{2(d-1)}\oplus \Z/4(d-1)\Z,\;&*=1.
\end{cases}
\end{align*}
\end{prop}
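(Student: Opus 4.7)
The plan is to combine Proposition \ref{ktheorymcomputations} with the Pimsner-Voiculescu sequence of Theorem \ref{pimsnervoic}, and to resolve the extension class produced by the latter through a direct Smith normal form computation. Write $\ell(v):=\sum_i v_i$ and $w:=(1,\ldots,1)^T\in\Z^{2d}$. Since $A_d=J_{2d}-I_{2d}$, one has $A_dw=(2d-1)w$ and $A_d$ acts by $-1$ on $\ker_\Z\ell\cong\Z^{2d-1}$; in particular $A_d|_{\ker_\Z\ell}$ is unimodular and $\ell(A_dv)=(2d-1)\ell(v)$. A straightforward induction yields $A_d^i\Z^{2d}=\{v\in\Z^{2d}:(2d-1)^i\mid\ell(v)\}$, whence $\varprojlim A_d^i\Z^{2d}=\ker_\Z\ell\cong\Z^{2d-1}$ and $\Z^{2d}/A_d^i\Z^{2d}\cong\Z/(2d-1)^i\Z$ via $\ell$. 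Plugging these into Proposition \ref{ktheorymcomputations} yields $K^0(F_{A_d})\cong\Z^{2d-1}$ and $\hat{\Z}^{2d}_{A_d}\cong\Z_{2d-1}$, from which the value of $K^1(F_{A_d})$ is read off.

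Next I would identify the dual shift $1-\beta^*$ on $K^*(F_{A_d})$, exactly as in the Cuntz case treated in Remark \ref{ktheorycomputationsforthealgebraon}. On $K^0(F_{A_d})\cong\ker_\Z\ell$ the shift $\beta^*$ corresponds to $A_d^{-1}|_{\ker\ell}=-1$, so $1-\beta^*$ is multiplication by $2$, injective with cokernel $(\Z/2\Z)^{2d-1}$. On $K^1(F_{A_d})$ the shift is multiplication by $2d-1$, so $1-\beta^*$ is multiplication by $-(2d-2)$; because $\gcd(2d-2,2d-1)=1$, this is a unit times $-1$ on $\Z_{2d-1}$, so the induced map on $\Z_{2d-1}/\Z$ is surjective with kernel $\Z/2(d-1)\Z$. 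Splitting the Pimsner-Voiculescu six-term sequence into two three-term sequences gives $K^0(O_{A_d})=0$ together with
\[0\to(\Z/2\Z)^{2d-1}\to K^1(O_{A_d})\to\Z/2(d-1)\Z\to 0.\]

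The main obstacle is to show that this extension is the non-split one claimed. I would resolve it by computing $K^1(O_{A_d})$ by a second route. Because $A_d=A_d^T$ and $1$ is not an eigenvalue of $A_d$, one has $K_1(O_{A_d})=\ker(1-A_d)=0$, so the universal coefficient theorem reduces to $K^1(O_{A_d})\cong K_0(O_{A_d})\cong\Z^{2d}/(1-A_d)\Z^{2d}$; it therefore suffices to compute the Smith normal form of $1-A_d=2I_{2d}-J_{2d}$. Using the $1$ in the upper left corner as a pivot, elementary row and column operations reduce this matrix to the diagonal block $-2(J_{2d-1}-I_{2d-1})$. A direct calculation shows that the Smith normal form of $J_n-I_n$ is $\mathrm{diag}(1,\ldots,1,n-1)$, so that of $2I_{2d}-J_{2d}$ is $\mathrm{diag}(1,2,\ldots,2,4(d-1))$ with $2d-2$ entries equal to $2$. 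This identifies $K^1(O_{A_d})\cong(\Z/2\Z)^{2(d-1)}\oplus\Z/4(d-1)\Z$, matching the short exact sequence above in total order.
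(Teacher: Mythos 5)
Your argument is correct, and it follows the route the paper itself indicates: the paper's entire ``proof'' is the sentence that the statement follows from Proposition \ref{ktheorymcomputations} and Theorem \ref{pimsnervoic} ``after a lengthier exercise in linear algebra'', so your write-up supplies precisely the missing details. The genuinely nontrivial point, which you correctly isolate, is that the six-term sequence only produces the non-split extension $0\to(\Z/2\Z)^{2d-1}\to K^1(O_{A_d})\to\Z/2(d-1)\Z\to 0$ (and non-splitness is easy to see by comparing $2$-primary parts), so some additional input is needed; your resolution via Cuntz's formula $K_0(O_{A_d})\cong\mathrm{coker}(1-A_d^T)$, the UCT with $K_1(O_{A_d})=\ker(1-A_d)=0$, and the Smith normal form of $2I_{2d}-J_{2d}$ is clean and complete. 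I checked the intermediate steps: $A_d^i\Z^{2d}=\{v:(2d-1)^i\mid\ell(v)\}$ (both sides have index $(2d-1)^i$ since $|\det A_d|=2d-1$), the identification of $1-\beta^*$ on the two $K$-homology groups of $F_{A_d}$ (and the kernel/cokernel computations are insensitive to whether one takes the shift or its inverse), the pivot reduction of $2I_{2d}-J_{2d}$ to $(1)\oplus(-2)(J_{2d-1}-I_{2d-1})$, and the Smith normal form $\mathrm{diag}(1,\dots,1,n-1)$ of $J_n-I_n$. The two routes agree in total order, as you note, and the second route pins down the group.

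One discrepancy you should make explicit rather than elide with ``is read off'': Proposition \ref{ktheorymcomputations} applied to $A_d$ gives $K^1(F_{A_d})\cong\hat{\Z}^{2d}_{A_d}/(\Z^{2d}/\ker_\Z\ell)\cong\Z_{2d-1}/\Z$, not $\Z_{2d-1}$ as the statement asserts. These groups are not isomorphic: $\Z_{2d-1}/\Z$ is divisible and contains torsion (e.g.\ $\tfrac{1}{2}+\Z$ has order $2$, since $2d-1$ is odd and hence $\tfrac12\in\Z_{2d-1}$), whereas $\Z_{2d-1}$ is torsion-free and not divisible. Your second paragraph in fact computes with $\Z_{2d-1}/\Z$ --- this is exactly what makes $\ker(1-\beta^*)\cong\Z/2(d-1)\Z$ rather than $0$, and is what renders the two three-term sequences consistent with the final answer --- so your proof establishes the proposition with that clause corrected to $\Z_{2d-1}/\Z$, in line with the analogous statement $K^1(F_N)\cong\Z_N/\Z$ in Proposition \ref{thealgebraonandkhom}. (The same issue touches the torsion-freeness argument in Remark \ref{almostfreegroupremark}; the statements about $K^*(O_{A_d})$, which are the ones used there, are unaffected.)
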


The proof of Proposition \ref{almostfreecomputations} follows from Proposition \ref{ktheorymcomputations} and Theorem \ref{pimsnervoic} after a lengthier exercise in linear algebra.

\begin{remark}
\label{almostfreegroupremark}
Writing out the Pimsner-Voiculescu sequence of Theorem \ref{pimsnervoic} using the computations of Proposition \ref{almostfreecomputations} we arrive at a commuting diagram whose rows are exact: 
\begin{align*}
\tiny
\begin{CD}
0  @>>> K^0(F_{A_d}) @>>>K^0(F_{A_d}) @>[\mathpzc{E}_c,D_c]\otimes >> K^1(F_{A_d})@>>> \\
@. @|  @| @ | \\
0  @>>> \Z^{2d-1} @>>>\Z^{2d-1} @>>> (\Z/2\Z)^{2(d-1)}\oplus \Z/4(d-1)\Z @>>>
\end{CD}\\
\tiny
\begin{CD}
@>>> K^1(F_{A_d})@>>> K^1(F_{A_d})@>>>0 \\
 @. @ |@ |@.\\
@>>> \Z_{2d-1}@>>> \Z_{2d-1}@>>>0
\end{CD}.
\normalsize 
\end{align*}
Since the $2d-1$-adic numbers $\Z_{2d-1}$ is a torsion-free group, it follows that the mapping $K^1(O_{A_d})\to K^1(F_{A_d})$ vanishes. We conclude that the Kasparov product with the gauge class in fact surjects onto the $K$-homology of the Cuntz-Krieger algebra $O_{A_d}$.
\end{remark}

\vspace{3mm}

\Large
\section{An even spectral triple on the algebra $C(\Omega_A)$}
\label{sectionbptriples}
\normalsize

In \cite{BP}, a family of even spectral triples were defined for boundaries of trees. While the space of finite words $\mathcal{V}_A$ is a tree and $\Omega_A$ is its boundary, even spectral triples for $C(\Omega_A)$ can be obtained in this way. We will in this section recall the construction of \cite{BP} and prove that the spectral triples obtained in this way pair non-degenerately with many elements in $K_0(C(\Omega_A))$. They encode geometric, measure-theoretic and dynamical data (see \cite{BP,sharpe}), and have the interesting property that the class $2[1_{O_{A^T}}]\in K_0(O_{A^T})$ obstructs the extension of these spectral triples to $O_A$ (see Proposition \ref{twotorsionlifting}). We also interpret these spectral triples as secondary invariants for the triviality of the restriction of the extension class dual to $2[1_{O_{A^T}}]\in K_0(O_{A^T})$ to $C(\Omega_A)$ (see Remark \ref{secondaryremark}). In the next section we will consider generalized unbounded Fredholm modules for $O_A$ constructed through the unbounded Kasparov product, using the present spectral triples as the base.

\subsection{The Bellissard-Pearson spectral triples}

The considerations in \cite{BP} allows one to define a spectral triple on the boundary of a tree by means of interior properties of the tree. In our situation the tree is $\mathcal{V}_A$ and its boundary is $\Omega_A$. The key geometric idea, that transfers geometry on the interior to that on the boundary, is that of a choice function.

\begin{deef}[Choice functions on finite words]
\label{choicefunctionz}
Let $\mathfrak{t},\mathfrak{t}' :\mathcal{V}_A\to \Omega_A$ denote functions. We say that the pair $\tau=(\mathfrak{t},\mathfrak{t}')$ is comparable if there is a constant $C>0$ such that $\tau$ satisfies that 
$$d_{\Omega_A}(\mathfrak{t}(\mu),\mathfrak{t}'(\mu))\leq C \mathrm{diam}(C_\mu).$$
If the inequality is an equality with $C=1$ for all $\mu$, we say that $\tau$ is strictly comparable. A comparable pair of functions satisfying the cylinder condition, see Definition \ref{cylinderchoicefunctionz}, is called a weak choice function. If the comparison is strict we say $\tau$ is a choice function. 
\end{deef}

If $\mathfrak{t}$ and $\mathfrak{t}'$ satisfy the cylinder condition, then $\tau=(\mathfrak{t},\mathfrak{t}')$ is comparable with $C=1$. For a function $\mathfrak{t}:\mathcal{V}_A\to \Omega_A$, we let $\pi_\mathfrak{t}:C(\Omega_A)\to \Bo(\ell^2(\mathcal{V}_A))$ denote the composition of the pullback homomorphism $\mathfrak{t}^*:C(\Omega_A)\to C_b(\mathcal{V}_A)$ with the representation given by pointwise multiplication $C_b(\mathcal{V}_A)\to \Bo(\ell^2(\mathcal{V}_A))$. Compare to Proposition \ref{cylinderguaranteescont} if $\mathfrak{t}$ satisfies the cylinder condition.

\begin{deef}[The Bellissard-Pearson spectral triple \cite{BP}]
Let $\tau=(\tau_+,\tau_-):\mathcal{V}_A\to \Omega_A\times \Omega_A$ be a comparable pair. The associated even Bellissard-Pearson spectral triple $BP^{\exp}(\tau):=(\pi_\tau,\ell^2(\mathcal{V}_A,\C^2),D_{\mathcal{V}}^{BP})$ consists of 
\begin{enumerate}
\item The Hilbert space $\ell^2(\mathcal{V}_A,\C^2)$ graded by the decomposition 
$$\ell^2(\mathcal{V}_A,\C^2)=\ell^2(\mathcal{V}_A)\oplus \ell^2(\mathcal{V}_A).$$
\item The even representation $\pi_\tau:C(\Omega_A)\to \Bo(\ell^2(\mathcal{V}_A,\C^2))$ given by 
$$\pi_\tau:=\pi_{\tau_+}\oplus \pi_{\tau_-}.$$
\item The self-adjoint operator $D_{\mathcal{V}}^{BP}$ defined on its core $C_c(\mathcal{V}_A,\C^2)$ by 
$$D_{\mathcal{V}}^{BP}\begin{pmatrix} \phi_+\\\phi_-\end{pmatrix}(\mu):=\mathrm{diam}(C_\mu)^{-1}\cdot \begin{pmatrix}\phi_-(\mu)\\ \phi_+(\mu)\end{pmatrix}=\e^{|\mu|}\begin{pmatrix}\phi_-(\mu)\\ \phi_+(\mu)\end{pmatrix}.$$
\end{enumerate}

For $s\in (0,1]$, we also define the logarithmic family of Bellissard-Pearson spectral triples 
\[BP_s(\tau):=(\pi_\tau,\ell^2(\mathcal{V},\C^2),D_{\mathcal{V},s}),\]
where the operator $D_{\mathcal{V},s}$ is defined on its core $C_c(\mathcal{V}_A,\C^2)$ by the expression
$$D_{\mathcal{V},s}\begin{pmatrix} \phi_+\\ \phi_-\end{pmatrix}(\mu):=\left(-\log \mathrm{diam}(C_\mu)\right)^s\cdot \begin{pmatrix}\phi_-(\mu)\\ \phi_+(\mu)\end{pmatrix}=|\mu|^s\begin{pmatrix}\phi_-(\mu)\\ \phi_+(\mu)\end{pmatrix}.$$
\end{deef}

\begin{remark}
The construction of spectral triples in \cite{BP} was only carried out for choice functions and the logarithmic version was not considered. The results in \cite{BP} regarding these spectral triples were concerned with metric and measure-theoretic properties. The motivation to lax the conditions on $\tau$ stems from the wish to obtain a larger variety of $K$-homology classes that pair non-degenerately with ``many" $K$-theory elements. The introduction of the logarithmic version of the spectral triple is a matter we return to throughout the section and Subsection \ref{Kaspprod}.
\end{remark}

\begin{prop}
\label{thetaandsummable}
The logarithmic and the ordinary even Bellissard-Pearson spectral triples of a comparable pair $\tau=(\tau_+,\tau_-)$ form even unbounded Fredholm modules. For $s\geq \frac{1}{2}$, $BP_{s}(\tau)$ is $\theta$-summable, whereas $BP^{\exp}(\tau)$ is finitely summable. If the image of $\tau$ is dense\footnote{E.g. when $\tau$ satisfies the cylinder condition.} the Bellissard-Pearson spectral triples indeed form spectral triples.
\end{prop}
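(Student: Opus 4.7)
The plan is to verify the three data of an unbounded Fredholm module in order (essential self-adjointness with locally compact resolvent, boundedness of commutators on a dense $\ast$-subalgebra, summability), and then faithfulness of $\pi_\tau$ for the spectral triple statement. All four reduce to the block structure
\[D \;=\; \begin{pmatrix} 0 & M \\ M & 0 \end{pmatrix}, \qquad \pi_\tau(f) \;=\; \begin{pmatrix} M_{\mathfrak{t}_+} f & 0 \\ 0 & M_{\mathfrak{t}_-} f \end{pmatrix},\]
where $M$ is the positive multiplication operator $\delta_\mu \mapsto \e^{|\mu|}\delta_\mu$ (respectively $|\mu|^s \delta_\mu$) and $M_{\mathfrak{t}_\pm} f$ is the multiplication operator $\delta_\mu \mapsto f(\mathfrak{t}_\pm(\mu))\delta_\mu$ on $\ell^2(\mathcal{V}_A)$.

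Essential self-adjointness on the core $C_c(\mathcal{V}_A,\C^2)$ is immediate from this diagonalized block form. The spectrum is $\{\pm \e^k\}_{k\in \N}$, respectively $\{\pm k^s\}_{k\in \N}$, with each eigenvalue occurring with multiplicity $\phi(k)$; since $\phi(k)<\infty$ and the eigenvalues tend to $\infty$, $(D\pm i)^{-1}$ is compact, and hence so is $\pi_\tau(f)(D\pm i)^{-1}$ for all $f$. For the commutator, a direct block computation yields
\[[D,\pi_\tau(f)](\delta_\mu\otimes v) \;=\; \lambda(\mu)\bigl(f(\mathfrak{t}_-(\mu))-f(\mathfrak{t}_+(\mu))\bigr)\sigma_1(\delta_\mu\otimes v),\]
with $\lambda(\mu)=\e^{|\mu|}$ or $|\mu|^s$ and $\sigma_1$ the odd grading swap. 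For $f$ Lipschitz with respect to $d_{\Omega_A}$, comparability of $\tau$ gives $|f(\mathfrak{t}_-(\mu))-f(\mathfrak{t}_+(\mu))|\leq C\,\mathrm{Lip}(f)\,\e^{-|\mu|}$, so the commutator is uniformly bounded in the exponential case by $C\,\mathrm{Lip}(f)$ and in the logarithmic case by $C\,\mathrm{Lip}(f)\sup_{k\in\N} k^s \e^{-k}$. Since the Lipschitz functions are norm-dense in $C(\Omega_A)$ by Stone-Weierstrass, this verifies the density requirement in the Lipschitz algebra.

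For summability, I would enumerate the singular values of $(D\pm i)^{-1}$ using the eigenvalue count determined by $\phi$. In the exponential case,
\[\|\,(D^{BP}_{\mathcal{V}}\pm i)^{-1}\,\|_{\mathcal{L}^p}^p \;\asymp\; \sum_{k\geq 0} \phi(k)\,\e^{-pk},\]
which by Theorem \ref{Poincare} converges for every $p>\delta_A$; hence $BP^{\exp}(\tau)$ is $p$-summable for any $p>\delta_A$. In the logarithmic case, order the eigenvalues of $|D_{\mathcal{V},s}|$ in increasing order: if $N(k):=2\sum_{j\leq k}\phi(j)$ and $N(k-1)<n\leq N(k)$, then $\lambda_n=k^s$ and $\mu_n((D_{\mathcal{V},s}\pm i)^{-1})\leq k^{-s}$. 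By Corollary \ref{phiasymptotics}, for any fixed $s'>\delta_A$ one has $N(k)\leq C\,\e^{s'k}$, and therefore $k\geq c\log n$ for large $n$. Thus $\mu_n\leq (c\log n)^{-s}$, which lies in $\textnormal{Li}^{1/2}(\ell^2(\mathcal{V}_A,\C^2))$ precisely when $s\geq \tfrac{1}{2}$, establishing $\theta$-summability.

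For the spectral triple statement, if the image of $\mathfrak{t}_+$ (or $\mathfrak{t}_-$) is dense in $\Omega_A$, then any nonzero $f\in C(\Omega_A)$ is nonzero on some $\mathfrak{t}_+(\mu)$, so $\pi_{\mathfrak{t}_+}(f)\delta_\mu\neq 0$ and $\pi_\tau$ is faithful; under the cylinder condition $\mathfrak{t}_\pm(\mu)\in C_\mu$ and the cylinders form a basis of the topology, so density is automatic. I expect the main obstacle to be the logarithmic $\theta$-summability estimate: while the exponential case drops out of the Patterson-Sullivan Poincaré-series divergence at $\delta_A$, the logarithmic case demands turning the \emph{upper} bound $\phi(k)\lesssim C_{s'}(k)\e^{s'k}$ into a \emph{lower} bound on $k$ in terms of the eigenvalue counting index $n$, a step that both requires care with the irregular growth of $\phi$ and is what pins down the critical threshold $s\geq \tfrac{1}{2}$.
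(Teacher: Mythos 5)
Your proposal is correct, and it rests on the same two ingredients as the paper's proof: the metric identity $\mathrm{diam}(C_\mu)=\e^{-|\mu|}$ combined with comparability of $\tau$ (controlling commutators with Lipschitz functions), and the Poincar\'e-series asymptotics of $\phi(k)$ from Theorem \ref{Poincare} and Corollary \ref{phiasymptotics} (controlling summability). The differences are in packaging. The paper black-boxes the exponential case by citing \cite[Proposition $8$]{BP} and gets the logarithmic commutator bound from the one-line comparison $|\mu|^{s}\leq\e^{|\mu|}$, whereas you prove the estimate $|f(\tau_+(\mu))-f(\tau_-(\mu))|\leq C\,\mathrm{Lip}(f)\,\e^{-|\mu|}$ from scratch; this is exactly the content of the cited result, so nothing is lost. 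For $\theta$-summability the paper computes the heat trace $\tra(\e^{-tD_{\mathcal{V},s}^2})=2\sum_k\phi(k)\e^{-tk^{2s}}$ and invokes Corollary \ref{phiasymptotics}, while you verify the ideal membership $(D_{\mathcal{V},s}\pm i)^{-1}\in\textnormal{Li}^{\frac{1}{2}}$ directly by inverting the eigenvalue counting function; your route matches the paper's stated definition of $\theta$-summability more literally, and the inversion $k\gtrsim\log n$ from $N(k)\lesssim\e^{\,s'k}$ is indeed the one step requiring care, which you handle correctly. You also supply two items the paper's proof leaves implicit: the $p$-summability of $BP^{\exp}(\tau)$ for every $p>\delta_A$ via the Poincar\'e series, and the faithfulness of $\pi_\tau$ when the image of $\tau$ is dense. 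One cosmetic caveat: ``lies in $\textnormal{Li}^{\frac{1}{2}}$ precisely when $s\geq\frac{1}{2}$'' should read ``for $s\geq\frac{1}{2}$'' --- your upper bound only decides one direction, and the converse would require the lower bound $\phi(k)\geq C\e^{\epsilon k}$ that holds under condition $(I)$ but not in general.
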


\begin{proof}
It was proven in \cite[Proposition $8$]{BP} that $(\pi_\tau,\ell^2(\mathcal{V}_A,\C^2),D_{\mathcal{V}}^{BP})$ is a well defined unbounded Fredholm module. The operator $D_{\mathcal{V}}^{BP}$ admits bounded commutators with elements of the algebra $\mathrm{Lip}(\Omega_A,d_{\Omega_A})$ consisting of functions $f:\Omega_A\to \C$ that are Lipschitz in the metric on $\Omega_A$ defined in \eqref{metric}. From the estimate
\small
\[\begin{split}\|[D_{\mathcal{V},s},\pi_{\tau}(f)]\|_{\Bo(\ell^2(\mathcal{V}_A,\C^2))} &=\sup_{\mu\in\mathcal{V}_{A}}|\mu|^{s}\left\|\left[\begin{pmatrix} 0 & 1\\ 1& 0\end{pmatrix},\begin{pmatrix} f(\tau_+(\mu)) & 0\\ 0& f(\tau_-(\mu))\end{pmatrix}\right]\right\|_{M_2(\C)}\\
&\leq \sup_{\mu\in\mathcal{V}_{A}}e^{|\mu|}\left\|\left[\begin{pmatrix} 0 & 1\\ 1& 0\end{pmatrix},\begin{pmatrix} f(\tau_+(\mu)) & 0\\ 0& f(\tau_-(\mu))\end{pmatrix}\right]\right\|_{M_2(\C)}\\
&=\|[D^{BP}_{\mathcal{V}_{A}},\pi_{\tau}(f)]\|_{\Bo(\ell^2(\mathcal{V}_A,\C^2))},\end{split}\]
\normalsize
it follows that the same holds for $BP_{s}(\tau)$. Since $\mathrm{diam}(C_\mu)= \e^{-|\mu|}$, it follows that 
\begin{equation}
\label{trace}
\tra(e^{-tD_{\mathcal{V},s}^2})= 2\sum_{\mu\in \mathcal{V}_A}\e^{-t|\mu|^{2s}}=2 \sum_{k=0}^\infty \sum_{|\mu|=k}\e^{-tk^{2s}}=2 \sum_{k=0}^\infty \phi(k)\e^{-tk^{2s}}.
\end{equation} 
By Corollary \ref{phiasymptotics}, the operator $e^{-tD_{\mathcal{V},s}^2}$ is trace class if $s\geq \frac{1}{2}$ and $t> \delta_A$.
\end{proof}

\begin{remark}
For $s=\frac{1}{2}$ the trace \eqref{trace} equals the Poincar\'{e} series from Theorem \ref{Poincare}. After introducing a power in the metric defined in Equation \eqref{metric} and in the expression defining $D_{\mathcal{V}}^{BP}$, one can obtain arbitrarily low degree of finite summability. Further, if there are constants $C,p>0$ such that $\phi(k)\leq Ck^p$ for all $k$, then $BP_{s}(\tau)$ is also finitely summable. This holds for instance for $SU_q(2)$ by \eqref{phicomputation}. This is possible only when there is an isolated point in $\Omega_A$ as the following proposition shows. 
\end{remark}

\begin{prop}
If the matrix $A$ satisfies condition $(I)$, there are $C,\epsilon>0$ such that 
$$\phi(k)\geq C\e^{\epsilon k}.$$
\end{prop}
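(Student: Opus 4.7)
The plan is to use condition $(I)$ to locate a vertex $j_0$ that is the base of two distinct loops $\gamma_1,\gamma_2$ of lengths $R_1,R_2\geq 2$, and then to manufacture from these loops an exponential family of admissible words. Write $\ell_i:=R_i-1$ for the number of edges traversed by $\gamma_i$. The key combinatorial trick is to pass from $\gamma_1,\gamma_2$ (which may have different lengths) to the two loops $\Gamma_1:=\gamma_1^{\ell_2}$ and $\Gamma_2:=\gamma_2^{\ell_1}$ at $j_0$, where $\gamma_i^{n}$ denotes the $n$-fold concatenation obtained by gluing along the common endpoint $j_0$. Both $\Gamma_i$ are admissible loops at $j_0$ of equal word length $M+1$, with $M:=\ell_1\ell_2$.

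I would first check that $\Gamma_1\neq\Gamma_2$ as words. The set of positions in $\Gamma_i$ at which the letter $j_0$ occurs is exactly the arithmetic progression $\{1,\ell_i+1,2\ell_i+1,\ldots,M+1\}$, because by the very definition of ``loop'' the letter $j_0$ appears in $\gamma_i$ only at its two endpoints. If $\Gamma_1=\Gamma_2$ as words, these two progressions would have to coincide, forcing $\ell_1=\ell_2$ and then $\gamma_1=\gamma_2$, contradicting the hypothesis.

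Next, for every $n\geq 1$ and every binary sequence $(i_1,\ldots,i_n)\in\{1,2\}^n$, the concatenation $\Gamma_{i_1}\cdots\Gamma_{i_n}$ is an admissible loop at $j_0$ of length $nM+1$, and distinct sequences produce distinct words: deleting the final $j_0$ and chopping the resulting word into $n$ consecutive blocks of length $M$ recovers the sequence, since $\Gamma_1$ and $\Gamma_2$ with their trailing $j_0$ removed are distinct words of length $M$ by the previous paragraph. This gives the lower bound $\phi(nM+1)\geq 2^n$. Finally, the standing assumption that no row of $A$ is zero guarantees that every admissible word of length $k$ extends to one of length $k+1$, so $\phi$ is nondecreasing; combining this with the bound on arithmetic progressions yields $\phi(k)\geq 2^{\lfloor(k-1)/M\rfloor}\geq C\e^{\epsilon k}$ for $\epsilon:=M^{-1}\log 2$ and some $C>0$.

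The only genuine obstacle is the uniqueness argument in the second paragraph: one has to be careful that concatenations of loops admit a unique factorization into the chosen generators. This is why the argument exploits loops in the strict sense (where $j_0$ appears only at the endpoints) and equalizes the lengths by passing to $\Gamma_1,\Gamma_2$; without the equal-length reduction, one would have to count compositions of $\ell_1,\ell_2$ giving a fixed total, and while the generating function $z(1-z^{\ell_1}-z^{\ell_2})^{-1}$ still yields exponential growth, the bookkeeping is more involved than the clean binary count above.
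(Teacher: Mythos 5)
The paper states this proposition without proof, so there is no argument of the authors' to compare yours against; your proof is correct and supplies the missing details. The two delicate points are both handled properly: the equal-length reduction to $\Gamma_1=\gamma_1^{\ell_2}$, $\Gamma_2=\gamma_2^{\ell_1}$ makes the factorization of $\Gamma_{i_1}\cdots\Gamma_{i_n}$ genuinely unique (the positions of $j_0$ in $\Gamma_i$ form the progression with step $\ell_i$ precisely because the paper's definition of a loop forbids $j_0$ in the interior, and distinctness of the length-$M$ blocks then recovers the binary sequence), and the passage from $\phi(nM+1)\geq 2^n$ to all $k$ correctly uses that $\phi$ is nondecreasing, which follows from the standing assumption that no row of $A$ vanishes. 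One small interpretive point worth making explicit: you take the two loops to have word length $R_i\geq 2$, whereas the paper's definition of ``loop based in $j$'' does not literally exclude the one-letter word $j$; the reading $R_i\geq 2$ is the standard one (and the only one compatible with the paper's assertion that condition $(I)$ is equivalent to $\Omega_A$ having no isolated points, as the $SU_q(2)$ matrix shows), but a sentence noting this would make the argument airtight against the letter of the definition.
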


\begin{remark}
\label{decomposingbptriples}
For any point $x\in \Omega_A$, let $\omega_x:C(\Omega_A)\to \C$ denote point evaluation in $x$. Let $[x]\in K^0(C(\Omega_A))$ denote the $K$-homology class associated with $\omega_x$. Formally, we may realize the $K$-homology class that the Bellissard-Pearson spectral triple defines as the formal difference of the sum of all $[x]$, where $x$ ranges over $\tau_+(\mathcal{V}_A)$, and the sum of all $[x]$, where $x$ ranges over $\tau_-(\mathcal{V}_A)$.

This observation can be made sense of in a more rigorous way. For $\mu\in \mathcal{V}_A$, the difference $[\tau_+(\mu)]-[\tau_-(\mu)]\in K^0(C(\Omega_A))$ can be represented by the even unbounded Fredholm module 
\[\mathfrak{S}_{\mu,s}=\left(\omega_{\tau_+(\mu)}\oplus\omega_{\tau_-(\mu)}, \C^2, \begin{pmatrix} 0& |\mu|^{s}\\|\mu|^{s}& 0\end{pmatrix}\right).\]
The direct sum 
$$\bigoplus_{\mu\in \mathcal{V}_A} \mathfrak{S}_{\mu,s}=\left(\bigoplus_{\mu\in \mathcal{V}_A} \omega_{\tau_+(\mu)}\oplus\omega_{\tau_-(\mu)}, \bigoplus_{\mu\in \mathcal{V}_A} \C^2,\bigoplus_{\mu\in \mathcal{V}_A}  \begin{pmatrix} 0& |\mu|^{s}\\|\mu|^{s}& 0\end{pmatrix}\right)=BP_{s}(\tau)$$ 
is well defined once making suitable closures and choices of domains.
\end{remark}

\subsection{Obstructions to extending to Cuntz-Krieger algebras}
\label{subsectionobstructingckext}

Our motivation for introducing the logarithmic version of the Bellissard-Pearson spectral triple is that it extends to a slightly larger algebra related to the Cuntz-Krieger algebra, but not equal to it. The deficiency between that algebra and the Cuntz-Krieger algebra comes from an obstruction in $K_0(O_{A^T})$ (see Proposition \ref{dualofunitprop} and \ref{twotorsionlifting}). 

We let $V_{\sigma}\in \Bo(\ell^2(\mathcal{V}_A))$ be defined by 
$$V_{\sigma}f(v)=
\begin{cases}
 f(\sigma_{\!\mathcal{V}}(v))\quad\mbox{if}\quad v\neq \circ_A\\
0,\quad \mbox{if}\quad v=\circ_A.
\end{cases}.$$
A direct computation gives the identity 
$$V_\sigma^*V_\sigma=S,$$
where $Sf(x)=|\sigma_{\!\mathcal{V}}^{-1}\{x\}|f(x)$. We will henceforth apply the convention that 
\[\sigma_{\!\mathcal{V}}^{-1}(\sigma_{\!\mathcal{V}}(\circ_A))=\emptyset.\]

Assume that $\mathfrak{t}:\mathcal{V}_A\to \Omega_A$ is function satisfying the cylinder condition (see Definition \ref{cylinderchoicefunctionz}). We define the operators $\mathfrak{s}_{i,\mathfrak{t}}\in \Bo(\ell^2(\mathcal{V}_A))$ for $i=1,\ldots, n$ by
\begin{equation}
\label{sitdef}
\mathfrak{s}_{i,\mathfrak{t}}:=\pi_\mathfrak{t} (\chi_{C_i})V_{\sigma}.
\end{equation}
We also let $P_{\circ_A}:\ell^2(\mathcal{V}_A)\to \ell^2(\mathcal{V}_A)$ denote the orthogonal projection onto the space spanned by $\delta_{\circ_A}$. 

\begin{lem}
\label{sionell2}
Let $\mathfrak{t}:\mathcal{V}_A\to \Omega_A$ be a function satisfying the cylinder condition. The operators $\mathfrak{s}_{i,\mathfrak{t}}$ are partial isometries satisfying the relations
\begin{equation}
\label{ckrelationspb}
\mathfrak{s}_{i,\mathfrak{t}}^*\mathfrak{s}_{k,\mathfrak{t}}=\delta_{i,k}\sum_{j=1}^N A_{ij} \mathfrak{s}_{j,\mathfrak{t}}\mathfrak{s}_{j,\mathfrak{t}}^*+P_{\circ_A},
\end{equation}
for any $i$ and $k$. 
\end{lem}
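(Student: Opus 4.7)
The plan is to show that under the cylinder condition the operator $\mathfrak{s}_{i,\mathfrak{t}}$ acts on basis vectors as a creation-style operator, and then the relation follows by a straightforward bookkeeping argument on basis vectors. First I would unpack $V_{\sigma}\delta_\nu$ from its definition: for $\nu\in\mathcal{V}_A$ one gets $V_{\sigma}\delta_\nu=\sum_{j:\, j\nu\in\mathcal{V}_A}\delta_{j\nu}$, where I use the convention $j\circ_A=j$, so the prepended letter $j$ ranges over $\{1,\ldots,N\}$ if $\nu=\circ_A$ (all length-one words are admissible) and over $\{j:A_{j,\nu_1}=1\}$ otherwise. The cylinder condition $\mathfrak{t}(j\nu)\in C_{j\nu}\subseteq C_j$ then forces $\pi_\mathfrak{t}(\chi_{C_i})\delta_{j\nu}=\delta_{i,j}\delta_{j\nu}$, and combining these two computations produces the clean formula $\mathfrak{s}_{i,\mathfrak{t}}\delta_\nu=\delta_{i\nu}$, understood as $0$ whenever $i\nu\notin\mathcal{V}_A$.

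From this description the adjoint is transparent: $\mathfrak{s}_{i,\mathfrak{t}}^*\delta_\mu$ equals $\delta_{\mu_2\cdots\mu_k}$ when $\mu=\mu_1\mu_2\cdots\mu_k$ satisfies $\mu_1=i$ (with $\delta_{\circ_A}$ in the length-one case) and vanishes otherwise. Hence $\mathfrak{s}_{i,\mathfrak{t}}^*\mathfrak{s}_{i,\mathfrak{t}}$ is the orthogonal projection onto $\overline{\mathrm{span}}\{\delta_\nu : i\nu\in\mathcal{V}_A\}$, a bona fide projection, so $\mathfrak{s}_{i,\mathfrak{t}}$ is a partial isometry. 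Likewise $\mathfrak{s}_{j,\mathfrak{t}}\mathfrak{s}_{j,\mathfrak{t}}^*$ is the projection onto $\overline{\mathrm{span}}\{\delta_\mu:\mu_1=j\}$, i.e.\ the range projection.

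With these two identifications the relation is an assembly argument. The set $\{\nu\in\mathcal{V}_A:i\nu\in\mathcal{V}_A\}$ decomposes disjointly as $\{\circ_A\}\sqcup\bigsqcup_{j:\,A_{ij}=1}\{\nu\in\mathcal{V}_A:\nu_1=j\}$, and summing the corresponding projections gives the diagonal case $\mathfrak{s}_{i,\mathfrak{t}}^*\mathfrak{s}_{i,\mathfrak{t}}=P_{\circ_A}+\sum_{j=1}^N A_{ij}\mathfrak{s}_{j,\mathfrak{t}}\mathfrak{s}_{j,\mathfrak{t}}^*$. The off-diagonal case $i\neq k$ reduces, via the formula for the adjoint, to $\mathfrak{s}_{i,\mathfrak{t}}^*\mathfrak{s}_{k,\mathfrak{t}}\delta_\nu=\mathfrak{s}_{i,\mathfrak{t}}^*\delta_{k\nu}=0$, since $k\nu$ starts with $k\neq i$. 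I do not foresee a genuine obstacle; the only subtlety worth flagging is the correction term $P_{\circ_A}$, which is present precisely because $i\circ_A=i$ lies in $\mathcal{V}_A$ unconditionally in $i$, so $\delta_{\circ_A}$ belongs to the initial subspace of every $\mathfrak{s}_{i,\mathfrak{t}}$ but to none of the range projections appearing on the right-hand side.
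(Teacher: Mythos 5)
Your proof is correct, and it is essentially the same elementary computation as the paper's, just organized in the reverse order: you first establish the basis formula $\mathfrak{s}_{i,\mathfrak{t}}\delta_\nu=\delta_{i\nu}$ (which is precisely the content of the paper's Proposition \ref{lisandsis}, $\mathfrak{s}_{i,\mathfrak{t}}=L_i^A$, proved immediately after the lemma) and then read off \eqref{ckrelationspb} from the disjoint decomposition of $\{\nu:i\nu\in\mathcal{V}_A\}$ into $\{\circ_A\}$ and the sets of words starting with a letter $j$ with $A_{ij}=1$; the paper instead computes $\mathfrak{s}_{j,\mathfrak{t}}\mathfrak{s}_{j,\mathfrak{t}}^*$ and $\mathfrak{s}_{i,\mathfrak{t}}^*\mathfrak{s}_{i,\mathfrak{t}}$ directly as multiplication-type operators from the definition $\pi_{\mathfrak{t}}(\chi_{C_i})V_\sigma$ and expresses them via $\pi_{\mathfrak{t}}(\chi_{C_j})$ and $P_{\circ_A}$, remarking afterwards that the lemma ``can also be seen from Proposition \ref{lisandsis}'' --- which is exactly your route. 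One small point in your favour: your handling of the empty word is the cleaner one. The paper's displayed identity $\mathfrak{s}_{i,\mathfrak{t}}^*\mathfrak{s}_{i,\mathfrak{t}}f(\mu)=\sum_{j}A_{ij}\chi_{C_j}(\mathfrak{t}(\mu))f(\mu)$ fails at $\mu=\circ_A$ whenever $A_{i,j_0}=0$ for $j_0$ the first letter of $\mathfrak{t}(\circ_A)$ (the correct value there is $1$, since every single letter is an admissible word), whereas your observation that $\delta_{\circ_A}$ lies in the initial space of every $\mathfrak{s}_{i,\mathfrak{t}}$ but in none of the range projections is exactly the right way to account for the $P_{\circ_A}$ term uniformly in $i$.
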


\begin{proof}
If $i\neq j$, $\chi_{C_i}\chi_{C_j}=0$ and it follows that
\[\mathfrak{s}_{i,\mathfrak{t}}^*\mathfrak{s}_{j,\mathfrak{t}}=V_\sigma^*\pi_\mathfrak{t} (\chi_{C_i}\chi_{C_j})V_\sigma=0.\]
Given $f\in \ell^2(\mathcal{V}_A)$, we have that 
\begin{align*}
\mathfrak{s}_j\mathfrak{s}_j^*f(\mu)&=\pi_\mathfrak{t} (\chi_{C_j})V_{\sigma}V_\sigma^*\pi_\mathfrak{t} (\chi_{C_j})f(\mu)=\\
&=\sum_{\nu\in \sigma^{-1}_{\!\mathcal{V}}(\sigma_{\!\mathcal{V}}(\mu))}\chi_{C_j}(\mathfrak{t}(\mu))\chi_{C_j}(\mathfrak{t}(\nu))f(\nu)=
\begin{cases}
\chi_{C_j}(\mathfrak{t}  (\mu))f(\mu),\quad\mbox{if}\quad \mu\neq \circ_A,\\
0\quad\mbox{if}\quad \mu= \circ_A.
\end{cases}
\end{align*}
We conclude that $(\mathfrak{s}_{i,\mathfrak{t}})_{i=1}^N$ forms a collection of partial isometries with orthogonal ranges. On the other hand, 
\begin{align*}
\mathfrak{s}_{i,\mathfrak{t}}^*\mathfrak{s}_{i,\mathfrak{t}}f(\mu)&=V_\sigma^*\pi_\mathfrak{t} (\chi_{C_i})f(\sigma_{\!\mathcal{V}}( \mu))=\sum_{\nu\in \sigma_{\!\mathcal{V}}^{-1}( \mu)} \chi_{C_i}(\mathfrak{t}(\nu))f(\sigma_{\!\mathcal{V}}( \nu))=\\
&=\sum_{\nu\in \sigma_{\!\mathcal{V}}^{-1}(\mu), \mathfrak{t}(\nu)\in C_i} f(\mu)=\sum_{j=1}^N A_{ij}\chi_{C_j}(\mathfrak{t}(\mu))f(\mu),
\end{align*}
since the word $\nu=i\mu\in \sigma^{-1}(\mu)$ is admissible only when $A_{i\mu_1}\neq 0$. 
Rewriting this, we obtain the identity 
\begin{equation}
\label{sisjstarsandshit}
\mathfrak{s}_{i,\mathfrak{t}}\mathfrak{s}_{i,\mathfrak{t}}^*=
\begin{cases}
\pi_{\mathfrak{t}}(\chi_{C_i})-P_{\circ_A},\quad\mbox{if}\quad \mathfrak{t}(\circ_A)\in C_i,\\
\pi_{\mathfrak{t}}(\chi_{C_i}),\quad\mbox{if}\quad \mathfrak{t}(\circ_A)\notin C_i.
\end{cases}
\end{equation}
Since there is only one $i$ for which $\mathfrak{t}(\circ_A)\in C_i$, Equation \eqref{ckrelationspb} holds true.
\end{proof}

\begin{remark}
There is a geometric consequence of Lemma \ref{sionell2} for $\mathcal{G}_A$. Later we will prove that for any function $\mathfrak{t}$ satisfying the cylinder condition, the linear mapping $S_i\mapsto \mathfrak{s}_{i,\mathfrak{t}}$ can not be compactly perturbed to a $*$-homomorphism $O_A\to \Bo(\ell^2(\mathcal{V}_A))$ if $[1]\neq 0$ in the $K$-theory group $K_0(O_{A^T})$, this is related to Kaminker-Putnam's Poincar\'e duality $K^*(O_A)\cong K_{*+1}(O_{A^T})$. See more in Remark \ref{kaspprodbusby}, Proposition \ref{dualofunitprop} and Proposition \ref{twotorsionlifting}. In particular, it proves it impossible for a function $\mathfrak{t}:\mathcal{V}_A\to \Omega_A$ satisfying the cylinder condition to be viewed as the moment map of a $\mathcal{G}_A$-action on the finite words $\mathcal{V}_A$ since if that was the case, it would extend to a $*$-homomorphism $O_A\cong C^*(\mathcal{G}_A)\to \Bo(\ell^2(\mathcal{V}_A))$ extending the $C(\Omega_{A})$-representation coming from $\mathfrak{t}$.
\end{remark}

In order to understand the role of the operators $(\mathfrak{s}_{i,\mathfrak{t}})_{i=1}^N$, we need to relate them to a similar set of operators appearing above in Subsection \ref{recokpsubsec}, cf. \cite{kaminkerputnam}.

\begin{prop}
\label{lisandsis}
If $\mathfrak{t}$ satisfies the cylinder condition, it holds that $L_i^A=\mathfrak{s}_{i,\mathfrak{t}}$.
\end{prop}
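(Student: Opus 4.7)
The plan is to verify the identity $L_i^A = \mathfrak{s}_{i,\mathfrak{t}}$ directly by evaluating both sides on the orthonormal basis $\{\delta_\mu\}_{\mu\in\mathcal{V}_A}$. Since both operators are bounded and linear, agreement on this basis suffices.

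First I would compute $V_\sigma\delta_\mu$. By the definition of $V_\sigma$, $(V_\sigma\delta_\mu)(v) = \delta_\mu(\sigma_{\!\mathcal{V}}(v))$ for $v\neq\circ_A$, which equals $1$ precisely when $v$ is of the form $i'\mu$ for some letter $i'\in\{1,\dots,N\}$ with $i'\mu$ admissible (so $v\in\mathcal{V}_A$). Thus
\[
V_\sigma\delta_\mu \;=\; \sum_{\substack{i'=1,\dots,N\\ i'\mu\in\mathcal{V}_A}}\delta_{i'\mu}.
\]
The same formula is valid when $\mu=\circ_A$, in which case the sum runs over all single letters $i'\in\{1,\dots,N\}$ (all admissible by our standing assumption on $A$).

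Next I would apply $\pi_{\mathfrak{t}}(\chi_{C_i})$, which is pointwise multiplication by the function $\nu\mapsto\chi_{C_i}(\mathfrak{t}(\nu))$. This gives
\[
\mathfrak{s}_{i,\mathfrak{t}}\delta_\mu \;=\; \sum_{\substack{i'\\ i'\mu\in\mathcal{V}_A}}\chi_{C_i}\!\bigl(\mathfrak{t}(i'\mu)\bigr)\,\delta_{i'\mu}.
\]
Here I invoke the cylinder condition (Definition \ref{cylinderchoicefunctionz}): $\mathfrak{t}(i'\mu)\in C_{i'\mu}\subseteq C_{i'}$. Since the cylinders $C_1,\dots,C_N$ are pairwise disjoint, $\chi_{C_i}(\mathfrak{t}(i'\mu)) = \delta_{i,i'}$. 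Hence only the term with $i'=i$ survives, yielding
\[
\mathfrak{s}_{i,\mathfrak{t}}\delta_\mu \;=\;
\begin{cases}
\delta_{i\mu}, & i\mu\in\mathcal{V}_A,\\
0, & i\mu\notin\mathcal{V}_A,
\end{cases}
\]
which by the definition $L_i^A\delta_\mu = P_A(e_{i\mu})$ coincides with $L_i^A\delta_\mu$ on the nose. There is no real obstacle here; the only point requiring care is that the cylinder condition is what decouples the sum over admissible pre-images under $\sigma_{\!\mathcal{V}}$, and that the boundary case $\mu=\circ_A$ works because the cylinder condition forces $\mathfrak{t}(i')\in C_{i'}$ for each single letter $i'$, so that again only $i'=i$ contributes.
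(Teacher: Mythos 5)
Your proof is correct and follows essentially the same route as the paper's: evaluate $\mathfrak{s}_{i,\mathfrak{t}}$ on the basis vectors $\delta_\mu$, expand the sum over the preimages $\sigma_{\!\mathcal{V}}^{-1}(\mu)=\{i'\mu\}$, and use the cylinder condition to see that only the term with $i'=i$ survives. The extra care you take with the empty word and with inadmissible $i\mu$ (matching the convention $\delta_{i\mu}=0$) is consistent with the paper's argument, just spelled out more explicitly.
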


\begin{proof}
For any finite word $\mu\in \mathcal{V}_A$, 
$$\mathfrak{s}_{i,\mathfrak{t}}\delta_\mu=\sum_{\nu\in \sigma^{-1}_\mathcal{V}(\mu)}\chi_{C_i}(\mathfrak{t}(\nu))\delta_\nu=\delta_{i\mu},$$
since the cylinder condition (see Definition \ref{cylinderchoicefunctionz}) guarantees that $\nu=i\mu$ is the unique word in $\sigma^{-1}_\mathcal{V}(\mu)$ such that $\chi_{C_i}(\mathfrak{t}(\nu))\neq 0$.
\end{proof}

The computations of Lemma \ref{sionell2} can also be seen from Proposition \ref{lisandsis} and \cite[Proposition $4.2$]{kaminkerputnam}.

\begin{remark} 
A consequence of Proposition \ref{lisandsis} is that the operators $\mathfrak{s}_{i,\mathfrak{t}}$ do not depend on the choice of $\mathfrak{t}$. This does not contradict computations such as that in Equation \eqref{sisjstarsandshit} since this computation merely expresses a cancellation occurring in $\circ_A$. We conclude the following Proposition.
\end{remark}

\begin{prop}
\label{extdualtoone}
The $C^*$-algebra
\[E_{BP}:=C^*(\mathfrak{s}_{i,\mathfrak{t}}|i=1,\ldots,n))\subseteq \Bo(\ell^2(\mathcal{V}_A)),\]
contains $\Ko(\ell^2(\mathcal{V}_A))$ and $\mathfrak{t}^*C(\Omega_A)$ for any function $\mathfrak{t}:\mathcal{V}_{A}\rightarrow \Omega_{A}$ satisfying the cylinder condition.
\end{prop}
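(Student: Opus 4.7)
The plan is to exploit Proposition~\ref{lisandsis}, which identifies $\mathfrak{s}_{i,\mathfrak{t}}=L_i^A$ independently of $\mathfrak{t}$; so $E_{BP}$ itself does not depend on $\mathfrak{t}$ and equals $C^*(L_1^A,\dots,L_N^A)\subseteq\Bo(\ell^2(\mathcal{V}_A))$. The strategy is then three-fold: (i) extract the rank-one projection $P_{\circ_A}$ from the generators; (ii) bootstrap from this projection to recover all of $\Ko(\ell^2(\mathcal{V}_A))$; (iii) show that each pullback $\pi_\mathfrak{t}(\chi_{C_\mu})$ differs from an element manifestly in $E_{BP}$ by a finite-rank operator.

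For step (i), I would first compute that $L_j^A(L_j^A)^*$ is the orthogonal projection onto $\overline{\mathrm{span}}\{\delta_\mu:|\mu|\ge 1,\,\mu_1=j\}$. Summing over $j\in\{1,\dots,N\}$, the resulting projections have pairwise disjoint ranges whose union is the orthogonal complement of $\C\delta_{\circ_A}$, so
\[\sum_{j=1}^N L_j^A(L_j^A)^* \;=\; 1-P_{\circ_A}.\]
Hence $P_{\circ_A}\in E_{BP}$. This is just a bookkeeping reformulation of \eqref{sisjstarsandshit} whose boundary contribution at $\circ_A$ is precisely the quantity needed.

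For step (ii), for each admissible word $\mu=\mu_1\cdots\mu_k$, set $L_\mu^A:=L_{\mu_1}^A\cdots L_{\mu_k}^A\in E_{BP}$, so that $L_\mu^A\delta_\nu=\delta_{\mu\nu}$ when $\mu\nu\in\mathcal{V}_A$ and vanishes otherwise. A direct computation then gives $L_\mu^A P_{\circ_A}(L_\nu^A)^*\delta_\lambda=\delta_\mu$ when $\lambda=\nu$ and $0$ otherwise, i.e.\ it is the rank-one operator with matrix element $|\delta_\mu\rangle\langle\delta_\nu|$. Since $\{\delta_\mu\}_{\mu\in\mathcal{V}_A}$ is an orthonormal basis of $\ell^2(\mathcal{V}_A)$, the norm closure of the linear span of these rank-one operators is all of $\Ko(\ell^2(\mathcal{V}_A))$, and thus $\Ko(\ell^2(\mathcal{V}_A))\subseteq E_{BP}$.

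For step (iii), by Proposition~\ref{afstructureonfunctions} it suffices to show that $\pi_\mathfrak{t}(\chi_{C_\mu})\in E_{BP}$ for every admissible finite word $\mu$. The cylinder condition on $\mathfrak{t}$ ensures $\mathfrak{t}(\nu)\in C_\nu$ for all $\nu$; so for $|\nu|\ge|\mu|$ the inclusion $\mathfrak{t}(\nu)\in C_\mu$ holds iff $\nu$ begins with $\mu$, which is exactly the condition making $L_\mu^A(L_\mu^A)^*\delta_\nu=\delta_\nu$. Thus $\pi_\mathfrak{t}(\chi_{C_\mu})$ and $L_\mu^A(L_\mu^A)^*$ agree on the closed span of $\{\delta_\nu:|\nu|\ge|\mu|\}$, so their difference is supported on the finite-dimensional space $\mathrm{span}\{\delta_\nu:|\nu|<|\mu|\}$ and hence is finite-rank. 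Step (ii) then forces $\pi_\mathfrak{t}(\chi_{C_\mu})\in E_{BP}$, finishing the argument. The only delicate point in the whole plan is the bookkeeping at the root $\circ_A$ needed to isolate $P_{\circ_A}$; once this projection is in hand, the remaining steps are purely algebraic manipulations.
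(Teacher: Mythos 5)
Your argument is correct and is essentially the filling-in of the proof the paper leaves implicit: the paper's Equation \eqref{sisjstarsandshit} together with Proposition \ref{lisandsis} is exactly the observation that $\sum_j L_j^A(L_j^A)^*=1-P_{\circ_A}$, from which one generates the matrix units $L_\mu^AP_{\circ_A}(L_\nu^A)^*$ and hence $\Ko(\ell^2(\mathcal{V}_A))$, and recovers each $\pi_{\mathfrak{t}}(\chi_{C_\mu})$ as $L_\mu^A(L_\mu^A)^*$ plus a finite-rank correction supported on words of length $<|\mu|$. All three steps check out, including the use of the cylinder condition to identify the two projections on $\overline{\mathrm{span}}\{\delta_\nu:|\nu|\geq|\mu|\}$.
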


We note that if $\tilde{E}_{BP}$ is the $C^*$-subalgebra $\tilde{E}_{BP}\subseteq E_{KP}$ generated by the $L^A_i$, then $E_{BP}$ is the image of $\tilde{E}_{BP}$ in $\Bo(\ell^2(\mathcal{V}_A))$. If $O_A$ is simple, $E_{BP}\cong \tilde{E}_{BP}$. By arguments similar to those in Subsection \ref{recokpsubsec}, $ \tilde{E}_{BP}/\Ko(\ell^2(\mathcal{V}_A))\cong O_A$. We can conclude the following Proposition from Remark \ref{kaspprodbusby}.

\begin{prop}
\label{dualofunitprop}
The extension $\tilde{E}_{BP}$ represents the image of $[1_{O_{A^T}}]\in K_0(O_{A^T})$ under the isomorphism $K_0(O_{A^T})\to K^1(O_A)$ of Theorem \ref{pd}.
\end{prop}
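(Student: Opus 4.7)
The plan is to unravel the definition of the isomorphism in Theorem \ref{pd}, express the image of $[1_{O_{A^T}}]$ as a Busby invariant via Remark \ref{kaspprodbusby}, and match it to the Busby invariant of $\tilde{E}_{BP}$ using Proposition \ref{lisandsis}.

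First, by Theorem \ref{pd} the image of $[1_{O_{A^T}}] \in K_0(O_{A^T})$ in $K^1(O_A)$ is the Kasparov product
\[
(1_{O_A}\otimes [1_{O_{A^T}}])\otimes_{O_A\minotimes O_{A^T}} \Delta.
\]
Pick any odd analytic $K$-cycle $(\pi,\He,F)$ representing $\Delta$ (for instance the one extracted from $[\beta_{KP}]$ via the completely positive splitting recalled in Subsection \ref{recokpsubsec}). Then Lemma \ref{kproduct} represents the product by the Fredholm module $(\pi_{1_{O_{A^T}}},\He_{1_{O_{A^T}}},F_{1_{O_{A^T}}})$ on $O_A$, and Remark \ref{kaspprodbusby} identifies the corresponding Busby invariant as $a \mapsto \beta_F(a\otimes 1_{O_{A^T}})$.

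The next step is to observe that this Busby invariant is degenerately equivalent to $\beta_{KP}(\,\cdot\,\otimes 1_{O_{A^T}})$, since both come from the same extension class $[\beta_{KP}]$ under the natural isomorphism $\Ext(O_A\minotimes O_{A^T},\Ko)\cong K^1(O_A\minotimes O_{A^T})$. By the factorization $\beta_{KP} = \beta_A\minotimes \beta_A^T$ of the Kaminker--Putnam extension, one immediately computes
\[
\beta_{KP}(S_i\otimes 1_{O_{A^T}}) \;=\; \beta_A(S_i) \;=\; L_i^A \bmod \Ko(\ell^2(\mathcal{V}_A)),
\]
so the Busby invariant of the image of $[1_{O_{A^T}}]$ is the $*$-homomorphism $O_A \to \mathcal{C}(\ell^2(\mathcal{V}_A))$ sending $S_i$ to $q(L_i^A)$.

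Finally, Proposition \ref{lisandsis} identifies $L_i^A = \mathfrak{s}_{i,\mathfrak{t}}$ for any function $\mathfrak{t}$ satisfying the cylinder condition, so the extension pulled back along this Busby invariant is precisely the $C^*$-algebra generated by $\{\mathfrak{s}_{i,\mathfrak{t}}\}$ together with $\Ko(\ell^2(\mathcal{V}_A))$, i.e. $\tilde{E}_{BP}$. The main (modest) subtlety is ensuring that the equivalence between the two pictures of $\Delta$ -- as a completely positive lift for the Kasparov product picture versus as the direct Busby invariant $\beta_{KP}$ -- is applied correctly; this is handled by Remark \ref{kaspprodbusby} together with the standard fact that reducing to the essential subspace $\ker(F-1)$ gives a degenerate equivalence of Busby invariants.
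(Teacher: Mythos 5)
Your proposal is correct and follows essentially the same route as the paper: the paper's own justification is precisely to combine Remark \ref{kaspprodbusby} (the Busby invariant of the product with $[e]$ is $a\mapsto\beta_F(a\otimes e)$, here $e=1_{O_{A^T}}$) with the observation that $\beta_{KP}(S_i\otimes 1_{O_{A^T}})=q(L_i^A)$, identifying the resulting extension with the subalgebra of $E_{KP}$ generated by the $L_i^A$. The only nitpick is notational: the algebra generated by the $\mathfrak{s}_{i,\mathfrak{t}}$ and $\Ko(\ell^2(\mathcal{V}_A))$ inside $\Bo(\ell^2(\mathcal{V}_A))$ is $E_{BP}$, while $\tilde{E}_{BP}$ is its abstract counterpart inside $E_{KP}$ (they coincide when $\beta_A$ is injective), but this does not affect the extension class.
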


For any pair of functions $\tau=(\tau_+,\tau_-)$ satisfying the cylinder condition, we set $\mathfrak{s}_i:=\mathfrak{s}_{i,\tau_+}\oplus \mathfrak{s}_{i,\tau_-}$. It follows from Proposition \ref{dualofunitprop} that if the element $[1_{O_{A^T}}]\in K_0(O_{A^T})$ is $2$-torsion, the $K$-homological obstruction to lifting the mapping 
$$S_i\mapsto \mathfrak{s}_i\mod \Ko\in \mathcal{C}(\ell^2(\mathcal{V}_A,\C^2))$$ 
to a $*$-homomorphism $O_A\to \Bo(\ell^2(\mathcal{V}_A,\C^2))$, vanishes. In a similar fashion, we conclude the following. 

\begin{prop}
\label{twotorsionlifting}
Assume that $k$ is such that $k[1_{O_{A^T}}]\neq 0$. For functions $\mathfrak{t}_1,\ldots, \mathfrak{t}_k:\mathcal{V}_A\to \Omega_A$ satisfying the cylinder condition,
$$\oplus_{j=1}^k \pi_{\mathfrak{t}_j}:C(\Omega_A)\to \Bo(\ell^2(\mathcal{V}_A,\C^k)),$$ 
does not extend to a representation of $O_A$ and neither does any compact perturbation of it. 
\end{prop}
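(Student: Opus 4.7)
The plan is to package the given data into a canonical Busby invariant of $O_A$, compute its class in $K^1(O_A)$ via Kaminker--Putnam duality as $k[1_{O_{A^T}}]$, and derive a contradiction from its hypothesised vanishing. Set $\mathfrak{s}_i := \bigoplus_{j=1}^k \mathfrak{s}_{i,\mathfrak{t}_j}\in \Bo(\ell^2(\mathcal{V}_A,\C^k))$. By Proposition \ref{lisandsis} each summand equals $L_i^A$ independently of $j$, so $\mathfrak{s}_i=L_i^A\otimes\id_{\C^k}$. Lemma \ref{sionell2} shows that the $\mathfrak{s}_i$'s satisfy the Cuntz--Krieger relations modulo finite-rank defects, so $S_i\mapsto\mathfrak{s}_i\mod\Ko$ extends to a $*$-homomorphism $\beta\colon O_A\to \mathcal{C}(\ell^2(\mathcal{V}_A,\C^k))$. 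The same lemma gives $\mathfrak{s}_i\mathfrak{s}_i^*\equiv\pi_{\mathfrak{t}_j}(\chi_{C_i})\mod\Ko$, and the cylinder condition forces $d_{\Omega_A}(\mathfrak{t}_j(\mu),\mathfrak{t}_{j'}(\mu))\leq\mathrm{diam}(C_\mu)\to 0$, making the restrictions $\pi_{\mathfrak{t}_j}|_{C(\Omega_A)}$ pairwise compact perturbations of each other; hence $\beta|_{C(\Omega_A)}=\bigoplus_{j=1}^k\pi_{\mathfrak{t}_j}\mod\Ko$.

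Next, since $\mathfrak{s}_i=L_i^A\otimes\id_{\C^k}$, the Busby invariant $\beta$ decomposes as the block-diagonal $k$-fold direct sum of the Kaminker--Putnam Busby invariant $\beta_A\colon S_i\mapsto L_i^A\mod\Ko$ from Subsection \ref{subsectionobstructingckext}. Direct sum of extensions corresponds to addition in $\Ext(O_A,\Ko)\cong K^1(O_A)$, so $[\beta]=k[\beta_A]$. Proposition \ref{dualofunitprop} combined with Theorem \ref{pd} identifies $[\beta_A]$ with the image of $[1_{O_{A^T}}]$ under the Kaminker--Putnam isomorphism $K_0(O_{A^T})\cong K^1(O_A)$, so $[\beta]$ corresponds to $k[1_{O_{A^T}}]$, which is non-zero by hypothesis.

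Suppose for contradiction a representation $\pi\colon O_A\to\Bo(\ell^2(\mathcal{V}_A,\C^k))$ extends $\bigoplus_{j=1}^k\pi_{\mathfrak{t}_j}$ or a compact perturbation thereof on $C(\Omega_A)$. Then $\pi(S_iS_i^*)=\pi(\chi_{C_i})\equiv\mathfrak{s}_i\mathfrak{s}_i^*\mod\Ko$ and similarly $\pi(S_i^*S_i)\equiv\mathfrak{s}_i^*\mathfrak{s}_i\mod\Ko$, so -- as spelled out in the paragraph preceding the proposition -- $\pi$ provides a $*$-homomorphism $O_A\to\Bo$ lifting $\beta$ along the Calkin quotient. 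This forces $[\beta]=0$ in $\Ext(O_A,\Ko)$, contradicting the previous paragraph. The main obstacle is precisely this last step: verifying that any representation $\pi$ compatible with the prescribed commutative data genuinely lifts $\beta$ rather than inducing an a priori different, trivialising Busby invariant. This rigidity is exactly the content of the remark in the paragraph preceding the proposition, where the $K$-homological obstruction to producing a lift of $S_i\mapsto\mathfrak{s}_i\mod\Ko$ is identified with $[\beta]=k[1_{O_{A^T}}]$; the compact-perturbation flexibility on $C(\Omega_A)$ does not affect this class since it only changes operator-level representatives while preserving their Calkin-algebra images.
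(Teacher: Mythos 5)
Your first two paragraphs are correct and follow exactly the route the paper intends: by Proposition \ref{lisandsis} each $\mathfrak{s}_{i,\mathfrak{t}_j}$ equals $L_i^A$, the resulting Busby invariant is the $k$-fold direct sum of the one in Proposition \ref{dualofunitprop}, its class is the Poincar\'e dual of $k[1_{O_{A^T}}]\neq 0$, and hence the map $S_i\mapsto\oplus_j\mathfrak{s}_{i,\mathfrak{t}_j}\mod\Ko$ admits no multiplicative lift. The difficulty is entirely in the last step, which you flag yourself and then wave away. A representation $\pi$ of $O_A$ whose restriction to $C(\Omega_A)$ is a compact perturbation of $\oplus_j\pi_{\mathfrak{t}_j}$ is \emph{not} thereby a lift of the Busby invariant $\beta^{(k)}$: the commutative data only forces $q(\pi(S_i))$ and $q(\mathfrak{s}_i)$ to be partial isometries with the same source and range projections in the Calkin algebra and the same intertwining of $C(\Omega_A)$, so $q(\pi(S_i))=u_i\,q(\mathfrak{s}_i)$ for some unitary $u_i$ in the corner of the Calkin algebra determined by $q(\pi(P_i))$ commuting with the image of $\chi_{C_i}C(\Omega_A)$. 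Such $u_i$ can carry non-trivial Fredholm index, and twisting by them changes the extension class by an arbitrary element of the subgroup generated by the classes $[\beta_i]$, which is all of $K^1(O_A)$. Put differently: $\Omega_A$ is totally disconnected, so $K^1(C(\Omega_A))=0$ and \emph{every} extension of $O_A$ restricts to a trivial extension of $C(\Omega_A)$ (cf.\ Proposition \ref{kaatorsionlifting}); agreement with $\beta^{(k)}$ on the maximal abelian subalgebra therefore retains no information about the class. Your closing sentence about compact-perturbation flexibility addresses perturbations of the $C(\Omega_A)$-data, not the genuine freedom in the choice of $\pi(S_i)$, and does not close this gap.

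The gap is not cosmetic. Take $A$ the all-ones $3\times 3$ matrix, so $O_A=O_3$ and $[1_{O_3}]\neq 0$ in $K_0(O_3)\cong\Z/2\Z$, and take $k=1$. Put $T_2:=L_2^A$, $T_3:=L_3^A$ and $T_1\delta_\nu:=\delta_{\phi(\nu)}$, where $\phi:\mathcal{V}_A\to C^{\mathcal{V}}_1\cup\{\circ_A\}$ is the bijection with $\phi(1^n)=1^n$ for all $n\geq 0$ and $\phi(\nu)=1\nu$ for $\nu\notin\{1\}^{*}$. One checks that $\phi(C^{\mathcal{V}}_\mu)$ differs from $C^{\mathcal{V}}_{1\mu}$ by a finite set for every $\mu$, that $T_1,T_2,T_3$ are isometries with $\sum_iT_iT_i^*=1$ (so they generate an honest representation $\pi$ of $O_3$ by universality), and by induction on $|\mu|$ that $\pi(\chi_{C_\mu})=T_\mu T_\mu^*$ differs from the projection onto $\ell^2(C^{\mathcal{V}}_\mu)$, hence from $\pi_{\mathfrak{t}}(\chi_{C_\mu})$, by a finite-rank operator. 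Thus a compact perturbation of $\pi_{\mathfrak{t}}$ \emph{does} extend to a representation of $O_3$, even though $[1_{O_3}]\neq 0$; here $q\pi(S_1)$ is $q(\mathfrak{s}_{1,\mathfrak{t}})$ twisted by an index-one unitary, which is precisely what cancels the obstruction. What your first two paragraphs do prove, and what the paragraph of the paper preceding the proposition actually asserts, is the statement that no representation of $O_A$ can send each $S_i$ to a compact perturbation of $\oplus_j\mathfrak{s}_{i,\mathfrak{t}_j}$; your argument should be restricted to that claim, and the final bridging step should be removed rather than asserted.
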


\begin{remark}
\label{faobstruding}
If $K^0(F_A)=0$, it follows from Theorem \ref{pimsnervoic} that $K^1(O_A)\to K^1(F_A)$ is injective. This happens for instance for the algebra $O_N$ as we saw above in Remark \ref{ktheorycomputationsforthealgebraon}. In this particular case, the obstruction mentioned in Proposition \ref{twotorsionlifting} to lifting the representation of $C(\Omega_A)$ in the Bellissard-Pearson spectral triples remains for $F_A$.
\end{remark}

\begin{prop}
\label{kaatorsionlifting}
If $\mathfrak{t}:\mathcal{V}_A\to \Omega_A$ is a function satisfying the cylinder condition, the representation $\pi_\mathfrak{t}$ of $C(\Omega_A)$ satisfies that 
\[q\circ \pi_\mathfrak{t}=\beta_A|_{C(\Omega_A)},\]
and hence $[\beta_A]|_{C(\Omega_A)}=0$ in $K^1(C(\Omega_A))$.
\end{prop}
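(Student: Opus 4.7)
The strategy is to verify the identity on a dense family of generators of $C(\Omega_A)$ and then invoke the characterization of trivial Busby extensions. By Proposition \ref{afstructureonfunctions}, the $*$-algebra generated by the characteristic functions $\chi_{C_\mu}$, $\mu\in \mathcal{V}_A$, is norm dense in $C(\Omega_A)$. Under the standard inclusion $C(\Omega_A)\hookrightarrow O_A$ given by the groupoid picture (Theorem \ref{oaandcga}, with $X_i\subset \mathcal{G}_A$ as in \eqref{xi}), $\chi_{C_\mu}$ corresponds to the diagonal projection $S_\mu S_\mu^*\in O_A$. Since both $q\circ\pi_{\mathfrak{t}}$ and $\beta_A|_{C(\Omega_A)}$ are continuous $*$-homomorphisms, it suffices to compare them on each such $\chi_{C_\mu}$.

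The main computation is to show that
\[R_\mu := \pi_{\mathfrak{t}}(\chi_{C_\mu})-L_\mu^A(L_\mu^A)^*\in \Bo(\ell^2(\mathcal{V}_A))\]
is of finite rank, where $L_\mu^A:=L_{\mu_1}^A\cdots L_{\mu_{|\mu|}}^A$. On the one hand, $L_\mu^A\delta_\nu=\delta_{\mu\nu}$ when $\mu\nu$ is admissible and $0$ otherwise, so $L_\mu^A(L_\mu^A)^*$ is the orthogonal projection onto $\overline{\mathrm{span}}\{\delta_\nu:\nu\in C^{\mathcal{V}}_\mu\}$. On the other, $\pi_{\mathfrak{t}}(\chi_{C_\mu})\delta_\nu=\chi_{C_\mu}(\mathfrak{t}(\nu))\delta_\nu$; since the cylinder condition forces $\mathfrak{t}(\nu)\in C_\nu$, the condition $\mathfrak{t}(\nu)\in C_\mu$ is automatic when $\nu\in C^{\mathcal{V}}_\mu$, and for $|\nu|\geq |\mu|$ it actually forces $\nu\in C^{\mathcal{V}}_\mu$. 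The two operators can therefore only disagree on those finitely many proper prefixes $\nu$ of $\mu$ for which $\mathfrak{t}(\nu)\in C_\mu$, confirming that $R_\mu$ is of finite rank. Passing to the Calkin algebra, $q(\pi_{\mathfrak{t}}(\chi_{C_\mu}))=q(L_\mu^A(L_\mu^A)^*)=\beta_A(S_\mu S_\mu^*)$, and density gives the asserted identity $q\circ\pi_{\mathfrak{t}}=\beta_A|_{C(\Omega_A)}$.

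For the second assertion, recall from Remark \ref{kaspprodbusby} that the class $[\beta_A]|_{C(\Omega_A)}$ in $\Ext(C(\Omega_A),\Ko(\ell^2(\mathcal{V}_A)))\cong K^1(C(\Omega_A))$ is represented precisely by the Busby invariant $\beta_A|_{C(\Omega_A)}$. Having just proved that this Busby invariant factors as $q\circ \pi_{\mathfrak{t}}$ for an honest $*$-representation $\pi_{\mathfrak{t}}:C(\Omega_A)\to \Bo(\ell^2(\mathcal{V}_A))$, the corresponding extension admits a $*$-homomorphism splitting, hence is trivial in $\Ext$ and represents $0\in K^1(C(\Omega_A))$.

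The only nontrivial step is the finite-rank verification for $R_\mu$, and the only potential pitfall there is keeping careful track of the prefixes of $\mu$; once that is done, everything else reduces to the standard fact that a Busby invariant lifting to a $*$-homomorphism yields the trivial class in $\Ext$.
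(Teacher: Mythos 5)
Your argument is correct and is essentially the proof the paper intends: the paper leaves this Proposition without a written proof precisely because Lemma \ref{sionell2}, Equation \eqref{sisjstarsandshit} and Proposition \ref{lisandsis} already give $\pi_{\mathfrak{t}}(\chi_{C_i})=L_i^A(L_i^A)^*+(\text{rank}\leq 1)$, and your finite-rank computation of $R_\mu$ for general cylinder functions, followed by density and the split-extension criterion from Remark \ref{kaspprodbusby}, is exactly that argument carried out explicitly. The only cosmetic shortcut you could have taken is to note that, both $q\circ\pi_{\mathfrak{t}}$ and $\beta_A|_{C(\Omega_A)}$ being $*$-homomorphisms, agreement on the single-letter generators $\chi_{C_i}$ already forces agreement on all $\chi_{C_\mu}=\prod_k \sigma^{*(k-1)}\chi_{C_{\mu_k}}$, sparing the prefix bookkeeping; but your direct verification is sound.
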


\begin{remark}
\label{secondaryremark}
An interesting interpretation of this Proposition is that the Bellissard-Pearson spectral triples should be thought of as an invariant for the choice of two multiplicative liftings of $\beta_A|_{C(\Omega_A)}$, i.e. a \emph{secondary} invariant for the homological triviality of the Toeplitz extension $E_{BP}$ restricted to $C(\Omega_A)$.
\end{remark}

\begin{prop}
\label{dlogprop}
For $i=1,\ldots, N$, the operator $\mathfrak{s}_{i}:=\mathfrak{s}_{i,\tau_+}\oplus \mathfrak{s}_{i,\tau_-}$ and its adjoint 
\begin{enumerate}
\item preserve $C_c(\mathcal{V}_A,\C^2)$;
\item admit bounded commutators with $D_{\mathcal{V},s}$;
\item there is a sequence $(f_k)\subseteq C_c(\mathcal{V}_A,\C^2)$ such that $\|f_k\|=1$ but $\|[D_{\mathcal{V}}^{BP},\mathfrak{s}_i]f_k\|\to \infty$.
\end{enumerate}
\end{prop}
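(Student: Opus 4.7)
The plan is to reduce all three claims to a single computation on $\ell^2(\mathcal{V}_A)$. By Proposition \ref{lisandsis}, the cylinder condition forces $\mathfrak{s}_{i,\tau_\pm}=L_i^A$, where $L_i^A\delta_\mu=\delta_{i\mu}$ (with the convention $\delta_{i\mu}:=0$ if $i\mu\notin\mathcal{V}_A$). Consequently, under the identification $\ell^2(\mathcal{V}_A,\C^2)=\ell^2(\mathcal{V}_A)\otimes\C^2$, one has $\mathfrak{s}_i=L_i^A\otimes I_2$, while $D_{\mathcal{V},s}=M_s\otimes\sigma_1$ and $D_{\mathcal{V}}^{BP}=M^{\exp}\otimes\sigma_1$, with $M_s\delta_\mu=|\mu|^s\delta_\mu$, $M^{\exp}\delta_\mu=\e^{|\mu|}\delta_\mu$, and $\sigma_1=\begin{pmatrix}0&1\\1&0\end{pmatrix}$. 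Therefore
\[
[D_{\mathcal{V},s},\mathfrak{s}_i]=[M_s,L_i^A]\otimes\sigma_1,\qquad [D_{\mathcal{V}}^{BP},\mathfrak{s}_i]=[M^{\exp},L_i^A]\otimes\sigma_1,
\]
and all three claims translate into statements about scalar commutators on $\ell^2(\mathcal{V}_A)$.

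Claim (1) is immediate: $L_i^A$ and $(L_i^A)^*$ each send any basis vector either to another basis vector or to $0$, hence preserve $C_c(\mathcal{V}_A,\C^2)$. For claim (2), I would compute on basis vectors
\[
[M_s,L_i^A]\delta_\mu=\bigl((|\mu|+1)^s-|\mu|^s\bigr)\delta_{i\mu}\quad\text{if }i\mu\in\mathcal{V}_A,\text{ else }0.
\]
Since $\mu\mapsto i\mu$ is injective on its domain of definition, $[M_s,L_i^A]$ is unitarily equivalent to a diagonal operator, and for $s\in(0,1]$ one has $(k+1)^s-k^s\leq 1$ uniformly in $k\in\N$ (use the Mean Value Theorem for $k\geq 1$, and observe $1^s-0^s=1$ at $k=0$). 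Hence the commutator has norm at most one, and the identity $[M_s,(L_i^A)^*]=-[M_s,L_i^A]^*$ gives the analogous bound for $(L_i^A)^*$.

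For claim (3), the same computation yields $[M^{\exp},L_i^A]\delta_\mu=(\e-1)\e^{|\mu|}\delta_{i\mu}$ whenever $i\mu\in\mathcal{V}_A$. Our standing hypothesis that no row of $A$ vanishes lets us select $j\in\{1,\ldots,N\}$ with $A_{ij}=1$ and, by iterating the same non-vanishing hypothesis for each successive letter, extend $j$ to an admissible word $\mu_k\in\mathcal{V}_A$ of any prescribed length $k$; by construction $i\mu_k\in\mathcal{V}_A$ as well. The vectors $f_k:=\delta_{\mu_k}\otimes e_1\in C_c(\mathcal{V}_A,\C^2)$ then satisfy $\|f_k\|=1$ and $\|[D_\mathcal{V}^{BP},\mathfrak{s}_i]f_k\|=(\e-1)\e^{k}\to\infty$. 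There is no real obstacle here; the content of the proposition is simply the contrast between the uniformly bounded increment $(k+1)^s-k^s$ and the exponentially divergent increment $\e^{k+1}-\e^{k}$.
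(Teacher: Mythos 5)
Your proof is correct and follows essentially the same route as the paper's: both invoke Proposition \ref{lisandsis} to reduce $\mathfrak{s}_{i,\tau_\pm}$ to $L_i^A$, compute the commutators on basis vectors, and exploit the contrast between the bounded increments $(k+1)^s-k^s$ and the exponentially growing increments $\e^{k+1}-\e^{k}$. Your tensor factorization $[D_{\mathcal{V},s},\mathfrak{s}_i]=[M_s,L_i^A]\otimes\sigma_1$ is just a tidier packaging of the paper's $2\times 2$ column computation, and your explicit construction of the admissible words $\mu_k$ (which the paper takes for granted) and treatment of the adjoint via $[M_s,(L_i^A)^*]=-[M_s,L_i^A]^*$ are welcome but minor additions.
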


\begin{proof}
Property $(1)$ is clear from the definition $\mathfrak{s}_i:=\mathfrak{s}_{i,\tau_+}\oplus \mathfrak{s}_{i,\tau_-}$ and Equation \eqref{sitdef}. To prove $(2)$, we note that Proposition \ref{lisandsis} implies that
$$[D_{\mathcal{V}, s},\mathfrak{s}_i]\begin{pmatrix}\delta_\mu\\ \delta_\nu\end{pmatrix}=\begin{pmatrix}\left(|i\nu|^{s}-|\nu|^s\right)\delta_{i\nu}\\ \left(|i\mu|^{s}-|\mu|^s\right)\delta_{i\mu}\end{pmatrix}.$$
Since $\mu\mapsto -\log\mathrm{diam}(C_\mu)=|\mu|$ grows linearly in $|\mu|$, $\mu\mapsto |i\mu|^{s}-|\mu|^{s}$ is a bounded function for $0<s\leq 1$. Hence $[D_{\mathcal{V},s},\mathfrak{s}_i]$ is bounded.

Concerning $(3)$, it follows from Proposition \ref{lisandsis} that
$$[D_{\mathcal{V}}^{BP},\mathfrak{s}_i]\begin{pmatrix}\delta_\mu\\ \delta_\nu\end{pmatrix}=
\begin{pmatrix}\left(\mathrm{diam}(C_{i\nu})^{-1}-\mathrm{diam}(C_{\nu})^{-1}\right)\delta_{i\nu}\\ 
\vspace{-2mm}\\
\left(\mathrm{diam}(C_{i\mu})^{-1}-\mathrm{diam}(C_{\mu})^{-1}\right)\delta_{i\mu}\end{pmatrix}.$$
Take a sequence $(\mu_k)_{k=1}^\infty\subseteq \mathcal{V}_A$ such that $|\mu_k|=k$ and $i\mu_k$ is admissible for all $k$. Set $f_k:=(\delta_{\mu_k},0)^T$. It trivially holds that $f_k\in C_c(\mathcal{V}_A,\C^2)$ and that $\|f_k\|=1$. Since $\mathrm{diam}(C_\mu)=\e^{-|\mu|}$,  there is an $\epsilon>0$ for which
\[\frac{\mathrm{diam}(C_{\mu})}{\mathrm{diam}(C_{i\mu})}>1+\epsilon\]
We conclude that $\|[D_{\mathcal{V}}^{BP},\mathfrak{s}_i]f_k\|\geq \epsilon \e^k\to \infty$, as $k\to \infty$.
\end{proof}

As a consequence of Proposition \ref{dlogprop}, the operator $D_{\mathcal{V},s}$ defines a spectral triple on $E_{BP}$ (see Proposition \ref{extdualtoone}) which is $\theta$-summable for $s\geq 1/2$. Yet another consequence is that $D_{\mathcal{V}}^{BP}$ does \emph{not} define a spectral triple on $E_{BP}$ such that $\mathfrak{s}_{i,\mathfrak{t}}$ is in the Lipschitz algebra. In the light of Theorem \ref{variousstructuresonoa} and Proposition \ref{thetaandsummable} this result does not come as a surprise as in that case we would obtain a finitely summable spectral triple on $E_{BP}$. We do however note that there is no obvious obstruction to finitely summable spectral triples on $E_{BP}$ since it is not purely infinite. This fact follows from \cite[Proposition V$.2.2.23$]{blackadarbook} and the existence of the inclusion $\Ko(\ell^2(\mathcal{V}_A))\subseteq E_{BP}$ of Proposition \ref{extdualtoone}.

\subsubsection{Dual of the unit for a free group}
\label{dualofunitfreegroup}

We end this subsection by a comparison of various descriptions of the extension dual to $[1_{O_{A^T}}]\in K_0(O_{A^T})$ in the special case of a free group. This example, described above in Subsection \ref{freegroupexample}, falls into the category of extensions studied by Emerson-Nica \cite{EN2}. The extension constructed in \cite{EN2} is defined from the short exact sequence
\[0\to C_0(F_d)\rtimes F_d\to C(\overline{F_d})\rtimes F_d\to C(\partial F_d)\rtimes F_d\to 0.\]
Using the isomorphism $C_0(F_d)\rtimes F_d\cong \Ko(\ell^2(F_d))$ we obtain an extension $E_{EN}$ whose class was proven in \cite{EN2} to be dual to $[1_{O_{A}}]\in K_0(O_{A})$. In \cite{EN2} an explicit finitely summable analytic $K$-cycle representing this extension class was prescribed. Recall the measure $\mu_A$ on $\partial F_d$ constructed as in Subsection \ref{realizingboundarysubsec}. Let $P_{EN}$ be the orthogonal projection onto the image of the isometric embedding $\ell^2(F_d)\to \ell^2(F_d,L^2(\partial F_d,\mu_A))$ as constant functions on $\partial F_d$. By \cite[Theorem $1.1$]{EN2} the class $[E_{EN}]$ is represented by the finitely summable analytic cycle $(\pi_{F_d}, \ell^2(F_d,L^2(\partial F_d,\mu_A)), 2P_{EN}-1)$, where $\pi_{F_d}$ is the crossed product representation associated with the covariant $C(\partial F_d)$-representation on $\ell^2(F_d,L^2(\partial F_d,\mu_A))$. One can check that this construction of $P_{EN}$ corresponds to the construction of $\mathfrak{Q}_{\circ}$ in Remark \ref{measureremark} thus concluding the following Proposition. 

\begin{prop}
\label{comparingtoen}
If $A$ is the $2d\times 2d$-matrix from Subsection \ref{freegroupexample}, the following diagram with exact rows commute:
\[\begin{CD}
0  @>>>\Ko(\ell^2(F_d)) @>>>E_{BP} @>>> O_A@>>>0 \\
@. @|  @| @|@.\\
0  @>>> \Ko(\ell^2(F_d))@>>>E_{EN} @> >> O_A@>>>0 \\
\end{CD} \]
Furthermore, under the unitary equivalence $L^2(O_A,\phi_A)\cong \ell^2(F_d,L^2(\partial F_d,\mu_A))$ induced by the isomorphism of groupoids $\mathcal{G}_A\cong \partial F_d\rtimes F_d$ it holds that 
\[(\pi_A,L^2(O_A,\phi_A),2W_\circ W^*_\circ-1)=(\pi_{F_d}, \ell^2(F_d,L^2(\partial F_d,\mu_A)), 2P_{EN}-1)\]
\end{prop}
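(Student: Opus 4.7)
The plan is to reduce both extensions to concrete $C^*$-subalgebras of $\Bo(\ell^2(F_d))$ via the natural identification $\mathcal{V}_A=F_d$ arising from the fact that admissible words in the $2d$-letter alphabet coincide with reduced words in $F_d$. This gives a unitary $\ell^2(\mathcal{V}_A)\cong \ell^2(F_d)$ and embeds $E_{EN}=C(\overline{F_d})\rtimes F_d$ faithfully in $\Bo(\ell^2(F_d))$ via the standard covariant representation ($C(\overline{F_d})$ by pointwise multiplication through restriction to $F_d$, and $F_d$ by the left regular representation $\lambda$); faithfulness on the quotient follows because $O_A$ is simple in this case. Under this representation, $E_{EN}$ sits inside $\Bo(\ell^2(F_d))$ as a $C^*$-subalgebra containing $\Ko(\ell^2(F_d))=C_0(F_d)\rtimes F_d$. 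So both $E_{BP}$ and $E_{EN}$ become concrete subalgebras of $\Bo(\ell^2(F_d))$ containing the compacts, and the task is to show they coincide and that the induced quotient maps to $O_A$ agree.

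First I would use Proposition \ref{lisandsis} to compute that each generator $\mathfrak{s}_i$ acts on $\ell^2(F_d)$ as $\chi_{V_i\cap F_d}\lambda(\gamma_i)$, where $V_i\subseteq\overline{F_d}$ is the clopen cylinder of elements beginning with $\gamma_i$. Since $\chi_{V_i}\in C(\overline{F_d})$, this realizes $\mathfrak{s}_i$ as the image of $\chi_{V_i}u_{\gamma_i}\in E_{EN}$, giving $E_{BP}\subseteq E_{EN}$. For the reverse inclusion I would verify two relations by direct computation: $\mathfrak{s}_\mu\mathfrak{s}_\mu^*=\chi_{V_\mu\cap F_d}$ for every admissible word $\mu$, which by density of basic clopens places all of $C(\overline{F_d})$ inside $E_{BP}$; and the identity $\lambda(\gamma_i)=\mathfrak{s}_i+\mathfrak{s}_{i^{-1}}^*$, which splits into cases according as $\mu$ begins with $\gamma_i^{-1}$ or not, placing $\lambda(F_d)$ inside $E_{BP}$. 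Combining these gives $E_{EN}\subseteq E_{BP}$. The diagram then commutes on generators modulo compacts: $\mathfrak{s}_i\bmod\Ko$ is the Cuntz-Krieger generator $S_i$ on the top row by the discussion preceding Theorem \ref{pd}, while $\chi_{V_i}u_{\gamma_i}\bmod(C_0(F_d)\rtimes F_d)$ becomes $\chi_{C_i}u_{\gamma_i}\in C(\partial F_d)\rtimes F_d$, which under the groupoid isomorphism of Proposition \ref{isofreegr} corresponds to $\chi_{X_i}=S_i$ in $C^*_r(\mathcal{G}_A)\cong O_A$.

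For the second assertion, the GNS identification $L^2(O_A,\phi_A)\cong L^2(\mathcal{G}_A,\mu_A)$ of Subsection \ref{subsectionckalgebras}, composed with the groupoid isomorphism $\mathcal{G}_A\cong\partial F_d\rtimes F_d$ and the natural decomposition of the latter's $L^2$-space over its unit space $\partial F_d$, produces the unitary $L^2(O_A,\phi_A)\cong\ell^2(F_d,L^2(\partial F_d,\mu_A))$ intertwining $\pi_A$ and $\pi_{F_d}$. Remark \ref{measureremark} already identifies $W_\circ W_\circ^*$ with the groupoid projection $\mathfrak{Q}_\circ$ given explicitly by formula \eqref{measureproj}. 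It then remains to unpack the definition of $P_{EN}$ from \cite[Theorem 1.1]{EN2}, which represents the conditional expectation onto constants in the $\partial F_d$-variable with respect to the Patterson-Sullivan measure, and verify that it pulls back through the unitary to the same operator.

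The main obstacle will be this last verification. Matching $\mathfrak{Q}_\circ$ with $P_{EN}$ is essentially bookkeeping, but the unitary equivalence $L^2(O_A,\phi_A)\cong \ell^2(F_d,L^2(\partial F_d,\mu_A))$ inevitably involves the modular cocycle $e^{-\delta_A c_A}$ of Proposition \ref{quasi-inv}, while the Emerson-Nica isometric embedding as ``constant functions'' is normalized against the Patterson-Sullivan measure; the point is precisely that these two normalizations have been chosen to agree. The cleanest way to carry this out is to evaluate both projections on the orthonormal basis $\{c_\mu S_\mu\}_{\mu\in\mathcal{V}_A}$ of $\He_\circ$ and check that under the unitary they are carried to the images of the canonical basis vectors of $\ell^2(F_d)$ embedded as constant functions, which is Remark \ref{measureremark} made precise in the free-group case.
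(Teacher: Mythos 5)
Your overall route is the same as the paper's, which in fact offers almost no argument: the text preceding the proposition simply recalls the Emerson--Nica extension, states the form of $P_{EN}$, and asserts that ``one can check'' that $P_{EN}$ corresponds to $\mathfrak{Q}_{\circ}$ of Remark \ref{measureremark}. Your first half is therefore considerably more detailed than the source. The computations you propose there are correct: under the identification $\mathcal{V}_A=F_d$ one has $\mathfrak{s}_i=L_i^A$ acting by $\delta_g\mapsto\delta_{\gamma_i g}$ cut down to reduced products, the identity $\lambda(\gamma_i)=\mathfrak{s}_i+\mathfrak{s}_{i^{-1}}^*$ holds exactly (the case split according to whether $g$ begins with $\gamma_i^{-1}$ works out, including $g=e$), and $\mathfrak{s}_\mu\mathfrak{s}_\mu^*=\chi_{V_\mu\cap F_d}$ together with the compacts recovers $C(\overline{F_d})$. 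Combined with simplicity of $O_{A_{F_d}}$ and the fact that $\Ko(\ell^2(F_d))$ is represented faithfully and essentially, this gives the equality of the two concrete subalgebras and the commutativity of the diagram on generators modulo compacts. This is a sound and complete argument for the first statement.

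For the second statement you have correctly located the difficulty, but you defer exactly the step that carries the content, and I want to warn you that it is more than a scalar normalization. Carrying out your proposed basis computation: the GNS inner product $\phi_A(f^**f)=\int_{\Omega_A}\sum_{\eta\in d^{-1}(x)}|f(\eta)|^2\,\rd\mu_A(x)$ decomposes $L^2(\mathcal{G}_A,\mu_A)$ over \emph{source} fibres, and in that picture $c_\mu S_\mu$ is carried to $\delta_{g_\mu}\otimes c_\mu\chi_{\sigma^{|\mu|}(C_\mu)}$ --- a normalized indicator of the clopen set $\{y:A(\mu_{|\mu|},y_1)=1\}$, not the constant function $1$. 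Since $\mu_A(\sigma^{|\mu|}(C_\mu))=(2d-1)/(2d)<1$ for $\mu\neq\circ_A$, the naive projections onto $\overline{\operatorname{span}}\{\delta_g\otimes 1\}$ and onto $\overline{\operatorname{span}}\{c_\mu S_\mu\}$ differ by a direct sum of rank-two operators of \emph{uniform} positive norm, i.e.\ not even by a compact. The on-the-nose equality claimed in the proposition therefore depends on using the range-fibre decomposition corrected by the modular cocycle $e^{-\delta_A c_A}$ of Proposition \ref{quasi-inv} (under which $c_\mu S_\mu$ becomes $\delta_{g_\mu}$ tensored with the normalized indicator of the shadow $C_\mu$) and on the precise form of the Emerson--Nica isometry in \cite[Theorem $1.1$]{EN2}, which must be matched against those shadow vectors rather than against the literal phrase ``constant functions''. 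Your plan to ``check on the basis'' is the right one, but you must fix the unitary and the covariant representation explicitly before the check becomes meaningful; as written, the verification that the two projections coincide is still open in your proposal, just as it is in the paper.
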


\begin{remark}
For a general $N\times N$-matrix $A$, there are several other equivalent ways of constructing extensions equivalent to $E_{BP}$ in a geometric way from the short exact sequence
\[0\to C_0(\mathcal{V}_A)\to C\left(\overline{\mathcal{V}_A}\right)\to C(\Omega_A)\to 0.\]
For instance, using crossed products by partial actions of the free group $F_N$ on $\Omega_A$ (see \cite{exelpartial}) or a crossed product by the shift endomorphism (see \cite{exel}).
\end{remark}

\subsection{$K$-homology classes}

We now turn to the study of the index theory of the Bellissard-Pearson spectral triples. Whenever $(\pi,\He,D)$ is an unbounded Fredholm module on a $C^*$-algebra $\cstar$, we let $[\pi,\He,D]\in K^*(\cstar)$ denote its $K$-homology class, obtained via the bounded transform. Throughout this subsection, $\tau=(\tau_+,\tau_-)$ denotes a comparable pair of functions $\mathcal{V}_A\to \Omega_A$. For most of the section, $\tau$ will be a weak choice function.

\begin{lem}
\label{bddtransformofpb}
For $0<s\leq 1$, the bounded transforms of the logarithmic and the ordinary even Bellissard-Pearson spectral triples coincide in $K$-homology:
$$[BP^{\exp}(\tau)]=[BP_{s}(\tau)]\in K^0(C(\Omega_A)).$$
Further, the class $[BP_{s}(\tau)]\in K^0(C(\Omega_A))$ of a comparable pair $\tau$ can be represented by the analytic $K$-cycle 
\begin{equation}
\label{kcyclerepbp}
\left(\pi_\tau,\ell^2(\mathcal{V}_A,\C^2),F \right),\quad\mbox{where}\quad F:=\begin{pmatrix} 0& 1\\1&0\end{pmatrix}.
\end{equation}
For any $p>0$ and weak choice function $\tau$, this $K$-cycle is $p$-summable on the dense $*$-subalgebra generated by cylinder functions inside $C(\Omega_A)$.
\end{lem}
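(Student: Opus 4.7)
First, I would exploit the fact that both operators decompose as a product $M\cdot F$ where $F=\begin{pmatrix}0&1\\1&0\end{pmatrix}$ acts on the $\C^{2}$ fibre and $M$ is a positive diagonal multiplication operator on $\ell^{2}(\mathcal{V}_{A},\C^{2})$ commuting with $F$ --- namely $M\psi(\mu)=|\mu|^{s}\psi(\mu)$ in the logarithmic case and $M\psi(\mu)=e^{|\mu|}\psi(\mu)$ in the exponential case. Because $F$ and $M$ commute and $F^{2}=1$, the bounded transform factors as $F\cdot M(1+M^{2})^{-1/2}$. The eigenvalues $\lambda/\sqrt{1+\lambda^{2}}$ tend to $1$ as $|\mu|\to\infty$ in both cases, so $M(1+M^{2})^{-1/2}-1$ is a compact diagonal operator (in the logarithmic case one handles the eigenvalue at $\mu=\circ_{A}$ separately as a finite-rank correction coming from the kernel at $|\mu|=0$). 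Hence both bounded transforms agree with $F$ modulo compacts, and both $K$-homology classes coincide with the class of $(\pi_{\tau},\ell^{2}(\mathcal{V}_{A},\C^{2}),F)$ --- provided this triple is in fact a bounded Fredholm module.

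Second, I would verify the Fredholm module axioms for $(\pi_{\tau},\ell^{2}(\mathcal{V}_{A},\C^{2}),F)$. The conditions on $F^{*}-F$ and $F^{2}-1$ are trivial, and the commutator equals
$$[F,\pi_{\tau}(a)]=\begin{pmatrix}0 & \pi_{\tau_{-}}(a)-\pi_{\tau_{+}}(a)\\ \pi_{\tau_{+}}(a)-\pi_{\tau_{-}}(a) & 0\end{pmatrix},$$
whose off-diagonal blocks are diagonal in the basis $\{\delta_{\mu}\}$ with entries $\pm(a(\tau_{+}(\mu))-a(\tau_{-}(\mu)))$. Comparability of $\tau$ forces $d_{\Omega_{A}}(\tau_{+}(\mu),\tau_{-}(\mu))\leq C e^{-|\mu|}\to 0$, and uniform continuity of $a\in C(\Omega_{A})$ then turns these entries into a null sequence, making the commutator compact.

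Third, for $p$-summability on cylinder functions I would compute $[F,\pi_{\tau}(\chi_{C_{\nu}})]$ directly. The cylinder condition satisfied by a weak choice function $\tau_{\pm}$ ensures $\tau_{\pm}(\mu)\in C_{\mu}$; since any two cylinder sets are either nested or disjoint, for $|\mu|\geq |\nu|$ both quantities $\chi_{C_{\nu}}(\tau_{\pm}(\mu))$ equal $\chi_{C^{\mathcal{V}}_{\nu}}(\mu)$ and the corresponding commutator entry vanishes. Thus $[F,\pi_{\tau}(\chi_{C_{\nu}})]$ is supported on the finite-dimensional subspace $\mathrm{span}\{\delta_{\mu}\otimes e_{j}:|\mu|<|\nu|\}$ and is in particular of finite rank, hence in $\mathcal{L}^{p}(\ell^{2}(\mathcal{V}_{A},\C^{2}))$ for every $p>0$. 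The $*$-algebra generated by the $\chi_{C_{\nu}}$ is norm dense in $C(\Omega_{A})$ by Proposition \ref{afstructureonfunctions}, completing the summability claim.

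No step looks genuinely hard here; the proof is mostly bookkeeping driven by the comparability estimate and the combinatorics of the cylinder sets. The only point that requires mild care is the compactness of $M(1+M^{2})^{-1/2}-1$ in the logarithmic case, where the zero eigenvalue at $\circ_{A}$ must be separated out as a finite-rank contribution before invoking the decay of $|\mu|^{s}/\sqrt{1+|\mu|^{2s}}-1$ at infinity.
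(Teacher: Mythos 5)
Your proof is correct and follows essentially the same route as the paper's: identify $F$ as the phase of both operators modulo compact (indeed finite-rank) perturbations, and then observe that the cylinder condition forces $[F,\pi_\tau(\chi_{C_\nu})]$ to be supported on words of length less than $|\nu|$, hence of finite rank. You are in fact more careful than the paper in two places — spelling out the factorization $F\cdot M(1+M^2)^{-1/2}$ behind the paper's "it is clear", and separating out the rank-two kernel contribution at $\circ_A$ in the logarithmic case — but these are elaborations of the same argument, not a different one.
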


\begin{proof}
It is clear that $[BP^{\exp}(\tau)]=[BP_{s}(\tau)]$. That $[BP_{s}(\tau)]\in K^0(C(\Omega_A))$ is represented by the $K$-cycle \eqref{kcyclerepbp} follows from that $F=D_{\mathcal{V}}^{BP}|D_{\mathcal{V}}^{BP}|^{-1}$. To verify the $p$-summability claim, take a finite word $\mu\in \mathcal{V}_A$ and consider the locally constant function $\chi_{C_\mu}\in C(\Omega_A)$. For any $\nu,\nu'\in \mathcal{V}_A$,
\[[F,\pi_\tau(\chi_{C_\mu})]\begin{pmatrix} \delta_\nu\\\delta_{\nu'}\end{pmatrix}=
\begin{pmatrix} \left(\chi_{C_\mu}(\tau_-(\nu'))-\chi_{C_\mu}(\tau_+(\nu'))\right) \delta_{\nu'}\\
\vspace{-2mm}\\
\left(\chi_{C_\mu}(\tau_+(\nu))-\chi_{C_\mu}(\tau_-(\nu))\right) \delta_{\nu}\end{pmatrix}.\]
If both $\tau_+$ and $\tau_-$ satisfies the cylinder condition, then
\[\chi_{C_\mu}(\tau_+(\nu))-\chi_{C_\mu}(\tau_-(\nu))=0\quad\mbox{if} \quad |\nu|\geq |\mu|.\]
The latter statement holds, because $\chi_{C_\mu}(\tau_\pm(\nu))$ is non-zero if and only if $\tau_\pm(\nu)\in C_\mu$ and whenever $|\nu|\geq |\mu|$ the cylinder condition and $\tau_\pm(\nu)\in C_\mu$ implies that there is a finite word $\lambda$ with $\nu=\mu\lambda$, hence $\tau_+(\nu)\in C_\mu$ if and only if $\tau_-(\nu)\in C_\mu$. It follows that $[F,\pi_\tau(\chi_{C_\mu})]$ is an operator of rank at most $2\sum_{k< |\mu|}\phi(k)$ and hence $p$-summable for any $p>0$. The linear span of the cylinder functions $\{\chi_{C_{\mu}}|\mu\in \mathcal{V}_A\}$ forms a dense subalgebra of $C(\Omega_A)$ and the Lemma follows.
\end{proof}

Lemma \ref{bddtransformofpb} gives us a description of the class $[BP(\tau)]$ by means of the quasi-homomorphism $(\pi_{\tau_+},\pi_{\tau_-})$, cf. \cite{cuntzquasi}. To understand the index pairing of the Bellissard-Pearson spectral triples with $K$-theory, we first recall a well known computation of the $K$-theory of $C(\Omega_A)$.

\begin{lem}
\label{ktheoryfunctions}
The $K$-theory group $K_*(C(\Omega_A))$ is given by
\[K_*(C(\Omega_A))=
\begin{cases} 
C(\Omega_A,\Z),\quad\mbox{if}\quad *=0,\\
0,\quad\mbox{if}\quad *=1. \end{cases}\]
\end{lem}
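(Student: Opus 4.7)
The plan is to exploit the $AF$-structure of $C(\Omega_A)$ established in Proposition~\ref{afstructureonfunctions} together with continuity of $K$-theory under inductive limits. Recall that
\[C(\Omega_A)=\overline{\bigcup_{k\in\N}\mathcal{C}_k},\qquad \mathcal{C}_k=\bigoplus_{|\mu|=k}\C\chi_{C_\mu}\cong \C^{\phi(k)},\]
with connecting maps induced from the refinement $C_\mu=\bigcup_{j=1}^N C_{\mu j}$, namely $\chi_{C_\mu}\mapsto \sum_{j:A(\mu_{|\mu|},j)=1}\chi_{C_{\mu j}}$.

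For $K_1$, each $\mathcal{C}_k$ is a finite direct sum of copies of $\C$ and hence satisfies $K_1(\mathcal{C}_k)=0$. By continuity of $K$-theory,
\[K_1(C(\Omega_A))=\varinjlim_{k} K_1(\mathcal{C}_k)=0.\]

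For $K_0$, I would run the same continuity argument on the integral side. Let $\mathcal{C}_k(\Omega_A,\Z):=\bigoplus_{|\mu|=k}\Z\chi_{C_\mu}\subseteq C(\Omega_A,\Z)$. Since $\Omega_A$ is a compact totally disconnected space whose topology is generated by the cylinder sets $C_\mu$, every locally constant $\Z$-valued function $f\in C(\Omega_A,\Z)$ is (by a standard compactness/refinement argument using that the $C_\mu$ form a basis of clopen sets) constant on each $C_\mu$ for $|\mu|$ sufficiently large, which gives the identification
\[C(\Omega_A,\Z)=\varinjlim_k \mathcal{C}_k(\Omega_A,\Z).\]
On the other hand, the isomorphism $K_0(\C^{\phi(k)})\cong \Z^{\phi(k)}$ sending a minimal projection to the generator identifies $K_0(\mathcal{C}_k)$ with $\mathcal{C}_k(\Omega_A,\Z)$ via $[\chi_{C_\mu}]\mapsto \chi_{C_\mu}$. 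Under the inclusions $\mathcal{C}_k\hookrightarrow\mathcal{C}_{k+1}$, the induced map on $K_0$ sends the class $[\chi_{C_\mu}]$ to $\sum_{j:A(\mu_{|\mu|},j)=1}[\chi_{C_{\mu j}}]$, which under the above identification corresponds exactly to the refinement embedding $\mathcal{C}_k(\Omega_A,\Z)\hookrightarrow\mathcal{C}_{k+1}(\Omega_A,\Z)$. Passing to the limit then yields
\[K_0(C(\Omega_A))=\varinjlim_k K_0(\mathcal{C}_k)=\varinjlim_k \mathcal{C}_k(\Omega_A,\Z)=C(\Omega_A,\Z).\]

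There is no real obstacle here; the only point that requires care is the compatibility of the two inductive systems, but the refinement $C_\mu=\bigcup_{j}C_{\mu j}$ makes this transparent on both sides.
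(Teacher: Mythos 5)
Your proof is correct and follows essentially the same route as the paper: write $C(\Omega_A)=\varinjlim\mathcal{C}_k$ using the AF-filtration of Proposition \ref{afstructureonfunctions} and apply continuity of $K$-theory under inductive limits. The paper states the conclusion more tersely, while you additionally spell out the identification of the connecting maps on $K_0$ with the refinement embeddings $\mathcal{C}_k(\Omega_A,\Z)\hookrightarrow\mathcal{C}_{k+1}(\Omega_A,\Z)$ and the compactness argument identifying the limit with $C(\Omega_A,\Z)$ -- details the paper leaves implicit, but the argument is the same.
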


\begin{proof}
We write $C(\Omega_A)=\varinjlim \mathcal{C}_k$ as in Proposition \ref{afstructureonfunctions}. Continuity of $K$-theory under direct limits implies that 
\begin{align*}
K_*(C(\Omega_A))&=\varinjlim K_*(\mathcal{C}_k)=
\begin{cases} 
\varinjlim K_0(\mathcal{C}_k),\quad\mbox{if}\quad *=0,\\
0,\quad\mbox{if}\quad *=1. \end{cases}\\
&=
\begin{cases} 
C(\Omega_A,\Z),\quad\mbox{if}\quad *=0,\\
0,\quad\mbox{if}\quad *=1. \end{cases}.
\end{align*}
\end{proof}

We say that a word $\mu\in \mathcal{V}_A$ is minimal if the following condition holds:
\begin{equation}
\label{minimality}
\mbox{For any} \quad \nu_0,\lambda_0\in \mathcal{V}_A\quad\mbox{such that}\quad \mu=\nu_0\lambda_0,\quad \mbox{we have that} \quad C_\mu\neq C_{\nu_0}.
\end{equation}

\begin{lem}
\label{constructingchoice}
Let $\mu\in \mathcal{V}_A$ and let $\nu_0$ be the longest minimal word such that $\mu=\nu_0\lambda_0$ for some $\lambda_0$. Then there is a weak choice function $\tau=(\tau_+,\tau_-)$ such that whenever $\nu,\lambda\in \mathcal{V}_A\setminus \{\circ_A\}$ are such that $\mu=\nu\lambda$ then 
\begin{enumerate}
\item $\tau_+(\nu)\in C_\mu$ if and only if $\tau_-(\nu)\in C_\mu$ for $|\lambda|\neq |\lambda_0|+1$
\item $\tau_-(\nu)\notin C_\mu$ and $\tau_+(\nu)\in C_\mu$ if $|\lambda|=|\lambda_0|+1$.
\end{enumerate}
\end{lem}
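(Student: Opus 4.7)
The plan is constructive: I will define $\tau_+$ and $\tau_-$ so that they agree everywhere on $\mathcal{V}_A$ \emph{except} at the single word $\nu_0'$, the prefix of $\nu_0$ of length $|\nu_0|-1$. The case $\mu=\circ_A$ renders the hypothesis on $(\nu,\lambda)$ vacuous, so I would dispose of it first and henceforth assume $|\nu_0|\geq 1$, ensuring $\nu_0'\in\mathcal{V}_A$. The decisive geometric input that I extract from the minimality of $\nu_0$ is the strict inclusion $C_{\nu_0}\subsetneq C_{\nu_0'}$; combined with $C_\mu\subseteq C_{\nu_0}$, this produces the nonempty set $C_{\nu_0'}\setminus C_\mu$, from which I pick a point $x_-$. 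Together with an arbitrary $x_+\in C_\mu$, this $x_-$ carries the whole asymmetry of the construction.

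Next, I would declare $\tau_+(\nu):=x_+$ for every prefix $\nu$ of $\mu$ (consistent with the cylinder condition because $x_+\in C_\mu\subseteq C_\nu$), and set $\tau_-$ equal to $\tau_+$ on every such prefix \emph{except} at $\nu_0'$, where I set $\tau_-(\nu_0'):=x_-\in C_{\nu_0'}$. On all remaining $\nu\in\mathcal{V}_A$ I would extend $\tau_\pm$ arbitrarily subject only to the cylinder condition $\tau_\pm(\nu)\in C_\nu$; since no requirement of the lemma constrains these values, any such extension suffices.

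The verification is then essentially automatic. The inclusions $\tau_\pm(\nu)\in C_\nu$ force $d_{\Omega_A}(\tau_+(\nu),\tau_-(\nu))\leq \mathrm{diam}(C_\nu)$ everywhere, so $\tau$ is comparable with constant $C=1$ and hence a weak choice function. For the two listed properties, given any decomposition $\mu=\nu\lambda$ with $\nu,\lambda\in\mathcal{V}_A\setminus\{\circ_A\}$, the length identity $|\nu|=|\mu|-|\lambda|=|\nu_0|+|\lambda_0|-|\lambda|$ shows that $|\lambda|=|\lambda_0|+1$ holds precisely when $\nu$ is the prefix of $\mu$ of length $|\nu_0|-1$, namely $\nu=\nu_0'$. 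At this $\nu$ one reads off $\tau_+(\nu)=x_+\in C_\mu$ and $\tau_-(\nu)=x_-\notin C_\mu$, giving (2); at every other prefix of $\mu$, $\tau_+(\nu)=\tau_-(\nu)=x_+\in C_\mu$, giving (1). The only nontrivial step is the initial extraction of $x_-$, which reduces to the elementary chain $C_\mu\subseteq C_{\nu_0}\subsetneq C_{\nu_0'}$ afforded by minimality; beyond that, the argument is pure bookkeeping.
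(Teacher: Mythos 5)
Your proof is correct, and it rests on the same single geometric input as the paper's: minimality of $\nu_0$ gives $C_{\nu_0}\subsetneq C_{\nu_0'}$, hence a point $x_-$ of $C_{\nu_0'}$ outside $C_\mu\subseteq C_{\nu_0}$, which is installed as the value of $\tau_-$ at the one prefix where the two halves of $\tau$ must disagree. The execution, however, is organized differently. The paper first reduces to the case $\mu=\nu_0$ (via the asserted identity $C_\mu=C_{\nu_0}$) and then perturbs an arbitrary weak choice function $\tau^0$ through a four-case analysis, invoking minimality only in the last case to produce an admissible sibling $\nu\lambda'$ of $\mu$ and setting $\tau_-(\nu):=\tau_-^0(\nu\lambda')$. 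You build $\tau$ from scratch: constant equal to $x_+\in C_\mu$ on all prefixes of $\mu$, a single override $\tau_-(\nu_0'):=x_-$, and arbitrary values (subject to the cylinder condition) elsewhere. This buys two simplifications: you never need $C_\mu=C_{\nu_0}$, only the trivial inclusion $C_\mu\subseteq C_{\nu_0}$, and the verification of property (1) collapses to ``$\tau_+=\tau_-$ there.'' The only loose end is the passage from $\mu\neq\circ_A$ to $|\nu_0|\geq 1$: this requires the (easy) observation that every single letter is a minimal word, which holds since $C_j\neq\Omega_A$ whenever $N\geq 2$ because no row or column of $A$ vanishes; in the degenerate remaining case both conditions of the lemma are vacuous or automatic, so any weak choice function works. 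That is a one-line patch, not a gap in the method.
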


\begin{proof}
Let $\tau^0$ be any weak choice function. We will redefine $\tau^0$ on the set of $\nu$:s such that there exists a $\lambda\in \mathcal{V}_A\setminus \{\circ_A\}$  with $\mu=\nu\lambda$. Since $C_\mu=C_{\nu_0}$ we can equally well assume $\mu=\nu_0$ and $|\lambda_0|=0$.

Whenever $\mu=\nu\lambda$, we divide into the four cases
\begin{enumerate}
\item[A)] $|\lambda|>1$ and $\tau_+^0(\nu)=\tau_-^0(\nu)$.
\item[B)] $|\lambda|=1$ and $\tau_+^0(\nu)\neq \tau_-^0(\nu)$.
\item[C)] $|\lambda|>1$ and $\tau_+^0(\nu)\neq \tau_-^0(\nu)$.
\item[D)] $|\lambda|=1$ and $\tau_+^0(\nu)=\tau_-^0(\nu)$.
\end{enumerate}
If $\nu$ satisfies $A)$, we do not alter $\tau^0(v)$. If $\nu$ satisfies $B)$, and $\tau_+(\nu)\in C_\mu$ we do not alter $\tau^0(\nu)$. If $\nu$ satisfies $B)$, and $\tau_-(\nu)\in C_\mu$ we redefine $\tau_\pm(\nu):=\tau^0_\mp(\nu)$. If $\nu$ satisfies $B)$ and $\tau_+^0(\nu),\tau_-^0(\nu)\notin C_\mu$ we do not alter $\tau_-^0(\nu)$ but define $\tau_+(\nu):=\tau_+^0(\mu)\in C_\mu$. If $C)$ holds, then we set $\tau_\pm(\nu):=\tau^0_-(\nu)$. If $D)$ holds, then the minimality assumption \eqref{minimality} guarantees that there is a finite word $\lambda'$ such that $|\lambda'|=|\lambda|$, $\lambda'\neq \lambda$ and $\nu\lambda'$ is admissible. Define $\tau_+(\nu):=\tau_+^0(\mu)\in C_\mu$ and $\tau_-(\nu):=\tau_-^0(\nu\lambda')\notin C_\mu$. The constructed $\tau$ satisfy the cylinder condition, hence $\tau$ is a comparable pair.
\end{proof}

Our main result of this subsection indicates the topological importance of the Bellissard-Pearson spectral triples.

\begin{lem}
For any non-empty word $\mu\in \mathcal{V}_A\setminus \{\circ_A\}$ there is a weak choice function $\tau_\mu$ such that 
\[\langle [\chi_{C_\mu}],[BP_s(\tau_\mu)]\rangle=1.\]
\end{lem}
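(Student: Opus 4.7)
The plan is to identify $\langle [\chi_{C_\mu}], [BP_s(\tau_\mu)]\rangle$ with a concrete Fredholm index and then to invoke Lemma~\ref{constructingchoice} to force that index to equal $1$. By Lemma~\ref{bddtransformofpb}, $[BP_s(\tau)]$ is represented by the bounded Fredholm module $(\pi_\tau,\ell^2(\mathcal{V}_A,\C^2),F)$ with $F=\begin{pmatrix}0&1\\1&0\end{pmatrix}$ and the obvious diagonal grading. Writing $P_\pm:=\pi_{\tau_\pm}(\chi_{C_\mu})$, the standard formula for the index pairing in even $K$-homology gives
\[
\langle [\chi_{C_\mu}], [BP_s(\tau)]\rangle \;=\; \mathrm{index}\bigl( P_- : P_+\ell^2(\mathcal{V}_A)\to P_-\ell^2(\mathcal{V}_A)\bigr).
\]
Since $P_\pm$ is the orthogonal projection onto the closed span of $\{\delta_\nu:\nu\in V_\pm\}$ with $V_\pm:=\{\nu\in\mathcal{V}_A:\tau_\pm(\nu)\in C_\mu\}$, the restricted operator $P_-|_{P_+\ell^2}$ is diagonal in the basis $\{\delta_\nu\}$, with kernel spanned by $\{\delta_\nu:\nu\in V_+\setminus V_-\}$ and cokernel spanned by $\{\delta_\nu:\nu\in V_-\setminus V_+\}$. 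The pairing therefore equals $|V_+\setminus V_-|-|V_-\setminus V_+|$, once these sets are shown to be finite.

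Finiteness will follow from the standard nesting/disjointness dichotomy for cylinder sets. For any $\nu\in\mathcal{V}_A$ with $|\nu|\geq|\mu|$, either $\mu$ is a prefix of $\nu$---in which case $C_\nu\subseteq C_\mu$ and the cylinder condition forces $\tau_\pm(\nu)\in C_\nu\subseteq C_\mu$, so $\nu\in V_+\cap V_-$---or $C_\nu\cap C_\mu=\emptyset$, and $\tau_\pm(\nu)\in C_\nu$ forces $\nu\notin V_+\cup V_-$. Hence $V_+\triangle V_-$ is contained in the finite set $\{\nu:|\nu|<|\mu|\}$. The same dichotomy rules out $\nu$'s of length less than $|\mu|$ that are not prefixes of $\mu$, while $\nu=\mu$ itself lies in $V_+\cap V_-$ by the cylinder condition. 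Thus the only $\nu$'s that may contribute to $V_+\triangle V_-$ are $\circ_A$ together with the non-empty proper prefixes of $\mu$---precisely the set over which Lemma~\ref{constructingchoice} is designed to act.

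Taking $\tau_\mu$ to be the weak choice function produced by Lemma~\ref{constructingchoice}, condition $(1)$ annihilates every non-empty prefix with $|\lambda|\neq|\lambda_0|+1$, while condition $(2)$ singles out the unique non-empty prefix with $|\lambda|=|\lambda_0|+1$, placing it in $V_+\setminus V_-$ and leaving $V_-\setminus V_+$ empty. When $|\nu_0|\geq 2$ this critical prefix is already non-empty, so I will supplement the construction by fixing $\tau_\pm(\circ_A)\in\Omega_A\setminus C_\mu$, which is possible since $C_\mu\subsetneq\Omega_A$, and contributes nothing further. The only real subtlety will be the edge case $|\nu_0|=1$, where the would-be critical prefix is $\circ_A$ itself, falling outside the scope of Lemma~\ref{constructingchoice}: there I instead assign $\tau_+(\circ_A)\in C_\mu$ and $\tau_-(\circ_A)\in\Omega_A\setminus C_\mu$ by hand, which is compatible with comparability (trivial at $\circ_A$ since $\mathrm{diam}(C_{\circ_A})$ is of order $1$) and with the cylinder condition ($C_{\circ_A}=\Omega_A$). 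Either way the single contribution to $V_+\setminus V_-$ yields the desired pairing $1$.
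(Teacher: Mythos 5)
Your proof is correct and follows essentially the same route as the paper: both reduce the pairing to the relative index of the pair of projections $(\pi_{\tau_+}(\chi_{C_\mu}),\pi_{\tau_-}(\chi_{C_\mu}))$, identify it with the counting quantity $\#\{\nu:\tau_+(\nu)\in C_\mu,\ \tau_-(\nu)\notin C_\mu\}-\#\{\nu:\tau_-(\nu)\in C_\mu,\ \tau_+(\nu)\notin C_\mu\}$, and then invoke Lemma \ref{constructingchoice}. The only cosmetic differences are that the paper cites the Avron--Seiler--Simon trace formula for a Fredholm pair of projections where you compute the kernel and cokernel directly, and that you treat the root $\circ_A$ (which Lemma \ref{constructingchoice} leaves unconstrained) slightly more explicitly.
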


\begin{proof}
It suffices to prove the Lemma for finite words $\mu$ satisfying the minimality assumption \eqref{minimality}. A straight forward index manipulation gives the identities
\begin{align*}
\langle [\chi_{C_\mu}],[BP_s(\tau_\mu)]\rangle&=\ind (\tau_+^*(\chi_{C_\mu}):\tau_-^*(\chi_{C_\mu})\ell^2(\mathcal{V}_A)\to \tau_+^*(\chi_{C_\mu})\ell^2(\mathcal{V}_A))\\
&=\ind(\tau_+^*(\chi_{C_\mu}),\tau_-^*(\chi_{C_\mu})),
\end{align*}
where the last index denotes the relative index of the Fredholm pair of projections given by $(\tau_+^*(\chi_{C_\mu}),\tau_-^*(\chi_{C_\mu}))$. Using \cite[Proposition $2.2$]{avronseilersimon}, it follows that 
\begin{align}
\nonumber
\langle [\chi_{C_\mu}],[BP_s(\tau_\mu)]\rangle&=\tra_{\ell^2(\mathcal{V}_A)}\left(\tau_+^*(\chi_{C_\mu})-\tau_-^*(\chi_{C_\mu})\right)\\
\nonumber&=\sum_{|\nu|< |\mu|} [\chi_{C_\mu}(\tau_+(\nu))-\chi_{C_\mu}(\tau_-(\nu))]\\
\label{mucomputations}
&=\#\left\{\nu\big| \;\tau_+(\nu)\in C_\mu,\; \tau_-(\nu)\notin C_\mu\right\}\\
&\nonumber\qquad\qquad-\#\left\{\nu\big| \;\tau_-(\nu)\in C_\mu,\; \tau_+(\nu)\notin C_\mu\right\}.
\end{align}
The Lemma follows from Equation \eqref{mucomputations} and Lemma \ref{constructingchoice}.
\end{proof}

\vspace{3mm}

\Large
\section{Unbounded $(O_A,C(\Omega_A))$-cycles and the associated spectral triples}
\label{oacasection}
\normalsize

In this section we will construct classes over the commutative base by combining the philosophies of Section \ref{sectionfinisumandputnamkam} and Section \ref{oafasection}. The advantage of using $C(\Omega_A)$ is that there are several well behaved $K$-homology classes, e.g. point evaluations and Bellissard-Pearson spectral triples. We will use these to construct unbounded Fredholm modules on Cuntz-Krieger algebras $O_A$ and prove that such unbounded Fredholm modules exhaust $K^1(O_A)$. For this purpose, point evaluations suffices. We consider the products with Bellissard-Pearson spectral triples in the next section. The reader unfamiliar with unbounded $KK$-theory is referred to the references listed in the beginning of Section \ref{oafasection}.

\subsection{An unbounded $(O_A,C(\Omega_A))$-cycle}
\label{definingoacoa}
We start this subsection with a structure analysis for the Haar module $\mathpzc{E}^\Omega_A$ over the commutative algebra $C(\Omega_A)$. Consider the filtration of $\mathcal{G}_{A}$ given by
\begin{equation}
\label{Gfilter}
\mathcal{G}^{k}_A:=\{(x,n,y)\in\mathcal{G}_{A}: \sigma^{k+n}(x)=\sigma^{k}(y)\},
\end{equation}
which forms a filtration by subsets such that:
\begin{enumerate}
\item Each set $\mathcal{G}^{k}_A$ is closed under under composition.
\item Inversion is a filtered operation in the sense that if $\xi=(x,n,y)\in \mathcal{G}^{k}_A$, then $\xi^{-1}\in \mathcal{G}^{n+k}_A$.
\item The filtering respects the cocycle grading; $\mathcal{G}^k_A=\cup_{n\in \Z}\mathcal{G}^{k}_n$ where $\mathcal{G}^{k}_n:=\mathcal{G}^{k}_A\cap c_A^{-1}(n)$. 
\end{enumerate}

We can further decompose this filtration into a grading. 

\begin{lem} 
The function 
\begin{equation}
\begin{split}
\label{kappa}
\kappa:\mathcal{G}_{A} &\rightarrow  \N\\
(x,n,y) &\mapsto  \min\{k:\sigma^{k+n}(x)=\sigma^{k}(y)\},
\end{split}
\end{equation}
is locally constant and hence continuous.
\end{lem}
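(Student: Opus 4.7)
The plan is to show directly that each point of $\mathcal{G}_A$ has a basic open neighborhood on which $\kappa$ takes a constant value; continuity then follows since $\N$ carries the discrete topology. Fix $\xi_{0}=(x_{0},n_{0},y_{0})\in \mathcal{G}_A$ and set $k_{0}:=\kappa(\xi_{0})$. By definition of $\kappa$, we have $\sigma^{k_{0}+n_{0}}(x_{0})=\sigma^{k_{0}}(y_{0})$, and both exponents are nonnegative, so $k_{0}\geq k_{\min}(n_{0}):=\max(0,-n_{0})$.

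For the upper bound, I would choose a basic open neighborhood of $\xi_{0}$ of the form $(U,k_{0}+n_{0},k_{0},V)$ from \eqref{etaletop}, where $U\ni x_{0}$ is an open set on which $\sigma^{k_{0}+n_{0}}$ is injective and $V\ni y_{0}$ is an open set on which $\sigma^{k_{0}}$ is injective. Such $U,V$ exist because $\sigma^{j}$ is injective on each cylinder $C_{\mu}$ of length $|\mu|\geq j$. For any $\xi=(x,n_{0},y)$ in this neighborhood, $\sigma^{k_{0}+n_{0}}(x)=\sigma^{k_{0}}(y)$ and hence $\kappa(\xi)\leq k_{0}$.

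For the lower bound there are two cases. If $k_{0}=k_{\min}(n_{0})$, then every element $(x,n_{0},y)$ satisfies $\kappa(x,n_{0},y)\geq k_{\min}(n_{0})=k_{0}$ automatically, and we are done. Otherwise $k_{0}>k_{\min}(n_{0})$, so both $\sigma^{k_{0}+n_{0}-1}(x_{0})$ and $\sigma^{k_{0}-1}(y_{0})$ are well-defined points of $\Omega_{A}$, and by minimality of $k_{0}$ they are distinct. Since $\Omega_{A}$ is Hausdorff I would separate them by disjoint open sets $W_{1},W_{2}\subset \Omega_{A}$, and then, using the continuity of $\sigma$, shrink $U$ and $V$ to open neighborhoods $U'\subseteq U\cap \sigma^{-(k_{0}+n_{0}-1)}(W_{1})$ of $x_{0}$ and $V'\subseteq V\cap \sigma^{-(k_{0}-1)}(W_{2})$ of $y_{0}$. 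For any $(x,n_{0},y)\in (U',k_{0}+n_{0},k_{0},V')$ we then have $\sigma^{k_{0}+n_{0}-1}(x)\in W_{1}$ and $\sigma^{k_{0}-1}(y)\in W_{2}$, so these cannot coincide, forcing $\kappa(x,n_{0},y)\geq k_{0}$. Combined with the upper bound, $\kappa\equiv k_{0}$ on this refined neighborhood, completing the argument.

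The only real obstacle is the bookkeeping of exponents when $n_{0}<0$; once one observes that the minimality hypothesis $k_{0}>k_{\min}(n_{0})$ is exactly what is needed to guarantee that the two shifts $\sigma^{k_{0}+n_{0}-1}$ and $\sigma^{k_{0}-1}$ are both defined (and hence that one can apply Hausdorffness of $\Omega_{A}$), the argument is routine.
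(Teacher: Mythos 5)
Your proof is correct and takes essentially the same route as the paper's: the paper also verifies constancy on a basic open set $(U,n+k,k,V)$ around the given point, but takes $U=C_{x_{1}\cdots x_{n+k}}$ and $V=C_{y_{1}\cdots y_{k}}$, so that the upper bound is immediate and your Hausdorff-separation step is replaced by the observation that minimality of $k$ forces $x_{n+k}\neq y_{k}$ (i.e.\ your $W_{1},W_{2}$ can simply be taken to be the cylinders $C_{x_{n+k}}$ and $C_{y_{k}}$). The one step you leave tacit is that $\sigma^{k_{0}+n_{0}-1}(x)\neq\sigma^{k_{0}-1}(y)$ already forces $\kappa(x,n_{0},y)\geq k_{0}$ — this needs the remark that $\sigma^{j+n_{0}}(x)=\sigma^{j}(y)$ propagates to all larger $j$ under application of $\sigma$ — but this is the same fact the paper uses silently.
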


\begin{proof} 
Recall the definition of the basic open sets from \eqref{etaletop}. Let $\kappa(x,n,y)=k$ and take $(U, n+k, k, V)$ with $U:=C_{x_{1}\cdots x_{n+k}}$ and $V=C_{y_{1}\cdots y_{k}}$. Since $k$ is minimal, it follows that $x_{n+k}\neq y_{k}$. Therefore it is clear that for any $(x',n,y')\in (U, n+k, k, V)$, $\kappa(x',n,y')=k$. So $\kappa$ is locally constant.
\end{proof}

Because $\kappa$ is continuous, the sets $\kappa^{-1}(k)$ are clopen in $\mathcal{G}_A$. Therefore each $\mathcal{G}^{k}_{A}$ decomposes as a disjoint union
\[\mathcal{G}^{k}_{A}=\bigcup_{i=0}^{k}\kappa^{-1}(i),\]
compatible with the cocycle grading. Writing
\[\mathcal{X}_{n}^{k}:=\{(x,n,y)\in\mathcal{G}_{A}: \kappa(x,n,y)=k\},\]
this gives decompositions  
\[\mathcal{G}_A=\bigcup_{n\in\Z}\bigcup_{k\in\N}\mathcal{X}_{n}^{k}\quad\mbox{and}\quad C_{c}(\mathcal{G}_A)=\bigoplus_{n\in\Z}\bigoplus_{k\in\N} C_{c}(\mathcal{X}_{n}^{k}),\]
where the former is a disjoint union and the latter is a decomposition into $C(\Omega_{A})$-submodules. For if $f\in C_{c}(\mathcal{G}_A)$ and $g\in C(\Omega_{A})$, then
\[f*g(x,n,y)=f(x,n,y)g(y),\]
so $\supp(f*g)\subset \supp f$. For $n+k<0$, $\mathcal{X}_n^k=\emptyset$ hence we use the convention $C_c(\mathcal{X}_n^k)=0$ if $n+k<0$. After completion this gives a decomposition of the Hilbert $C^*$-module $\mathpzc{E}^{\Omega}_A$ as
\[\mathpzc{E}^{\Omega}_A=\bigoplus_{n\in\Z}\bigoplus_{k\in\N}\mathpzc{E}^{k}_{n}.\] 
We will now proceed to show that each $\mathpzc{E}^{k}_{n}$ is a finitely generated projective $C(\Omega_{A})$-module.
Define the sets
\[X_{n,\mu}^{(k)}:=\{(x,n,y):\kappa(x,n,y)=k, x\in C_{\mu}, |\mu|=k+n\},\]
whose characteristic function we denote by $\chi^{k}_{n,\mu}\in C_c(\mathcal{X}_n^k)$. We set 
\[\phi_\lambda(l):=\#\{\mu\in \mathcal{V}_A:\mu\lambda\in \mathcal{V}_A, \; |\mu|=l-|\lambda|\}.\]
Recall the conditional expectation $\rho:O_{A}\rightarrow C(\Omega_{A})$ defined in \eqref{rho}.

\begin{lem}
\label{kisom} 
For any finite word $\lambda\in \mathcal{V}_A$ and $n+k\geq |\lambda|$, the column vectors 
$$v_{n,k,\lambda}:=\left(\left(\chi^{k}_{n,\mu\lambda}\right)^{*}\right)_{|\mu|=n+k-|\lambda|}\in \left(C_c(\mathcal{X}_{n}^{k})^{\phi_\lambda(n+k)}\right)^*\subseteq C_c(\mathcal{X}_{-n}^{n+k})^{\phi_\lambda(n+k)},$$ 
satisfy 
$$v_{n,k,\lambda}^{*}\rho(v_{n,k,\lambda}*f)=\chi_{(\sigma^{n+k-|\lambda|})^{-1}(C_\lambda)}*f\quad \forall f\in C_{c}(\mathcal{X}^{k}_{n}).$$
In particular, under the inclusion 
$$\left(C_c(\mathcal{X}_{n}^{k})^{\phi(n+k)}\right)^*\subseteq \mathrm{Hom}^*_{C(\Omega_A)}(\mathpzc{E}^{k}_{n}, C(\Omega_A)^{\phi(n+k)}),$$ 
the following $C(\Omega_A)$-linear operators define isometries 
$$v_{n,k}:=v_{n,k,\circ}\in \mathrm{Hom}^*_{C(\Omega_A)}(\mathpzc{E}^{k}_{n}, C(\Omega_A)^{\phi(n+k)}).$$
\end{lem}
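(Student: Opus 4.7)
My plan is to establish the identity
\[v_{n,k,\lambda}^*\rho(v_{n,k,\lambda}*f) = \chi_{(\sigma^{n+k-|\lambda|})^{-1}(C_\lambda)}*f, \qquad f\in C_c(\mathcal{X}_n^k),\]
by a direct pointwise computation on $\eta\in\mathcal{G}_A$, and then to deduce the isometry statement by specialising to $\lambda=\circ_A$, for which $|\lambda|=0$ and $C_{\circ_A}=\Omega_A$ make the right-hand side equal to $f$. Extending by continuity from $C_c(\mathcal{X}_n^k)$ to $\mathpzc{E}_n^k$ will then give $v_{n,k}^*v_{n,k}=\mathrm{id}$, i.e.\ that $v_{n,k}$ is an isometry onto its image inside $C(\Omega_A)^{\phi(n+k)}$.

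The first substantive step is to expand a single component of the inner product. The formula
\[\rho(g_1^**g_2)(x)=\sum_{\eta\in d^{-1}(x)}\overline{g_1(\eta)}g_2(\eta)\]
combined with the fact that $\chi_{n,\mu\lambda}^k$ is a characteristic function reduces the computation to enumerating $X_{n,\mu\lambda}^k\cap d^{-1}(x)$. An element $\eta=(x',n,x)$ of this intersection must have its first $n+k$ letters spelling $\mu\lambda$ and satisfy $\sigma^{n+k}(x')=\sigma^k(x)$, which forces $x'=\mu\lambda\cdot\sigma^k(x)$ uniquely. Admissibility of this concatenation as an element of $\Omega_A$ requires $A(\lambda_{|\lambda|},x_{k+1})=1$, and for $k\geq 1$ the minimality of $\kappa$ is equivalent to $\lambda_{|\lambda|}\neq x_k$. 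On the clopen subset $E_\lambda\subseteq\Omega_A$ carved out by these two conditions, the intersection is a singleton and $\rho((\chi_{n,\mu\lambda}^k)^**f)(x)=f(\mu\lambda\sigma^k(x),n,x)$; off $E_\lambda$ the inner product vanishes.

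Next I would multiply by $\chi_{n,\mu\lambda}^k$, using the right $C(\Omega_A)$-action $(h*g)(\eta)=h(\eta)g(d(\eta))$ on the Haar module. For $\eta=(x,n,y)\in X_{n,\mu\lambda}^k$, the cocycle relation $\sigma^{n+k}(x)=\sigma^k(y)$ together with $x\in C_{\mu\lambda}$ gives $\mu\lambda\sigma^k(y)=x$, so that $f(\mu\lambda\sigma^k(y),n,y)=f(\eta)$; moreover, the conditions defining $E_\lambda$ are automatically forced at $y$ whenever $\eta\in X_{n,\mu\lambda}^k$, so the cutoff $\chi_{E_\lambda}$ is redundant. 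Hence
\[\bigl(\chi_{n,\mu\lambda}^k*\rho((\chi_{n,\mu\lambda}^k)^**f)\bigr)(\eta)=\chi_{n,\mu\lambda}^k(\eta)\,f(\eta).\]
Summing over $\mu$ with $|\mu|=n+k-|\lambda|$ and using that the sets $\{X_{n,\mu\lambda}^k\}_\mu$ are pairwise disjoint with union
\[\bigl\{\eta\in\mathcal{X}_n^k : r(\eta)\in(\sigma^{n+k-|\lambda|})^{-1}(C_\lambda)\bigr\},\]
the total equals $\chi_{(\sigma^{n+k-|\lambda|})^{-1}(C_\lambda)}(r(\eta))f(\eta)$, which by the formula for convolution by a function supported on the unit space is precisely $\chi_{(\sigma^{n+k-|\lambda|})^{-1}(C_\lambda)}*f$ evaluated at $\eta$.

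The most delicate point is the combinatorial analysis of $X_{n,\mu\lambda}^k\cap d^{-1}(x)$: one must verify that the admissibility constraint $A(\lambda_{|\lambda|},x_{k+1})=1$ and the minimality constraint $\lambda_{|\lambda|}\neq x_k$ extracted from the inner product coincide exactly with the support conditions that $\chi_{n,\mu\lambda}^k$ imposes on the point $\eta$ when $d(\eta)=x$, so that the indicator $\chi_{E_\lambda}$ emerging from the inner product cancels against the outer multiplication by $\chi_{n,\mu\lambda}^k$. Once this cancellation is in hand, the remainder is a routine manipulation of characteristic functions on the \'etale groupoid $\mathcal{G}_A$, and setting $\lambda=\circ_A$ yields $v_{n,k}^*v_{n,k}=\mathrm{id}$.
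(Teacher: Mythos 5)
Your proposal is correct and follows essentially the same route as the paper's proof: a pointwise evaluation of $\rho((\chi^{k}_{n,\mu\lambda})^{*}*f)$ at units (which, as you note, reduces to the singleton $x'=\mu\lambda\sigma^{k}(x)$ cut down by the admissibility condition $A(\lambda_{|\lambda|},x_{k+1})=1$ and the minimality condition on $\kappa$), followed by the observation that left convolution with $\chi^{k}_{n,\mu\lambda}$ makes these cutoffs redundant and yields $\chi^{k}_{n,\mu\lambda}\cdot f$, after which summation over $\mu$ produces $\chi_{(\sigma^{n+k-|\lambda|})^{-1}(C_\lambda)}*f$. The only difference is presentational: you make the intermediate indicator $E_\lambda$ and its cancellation explicit, whereas the paper encodes it in the factor $A_{\mu_{n+k},x_{k+1}}\delta_{k,\kappa(\mu\sigma^{k}(x),n,x)}$.
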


\begin{proof} 
We must show that for $f\in C_{c}(\mathcal{X}^{k}_{n})$ 
\[\sum_{|\mu|=k+n-|\lambda|}\chi^{k}_{n,\mu\lambda}*\rho\left(\left(\chi^{k}_{n,\mu\lambda}\right)^* *f\right)=\chi_{(\sigma^{n+k-|\lambda|})^{-1}(C_\lambda)}* f.\] 
First, for arbitrary $\mu$, we compute
\[\begin{split}\rho\left(\left(\chi^{k}_{n,\mu}\right)^{*}*f\right)(x,0,x) & =\sum\left(\chi^{k}_{n,\mu}\right)^{*}(x,\ell,z)f(z,-\ell,x) \\
&=A_{\mu_{n+k},x_{k+1}}\delta_{k,\kappa(\mu\sigma^{k}(x),n,x)}f(\mu\sigma^{k}(x),n,x),\end{split}\]
and subsequently
\[\begin{split}\chi^{k}_{n,\mu}*\rho\left(\left(\chi^{k}_{n,\mu}\right)^{*}*f\right)&(x,m,y)  =\sum \chi^{k}_{n,\mu}(x,\ell,z)\rho\left(\left(\chi^{k}_{n,\mu}\right)^{*}*f\right)(z,m-\ell,y) \\
&=\sum  \chi^{k}_{n,\mu}(x,m,y)\left(\left(\chi^{k}_{n,\mu}\right)^{*}*f\right)(y,0,y)\\
&=\chi^{k}_{n,\mu}(x,m,y)A_{\mu_{n+k},y_{k+1}}\delta_{k,\kappa(\mu\sigma^{k}(y),n,y)}f(\mu\sigma^{k}(y),n,y)\\
&=\chi^{k}_{n,\mu}(x,m,y)f(x,m,y).\end{split}\]
Therefore, we have
\[\begin{split}\sum_{|\mu|=n+k-|\lambda|}\chi^{k}_{n,\mu\lambda}*\rho\left(\left(\chi^{k}_{n,\mu\lambda}\right)^{*}*f\right)(x,m,y) & =\sum_{|\mu|=n+k-|\lambda|}\chi^{k}_{n,\mu\lambda}(x,m,y)f(x,m,y)\\ &=\chi_{(\sigma^{n+k-|\lambda|})^{-1}(C_\lambda)}(x)f(x,m,y)\\ &=(\chi_{(\sigma^{n+k-|\lambda|})^{-1}(C_\lambda)}*f)(x,m,y).\end{split}\]

\end{proof}

\begin{prop} 
\label{projsum}
The Haar module $\mathpzc{E}_{A}^{\Omega}$ is the direct sum of the finitely generated projective $C(\Omega_A)$-modules $\mathpzc{E}^k_n$.
\end{prop}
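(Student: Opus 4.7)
The plan is to leverage Lemma \ref{kisom} specialized to the empty word $\lambda=\circ_A$. The decomposition $\mathpzc{E}^\Omega_A=\bigoplus_{n,k} \mathpzc{E}^k_n$ as Banach spaces is essentially already in hand from the disjoint decomposition $\mathcal{G}_A=\bigcup_{n,k}\mathcal{X}_n^k$ combined with the continuity of $\kappa$; what one must check first is that this decomposition is orthogonal in the $C(\Omega_A)$-valued inner product. This is a short calculation: for $f\in C_c(\mathcal{X}^k_n)$ and $g\in C_c(\mathcal{X}^{k'}_{n'})$, the formula
\[
\rho(f^{*}*g)(x)=\sum_{\zeta\in d^{-1}(x)}\overline{f(\zeta)}g(\zeta)
\]
is supported where $\mathcal{X}_n^k\cap \mathcal{X}_{n'}^{k'}\neq\emptyset$, which forces $(n,k)=(n',k')$. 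Hence the decomposition is orthogonal and extends to the Hilbert module completion.

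The main point is then to show that each summand $\mathpzc{E}^k_n$ is finitely generated projective over $C(\Omega_A)$. Applying Lemma \ref{kisom} with $\lambda=\circ_A$ (so $|\lambda|=0$ and the condition $n+k\geq|\lambda|$ is automatic on the non-trivial summands), the column vector $v_{n,k}:=v_{n,k,\circ_A}$ lives in a free module of rank $\phi(n+k)$, and the identity
\[
v_{n,k}^{*}\rho(v_{n,k}*f)=\chi_{(\sigma^{n+k})^{-1}(\Omega_A)}*f=f,\qquad f\in C_c(\mathcal{X}_n^k),
\]
exhibits $v_{n,k}$ as a genuine isometry from $\mathpzc{E}^k_n$ into $C(\Omega_A)^{\phi(n+k)}$; i.e.\ $v_{n,k}^*v_{n,k}=1_{\mathpzc{E}^k_n}$. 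The adjointability and the isometric property persist after completion because the estimate on the inner product $\|v_{n,k}*f\|=\|f\|$ extends continuously.

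Consequently, the operator $p_{n,k}:=v_{n,k}v_{n,k}^{*}\in M_{\phi(n+k)}(C(\Omega_A))$ is a self-adjoint projection, and $v_{n,k}$ yields a unitary isomorphism of Hilbert $C(\Omega_A)$-modules
\[
\mathpzc{E}^k_n\xrightarrow{\;\sim\;}p_{n,k}\,C(\Omega_A)^{\phi(n+k)}.
\]
This identifies $\mathpzc{E}^k_n$ as a direct summand of a finite free module, hence as a finitely generated projective $C(\Omega_A)$-module. Combined with the orthogonal decomposition above, this establishes the proposition.

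The step most likely to require care is the verification that Lemma \ref{kisom} really does produce an isometry with respect to the Hilbert module inner product on $\mathpzc{E}^k_n$ (not merely an algebraic one-sided inverse on the dense sub-bimodule $C_c(\mathcal{X}_n^k)$). In particular, one must check that the individual entries $(\chi^k_{n,\mu})^*$ are adjointable as maps $\mathpzc{E}^k_n\to C(\Omega_A)$, which reduces to noting that each $\chi^k_{n,\mu}$ is supported on a basic open bisection of the étale groupoid $\mathcal{G}_A$, so that convolution with it is bounded on the module. Everything else is essentially bookkeeping with the groupoid convolution and the restriction map $\rho$.
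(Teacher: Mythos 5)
Your proof is correct and follows essentially the same route as the paper: Lemma \ref{kisom} with $\lambda=\circ_A$ gives the isometries $v_{n,k}$, hence $\mathpzc{E}^k_n\cong p_{n,k}C(\Omega_A)^{\phi(n+k)}$ is finitely generated projective, and this is combined with the decomposition $\mathpzc{E}^\Omega_A=\bigoplus_{n,k}\mathpzc{E}^k_n$. Your explicit check that the decomposition is orthogonal in the $C(\Omega_A)$-valued inner product, and that the $v_{n,k}$ are adjointable on the completions, fills in details the paper treats as already established from the disjointness of the sets $\mathcal{X}^k_n$, but it is the same argument.
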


\begin{proof} 
It is clear from Lemma \ref{kisom} that the image of the isometries $v_{n,k}$ equals the range of the projections $p_{n,k}=v_{n,k}v_{n,k}^{*}$. Hence, $\mathpzc{E}^k_n\cong p_{n,k}C(\Omega_A)^{\phi(k+n)}$ are finitely generated projective $C(\Omega_A)$-modules. The Proposition follows from the fact that $\mathpzc{E}^\Omega_A=\bigoplus_{n\in\Z}\bigoplus_{k\in\N}\mathpzc{E}^{k}_{n}$.
\end{proof}

Define an operator $D_{\kappa}:C_{c}(\mathcal{G}_A)\rightarrow C_{c}(\mathcal{G}_A)$ via pointwise multiplication $D_{\kappa}f(\xi):=\kappa(\xi)f(\xi)$.

\begin{prop} 
\label{kappaprop}
The operator $D_{\kappa}$ is essentially selfadjoint and regular in $\mathpzc{E}_{A}^{\Omega}$. Moreover, it commutes up to bounded operators with the generators $S_{i}$.
\end{prop}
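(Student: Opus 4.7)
The plan is to exploit the orthogonal decomposition $\mathpzc{E}^\Omega_A = \bigoplus_{n \in \Z} \bigoplus_{k \in \N} \mathpzc{E}^k_n$ from Proposition \ref{projsum} to diagonalize $D_\kappa$. Since $\kappa \equiv k$ on $\mathcal{X}^k_n$, the operator $D_\kappa$ acts on each summand $\mathpzc{E}^k_n$ as multiplication by the scalar $k$, and hence is bounded there. Taking $\mathcal{D} := \bigoplus^{\mathrm{alg}}_{n,k} \mathpzc{E}^k_n$ (which is exactly $C_c(\mathcal{G}_A)$ after completion) as a core, $D_\kappa$ is densely defined, $C(\Omega_A)$-linear, symmetric, and preserves $\mathcal{D}$.

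For essential self-adjointness and regularity in the Hilbert $C^*$-module sense, I would construct the candidate resolvents
$$R_\pm := \bigoplus_{n,k} (k \pm i)^{-1} P^k_n,$$
where $P^k_n$ is the orthogonal projection onto $\mathpzc{E}^k_n$. Since $|(k \pm i)^{-1}| \leq 1$ uniformly in $k$, the block-diagonal sum converges strictly to an adjointable contraction on $\mathpzc{E}^\Omega_A$, its image lies inside the natural maximal domain of the closure $\overline{D_\kappa}$, and $(D_\kappa \pm i) R_\pm = \id$ holds summand-wise. Surjectivity of $D_\kappa \pm i$ then forces the closure of $D_\kappa|_\mathcal{D}$ to be self-adjoint and regular, by the usual criterion in unbounded $KK$-theory \cite{BJ}. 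This is the standard diagonalization technique for cocycle-multiplication operators on \'etale groupoid $C^*$-modules, cf.\ \cite{Mes1}.

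For the commutator with $S_i$, in fact the commutator vanishes identically on $\mathcal{D}$. A direct groupoid computation gives $(S_i * f)(x,n,y) = \chi_{C_i}(x)\, f(\sigma(x), n-1, y)$, and since $\sigma^{k+n}(x) = \sigma^{k+(n-1)}(\sigma(x))$ for all $k$, we get the crucial cocycle identity $\kappa(x,n,y) = \kappa(\sigma(x), n-1, y)$ whenever $x \in C_i$. Hence $D_\kappa(S_i * f) = S_i * D_\kappa f$ pointwise on $\mathcal{G}_A$; taking adjoints handles $S_i^*$ in the same way. Both commutators extend trivially to bounded operators, indeed to zero.

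The main technical point requiring care is justifying that $R_\pm$ is genuinely adjointable on the completion $\mathpzc{E}^\Omega_A$ with image in the domain of the closure of $D_\kappa|_\mathcal{D}$, rather than just defined on the algebraic direct sum. This is controlled by the uniform bound $|(k \pm i)^{-1}| \leq 1$, which is the standard input in \cite{BJ, Mes1} for assembling block-diagonal operators into regular operators on $C^*$-modules; the commutator statement is then a one-line application of the cocycle structure of $\kappa$ and is strictly speaking stronger than the proposition claims.
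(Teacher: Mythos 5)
Your argument for essential self-adjointness and regularity is fine and is essentially the paper's: the paper observes that $D_{\kappa}\pm i$ maps each $C_{c}(\mathcal{G}^{k}_{A})$ onto itself and that the union of these is dense, which is the same diagonalization you package into the explicit resolvents $R_{\pm}$.

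The commutator computation, however, contains a genuine error. The identity $\sigma^{k+n}(x)=\sigma^{k+n-1}(\sigma(x))$ is correct, but $\kappa(x,n,y)$ is the minimum of $\{k\in\N:\sigma^{k+n}(x)=\sigma^{k}(y)\}$ over those $k$ for which $\sigma^{k+n}$ is defined, i.e.\ $k\geq\max(0,-n)$. Replacing $(x,n,y)$ by $(\sigma(x),n-1,y)$ shifts this admissibility constraint to $k\geq\max(0,1-n)$. When the minimum for $(x,n,y)$ is attained at the boundary value $k=-n$ — that is, exactly when $n+\kappa(x,n,y)=0$, which is the locus corresponding to the summands $\mathpzc{E}^{k}_{-k}$ — that value of $k$ is no longer admissible for $(\sigma(x),n-1,y)$, and one checks that $\kappa(\sigma(x),n-1,y)=\kappa(x,n,y)+1$. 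A concrete instance in $O_{2}$: for the constant sequence $y=222\cdots$ one has $\kappa(y,0,y)=0$ but $\kappa(\sigma(y),-1,y)=\kappa(y,-1,y)=1$. Consequently $[D_{\kappa},S_{i}]$ does \emph{not} vanish; it equals $\mp S_{i}p_{-}$, where $p_{-}$ is the projection onto $\bigoplus_{k\in\N}\mathpzc{E}^{k}_{-k}$, which is bounded (so the proposition still holds) but nonzero. This is not a harmless slip: the dichotomy between $n+\kappa>0$ and $n+\kappa=0$ is precisely what drives the later definition of $\psi_{\lambda}$ and the case analysis in Lemma \ref{compmatrkfak}, so your claimed exact commutation would propagate into incorrect conclusions downstream.
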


\begin{proof} 
The operator $D_{\kappa}$ is obviously symmetric. Moreover $D_{\kappa}\pm i$ maps the submodule $C_{c}(\mathcal{G}^{k}_A)$ surjectively onto itself, and the union $\bigcup_{k} C_{c}(\mathcal{G}^{k}_A)$ is dense in $\mathpzc{E}_{A}^{\Omega}$. Therefore $D_{\kappa}\pm i$ have dense range, and the closure of $D_{\kappa}$ is selfadjoint and regular in $\mathpzc{E}^{\Omega}_{A}$. That $D_{\kappa}$ commutes up to bounded operators with the operators $S_{i}$ follows by direct computation:
\[\begin{split} [D_{\kappa},S_{i}]&g(x,n,y)\\
 &=\sum \kappa(x,n,y) S_{i}(x,\ell, z)g(z,n-\ell, y)-S_{i}(x,\ell,z)(\kappa g)(z,n-\ell,y)\\ &=\sum \left(\kappa(x,n,y)-\kappa(z,n-\ell,y)\right)S_{i}(x,\ell,z)g(z,n-\ell,y) \\
&=(\kappa(x,n,y)-\kappa(\sigma(x),n-1,y))\chi_{C_i}(x)g(\sigma(x),n-1,y)\\
&=S_{i}p_{-} g(x,n,y),
\end{split}\]
where $p_{-}$ denotes the projection onto  $\oplus_{k\in \N}\mathpzc{E}^k_{-k}$, because 
$$\kappa(x,n,y)=
\begin{cases}
\kappa(\sigma(x),n-1, y),\quad &\mbox{when}\quad n+\kappa(x,n,y)>0,\\
\kappa(\sigma(x),n-1,y)-1,\quad &\mbox{when}\quad n+\kappa(x,n,y)= 0.
\end{cases}$$
\end{proof}

The generator of the gauge action $D_{c}$, extends to a selfadjoint regular operator on $\mathpzc{E}^{\Omega}_{A}$. However, instead of a naive combination of the operators $D_{c}$ and $D_{\kappa}$, we need to assemble the two with a little more care in order to construct unbounded Kasparov modules that will eventually allow us to obtain nontrivial unbounded Fredholm modules on $O_{A}$.
We define the subset
\[Y_\lambda:=\{(x,n,y)\in \mathcal{G}^0_A:\;|\lambda|\leq n\;\mbox{and} \; \sigma^{n-|\lambda|}(x)=\lambda y\}.\]
We note that $Y_{\circ_A}=\mathcal{G}^0_A$ and $c_A|_{Y_\lambda}\geq |\lambda|$. We let $p_\lambda\in \End^*_{C(\Omega_A)}(\mathpzc{E}^\Omega_A)$ denote the projection given by pointwise multiplication by the characteristic function of $Y_\lambda$. We write $\mathpzc{E}_{n,\lambda}^{0}$ for the completion of the submodule $C_{c}(Y_{\lambda}\cap c^{-1}_A(n))$, and $\mathpzc{E}^{\perp 0}_{n,\lambda}$ for the completion of $C_{c}(\mathcal{G}^{0}_{A}\setminus Y_{\lambda}\cap c^{-1}_A(n))$. Recall the notation $\mathcal{V}_\lambda=\{\mu\lambda\in \mathcal{V}_A\}$.

\begin{prop}
\label{rangevlambda}
The projection $p_\lambda$ projects onto the closed $C(\Omega_A)$-submodule of $\mathpzc{E}^\Omega_A$ generated by $\{S_\mu|\mu\in \mathcal{V}_\lambda\}$, and can be written as $p_{\lambda}f=\sum_{n=0}^{\infty}v_{n,0,\lambda}^{*}\rho(v_{n,0,\lambda}*f)$. In particular, for any finite word $\lambda$,  the Haar module $\mathpzc{E}^{\Omega}_{A}$ decomposes as a direct sum of finitely generated projective $C(\Omega_{A})$-modules
\begin{equation}\label{lambdadecomp} E^{\Omega}_{A}=\bigoplus_{n=0}^{\infty}\mathpzc{E}^{0}_{n,\lambda}\oplus\bigoplus_{n=0}^{\infty}\mathpzc{E}^{\perp 0}_{n,\lambda}\oplus\bigoplus_{k=1}^{\infty}\bigoplus_{n\geq -k}\mathpzc{E}^{k}_{n}.\end{equation}
\end{prop}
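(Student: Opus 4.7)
The plan is to read off all three assertions from Lemma \ref{kisom} together with the disjoint decomposition of $\mathcal{G}_A$ by $(c_A,\kappa)$. First I would observe that $Y_\lambda$ is contained in $\mathcal{G}^0_A$ by definition, and that on $Y_\lambda$ the cocycle $c_A$ is locally constant (by the Lemma preceding \eqref{kappa}). Together with the identity $\sigma^{n-|\lambda|}(x)=\lambda y=\lambda\sigma^n(x)$ this produces the clopen partition
\[Y_\lambda=\bigsqcup_{n\geq |\lambda|}\bigl(Y_\lambda\cap c_A^{-1}(n)\bigr)=\bigsqcup_{n\geq |\lambda|}\bigsqcup_{|\mu|=n-|\lambda|}X^{(0)}_{n,\mu\lambda},\]
where only admissible $\mu$ with $\mu\lambda\in\mathcal{V}_A$ appear. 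Hence the projection $p_\lambda$ (multiplication by $\chi_{Y_\lambda}$) splits as a sum of mutually orthogonal projections $p_{\lambda,n}$ onto $\mathpzc{E}^0_{n,\lambda}$.

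Next I would apply Lemma \ref{kisom} with $k=0$. For $f\in C_c(\mathcal{X}^0_n)$ it gives
\[v_{n,0,\lambda}^{*}\rho(v_{n,0,\lambda}*f)=\chi_{(\sigma^{n-|\lambda|})^{-1}(C_\lambda)}*f.\]
Since $(\sigma^{n-|\lambda|})^{-1}(C_\lambda)=\{x:x_{n-|\lambda|+1}\cdots x_n=\lambda\}$, convolution by its characteristic function acts on $C_c(\mathcal{X}^0_n)$ as pointwise multiplication by $\chi_{Y_\lambda\cap c_A^{-1}(n)}$, i.e.\ as $p_{\lambda,n}$. A short groupoid bookkeeping then shows that $v_{n,0,\lambda}^{*}\rho(v_{n,0,\lambda}*f)=0$ whenever $f\in C_c(\mathcal{X}^k_m)$ and $(k,m)\neq(0,n)$: the adjoint $v_{n,0,\lambda}$ is supported in $\mathcal{X}^{n}_{-n}$, and for the convolution to have a nontrivial restriction to $\mathcal{G}^{(0)}$ one is forced into $c_A(f)=n$ and $\kappa(f)=0$. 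Summing over $n\geq|\lambda|$ and extending from $\bigoplus_{n,k}C_c(\mathcal{X}^k_n)$ by continuity yields the claimed formula $p_\lambda f=\sum_{n\geq|\lambda|}v_{n,0,\lambda}^{*}\rho(v_{n,0,\lambda}*f)$.

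For the identification of $p_\lambda\mathpzc{E}^\Omega_A$ with the closed submodule generated by $\{S_\mu:\mu\in\mathcal{V}_\lambda\}$, I would use that under Theorem \ref{oaandcga} the element $S_{\nu\lambda}$ becomes the characteristic function of $X^{(0)}_{|\nu\lambda|,\nu\lambda}\subset Y_\lambda$, so the inclusion $\overline{\mathrm{span}}\{S_\mu\cdot C(\Omega_A):\mu\in\mathcal{V}_\lambda\}\subseteq p_\lambda\mathpzc{E}^\Omega_A$ is automatic; the reverse inclusion follows by noting that $v_{n,0,\lambda}^*$ has range $\mathpzc{E}^0_{n,\lambda}$ and its components are exactly the $(\chi^{0}_{n,\mu\lambda})^*=S_{\mu\lambda}^{*}$ (whose adjoints, together with right multiplication by $C(\Omega_A)$, generate $\mathpzc{E}^0_{n,\lambda}$ as a topological $C(\Omega_A)$-module). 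Finally, the decomposition \eqref{lambdadecomp} is then the refinement of the decomposition of Proposition \ref{projsum} obtained by splitting, for each $n\geq 0$, the clopen $\mathcal{X}^0_n$ as $(Y_\lambda\cap\mathcal{X}^0_n)\sqcup(\mathcal{X}^0_n\setminus Y_\lambda)$; both summands inherit finite generation and projectivity from $\mathpzc{E}^0_n$ since they are the ranges of the complementary projections $p_{\lambda,n}$ and $1-p_{\lambda,n}$. The main point to be careful about is the vanishing claim for $f$ outside $\mathcal{X}^0_n$, which is where the fine structure of $\kappa$ and $c_A$ on inverses and products of groupoid elements enters; everything else is bookkeeping once Lemma \ref{kisom} is in place.
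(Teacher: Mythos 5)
Your argument is correct and follows essentially the same route as the paper: both rest on the clopen partition of $Y_\lambda$ into the sets $X^{(0)}_{n,\mu\lambda}$, the identification of $S_{\mu\lambda}$ with the characteristic function $\chi^{0}_{n,\mu\lambda}$, and Lemma \ref{kisom}. The only organizational difference is that you establish the factorization formula for $p_\lambda$ first and then deduce the description of its range, whereas the paper verifies $p_\lambda S_{\mu\lambda}=S_{\mu\lambda}$ and computes $p_\lambda S_\mu S_\nu^*$ directly on the spanning elements of $O_A$, and then cites Lemma \ref{kisom} for the factorization and the decomposition \eqref{lambdadecomp}.
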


\begin{proof}
It suffices to prove that  $p_\lambda S_{\mu\lambda}=S_{\mu\lambda}$ and that $p_\lambda S_\mu S_\nu^*= 0$ if and only if $\mu\neq \mu_0\nu$ for all $\mu_0\in \mathcal{V}_\lambda$. Since $S_{\mu\lambda}$ is defined from the characteristic function of the set
\[\{(x,|\mu|+|\lambda|,y)\in \mathcal{G}_A|x\in C_{\mu\lambda}, \,\sigma^{|\mu|+|\lambda|}(x)=y\},\]
it follows that $p_\lambda S_{\mu\lambda}=S_{\mu\lambda}$. The element $S_\mu S_\nu^*$ is defined from the characteristic function of the set
\[\{(x,|\mu|-|\nu|,y)\in \mathcal{G}_A:x\in C_{\mu}, y\in C_\nu, \, \sigma^{|\mu|}(x)=\sigma^{|\nu|}(y)\}.\]
The proposition follows from the fact that $p_\lambda S_\mu S_\nu^*$ is the characteristic function of the set
\begin{align*}
\big\{(x,&|\mu|-|\nu|,y)\in \mathcal{G}_A:x\in C_{\mu}, y\in C_\nu,\, |\lambda|+|\nu|\leq |\mu|,\\
&\qquad\qquad\qquad\qquad\qquad\qquad \, \sigma^{|\mu|}(x)=\sigma^{|\nu|}(y),\sigma^{|\mu|-|\nu|-|\lambda|}(x)=\lambda y \big\}\\
&=
\begin{cases}
\emptyset,&\mbox{if}\; \mu \neq\mu_0\nu \;\forall \mu_0\in \mathcal{V}_\lambda\\
\\
\{(x,|\mu|-|\nu|,y)\in& \mathcal{G}_A|x\in C_{\mu}, y\in C_\nu, \, \sigma^{|\mu|}(x)=\sigma^{|\nu|}(y)\}, \\ &\mbox{if for some}\;\mu_0\in \mathcal{V}_\lambda, \, \mu =\mu_0\nu. 
\end{cases}
\end{align*}
The factorization and decomposition statements now follow directly from Lemma \ref{kisom}.
\end{proof}

Recall that $\kappa|_{\mathcal{G}^k_A}=k\in \{0,1,2,\ldots\}$ and that $c_A|_{\mathcal{G}^k_A}+ k\geq 0$. Now consider the function $\psi_\lambda:\mathcal{G}_A\rightarrow\Z$ given by
\begin{equation}
\label{psilambda} 
\psi_\lambda(x,n,y)=
\begin{cases} 
\quad n &\textnormal{when } (x,n,y)\in Y_\lambda \\ 
-n &\textnormal{when } (x,n,y)\in \mathcal{G}^{0}_A\setminus Y_\lambda \\ 
-|n|-\kappa(x,n,y)&\textnormal{when } (x,n,y)\in \mathcal{G}_A\setminus \mathcal{G}^0_A, 
\end{cases}.
\end{equation}
The function $\psi_\lambda$ is clearly locally constant and continuous. Define an operator $D_{\lambda}:C_{c}(\mathcal{G}_A)\rightarrow C_{c}(\mathcal{G}_A)$ by pointwise multiplication by $\psi_\lambda$, i.e. $D_{\lambda}f(x,n,y)=\psi_\lambda(x,n,y)f(x,n,y)$. We wish to show that $D_{\lambda}$ has bounded commutators with the generators $S_{i}$. We compute
\begin{align} 
\nonumber
[D_{\lambda},S_{i}]f(x,n,y) & = (\psi_{\lambda}(x,n,y)-\psi_{\lambda}(\sigma(x),n-1,y))\chi_{C_i}(x)f(\sigma(x),n-1,y) \\
\label{commcompwithdlam}
&=(\psi_{\lambda}(x,n,y)-\psi_{\lambda}(\sigma(x),n-1,y))(S_{i}f)(x,n,y).
\end{align}

\begin{lem}
\label{compmatrkfak}
The function $\mathfrak{a}_\lambda(x,n,y):=\psi_\lambda(x,n,y)-\psi_\lambda(\sigma(x),n-1,y)$ satisfies the estimate $|\mathfrak{a}_\lambda(x,n,y)|\leq \max(2,2|\lambda|-1)$ for any $(x,n,y)\in \mathcal{G}_A$ and belongs to $C_b(\mathcal{G}_A)$. More precisely, 
$$\mathfrak{a}_\lambda(x,n,y)=$$
$$\begin{cases}
(2|\lambda|-1)&\chi_{Y_\lambda\cap \{c_A=|\lambda|\}}+\chi_{Y_\lambda\cap \{c_A>|\lambda|\}}-\chi_{\mathcal{G}^0_{>0}\setminus Y_\lambda}\\
&+2\chi_{\mathcal{G}_{\leq 0}\cap \{c_A+\kappa=0\}}-\chi_{\mathcal{G}_{>0}\setminus \mathcal{G}^0}+\chi_{\mathcal{G}_{\leq 0}\cap \{c_A+\kappa>0\}},\quad|\lambda|>0,\\
\\
\chi_{\mathcal{G}^0}&+2\chi_{\mathcal{G}_{< 0}\cap \{c_A+\kappa=0\}}-\chi_{\mathcal{G}_{>0}\setminus \mathcal{G}^0}+\chi_{\mathcal{G}_{\leq0}\cap \{c_A+\kappa>0\}},\quad\lambda=\circ_A.
\end{cases}$$
\end{lem}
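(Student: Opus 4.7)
The lemma is proved by a case analysis on the partition of $\mathcal{G}_A$ into the three clopen pieces on which $\psi_\lambda$ is defined piecewise: $Y_\lambda$, $\mathcal{G}^0_A\setminus Y_\lambda$, and $\mathcal{G}_A\setminus \mathcal{G}^0_A$. First I would verify that the map $(x,n,y)\mapsto (\sigma(x), n-1, y)$ sends $\mathcal{G}_A$ to itself: if $\sigma^{n+k}(x)=\sigma^k(y)$ holds with $k\in \N$, then $k':=\max(k,1-n)$ yields the corresponding relation with $n-1$ in place of $n$. By Proposition \ref{kappaprop}, $\kappa(\sigma(x), n-1, y)$ equals $\kappa(x,n,y)$ when $c_A+\kappa>0$ and equals $\kappa(x,n,y)+1$ when $c_A+\kappa=0$, which determines whether the image lies in $\mathcal{G}^0_A$ or in its complement. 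Moreover the defining condition $\sigma^{n-|\lambda|}(x)=\lambda y$ of $Y_\lambda$ coincides with $\sigma^{(n-1)-|\lambda|}(\sigma(x))=\lambda y$, so membership in $Y_\lambda$ is preserved by the shift as long as $n-1\geq |\lambda|$. Consequently, the image leaves $Y_\lambda$ (into $\mathcal{G}^0_A\setminus Y_\lambda$) only at the boundary $n=|\lambda|$, and leaves $\mathcal{G}^0_A$ (into $\mathcal{G}_A\setminus \mathcal{G}^0_A$) only at the unit-space points $(x,0,x)$.

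\textbf{Case enumeration.} Six mutually exclusive cases then cover all of $\mathcal{G}_A$:
\begin{enumerate}
\item $(x,n,y)\in Y_\lambda$, $n>|\lambda|$: both points lie in $Y_\lambda$, so $\mathfrak{a}_\lambda = n-(n-1)=1$.
\item $(x,n,y)\in Y_\lambda$, $n=|\lambda|$ (equivalently $x=\lambda y$): the shifted point lies in $\mathcal{G}^0_A\setminus Y_\lambda$, so $\mathfrak{a}_\lambda = |\lambda|-(-(|\lambda|-1))=2|\lambda|-1$.
\item $(x,n,y)\in \mathcal{G}^0_A\setminus Y_\lambda$, $n\geq 1$: both points remain in $\mathcal{G}^0_A\setminus Y_\lambda$, so $\mathfrak{a}_\lambda = -n-(-(n-1))=-1$.
\item $(x,n,y)=(x,0,x)$: the shifted point $(\sigma(x),-1,x)$ has $\kappa=1$ so lies in $\mathcal{G}_A\setminus \mathcal{G}^0_A$. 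Independently of whether $\lambda=\circ_A$ (so $(x,0,x)\in Y_\lambda$) or $|\lambda|>0$ (so $(x,0,x)\in \mathcal{G}^0_A\setminus Y_\lambda$), one has $\psi_\lambda(x,0,x)=0$ and $\psi_\lambda(\sigma(x),-1,x)=-2$, giving $\mathfrak{a}_\lambda=2$.
\item $(x,n,y)\in \mathcal{G}_A\setminus \mathcal{G}^0_A$, $c_A+\kappa>0$: the shifted point stays in the same stratum with the same $\kappa$, so $\mathfrak{a}_\lambda = |n-1|-|n|$, which equals $-1$ for $n\geq 1$ and $+1$ for $n\leq 0$.
\item $(x,n,y)\in \mathcal{G}_A\setminus \mathcal{G}^0_A$, $c_A+\kappa=0$ (so $n\leq -1$ and $\kappa=-n$): both points remain in $\mathcal{G}_A\setminus \mathcal{G}^0_A$ but $\kappa$ increases by $1$; a direct calculation shows $\psi_\lambda(x,n,y)=2n$ and $\psi_\lambda(\sigma(x),n-1,y)=2n-2$, giving $\mathfrak{a}_\lambda=2$.
\end{enumerate}

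\textbf{Assembly and main obstacle.} Collecting the six contributions, with Case 4 interpreted differently for $\lambda=\circ_A$ versus $|\lambda|>0$, recovers the claimed closed-form expression for $\mathfrak{a}_\lambda$. The bound $|\mathfrak{a}_\lambda|\leq \max(2,2|\lambda|-1)$ is immediate from the finite list of attained values $\{\pm 1,\pm 2, 2|\lambda|-1\}$. Continuity of $\mathfrak{a}_\lambda$ on $\mathcal{G}_A$, hence membership in $C_b(\mathcal{G}_A)$, follows because each set in the partition is clopen: $Y_\lambda$ and $\mathcal{G}^0_A$ are clopen by construction, $\kappa$ is locally constant, and $c_A$ is continuous. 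The principal obstacle in executing this plan is the careful bookkeeping of the two anomalous transition cases—the jump of $2|\lambda|-1$ across the boundary of $Y_\lambda$ at $n=|\lambda|$ (Case 2), and the constant jump $+2$ across the boundary of $\mathcal{G}^0_A$ at the unit-space points $(x,0,x)$ (Case 4)—which are precisely the points where the Lipschitz-type bound on $\psi_\lambda$ is saturated.
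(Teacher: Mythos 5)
Your case analysis is correct and is essentially the argument the paper gives: the paper enumerates the possible positions of $(x,n,y)$ and of $(\sigma(x),n-1,y)$ relative to the partition $Y_\lambda$, $\mathcal{G}^0_A\setminus Y_\lambda$, $\mathcal{G}_A\setminus\mathcal{G}^0_A$, rules out the impossible combinations, and evaluates $\mathfrak{a}_\lambda$ on each surviving piece, exactly as you do; your six cases and the values $1$, $2|\lambda|-1$, $-1$, $2$, $\mp 1$, $2$ all agree with the paper's computation. Two bookkeeping points, though. First, your Cases 2 and 4 are not mutually exclusive when $\lambda=\circ_A$: Case 2 then reads ``$(x,0,x)\in Y_{\circ_A}$'' and its justification (that the shifted point lies in $\mathcal{G}^0_A\setminus Y_\lambda$) is false there, since $(\sigma(x),-1,x)$ has $\kappa=1$ and lies outside $\mathcal{G}^0_A$; you should restrict Case 2 to $|\lambda|>0$ and let Case 4 govern the unit space. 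Second, your (correct) value $\mathfrak{a}_{\circ_A}(x,0,x)=2$ does \emph{not} reproduce the displayed formula for $\lambda=\circ_A$, which evaluates to $1$ on the unit space: the term $\chi_{\mathcal{G}^0}$ contributes $1$ there while $2\chi_{\mathcal{G}_{<0}\cap\{c_A+\kappa=0\}}$ excludes $n=0$. So the assertion that your assembly ``recovers the claimed closed-form expression'' is too quick. This mismatch appears to be an off-by-one in the statement's formula at the unit space rather than an error in your computation, and it affects neither the bound $|\mathfrak{a}_\lambda|\leq\max(2,2|\lambda|-1)$ nor the boundedness of $[D_\lambda,S_i]$ downstream, but a complete proof should either verify the displayed identity term by term or flag the discrepancy explicitly, which yours as written does not.
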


\begin{proof}
We prove the Lemma by dividing into cases. Assume first that $\lambda$ is non-empty. Consider the following statements involving $(x,n,y)\in \mathcal{G}_A$:
\begin{enumerate}
\item[a.] $(x,n,y)\in Y_\lambda$;
\item[b.] $(x,n,y)\in \mathcal{G}^0\setminus Y_\lambda$;
\item[c.] $(x,n,y)\in \mathcal{G}\setminus \mathcal{G}^0$;
\item[$\alpha$.] $(\sigma(x),n-1,y)\in Y_\lambda$;
\item[$\beta$.] $(\sigma(x),n-1,y)\in \mathcal{G}^0\setminus Y_\lambda$;
\item[$\gamma$.] $(\sigma(x),n-1,y)\in \mathcal{G}\setminus \mathcal{G}^0$.
\end{enumerate}

\noindent We start by excluding the cases that do not occur:\\

\noindent(a and $\gamma$) cannot hold simultaneously, for this would be the case if and only if $n=0$ and $(x,n,y)\in Y_\lambda$ which is not possible for $n\geq |\lambda|>0$. \\

\noindent(b and $\alpha$) can not hold simultaneously, because b implies $n>0$ in which case $\alpha$ implies a. \\

\noindent(c and $\alpha$ or $\beta$) can not hold simultaneously, because c. implies $n<0$ or $n\geq 0$ and $\sigma^n(x)\neq y$ while $\alpha$. or $\beta$. implies $n>0$ and $\sigma^n(x)=y$. \\

\noindent We thus have five cases to consider:\\

\noindent(a and $\alpha$) This holds if and only if $n>|\lambda|$ and $(x,n,y)\in Y_\lambda$. In this case, $\mathfrak{a}_\lambda(x,n,y)=1$. The contribution from a. and $\alpha$. is therefore $\chi_{Y_\lambda\cap \{c_A>|\lambda|\}}$. \\

\noindent(a and $\beta$) This holds if and only if $n=|\lambda|>0$ and $(x,n,y)\in Y_\lambda$. In this case, $\mathfrak{a}_\lambda(x,n,y)=2|\lambda|-1$, and this contributes $(2|\lambda|-1)\chi_{Y_\lambda\cap \{c_A=|\lambda|\}}$ if $|\lambda|>0$.\\

\noindent(b and $\beta$) This holds if and only if $(x,n,y)\in \mathcal{G}^0\setminus Y_\lambda$ and $n>0$ in which case $\mathfrak{a}_\lambda(x,n,y)=-1$. As such, the contribution to $\mathfrak{a}_\lambda$ is  $-\chi_{\mathcal{G}^0_{>0}\setminus Y_\lambda}$. \\

\noindent(b and $\gamma$) This holds if and only if $n=0$ and $(x,n,y)\in \mathcal{G}^0\setminus Y_\lambda$ and if this is the case, $\mathfrak{a}_\lambda(x,n,y)=2$. Thus, b. and $\gamma$ contribute $2\chi_{\mathcal{G}^0_0\setminus Y_\lambda}=2\chi_{\mathcal{G}^0_0}$ to $\mathfrak{a}_\lambda$.\\

\noindent(c and $\gamma$) This case can be divided into four sub cases: 
\begin{enumerate}
\item $n> 0$ and $\sigma^n(x)\neq y$;
\item $n= 0$ and $x\neq y$;
\item $n<0$ and $n+\kappa(x,n,y)=0$;
\item $n<0$ and $n+\kappa(x,n,y)>0$.
\end{enumerate}
In the first case $\mathfrak{a}_\lambda(x,n,y)=-1$, contributing $-\chi_{\mathcal{G}_{>0}\setminus \mathcal{G}^0}$ to $\mathfrak{a}_\lambda$. In the second case, $\mathfrak{a}_\lambda(x,n,y)=1$, contributing $\chi_{\mathcal{G}_{0}\setminus \mathcal{G}^0_0}$ to $\mathfrak{a}_\lambda$. In the third case $\mathfrak{a}_\lambda(x,n,y)=2$, contributing $2\chi_{\mathcal{G}_{<0}\cap\{c_A+\kappa=0\}}$ to $\mathfrak{a}_\lambda$. In the fourth case  $\mathfrak{a}_\lambda(x,n,y)=1$, contributing $\chi_{\mathcal{G}_{<0}\cap \{c_A+\kappa>0\}}$ to $\mathfrak{a}_\lambda$. The total contribution from the case c. is therefore $-\chi_{\mathcal{G}_{>0}\setminus \mathcal{G}^0}+\chi_{\mathcal{G}_{\leq 0}\cap \{c_A+\kappa>0\}}+2\chi_{\mathcal{G}_{<0}\cap\{c_A+\kappa=0\}}$. \\

We only sketch the case when $\lambda$ is the empty word. If $\lambda$ is the empty word, the case-by-case analysis is similar but without the cases b. and $\beta$. The conditions a. and $\alpha$ hold if and only if a. holds and $n>0$, contributing $\chi_{\mathcal{G}_{>0}^0}$. Further, a. and $\gamma$ hold if and only if a. holds and $n=0$, contributing $\chi_{\mathcal{G}_{0}^0}$. So the contributions from the case a. is exactly $\chi_{\mathcal{G}^0}$. If c. holds, then $\gamma$ follows contributing in the same fashion as above the terms $-\chi_{\mathcal{G}_{>0}\setminus \mathcal{G}^0}+\chi_{\mathcal{G}_{\leq 0}\cap \{c_A+\kappa>0\}}+2\chi_{\mathcal{G}_{<0}\cap\{c_A+\kappa=0\}}$.
\end{proof}

\begin{thm}
\label{unbdd2} 
The operator $(\mathpzc{E}^{\Omega}_{A},D_{\lambda})$ is an odd unbounded $KK$-cycle for $(O_{A},C(\Omega_{A}))$, which defines the same class as the $(O_A,C(\Omega_A))$-Kasparov module $(\mathpzc{E}^{\Omega}_{A},2p_\lambda-1)$ does.
\end{thm}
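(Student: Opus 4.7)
The plan is to verify the unbounded $KK$-cycle axioms summand-by-summand against the orthogonal decomposition \eqref{lambdadecomp} of $\mathpzc{E}^\Omega_A$, on which $D_\lambda$ acts by multiplication by an integer scalar, and then to pass to the bounded transform and compare with $2p_\lambda-1$ modulo a $C(\Omega_A)$-compact perturbation coming from $\ker D_\lambda$.

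\textbf{Self-adjointness, regularity, compact resolvents, bounded commutators.} By definition of $\psi_\lambda$ in \eqref{psilambda}, the restriction of $\psi_\lambda$ to each of the three types of summands $\mathpzc{E}^0_{n,\lambda}$, $\mathpzc{E}^{\perp 0}_{n,\lambda}$, $\mathpzc{E}^k_n$ appearing in \eqref{lambdadecomp} is the constant integer $n$, $-n$, $-|n|-k$ respectively; thus $D_\lambda$ is the closure of a diagonal operator with real integer eigenvalues on a countable Hilbert $C^*$-module direct sum, so it is essentially self-adjoint and regular. The resolvent $(D_\lambda\pm i)^{-1}$ acts on each summand by the scalar $(\psi_\lambda\pm i)^{-1}$. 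Each such summand is finitely generated projective over $C(\Omega_A)$ by Proposition \ref{projsum} and Proposition \ref{rangevlambda}, so the orthogonal projection onto it is a $C(\Omega_A)$-compact operator on $\mathpzc{E}^\Omega_A$; since the scalars $(\psi_\lambda\pm i)^{-1}$ tend to $0$ as the indexing integers grow, $(D_\lambda\pm i)^{-1}$ is the operator-norm limit of its truncations, each of which is $C(\Omega_A)$-compact. For the Lipschitz condition, the identity \eqref{commcompwithdlam} together with Lemma \ref{compmatrkfak} shows $[D_\lambda,S_i]$ extends to an adjointable operator of norm at most $\max(2,2|\lambda|-1)$; taking adjoints handles $[D_\lambda,S_i^*]$, so the norm-dense $*$-algebra generated by $\{S_1,\ldots,S_N\}$ lies in $\Lip(\pi_A^\Omega,\mathpzc{E}^\Omega_A,D_\lambda)$. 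This establishes that $(\mathpzc{E}^\Omega_A,D_\lambda)$ is an odd unbounded $KK$-cycle.

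\textbf{Comparison with the bounded cycle.} Because $D_\lambda$ has integer spectrum, the bounded transform $D_\lambda(1+D_\lambda^2)^{-1/2}$ differs from $\mathrm{sign}(D_\lambda)$ by an operator acting on each summand by the scalar $\psi_\lambda(1+\psi_\lambda^2)^{-1/2}-\mathrm{sign}(\psi_\lambda)$, which tends to zero; the difference is then $C(\Omega_A)$-compact by the same truncation argument as for the resolvent. A direct case analysis of the sign of $\psi_\lambda$ on the three types of summands in \eqref{lambdadecomp} shows that
$$\mathrm{sign}(D_\lambda)\;=\;(2p_\lambda-1)\;-\;p_{\ker},$$
where $p_{\ker}$ denotes the projection onto $\ker D_\lambda$. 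For $|\lambda|>0$ this kernel is the single summand $\mathpzc{E}^{\perp 0}_{0,\lambda}$ (a copy of $C(\Omega_A)$ sitting inside the degree-$0$ part of $\mathcal{G}_A^0$), while for $\lambda=\circ_A$ it is $\mathpzc{E}^0_{0,\circ_A}=C(\Omega_A)$; in either case $p_{\ker}$ is a rank-one projection onto a finitely generated projective $C(\Omega_A)$-submodule, hence $C(\Omega_A)$-compact. A $C(\Omega_A)$-compact perturbation does not change the class in $KK_1(O_A,C(\Omega_A))$, so $[\mathpzc{E}^\Omega_A,D_\lambda]=[\mathpzc{E}^\Omega_A,2p_\lambda-1]$.

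\textbf{Main obstacle.} The most delicate step will be the matching of $\mathrm{sign}(D_\lambda)$ with $2p_\lambda-1$: one has to keep track of signs carefully through the piecewise definition of $\psi_\lambda$, verify that the positive spectral subspace is exactly $\mathrm{im}(p_\lambda)=\overline{\mathrm{span}\{S_{\mu\lambda}:\mu\in\mathcal{V}_A\}}$ as identified in Proposition \ref{rangevlambda}, and treat the degenerate case $\lambda=\circ_A$ separately (where $Y_{\circ_A}=\mathcal{G}^0_A$ and the kernel sits inside $\mathrm{im}(p_{\circ_A})$ rather than in its complement). Once this bookkeeping is done, the rest is routine.
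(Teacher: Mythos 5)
Your proposal is correct and follows essentially the same route as the paper's (terser) proof: self-adjointness and regularity from multiplication by a real-valued function, compactness of the resolvent from the decomposition into finitely generated projective summands on which $D_\lambda$ acts by the scalar $\pm(|n|+k)$, bounded commutators from Equation \eqref{commcompwithdlam} and Lemma \ref{compmatrkfak}, and the identification of the class via the positivity of $\psi_\lambda$ on $Y_\lambda$ together with Proposition \ref{rangevlambda}. One small slip: for $|\lambda|>0$ the kernel lies in the complement of $\mathrm{im}(p_\lambda)$, so $\mathrm{sign}(D_\lambda)=2p_\lambda-1+p_{\ker}$ there (the $-p_{\ker}$ occurs only for $\lambda=\circ_A$, consistent with Theorem \ref{computingkhomclasseslikeaboss}); since $p_{\ker}$ is $C(\Omega_A)$-compact either way, this does not affect the conclusion.
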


\begin{proof} 
The operator $D_{\lambda}$ is $C(\Omega_{A})$-linear by construction. The operators $D_{\lambda}\pm i:C_{c}(\mathcal{G}_A)\rightarrow C_{c}(\mathcal{G}_A)$ are bijective since $D_{\lambda}$ is defined via multiplication by a real valued function. Thus, $D_{\lambda}$ extends to a selfadjoint regular operator in the module $\mathpzc{E}^{\Omega}_{A}$.  To prove that $D_{\lambda}$ has compact resolvent, we observe that the restriction of $D_{\lambda}^{2}$ to $\mathpzc{E}^{k}_{n}$ acts as multiplication by $(|n|+k)^{2}$, so since $\mathpzc{E}^{k}_{n}$ is finitely generated and projective, the resolvent $(1+D_{\lambda}^{2})^{-1}$ is compact.

It remains to show that $D_{\lambda}$ has bounded commutators with the generators $S_{i}$. This fact follows from Equation \eqref{commcompwithdlam} and Lemma \ref{compmatrkfak}. Since $\psi_\lambda$ is positive exactly on $Y_\lambda$, the class of this unbounded cycle coincides with that of $p_\lambda$ using Proposition \ref{rangevlambda}.
\end{proof}

\begin{remark}
It is also possible to construct even classes over $C(\Omega_A)$ from $c_A$ and $\kappa$. On the direct sum $\mathpzc{E}_{A}^{\Omega}\oplus \mathpzc{E}_{A}^{\Omega}$, consider the $O_{A}$ representation determined by $S_{i}\mapsto S_{i}\oplus S_i$ and the unbounded symmetric operator
\[D^{ev}:=\begin{pmatrix} 0 & D_{c} + iD_{\kappa} \\ D_{c}-iD_{\kappa} & 0\end{pmatrix}.\]
The pair $(\mathpzc{E}^{\Omega}_{A}\oplus\mathpzc{E}^{\Omega}_{A} ,D^{ev})$ defines a cycle for $KK_{0}(O_{A},C(\Omega_A))$. 
\end{remark}

\subsubsection{The operator $D_{\circ_A}$ on the free group}
Let us consider the construction of Theorem \ref{unbdd2} in the example of the free group, recalled above in Subsection \ref{freegroupexample}. The reader can verify that the function $\phi^*\kappa:\partial F_d\rtimes F_d\to \N$, where $\varphi$ denotes the groupoid isomorphism implementing the isomorphism of Proposition \ref{isofreegr}, is given by
$$\phi^*\kappa(x,\gamma)=\ell(x,\gamma).$$
In particular, for the empty word $\lambda=\circ_A$, it holds that 
$$\phi^*\psi(x,\gamma)=
\begin{cases}
\quad |\gamma| &\textnormal{when } \ell(x,\gamma)=0 \\ 
\vspace{-2mm}\\
-\left||\gamma|-2\ell(x,\gamma)\right|-\ell(x,\gamma)&\textnormal{when } \ell(x,\gamma)>0.
\end{cases}$$

\subsubsection{Quick computation for $SU_q(2)$} Recall the construction from Subsubsection \ref{suqtwoexample}.

\begin{prop}
\label{suq2prop}
If $\tau=(\tau_+,\tau_-):\mathcal{V}_{SU_q(2)}\to \Omega_{SU_q(2)}\times \Omega_{SU_q(2)}$ is a weak choice function such that $\tau_+(\circ_A)\in C_2$ and $\tau_-(\circ_A)\in C_1$, then the class $[\mathpzc{E}^\Omega_{SU_q(2)},D_2]\otimes_{C(\Omega_{SU_q(2)})} [BP(\tau)]$ generates $K^1(C(SU_q(2)))$.
\end{prop}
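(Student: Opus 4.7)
First I would invoke Theorem \ref{unbdd2} to replace the bivariant unbounded cycle $[\mathpzc{E}^\Omega_{SU_q(2)}, D_2]$ by its bounded representative $[\mathpzc{E}^\Omega_{SU_q(2)}, 2p_2 - 1]$. The key structural observation for the matrix $A$ of $SU_q(2)$ is that $A_{2,1}=0$ forces $C_2=\{2^\infty\}$ to be a singleton, and, more importantly, the word $2y$ is admissible for $y \in \Omega_A$ if and only if $y = 2^\infty$. Consequently $Y_2 \cap d^{-1}(y) = \emptyset$ for every $y \neq 2^\infty$, so the localization of $p_2$ at any such point $y$ vanishes, and hence $[\mathpzc{E}^\Omega_{SU_q(2)},2p_2-1]\otimes_{C(\Omega_A)}[\omega_y] = 0 \in K^1(O_A)$, the resulting Fredholm operator $-1$ being degenerate.

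Next I would decompose $[BP(\tau)]$ via Remark \ref{decomposingbptriples} as the formal sum $\sum_{\mu \in \mathcal{V}_A}([\omega_{\tau_+(\mu)}]-[\omega_{\tau_-(\mu)}])$ in $K^0(C(\Omega_A))$. Applying the homomorphism $[\mathpzc{E}^\Omega_{SU_q(2)},D_2]\otimes_{C(\Omega_A)}(-)$, the vanishing above annihilates every term except those for which $\tau_+(\mu) = 2^\infty$ or $\tau_-(\mu) = 2^\infty$. Since $\tau_\pm(\mu)\in C_\mu$, and since $2^\infty\in C_\mu$ if and only if $\mu\in\{\circ_A\}\cup\{2^k:k\geq 1\}$, these words are the only candidates. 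For $\mu = 2^k$ with $k\geq 1$ the singleton $C_{2^k}=\{2^\infty\}$ forces $\tau_+(2^k)=\tau_-(2^k)=2^\infty$, so the summand cancels; only $\mu = \circ_A$ survives, and the hypothesis $\tau_+(\circ_A)\in C_2 = \{2^\infty\}$ and $\tau_-(\circ_A)\in C_1\not\ni 2^\infty$ produces exactly one net copy of $[\omega_{2^\infty}]$. Therefore
\[
[\mathpzc{E}^\Omega_{SU_q(2)},D_2]\otimes_{C(\Omega_A)}[BP(\tau)] = [\mathpzc{E}^\Omega_{SU_q(2)},2p_2-1]\otimes_{C(\Omega_A)}[\omega_{2^\infty}] \in K^1(O_A).
\]

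The final and most delicate step is to identify this class as a generator of $K^1(O_A)\cong \Z$, in accordance with \eqref{kgroupsforsucue}. The natural candidate is $[\beta_2]$: by Proposition \ref{kzerodescription} combined with the linear algebra computation following it, $[T_2T_2^*]$ generates $K_0(O_{A^T})\cong \Z$, so by the Kaminker--Putnam duality of Theorem \ref{pd} together with Proposition \ref{representingbetai}, the class of the Toeplitz-type Fredholm module $(\pi_A,L^2(O_A,\phi_A),2W_2W_2^*-1)$ is a generator of $K^1(O_A)$. To match these I would unravel the groupoid description of the fibre $\mathpzc{E}^\Omega_{SU_q(2)}\otimes_{2^\infty}\C \cong \ell^2(d^{-1}(2^\infty))$, showing that both $W_2W_2^*$ and the localized projection $p_{2,2^\infty}$ correspond to the closed subspace generated by $\{S_{\mu 2}\}_{\mu\in \mathcal{V}_A}$ (cf.\ Proposition \ref{rangevlambda}). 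The hard part will be the explicit comparison of the two left $O_A$-representations up to compact perturbation, since these Hilbert spaces are not a priori isomorphic as $O_A$-modules; I expect this to yield to the generator-level criterion of Theorem \ref{splittingsandsum} applied to $S_1,S_2$, verifying that both Fredholm modules realize the same extension class in $\mathrm{Ext}(O_A,\Ko)$.
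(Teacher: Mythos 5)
Your route is genuinely different from the paper's. The paper uses that $K_1$ and $K^1$ of $C(SU_q(2))$ are both $\Z$ (Equation \eqref{kgroupsforsucue}), so that the index pairing is perfect; it then takes the explicit unitary $u=S_2+S_1S_1^*$, computes the index of the Toeplitz-type operator $p_2up_2$ over $C(\Omega_{SU_q(2)})$ to obtain $[u]\otimes_{C(SU_q(2))}[\mathpzc{E}^\Omega_{SU_q(2)},D_2]=-[\chi_{C_2}]$ in $K_0(C(\Omega_{SU_q(2)}))$, and finally evaluates $\langle[\chi_{C_2}],[BP(\tau)]\rangle=1$ by the finite counting formula \eqref{mucomputations}. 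You instead push $[BP(\tau)]$ through the homomorphism $[\mathpzc{E}^\Omega_{SU_q(2)},D_2]\otimes_{C(\Omega_{SU_q(2)})}(-)$ after decomposing it into point evaluations. Your structural observations are all correct ($C_2=\{2^\infty\}$, the localization of $p_2$ at any $y\neq 2^\infty$ vanishes, $[T_2T_2^*]$ generates $K_0(O_{A^T})$, so $[\beta_2]$ generates $K^1(O_A)$). Note also that your ``final and most delicate step'' is already Theorem \ref{computingkhomclasseslikeaboss}: $\omega_*[\mathpzc{E}^{\Omega}_{A},D_\lambda]=A(\lambda_\ell,j)[\beta_{\lambda_1}]$, which for $\lambda=2$ and $\omega=\omega_{2^\infty}$ gives $[\beta_2]$ directly, so the comparison of representations you anticipate as hard is already done.

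The one genuine gap is the middle step. Remark \ref{decomposingbptriples} is explicitly only a \emph{formal} decomposition: the $K$-homology class of the infinite direct sum $\bigoplus_\mu\mathfrak{S}_{\mu,s}$ is not the sum of the classes $[\mathfrak{S}_{\mu,s}]$ (that series has no meaning in $K^0(C(\Omega_A))$), so ``the vanishing above annihilates every term'' is not yet an argument --- this is precisely the difficulty that forces Theorem \ref{rationkhomcomp} to be stated only rationally and is flagged in Remark \ref{integralremark}. Here the gap can be closed: split off only the single summand $\mathfrak{S}_{\circ_A,s}$ (a legitimate finite direct-sum decomposition, contributing $[\omega_{2^\infty}]-[\omega_{\tau_-(\circ_A)}]$ with the second term killed since $\tau_-(\circ_A)$ starts in $1$ and $A(2,1)=0$), and then observe that on the remaining module $\bigoplus_{\mu\neq\circ_A}\mathfrak{S}_{\mu,s}$ every character occurring either equals $\omega_{2^\infty}$ in \emph{both} components (the $\mu=2^k$ summands, which are degenerate on the nose) or annihilates $\chi_{C_2}$; since $\rho(S_{\nu 2}^*S_{\nu 2})=\chi_{C_2}$, the projection $p_2\otimes 1$ vanishes identically on the corresponding product module, so the product cycle is degenerate as a whole rather than merely a sum of trivial classes. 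Alternatively one can note that $K^1(O_A)\cong\Z$ is torsion-free, so the rational computation of Theorem \ref{rationkhomcomp} already determines the integral class --- but that, in effect, reinstates the pairing argument that constitutes the paper's own proof. With either repair your argument is correct; as written, the term-by-term cancellation is unjustified.
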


We use the identification $\mathcal{V}_{SU_q(2)}\cong \N\times \N$ given by the mapping that maps $(k,l)$ to the word $1\cdots 12\cdots 2$ of $k$ $1$:s and $l$ $2$:s. 

\begin{proof}
It is well-known (see more in Equation \eqref{kgroupsforsucue}), that $K_1(C(SU_q(2)))\cong \Z\cong K^1(C(SU_q(2)))$. Hence, the Universal Coefficient Theorem for $KK$-theory implies that the index pairing 
$$K_1(C(SU_q(2)))\otimes K^1(C(SU_q(2)))\to \Z$$ 
is non-degenerate and in fact an isomorphism. Thus, it suffices to construct a unitary $u\in C(SU_q(2))$ such that the class $x:=[u]\otimes_{C(SU_q(2))} [\mathpzc{E}^\Omega_{SU_q(2)},D_2]\in K_0(C(\Omega_{SU_q(2)}))$ satisfies that $x\otimes_{C(\Omega_{SU_q(2)})} [BP(\tau)]=-1$.

Consider the unitary $u:=S_2+1-S_2S_2^*=S_2+S_1S_1^*$. We set $T:=p_2 u p_2\in \mathrm{End}^*_{C(\Omega_{SU_q(2)})}(p_2\mathpzc{E}^\Omega_{SU_q(2)})$, so $x=\ind_{C(\Omega_{SU_q(2)})}(T)$. It holds that $p_2\mathpzc{E}^\Omega_{SU_q(2)}$ is generated over $C(\Omega_{SU_q(2)})$ by the elements $\{S_{(k,l)}:l>0\}$. A direct computation gives that 
\[TS_{(k,l)}=
\begin{cases}
S_{(k,l)},\quad k>0,\\
S_{(0,l+1)},\quad k=0,
\end{cases}
\quad \mbox{and}\quad 
T^*S_{(k,l)}=
\begin{cases}
S_{(k,l)},\quad k>0,\\
S_{(0,l-1)},\quad k=0,\; l>1,\\
0, \quad k=l-1=0.
\end{cases}
\quad
\]
It follows that $\ker T=0$ and $\ker T^*=S_2C(\Omega_{SU_q(2)})\cong \chi_{C_2}C(\Omega_{SU_q(2)})$. Hence $x=\ind_{C(\Omega_{SU_q(2)})}(T)=-[\chi_{C_2}]$. It follows that $x\otimes_{C(\Omega_{SU_q(2)})} [BP(\tau)]=-1$ from the computation \eqref{mucomputations}.

\end{proof}

\subsection{Restricting to a fiber} 

In this subsection we will exhaust all the odd $K$-homology classes of $O_A$ by the unbounded Fredholm modules that are restrictions of the unbounded $(O_A,C(\Omega_A))$-cycles $(\mathpzc{E}^{\Omega}_{A},D_{\lambda})$ of Theorem \ref{unbdd2} to ``fibers" over points in $\Omega_A$. Whenever $\omega$ is a character on $C(\Omega_A)$, we say that $\omega$ starts in $j$ if the word that $\omega$ corresponds to starts in $j$, i.e. $\omega(\chi_{C_{k\mu}})=\delta_{k,j}\omega(\chi_{C_{k\mu}})$ for any $\mu\in \mathcal{V}_A$. Before formulating the precise result on these unbounded Fredholm modules, we need a lemma whose notation will come in handy. Recall the notation $\mathcal{V}_\lambda=\{\mu\lambda\in \mathcal{V}_A\}$.

\begin{lem}
\label{iotacomp}
For any character $\omega$ starting in $j$, there is a partial isometry $\iota_\omega:\ell^2(\mathcal{V}_A)\to \mathpzc{E}^{\Omega}_{A}\otimes_\omega \C$ such that the source projection is the orthogonal projection onto $\C\delta_{\circ_A}\oplus \bigoplus_{A(k,j)\neq 0} \ell^2(\mathcal{V}_k)$ and 
$$\iota_\omega(\delta_\mu)=
\begin{cases}
1_{O_A}\otimes_\omega 1_\C,\quad&\mbox{if}\; \mu=\circ_A,\\
S_\mu\otimes_{\omega} 1_\C,\quad&\mbox{if}\; \mu\in \mathcal{V}_A\setminus \{\circ_A\}.
\end{cases}$$
\end{lem}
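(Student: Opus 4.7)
The plan is to define $\iota_\omega$ on the standard basis $\{\delta_\mu:\mu\in\mathcal{V}_A\}$ of $\ell^2(\mathcal{V}_A)$ by the formula
\[\iota_\omega(\delta_\mu):=S_\mu\otimes_\omega 1_\C,\qquad S_{\circ_A}:=1_{O_A},\]
extend by linearity to $c_{00}(\mathcal{V}_A)$, and then verify via inner product computations that $\iota_\omega$ extends to a bounded operator with the prescribed source projection. The entire argument reduces to evaluating
\[\langle S_\mu\otimes_\omega 1_\C,\,S_\nu\otimes_\omega 1_\C\rangle_\C = \omega\bigl(\rho(S_\mu^*S_\nu)\bigr),\]
where $\rho:O_A\to C(\Omega_A)$ is the conditional expectation from \eqref{rho}.

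The key case analysis uses Proposition \ref{sscomp}: the product $S_\mu^*S_\nu$ is nonzero only if $\mu=\nu$, $\nu=\mu\beta$, or $\mu=\beta\nu$ for some $\beta$, and in the latter two cases it equals $S_\beta$ or $S_\beta^*$ respectively, both of gauge degree $\pm|\beta|\neq 0$. Since $\rho$ kills elements of nonzero gauge degree (under the isomorphism $O_A\cong C^*_r(\mathcal{G}_A)$ of Theorem \ref{oaandcga} the element $S_\beta$ corresponds to the characteristic function of $\{(x,|\beta|,\sigma^{|\beta|}(x)):x\in C_\beta\}$, which is disjoint from the unit space $\Omega_A=\mathcal{G}_A^{(0)}$ when $|\beta|>0$), only $\mu=\nu$ survives. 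In the surviving case, for $\mu=\nu=\circ_A$ one has $\rho(1_{O_A})=1$, while for $\mu=\nu\neq \circ_A$ Proposition \ref{sscomp} yields $S_\mu^*S_\mu=Q_{\mu_{|\mu|}}\in C(\Omega_A)$, on which $\rho$ acts as the identity. Using $Q_i=\sum_\ell A(i,\ell)P_\ell$ and $\omega(P_\ell)=\delta_{\ell,j}$ (as $\omega$ starts in $j$), one obtains
\[\omega\bigl(\rho(S_\mu^*S_\nu)\bigr)=\delta_{\mu,\nu}\cdot
\begin{cases} 1, & \mu=\circ_A,\\ A(\mu_{|\mu|},j), & \mu\neq\circ_A.\end{cases}\]

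From this computation the vectors $\{S_\mu\otimes_\omega 1_\C\}_{\mu\in\mathcal{V}_A}$ form a mutually orthogonal family in $\mathpzc{E}^\Omega_A\otimes_\omega\C$ of norms in $\{0,1\}$; explicitly, $S_\mu\otimes_\omega 1_\C=0$ precisely when $\mu\neq\circ_A$ and the final letter $k=\mu_{|\mu|}$ satisfies $A(k,j)=0$. Consequently $\iota_\omega$ extends continuously to all of $\ell^2(\mathcal{V}_A)$, and the adjoint $\iota_\omega^*\iota_\omega$ is the orthogonal projection onto $(\ker\iota_\omega)^\perp$, which by the previous sentence is exactly the closed subspace
\[\C\delta_{\circ_A}\oplus\bigoplus_{k:\,A(k,j)\neq 0}\ell^2(\mathcal{V}_k).\]
This establishes both the partial isometry property and the description of the source projection.

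The only genuinely non-routine step is the vanishing $\rho(S_\beta)=0$ for $|\beta|>0$; everything else is bookkeeping with Proposition \ref{sscomp} and the evaluation of $\omega$ on the standard diagonal projections. I do not anticipate any analytic difficulty once the inner product table above is established.
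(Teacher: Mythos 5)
Your proposal is correct and follows essentially the same route as the paper: define $\iota_\omega$ on the basis $\{\delta_\mu\}$, reduce everything to the inner products $\omega(\rho(S_\mu^*S_\nu))$, and evaluate these via Proposition \ref{sscomp} together with $\omega(P_\ell)=\delta_{\ell,j}$. Your version is in fact slightly more careful than the paper's, which passes directly to $\omega(S_\mu^*S_\nu)=\delta_{\mu,\nu}\,\omega(S_k^*S_k)$ without spelling out that the conditional expectation $\rho$ annihilates the nonzero-gauge-degree terms $S_\beta$ and $S_\beta^*$ arising from the cases $\nu=\mu\beta$ and $\mu=\beta\nu$.
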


\begin{proof}
The identity $\iota_\omega(\delta_\mu)=S_\mu\otimes_{\omega} 1$ and $\iota_\omega(\delta_{\circ_A})=1_{O_A}\otimes_\omega 1_\C$ determines a linear mapping $C_c(\mathcal{V}_A)\to  \mathpzc{E}^{\Omega}_{A}\otimes_\omega \C$. Let $\mathfrak{P}_k:=R^A_k(R^A_k)^*$ denote the orthogonal projection onto $\ell^2(\mathcal{V}_k)$. Since $1_{O_A}\otimes_\omega 1_\C$ is a unit vector in $\mathpzc{E}^{\Omega}_{A}\otimes_\omega \C$, it suffices to prove that for arbitrary $\mu,\nu\in \mathcal{V}_A$, with $\mu=\mu_0k$, it holds that 
\begin{equation}
\label{iotaepsident}
\langle \iota_\omega \delta_\mu,\iota_\omega \delta_\nu\rangle_{\mathpzc{E}^{\Omega}_{A}\otimes_\omega \C}=
\langle \sum_{A(j,k)\neq 0}\mathfrak{P}_k\delta_\mu,\delta_\nu\rangle=
\begin{cases}
\delta_{\mu,\nu},\;& \mbox{if}\;\; A(k,j)\neq 0,\\
0, \;& \mbox{otherwise}.
\end{cases}
\end{equation}
Let $\mu=\mu_0k$. A direct computation shows that 
\begin{align*}
\langle \iota_\omega \delta_\mu,\iota_\omega \delta_\nu\rangle_{\mathpzc{E}^{\Omega}_{A}\otimes_\omega \C}&=\omega(S_\mu
^*S_\nu)=\delta_{\mu,\nu} \omega(S_k^*S_k)\\
&=\delta_{\mu,\nu} \sum_{i=1}^N A_{ki} \omega (S_iS_i^*)=\begin{cases}
\delta_{\mu,\nu},\;& \mbox{if}\;\; A(k,j)\neq 0,\\
0, \;& \mbox{otherwise}.
\end{cases}\end{align*}
\end{proof}

\begin{remark}
\label{algecomput}
Already on an algebraic level, 
\begin{align*}
\iota_\omega(\delta_{\mu k})=S_{\mu k}\otimes _\omega 1=S_{\mu k}S_k^*S_k\otimes _\omega 1=\sum_{l=1}^N A_{kl}S_{\mu k}S_lS_l^*\otimes _\omega 1\\
=\sum_{l=1}^N A_{kl}S_{\mu k}\otimes _\omega \omega(\chi_{C_l})=A_{kj}S_{\mu k}\otimes_\omega1.
\end{align*}
\end{remark}

Let $\lambda\in \mathcal{V}_A$ be a finite word, if $\lambda$ is non-empty we let $\lambda_\ell$ denote the last letter of $\lambda$. We define the partial isometry $W_{\lambda,\omega}:\ell^2(\mathcal{V}_\lambda)\to \mathpzc{E}^{\Omega}_{A}\otimes_\omega \C$ by 
$$W_{\lambda,\omega}:=\iota_\omega|_{\ell^2(\mathcal{V}_\lambda)}.$$ 
By Lemma \ref{iotacomp} it holds that $W_{\lambda,\omega}$ is an isometry if $\lambda$ is non-empty and $A(\lambda_\ell,j)=1$. If $\lambda$ is non-empty and $A(\lambda_\ell,j)=0$, then $W_{\lambda,\omega}$ is a partial isometry of rank $1$ with source projection being the one-dimensional space $\C\delta_{\circ_A}$. If $\lambda=\circ_A$, the partial isometry $W_{\lambda,\omega}$ is precisely $\iota_\omega$.

We let $\pi_A^\Omega:O_A\to \End^*_{C(\Omega_A)}(\mathpzc{E}^\Omega_A)$ denote the left $O_A$-action. Let $P_\omega\in \Ko(\mathpzc{E}^\Omega_A\otimes_\omega \C)$ denote the orthogonal projection onto the one-dimensional space 
$$\ker D_\lambda\otimes_\omega 1=\C\iota_\omega(\delta_{\circ_A})=\C 1_{O_A}\otimes_\omega 1_\C.$$
These identities follow from the definition of $\psi_\lambda$, see Definition \ref{psilambda}.

\begin{thm}
\label{computingkhomclasseslikeaboss}
Let $\omega:C(\Omega_A)\to \C$ be a character starting in $j$. For a finite word $\lambda\in \mathcal{V}_A$, the unbounded Fredholm module
\begin{equation}
\label{pushdefspe}
\omega_*(\mathpzc{E}^{\Omega}_{A},D_{\lambda})=(\pi_A^\Omega\otimes_\omega\id_\C, \mathpzc{E}^{\Omega}_{A}\otimes_\omega \C, D_{\lambda}\otimes_\omega 1),
\end{equation}
is $\theta$-summable. If $\phi(l)\leq C l^p$ for some $C,p>0$, then $\omega_*(\mathpzc{E}^{\Omega}_{A},D_{\lambda})$ is $\mathcal{L}^{p+1,\infty}$-summable. Furthermore, it holds that the phase of the unbounded Fredholm module \eqref{pushdefspe} coincides with the finitely summable analytic $K$-cycle: 
$$(\pi_A^\Omega\otimes_\omega \id_\C,\mathpzc{E}^{\Omega}_{A}\otimes_\omega \C,2W_{\lambda,\omega}W_{\lambda,\omega}^*\pm P_\omega-1),$$
where the sign is $+$ is $\lambda\neq \circ_A$ and the sign is $-$ if $\lambda=\circ_A$. On the level of $K$-homology, it holds that 
\begin{equation}
\label{khomcomputationmoreexactly}
\omega_*[\mathpzc{E}^{\Omega}_{A},D_\lambda]=
\begin{cases}
[\beta_j], \quad &\lambda=\circ_A,\\
\vspace{-3mm}\\
A(\lambda_\ell,j)[\beta_{\lambda_1}],\quad &\lambda=\lambda_1\cdots \lambda_\ell\in \mathcal{V}_A\setminus \{\circ_A\}
\end{cases}
\qquad \mbox{in} \quad K^1(O_A).
\end{equation}
\end{thm}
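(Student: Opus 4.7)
My plan is to address the three claims---summability, the explicit phase formula, and the $K$-homology class---in order, leveraging the structural results of Subsection \ref{definingoacoa} throughout. For summability, the fiber $\mathpzc{E}^\Omega_A \otimes_\omega \C$ inherits the decomposition $\bigoplus_{n,k}\mathpzc{E}^k_n\otimes_\omega\C$ of Proposition \ref{projsum}, refined via Proposition \ref{rangevlambda} into $\mathpzc{E}^0_{n,\lambda}\oplus \mathpzc{E}^{\perp 0}_{n,\lambda}$ on the $k=0$ part, and $D_\lambda\otimes_\omega 1$ acts as the integer scalar $\psi_\lambda\in\{n,-n,-|n|-k\}$ on each summand. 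Lemma \ref{kisom} bounds each fiber dimension by $\phi(n+k)$; summing fiber dimensions over $(n,k)$ with fixed $|\psi_\lambda|=m$ and combining with the exponential growth estimate of Corollary \ref{phiasymptotics} makes the heat trace $\tra(e^{-t(D_\lambda\otimes_\omega 1)^2})$ finite for $t$ large, establishing $\theta$-summability. The polynomial bound $\phi(l)\leq Cl^p$ then yields the claimed $\mathcal{L}^{p+1,\infty}$-summability via a Weyl-type eigenvalue count.

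To identify the phase, observe that $\psi_\lambda$ is strictly positive on $Y_\lambda\setminus\mathcal{G}^0_0$, zero on $\mathcal{G}^0_0$, and strictly negative elsewhere. Consequently the projection $p_\lambda$ from Proposition \ref{rangevlambda} equals $P_+$ when $\lambda\neq\circ_A$ and equals $P_++P_{\ker}$ when $\lambda=\circ_A$. Lemma \ref{iotacomp} together with Remark \ref{algecomput} identifies $p_\lambda\otimes_\omega 1$ with $W_{\lambda,\omega}W_{\lambda,\omega}^*$ and the fibre kernel with the rank one projection $P_\omega$. Assembling $P_+-P_-$ then produces $2W_{\lambda,\omega}W_{\lambda,\omega}^*+P_\omega-1$ when $\lambda\neq\circ_A$ (since $P_\omega\perp W_{\lambda,\omega}W_{\lambda,\omega}^*$) and $2W_{\lambda,\omega}W_{\lambda,\omega}^*-P_\omega-1$ when $\lambda=\circ_A$ (since $P_\omega\leq W_{\lambda,\omega}W_{\lambda,\omega}^*$), exactly as stated.

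For the $K$-homology computation, the central identity $\pi_A^\Omega(S_k)\iota_\omega = \iota_\omega L_k^A$ is immediate from Lemma \ref{iotacomp} together with $S_kS_\mu=S_{k\mu}$. Conjugating by $W_{\lambda,\omega}$ yields the Busby invariant of the phase. For $\lambda\neq\circ_A$ with $A(\lambda_\ell,j)=1$ the invariant reads $L_k^A\mathfrak{P}_\lambda$; the bijection $\mu\lambda\leftrightarrow\mu\lambda_1$---well defined since admissibility of $\mu\lambda$ and of $\mu\lambda_1$ impose the same transition condition on the last letter of $\mu$---provides a unitary intertwining with $\beta_{\lambda_1}(S_k)=L_k^A\mathfrak{P}_{\lambda_1}$, giving class $[\beta_{\lambda_1}]$. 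The case $A(\lambda_\ell,j)=0$ is trivial as $W_{\lambda,\omega}=0$ and the class vanishes, matching $0\cdot[\beta_{\lambda_1}]$. For $\lambda=\circ_A$, the source of $\iota_\omega$ from Lemma \ref{iotacomp} splits as $\C\delta_{\circ_A}\oplus\bigoplus_{A(k',j)=1}\ell^2(\mathcal{V}_{k'})$; modulo the compact rank one contribution of $P_{\circ_A}$, the Busby invariant decomposes as $\sum_{A(k',j)=1}\beta_{k'}(S_k)$, yielding $K$-homology class $\sum_{A(k',j)=1}[\beta_{k'}]$.

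The hard step is the final identification of $\sum_{A(k',j)=1}[\beta_{k'}]$ with $[\beta_j]$ for $\lambda=\circ_A$. This is forced through Kaminker-Putnam Poincar\'e duality (Theorem \ref{pd}) combined with Proposition \ref{kzerodescription}: under the isomorphism $K_0(O_{A^T})\cong K^1(O_A)$ sending $[T_iT_i^*]$ to $[\beta_i]$, the Cuntz-Krieger relation $(1-A)e_j\in\ker(\Z^N\to K_0(O_{A^T}))$ unpacks to $[T_jT_j^*]=\sum_i A(i,j)[T_iT_i^*]$, and transporting back gives $[\beta_j]=\sum_{A(i,j)=1}[\beta_i]$. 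With this, every case of the formula \eqref{khomcomputationmoreexactly} is verified, and the remainder of the proof is direct bookkeeping with the explicit basis and source projection supplied by Lemma \ref{iotacomp}.
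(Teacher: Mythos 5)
Your proposal is correct and follows essentially the same route as the paper's proof: identify the positive spectral subspace of $D_\lambda\otimes_\omega 1$ with the range of $W_{\lambda,\omega}$ via Propositions \ref{rangevlambda} and \ref{posspeccom}, compress the generators through $W_{\lambda,\omega}$ to obtain $L_i^A\mathfrak{P}_\lambda$, and deduce the summability claims from the graded decomposition of Proposition \ref{projsum} together with Corollary \ref{phiasymptotics}. The only local variation is that you identify the class of $L_i^A\mathfrak{P}_\lambda$ with $[\beta_{\lambda_1}]$ via the explicit unitary $\delta_{\mu\lambda}\mapsto\delta_{\mu\lambda_1}$, whereas the paper invokes the Murray--von Neumann equivalence $T_{\bar\lambda}T_{\bar\lambda}^*\sim T_{\lambda_1}T_{\lambda_1}^*$ in $K_0(O_{A^T})$; both are valid, and your explicit derivation of $[\beta_j]=\sum_{A(i,j)=1}[\beta_i]$ from Theorem \ref{pd} and Proposition \ref{kzerodescription} supplies precisely the step the paper leaves implicit.
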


\begin{remark}
In fact, it follows from Lemma \ref{iotacomp} that if $\lambda$ is non-empty and $A(\lambda_\ell,j)=0$ then $2W_{\lambda,\omega}W_{\lambda,\omega}^*+P_\omega-1=P_\omega-1$. Hence, the computations of Theorem \ref{computingkhomclasseslikeaboss} imply that the phase of the unbounded Fredholm module \eqref{pushdefspe} is modulo $P_\omega$ a degenerate cycle, as such it is $K$-homologically trivial in a very strong sense. 
\end{remark}

Recall the notation $\beta_k$ from Proposition \ref{representingbetai}. We wish to remark\footnote{For the sake of mental peace of the reader.} that since $(\mathpzc{E}^{\Omega}_{A},D_{\lambda})$ is an unbounded $KK$-cycle, functoriality of unbounded $KK$-cycles guarantees that $\omega_*(\mathpzc{E}^{\Omega}_{A},D_{\lambda})$ is an unbounded Fredholm module. As such, the proof consists of proving $\theta$-summability and identifying its bounded transform. We structure the proof of the later in a Proposition.

\begin{prop}
\label{posspeccom}
Let $\omega$ be a character on $C(\Omega_A)$, $\lambda\in\mathcal{V}_A$ and define $\mathpzc{K}^{\,\lambda}_{\;\;\omega}$ as the closed linear span of $\{S_\mu\otimes_\omega 1|\mu=\mu_0\lambda\}\subseteq  \mathpzc{E}^{\Omega}_{A}\otimes_\omega \C$. It holds that the positive spectral projection of $D_\lambda\otimes_\omega 1$ is the orthogonal projection onto $(1-P_\omega)\mathpzc{K}^{\,\lambda}_{\;\;\omega}\subseteq  \mathpzc{E}^{\Omega}_{A}\otimes_\omega \C$. In particular, if $\omega$ starts in $j$ and $A(\lambda_\ell,j)=0$, where $\lambda_\ell$ is the last letter of $\lambda$, then $\mathpzc{K}^{\,\lambda}_{\;\;\omega}=0$.
\end{prop}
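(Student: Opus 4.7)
The plan is to diagonalize $D_\lambda \otimes_\omega 1$ via the direct sum decomposition provided by Proposition \ref{rangevlambda}, identify the positive spectral projection as a sum of certain finitely generated projective summands, and then match this sum with $(1-P_\omega)\mathpzc{K}^\lambda_\omega$.

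First, I will use the decomposition \eqref{lambdadecomp}, namely
\[\mathpzc{E}^{\Omega}_{A}=\bigoplus_{n=0}^{\infty}\mathpzc{E}^{0}_{n,\lambda}\oplus\bigoplus_{n=0}^{\infty}\mathpzc{E}^{\perp 0}_{n,\lambda}\oplus\bigoplus_{k=1}^{\infty}\bigoplus_{n\geq -k}\mathpzc{E}^{k}_{n}.\]
Inspecting the formula \eqref{psilambda} for $\psi_\lambda$, I see that the operator $D_\lambda$ acts on each of these summands as multiplication by a fixed scalar: by $n$ on $\mathpzc{E}^{0}_{n,\lambda}$, by $-n$ on $\mathpzc{E}^{\perp 0}_{n,\lambda}$, and by $-|n|-k\leq 0$ on $\mathpzc{E}^{k}_{n}$ when $k\geq 1$. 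Tensoring with $\omega$ preserves this block decomposition, so $D_\lambda\otimes_\omega 1$ is itself scalar on each block. The positive spectral projection therefore projects onto
\[\mathpzc{P}^+_{\lambda,\omega}:=\bigoplus_{n>0}\mathpzc{E}^{0}_{n,\lambda}\otimes_\omega\C.\]

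Next, I will use Lemma \ref{kisom} (applied at $k=0$) to describe each $\mathpzc{E}^{0}_{n,\lambda}$ concretely. For $n\geq |\lambda|$ the partial isometry $v_{n,0,\lambda}$ realises $\mathpzc{E}^{0}_{n,\lambda}$ as the closed $C(\Omega_{A})$-linear span of $\{S_{\mu_{0}\lambda}:|\mu_{0}|=n-|\lambda|\}$, while $\mathpzc{E}^0_{n,\lambda}=0$ for $0\leq n<|\lambda|$. Consequently
\[\mathpzc{P}^+_{\lambda,\omega}=\overline{\mathrm{span}}\{S_{\mu_{0}\lambda}\otimes_\omega 1:\mu_{0}\in\mathcal{V}_{A},\ |\mu_{0}\lambda|>0\}.\]
For $\lambda\neq\circ_{A}$ all elements $S_{\mu_{0}\lambda}$ lie in $\mathpzc{E}^0_n$ with $n\geq|\lambda|\geq 1$, so they are orthogonal to $1_{O_A}\otimes_\omega 1\in\mathpzc{E}^0_{0}\otimes_\omega\C=\operatorname{im}P_\omega$, giving $\mathpzc{P}^+_{\lambda,\omega}=\mathpzc{K}^{\,\lambda}_{\;\;\omega}=(1-P_\omega)\mathpzc{K}^{\,\lambda}_{\;\;\omega}$. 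For $\lambda=\circ_{A}$ the element $S_{\circ_A}\otimes_\omega 1=1_{O_A}\otimes_\omega 1$ spans $\operatorname{im}P_\omega$ and is removed by the condition $|\mu_0|>0$, so again $\mathpzc{P}^+_{\circ_A,\omega}=(1-P_\omega)\mathpzc{K}^{\,\circ_A}_{\;\;\omega}$. This establishes the main claim.

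For the ``in particular'' statement, I will note that if $\lambda=\lambda_{1}\cdots\lambda_{\ell}\in\mathcal{V}_A\setminus\{\circ_A\}$ and $\omega$ starts in $j$, then using the Cuntz--Krieger relation $Q_{\lambda_\ell}=\sum_{i} A(\lambda_\ell,i)P_i$ and $\omega(P_i)=\delta_{i,j}$,
\[\|S_{\mu_0\lambda}\otimes_\omega 1\|^{2}=\omega(S_{\mu_0\lambda}^{*}S_{\mu_0\lambda})=\omega(Q_{\lambda_\ell})=A(\lambda_\ell,j).\]
Thus $A(\lambda_\ell,j)=0$ forces every generator of $\mathpzc{K}^{\,\lambda}_{\;\;\omega}$ to vanish in $\mathpzc{E}^\Omega_A\otimes_\omega\C$, whence $\mathpzc{K}^{\,\lambda}_{\;\;\omega}=0$. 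The main obstacle I anticipate is only the bookkeeping needed to separate the case $\lambda=\circ_{A}$ from $\lambda\neq\circ_{A}$, which affects both the status of $1_{O_A}\otimes_\omega 1$ relative to $\mathpzc{K}^{\,\lambda}_{\;\;\omega}$ and the sign convention built into the proposition's phrasing via $\pm P_\omega$; everything else is a direct consequence of the multiplication nature of $D_\lambda$ on the scalar blocks of the decomposition.
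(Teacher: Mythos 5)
Your argument is correct and follows essentially the same route as the paper: the paper's proof simply cites Proposition \ref{rangevlambda} together with the proof of Theorem \ref{unbdd2} (where $\psi_\lambda$ is observed to be positive precisely on $Y_\lambda$ away from the unit space) for the first claim, and Lemma \ref{iotacomp} (whose proof contains exactly your computation $\omega(S_{\mu_0\lambda}^*S_{\mu_0\lambda})=A(\lambda_\ell,j)$) for the second. You have merely written out explicitly the block-diagonal action of $\psi_\lambda$ on the decomposition \eqref{lambdadecomp} and the $\lambda=\circ_A$ versus $\lambda\neq\circ_A$ bookkeeping that the paper leaves implicit.
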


The proof of the first part of Proposition \ref{posspeccom} is clear from Proposition \ref{rangevlambda} and the proof of Theorem \ref{unbdd2}. The second part follows from the first part and Lemma \ref{iotacomp} (cf. Remark \ref{algecomput}).

\begin{proof}[Proof of Theorem \ref{computingkhomclasseslikeaboss}]
It follows from Proposition \ref{posspeccom} and Lemma \ref{iotacomp} that if $\lambda$ is non-empty, the projection onto the positive spectrum of $D_\lambda\otimes_\omega 1$ coincides with $W_{\lambda,\omega}W_{\lambda,\omega}^*$. If $\lambda$ is empty, the projection onto the non-negative spectrum of $D_\circ\otimes_\omega 1$ coincides with $W_{\circ,\omega}W_{\circ,\omega}^*$. In our convention, declaring $|D_\lambda\otimes_\omega 1|^{-1}$ to be $0$ on $\ker D_\lambda\otimes_\omega 1$, it holds that
$$\frac{D_\lambda\otimes_\omega 1}{|D_\lambda\otimes_\omega 1|}=
\begin{cases}
2W_{\lambda,\omega}W_{\lambda,\omega}^*+P_\omega-1, \quad&\mbox{if}\; \lambda\neq \circ_A\\
\vspace{-2mm}\\
2W_{\circ,\omega}W_{\circ,\omega}^*-P_\omega-1, \quad&\mbox{if}\; \lambda= \circ_A
\end{cases}.$$
Hence, if $\lambda$ is non-empty and $A(\lambda_\ell,j)=0$, Equation \eqref{khomcomputationmoreexactly} follows. To prove Equation \eqref{khomcomputationmoreexactly} for a non-empty $\lambda$ with $A(\lambda_\ell,j)=1$, we apply the ideas of Subsection \ref{subsectionfinsuminkon} after computing 
\[W_{\lambda,\omega}^*\left[(\pi_A\otimes_\omega \id_\C)(S_i )\right]W_{\lambda,\omega}=L_i^A|_{\ell^2(\mathcal{V}_\lambda)},\quad i=1,\ldots, N.\] 
The identity \eqref{khomcomputationmoreexactly} and finite summability follows mutatis mutandis to the proof of Proposition \ref{representingbetai} using the fact that $\ell^2(\mathcal{V}_\lambda)=R_{\bar{\lambda}}^A(R_{\bar{\lambda}}^A)^*\ell^2(\mathcal{V}_A)$ and in the $K$-theory of $O_{A^T}$ it holds that 
$$T_{\bar{\lambda}} T_{\bar{\lambda}}^*\sim T_{\bar{\lambda}}^*T_{\bar{\lambda}}=T_{\lambda_1}^*T_{\lambda_1}\sim T_{\lambda_1}T_{\lambda_1}^*.$$ 

If $\lambda=\circ_A$, it follows from Proposition \ref{iotacomp} that 
\[W_{\circ,\omega}^*\left[(\pi_A\otimes_\omega \id_\C)(S_i )\right]W_{\circ,\omega}=W_{\circ,\omega}^*W_{\circ,\omega}L_i^A|_{\C\delta_{\circ_A}\oplus \bigoplus_{A(k,j)\neq 0} \ell^2(\mathcal{V}_k)},\quad i=1,\ldots, N.\] 
Hence $W_{\circ,\omega}^*\left[(\pi_A\otimes_\omega \id_\C)(S_i )\right]W_{\circ,\omega}-L_i^A|_{\bigoplus_{A(k,j)\neq 0} \ell^2(\mathcal{V}_k)}$ is of finite rank. An argument similar to that in Subsection \ref{subsectionfinsuminkon} shows that
$$\left[\pi_A^\Omega\otimes_\omega \id_\C,\mathpzc{E}^{\Omega}_{A}\otimes_\omega \C,2W_{\circ,\omega}W_{\circ,\omega}^* -P_\omega-1\right]=\sum_{l=1}^N A(l,j)[\beta_l]=[\beta_j]$$

It remains to prove $\theta$-summability, i.e. that $\e^{-(D_\lambda\otimes_\omega 1)^2}$ is trace class. Applying the computations of Proposition \ref{projsum} and the definition of $D_\lambda$, we have that 
$$\mathpzc{E}^{\Omega}_{A}\otimes_\omega \C=\bigoplus_{n\in\Z}\bigoplus_{\substack{l\in \N\\l+n\geq  0}}\omega(p_{n,l})\C^{\phi(l+n)}$$
and in this decomposition 
$$(D_\lambda\otimes_\omega 1)^2=\bigoplus_{n\in\Z}\bigoplus_{\substack{l\in \N\\l+n\geq  0}}(|n|+l)^2\omega(p_{n,l}).$$ 
It follows from Corollary \ref{phiasymptotics} that $\e^{-(D_\lambda\otimes_\omega 1)^2}$ is trace class. Assuming that $\phi(l)\leq Cl^p$ for some $p$ implies that $|D_\lambda \otimes_\omega 1|^{-1}\in \mathcal{L}^{p+1,\infty}(\mathpzc{E}^{\Omega}_{A}\otimes_\omega \C)$; in this case, $\omega_*(\mathpzc{E}^{\Omega}_{A},D_{\lambda})$ is a $\mathcal{L}^{p+1,\infty}$-summable unbounded Fredholm module.
\end{proof}

\begin{remark}
In particular, Theorem \ref{computingkhomclasseslikeaboss} implies that for a choice of characters $\omega_1, \omega_2, \ldots, \omega_N$ such that each $\omega_k$ starts in a letter $k$, the mapping 
$$\Z^N\to K^1(O_A), \quad  (l_1,l_2,\ldots, l_N)\mapsto \sum_{k=1}^N l_k \left[(\omega_k)_*(\mathpzc{E}^{\Omega}_{A},D_\circ)\right] \quad\mbox{is surjective}.$$
This gives an explicit proof of the fact that the Kasparov product 
\[KK_1(O_A,C(\Omega_A))\otimes K^0(C(\Omega_A))\to K^1(O_A)\quad\mbox{is surjective}.\]
\end{remark}

\begin{remark}
If the matrix $A$ is irreducible or has property $(I)$, Theorem \ref{variousstructuresonoa} implies that the unbounded Fredholm modules $\omega_*(\mathpzc{E}^\Omega_A,D_\lambda)$ in fact are spectral triples on $O_A$. 
\end{remark}

\Large
\section{Kasparov products with the Bellissard-Pearson spectral triples}
\label{bpkaspprodsection}
\normalsize

The point localizations of the previous section form a simple case of the Kasparov product in $KK$-theory. We will describe the Kasparov products of the $(O_{A},C(\Omega_{A}))$-cycles with the Bellissard-Pearson spectral triples, via the operator space approach to connections \cite{BMS, KaLe, Mes}. It turns out that, by naively applying these techniques, we obtain a $1-s$-unbounded Fredholm module (see the appendix) from any cycle $(\mathpzc{E}^\Omega_A,D_{\lambda})$, with $\lambda$ a finite word, and any Bellissard-Pearson spectral triple $(\pi_{\tau},\ell^{2}(\mathcal{V}_{A},\C^{2}), D_{\mathcal{V},s})$ for $s\in (0,1)$. The case $s=1$ is excluded as the theory of $\epsilon$-unbounded Fredholm modules breaks down at $\epsilon=0$. First, we will briefly recall the techniques developed in \cite{BMS}.

\setcounter{thm}{0}

\begin{deef} 
Let $(\pi,\mathpzc{H},D)$ be a unbounded Fredholm module. Its \emph{Lipschitz algebra} is as in Definition \ref{unbddfred} (see page \pageref{unbddfred}) defined to be the $*$-algebra
\begin{equation}
\label{lipdeef}
\mathcal{\cstar }_{D}=\textnormal{Lip}(\pi,\mathpzc{H},D):=\{a\in \cstar : [D,a]\in\mathbb{B}(\He)\}.
\end{equation}
\end{deef} 

This algebra is the maximal subalgebra of $\cstar $ such that $[D,a]$ is bounded for any $a$. The algebra $\mathcal{\cstar }_D$ can be topologized by the representation
\vspace{1mm}
\begin{align*}
\tilde{\pi}_D:=\id\oplus \pi_{D}:\mathcal{\cstar }_D&\rightarrow \cstar \oplus\mathbb{B}(\mathpzc{H}\oplus\mathpzc{H}) \\
\mbox{where} \quad&\pi_D:a \mapsto \begin{pmatrix} \pi(a) & 0 \\ [D,\pi(a)] & \pi(a)\end{pmatrix},
\end{align*}
realizing $\mathcal{\cstar }_D$ as a closed subalgebra of $\cstar \oplus\mathbb{B}(\mathpzc{H}\oplus\mathpzc{H})$. As such it is an \emph{operator algebra}. The reader can consult \cite{BlecherleM} for an exposition of the general theory of nonselfadjoint operator algebras. The involution in $\cstar $ induces an involution in $\mathcal{\cstar }_D$, which is well behaved with respect to the representation $\pi_{D}$. Indeed,
\[\pi_{D}(a^{*})=v^{*}\pi_{D}(a)^{*}v,\quad \mbox{where}\quad v=\begin{pmatrix} 0 & -1 \\ 1 & 0\end{pmatrix},\]
which implies that the involution is \emph{completely isometric} for the norm induced by $\tilde{\pi}_{D}$. Operator algebras equipped with a completely bounded involution are called \emph{involutive operator algebras} \cite{BMS, Mes} and \emph{operator $*$-algebras} in \cite{KaLe}. The main feature of involutive operator algebras is that there is a class of modules over them, which in many ways behave like Hilbert $C^*$-modules. We recall the theory for Lipschitz algebras.

\begin{deef}[\cite{KaLe, Mes}] 
Let $\mathcal{\cstar}_{D}$ be a unital Lipschitz algebra. The \emph{standard free module over} $\mathcal{\cstar}_D$ is the module
\[\mathpzc{H}_{\mathcal{\cstar}}:=\left\{(a_{i})_{i\in\Z}\in \prod_{i\in \Z}\mathcal{\cstar}_D:\sum_{i\in\Z}\tilde{\pi}_{D}(a_{i})^{*}\tilde{\pi}_{D}(a_{i})<\infty\right\}.\]
\end{deef}

The module $\mathpzc{H}_{\mathcal{\cstar}}$ carries an $\mathcal{\cstar}_D$-valued inner product, but this inner product does not define the norm.  The \emph{algebra of adjointable operators} $\End^{*}_{\mathcal{\cstar}}(\mathpzc{H}_{\mathcal{\cstar}})$ consists of those completely bounded operators $T:\mathpzc{H}_{\mathcal{\cstar}}\to \mathpzc{H}_{\mathcal{\cstar}}$ that admit an adjoint with respect to the inner product. The existence of unbounded projections in $\mathpzc{H}_{\mathcal{\cstar}}$ is due to the fact that norm and inner product are not related in the same way as they are in Hilbert $C^*$-modules.  A \emph{projection} is a closed densely defined operator satisfying $p^{2}=p^{*}=p$. In \cite[Definition 2.27]{BMS}, a \emph{Lipschitz module} over $\mathcal{\cstar}_D$ is defined to be a closed submodule $\mathcal{E}\subset\mathpzc{H}_{\mathcal{\cstar}}$ which is the range of a densely defined (possibly unbounded) projection $p:\Dom p\rightarrow\mathpzc{H}_{\mathcal{\cstar}}$ that decomposes as a direct sum $p=\bigoplus_{i\in I} p_{i}$ of projections $p_{i}\in\End^{*}_{\mathcal{\cstar}}(\mathpzc{H}_{\mathcal{\cstar}})$ for some countable set $I$. The algebra $\mathbb{K}(\mathcal{E})$ is defined to be the cb-norm closure of the $\mathcal{\cstar}_D$-linear finite rank operators on $\mathcal{E}$. 
\begin{prop}[\cite{BMS}] \label{projdirectsum}
For each $i\in\Z$, let $p_{i}\in M_{n_{i}}(\mathcal{\cstar}_D)$ be a projection and $\mathcal{E}_{i}:=p_{n_{i}}\mathcal{\cstar}_D^{n_{i}}\subset\mathcal{\cstar}_D^{n_{i}}$. Then the direct sum $\bigoplus_{i\in\Z}\mathcal{E}_{i}$ is a Lipschitz module.
\end{prop}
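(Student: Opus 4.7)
My plan is to realize $\bigoplus_{i\in\Z}\mathcal{E}_{i}$ explicitly as the range of a decomposable projection on $\mathpzc{H}_{\mathcal{\cstar}}$, and thereby match the definition of a Lipschitz module recalled in the paragraph preceding the statement. First I would fix a bijection $\varphi:\Z\xrightarrow{\sim}\coprod_{i\in\Z}\{1,\ldots,n_{i}\}$, which produces a partition of $\Z$ into disjoint blocks $I_{i}$ of cardinality $n_{i}$. Using $\varphi$ to relabel coordinates identifies $\mathpzc{H}_{\mathcal{\cstar}}$ (completely isometrically and $\mathcal{\cstar}_{D}$-equivariantly) with the ``blockwise'' module $\bigl\{(\xi_{i})_{i\in\Z}:\xi_{i}\in\mathcal{\cstar}_{D}^{n_{i}},\;\sum_{i}\tilde{\pi}_{D}(\xi_{i})^{*}\tilde{\pi}_{D}(\xi_{i})<\infty\bigr\}$. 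Under this identification each $\mathcal{\cstar}_{D}^{n_{i}}$ sits isometrically inside $\mathpzc{H}_{\mathcal{\cstar}}$ as the ``$I_{i}$-th slot'', and the ambient algebra $M_{n_{i}}(\mathcal{\cstar}_{D})$ embeds into $\End^{*}_{\mathcal{\cstar}}(\mathpzc{H}_{\mathcal{\cstar}})$ by acting on the $I_{i}$-th block and as zero on the complementary coordinates.

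Once this identification is in place, each projection $p_{i}\in M_{n_{i}}(\mathcal{\cstar}_{D})$ yields an element, still denoted $p_{i}$, of $\End^{*}_{\mathcal{\cstar}}(\mathpzc{H}_{\mathcal{\cstar}})$ with $p_{i}^{2}=p_{i}^{*}=p_{i}$; the key fact is that the representation $\tilde{\pi}_{D}$ extends to a completely contractive representation of $M_{n_{i}}(\mathcal{\cstar}_{D})$ on $\He^{2n_{i}}$, so $p_{i}$ is adjointable for the $\mathcal{\cstar}_{D}$-valued inner product and completely bounded for the operator algebra structure on $\End^{*}_{\mathcal{\cstar}}(\mathpzc{H}_{\mathcal{\cstar}})$. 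The projections $p_{i}$ now have pairwise orthogonal ranges and pairwise orthogonal source projections (by construction of the block decomposition), so the formal sum $p:=\bigoplus_{i\in\Z}p_{i}$ is well defined on the algebraic direct sum $\bigoplus_{i\in\Z}^{alg}\mathcal{\cstar}_{D}^{n_{i}}$, and one checks that $p^{2}=p$ and $p^{*}=p$ on this common dense core.

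It remains to identify the closure of the range of $p$ with $\bigoplus_{i\in\Z}\mathcal{E}_{i}$ and to verify density of $\Dom p$. The algebraic direct sum is evidently dense in $\mathpzc{H}_{\mathcal{\cstar}}$, so $p$ is densely defined. Its range contains each $\mathcal{E}_{i}=p_{i}\mathcal{\cstar}_{D}^{n_{i}}$, and these are mutually orthogonal in the inner product, so the closure of $\im p$ equals the internal direct sum $\bigoplus_{i\in\Z}\mathcal{E}_{i}$ in the norm inherited from $\mathpzc{H}_{\mathcal{\cstar}}$; this completes the verification that $\bigoplus_{i\in\Z}\mathcal{E}_{i}$ is the range of a densely defined projection decomposed as an orthogonal sum of adjointable projections, matching the definition of a Lipschitz module.

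The step I expect to require the most care is not the algebraic one but the verification that the \emph{unbounded} direct sum $p=\bigoplus_{i}p_{i}$ really is a projection in the sense of \cite{BMS}, i.e.\ that the $p_{i}$'s remain adjointable and completely bounded after embedding via $\varphi$ (so that the matrix norms from $\tilde{\pi}_{D}$ are preserved blockwise) and that the implied natural domain is in fact the correct one. This is essentially a bookkeeping matter about the operator algebra structure on $\End^{*}_{\mathcal{\cstar}}(\mathpzc{H}_{\mathcal{\cstar}})$, but it is the only point where the absence of a $C^{*}$-structure on $\mathcal{\cstar}_{D}$ could create a genuine issue, which is why the definition of Lipschitz module in \cite{BMS} was tailored precisely to accommodate such direct sums.
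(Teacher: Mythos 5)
Your argument is correct and is exactly the construction intended here: realize $\bigoplus_{i}\mathcal{E}_{i}$ blockwise inside $\mathpzc{H}_{\mathcal{\cstar}}$ via a partition of the coordinates and take $p=\bigoplus_{i}p_{i}$ — note that the paper itself gives no proof of this statement, deferring entirely to \cite{BMS}, so your reconstruction is the one to compare against the source. One small imprecision: the identification of the closure of $\im p$ with $\bigoplus_{i}\mathcal{E}_{i}$ should be justified by the fact that the blocks occupy disjoint coordinates, so that the norm \eqref{Lipnorm}-type expression $\|\sum_{i}\tilde{\pi}_{D}(\xi_{i})^{*}\tilde{\pi}_{D}(\xi_{i})\|$ is computed blockwise, rather than by orthogonality for the $\mathcal{\cstar}_{D}$-valued inner product, since — as the paper stresses — that inner product does not define the norm on $\mathpzc{H}_{\mathcal{\cstar}}$.
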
 
The main feature of Lipschitz modules is the existence of connections on them.  Recall that the space of $1$-forms associated to $(\pi,\mathpzc{H},D)$ is 

\[\Omega^{1}_{D}:=\left\{\sum_{i}\pi(a_{i})[D,\pi(b_{i})]: a_{i}\in \cstar,b_{i}\in \mathcal{\cstar}_{D}\right\}\subset \B(\mathpzc{H}),\]
where the sums converges in operator norm. The operator space $\Omega^{1}_{D}$ is a left $\cstar$-module and a right $\mathcal{\cstar}_D$-module. The map $a\mapsto [D,a]$ is a completely bounded derivation $\mathcal{\cstar}_D\to \Omega^1_D$.
A $D$-\emph{connection} on a Lipschitz module $\mathcal{E}$ is a completely bounded map \[\nabla:\mathcal{E}\rightarrow\mathpzc{E}\hotimes_{\cstar}\Omega^{1}_{D},\] where $\hotimes$ denotes the  \emph{Haagerup module tensor product} (see \cite{BlecherleM} for the general construction and  \cite[Section $3.2$]{Mes} and the papers \cite{BMS, KaLe} for its use in the context of $KK$-theory), satisfying the Leibniz rule 
$$\nabla(ea)=\nabla(e)a+e\otimes [D,a],$$ 
for $e\in \mathcal{E}$ and $a\in \mathcal{\cstar}_D$. By \cite{BMS}, connections on Lipschitz modules always exist, since the \emph{Grassmann connection} $p[D,p]$ is completely bounded by construction.

\subsection{A connection on the Haar module}\label{connsubsec}

We now employ the machinery described above to construct a Lipschitz submodule $\mathcal{E}^{\Omega}_{A}\subset \mathpzc{E}^{\Omega}_{A}$ for any given logarithmic Bellissard-Pearson spectral triple $BP(\tau)$. By Proposition \ref{projdirectsum} it suffices to show that the Haar module $\mathpzc{E}^{\Omega}_{A}$ is a direct sum of finitely generated projective modules over $C(\Omega_{A})$, which is the content of Proposition \ref{projsum}. The following lemma serves in making the associated Lipschitz structure explicit.

\begin{lem}
\label{lipproj} 
Let $(\pi_{\tau}, \ell^{2}(\mathcal{V}_{A},\C^{2}), D_{\mathcal{V},s})$ be a logarithmic Bellissard-Pearson spectral triple. The projections $p_{n,k,\lambda}:=v_{n,k,\lambda}v^{*}_{n,k,\lambda}\in M_{\phi(n+k)}(C(\Omega_{A}))$ are in fact elements of $M_{\phi(n+k)}(\textnormal{Lip}(\Omega_{A},d_{\Omega_A}))$, and therefore $[D_{\mathcal{V},s},p_{n,k,\lambda}]\in \B(\ell^{2}(\mathcal{V}_{A},\C^{2}))$. 
\end{lem}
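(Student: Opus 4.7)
The plan is to identify the matrix entries of $p_{n,k,\lambda}$ explicitly as locally constant functions on $\Omega_A$, and then to derive Lipschitz-continuity and boundedness of the commutator from this explicit form.

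First, I would unwind the definition $p_{n,k,\lambda} = v_{n,k,\lambda} v_{n,k,\lambda}^*$ using the formula computed in the proof of Lemma \ref{kisom}. There it is shown that for any $\mu$ with $|\mu|=n+k$,
\[
\rho\!\left((\chi^{k}_{n,\mu})^{*} * f\right)(x,0,x) = A_{\mu_{n+k},\,x_{k+1}}\,\delta_{k,\,\kappa(\mu\sigma^{k}(x),n,x)}\,f(\mu\sigma^{k}(x),n,x).
\]
Applied to $f = \chi^{k}_{n,\mu'}$ with $|\mu'|=n+k$, the requirement $\mu\sigma^{k}(x)\in C_{\mu'}$ forces $\mu=\mu'$. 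Consequently $p_{n,k,\lambda}$ is \emph{diagonal} in the index set $\{\mu\lambda : |\mu|=n+k-|\lambda|\}$, and each diagonal entry, evaluated at $x\in\Omega_{A}$, reads
\[
(p_{n,k,\lambda})_{\mu\lambda,\mu\lambda}(x) = A_{(\mu\lambda)_{n+k},\,x_{k+1}}\,\delta_{k,\,\kappa(\mu\lambda\sigma^{k}(x),n,x)}.
\]
The minimality condition on $\kappa$ unfolds (using the definition \eqref{kappa}) into a constraint on $x_{k}$ alone, so the entry depends only on $x_{1},\ldots,x_{k+1}$.

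Hence every entry of $p_{n,k,\lambda}$ is a finite $\C$-linear combination of cylinder indicators $\chi_{C_{\nu}}$ with $|\nu|\leq k+1$. I would then observe that each such $\chi_{C_{\nu}}$ is locally constant with respect to the ultrametric $d_{\Omega_{A}}$ of \eqref{metric}: if $x\neq y$ lie in distinct cylinders of length $\leq k+1$, then $d_{\Omega_A}(x,y)\geq e^{-(k+1)}$, so $|\chi_{C_\nu}(x)-\chi_{C_\nu}(y)|\leq e^{k+1}d_{\Omega_{A}}(x,y)$. This places $p_{n,k,\lambda}$ inside $M_{\phi(n+k)}(\mathrm{Lip}(\Omega_{A},d_{\Omega_{A}}))$.

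For the boundedness assertion, I would reuse the estimate from the proof of Proposition \ref{thetaandsummable}: for $f\in\mathrm{Lip}(\Omega_A,d_{\Omega_A})$,
\[
\|[D_{\mathcal{V},s},\pi_{\tau}(f)]\| \;\leq\; \sup_{\mu\in\mathcal{V}_{A}}|\mu|^{s}\,|f(\tau_{+}(\mu))-f(\tau_{-}(\mu))| \;\leq\; \mathrm{Lip}(f)\cdot C \cdot \sup_{\mu}|\mu|^{s}e^{-|\mu|},
\]
where the last inequality uses comparability of $\tau$ (Definition \ref{choicefunctionz}) to estimate $d_{\Omega_{A}}(\tau_{+}(\mu),\tau_{-}(\mu))\leq C\,\mathrm{diam}(C_{\mu}) = Ce^{-|\mu|}$. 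The supremum is finite (for any $s\in\R$, in fact), and this estimate promotes componentwise to matrix coefficients in $M_{\phi(n+k)}(\mathrm{Lip}(\Omega_{A},d_{\Omega_{A}}))$, yielding $[D_{\mathcal{V},s},p_{n,k,\lambda}]\in\Bo(\ell^{2}(\mathcal{V}_{A},\C^{2}))$.

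The only slightly delicate bookkeeping is verifying the diagonality and the finite dependence on $(x_{1},\ldots,x_{k+1})$ in Step~1; everything else is a routine application of the ultrametric structure and the exponential decay of $\mathrm{diam}(C_{\mu})$ against the at-most-polynomial growth of $|\mu|^{s}$.
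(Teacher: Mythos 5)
Your proposal is correct and follows essentially the same route as the paper: compute the matrix entries of $p_{n,k,\lambda}$ via the convolution formula from Lemma \ref{kisom}, observe the projection is diagonal with entries that are (shifted) cylinder functions depending only on finitely many coordinates (in fact only on $x_{k}$ and $x_{k+1}$), conclude these are Lipschitz for the ultrametric $d_{\Omega_A}$, and invoke the commutator estimate underlying Proposition \ref{thetaandsummable}. The only cosmetic difference is that you spell out the elementary Lipschitz bound $e^{k+1}$ for cylinder indicators and the estimate $\sup_\mu|\mu|^s e^{-|\mu|}<\infty$ explicitly, where the paper simply cites \cite[Proposition $8$]{BP} through Proposition \ref{thetaandsummable}.
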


\begin{proof} 
The projection $v_{n,k,\lambda}v_{n,k,\lambda}^{*}\in M_{\phi(n+k)}(C(\Omega_{A}))$ has entries 
$$\left[v_{n,k,\lambda}v_{n,k,\lambda}^{*}\right]_{\mu,\nu}=\rho\left(\left(\chi^{k}_{n,\mu\lambda}\right)^{*}\chi^{k}_{n,\nu\lambda}\right)$$ 
which equal $0$ if $\mu\neq \nu$. For $\mu\in \mathcal{V}_A$ of length $n+k$ the convolution product gives
\[\left(\chi^{k}_{n,\mu}\right)^{*}\chi^{k}_{n,\mu}(x)=\sum\chi^{k}_{n,\mu}(z,n,x)=\left\{\begin{matrix} 1 & \textnormal{if } A_{\mu_{n+k},x_{k+1}}=1\textnormal{ and } \mu_{n+k}\neq x_{k}\\ 0 &\textnormal{otherwise}\end{matrix}\right.\]
Thus, for $k=0$, this function equals the projection
\[\sum_{i=1}^{N} A_{\mu_{n},i}\chi_{C_{i}},\]
whereas, for $k>0$, we get
\[\sum_{j=1}^{N}\sum_{i\neq\mu_{n+k}}A_{\mu_{n+k},i}(\sigma^{k-1})^*\chi_{C_{ij}}.\]
Since these are sums of shifted cylinder functions, it is Lipschitz in the metric $d_{\Omega_A}$. It follows that the projection $v_{n,k,\lambda}v_{n,k,\lambda}^{*}$ is a matrix of functions that are Lipschitz in the metric $d_{\Omega_A}$. The proposition follows from Proposition \ref{thetaandsummable}.
\end{proof}

In view of this fact, Proposition \ref{projsum} and Lemma \ref{lipproj} imply that the module $\mathpzc{E}^{k}_{k}$ admits a submodule $\mathcal{E}^{n}_{k}$ with the structure of a projective operator module over the involutive operator algebra
\begin{align*}
\textnormal{Lip}_{\tau,s}&(\Omega_{A}):=\textnormal{Lip}(\pi_\tau,\ell^2(\mathcal{V}_A,\C^2),D_{\mathcal{V},s})\\
&=\left\{f\in C(\Omega_A):\begin{pmatrix} \pi_{\tau}(f) & 0 \\ [D_{\mathcal{V},s},\pi_{\tau}(f)] &  \pi_{\tau}(f)\end{pmatrix}\in \Bo(\ell^2(\mathcal{V}_A,\C^2)\oplus\ell^2(\mathcal{V}_A,\C^2)) \right\}.
\end{align*}
This uses the fact that Proposition \ref{thetaandsummable} implies that there is a continuous inclusion $\textnormal{Lip}(\Omega_{A},d_{\Omega_A})\hookrightarrow \textnormal{Lip}_{\tau,s}(\Omega_{A})$ for any $s\in (0,1]$. Denote by $\mathcal{E}^{\Omega}_{A}\subset\mathpzc{E}^\Omega_A$ the submodule
\[\mathcal{E}^{\Omega}_{A}:=\left\{f\in\mathpzc{E}^\Omega_A:\sum_{\substack{n,k,\mu\\|\mu|=n+k}}\tilde{\pi}_{D}(\rho(\chi^{k*}_{n,\mu}f))^{*}\tilde{\pi}_{D}(\rho(\chi_{n,\mu}^{k*}f))<\infty\right\},\]
which is complete in the norm
\begin{equation}
\label{Lipnorm}
\|f\|^{2}_{\mathcal{E}}:=\left\|\sum_{n,k,\mu}\tilde{\pi}_{D}(\rho(\chi^{k*}_{n,\mu}f))^{*}\tilde{\pi}_{D}(\rho(\chi_{n,\mu}^{k*}f))\right\|_{C(\Omega_A)\oplus \Bo(\ell^2(\mathcal{V},\C^4))}.
\end{equation}
To reduce notation, we suppress the dependence on $s$ in $\mathcal{E}^{\Omega}_{A}$ in our notation. We reduce notation further by setting $\Omega^1_\tau:=\Omega^1_{D_{\mathcal{V},s}}$, which depends on $\tau$ through the representation of $C(\Omega_A)$. The norm in \eqref{Lipnorm} is compatible with the projective module decomposition \eqref{lambdadecomp}. There is a connection 
\[\begin{split}\nabla_{n}^{k}:\mathcal{E}^{k}_{n}&\rightarrow \mathpzc{E}^{k}_{n}\hotimes_{C(\Omega_{A})}\Omega^{1}_{\tau}\\
f&\mapsto v^{*}_{n,k}\otimes[D_{\mathcal{V},s},\rho(v_{n,k}*f)],\end{split}\] whose direct sum extends to a connection
\[\nabla:\mathcal{E}^{\Omega}_{A}\rightarrow \mathpzc{E}^{\Omega}_{A}\hotimes_{C(\Omega_{A})}\Omega^{1}_{\tau}.\]

\begin{lem} 
\label{lipresolv}
The module $\mathcal{E}^{\Omega}_{A}$ is dense $\mathpzc{E}^{\Omega}_{A}$ and $\mathcal{E}^{\Omega}_{A}$ is a Lipschitz module in the norm \eqref{Lipnorm}. The operator $D_{\lambda}$ restricts to a selfadjoint regular operator in $\mathcal{E}^{\Omega}_{A}$, and $(D_{\lambda}\pm i)^{-1}\in\K(\mathcal{E}^{\Omega}_{A})$. Moreover, $[D_{\lambda},\nabla]=0$.
\end{lem}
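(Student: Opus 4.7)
The proof splits into four claims, all tied together by the observation that $\psi_\lambda$ is constant on each summand of the refined decomposition (\ref{lambdadecomp}), as one reads off directly from (\ref{psilambda}). Density is then immediate: each generator $\chi^k_{n,\mu}$ of $C_c(\mathcal{G}_A)$ sits in a single $\mathpzc{E}^k_n$, with $v_{n,k}\chi^k_{n,\mu}$ being a column vector whose only nonzero entry is the Lipschitz function $\rho((\chi^k_{n,\mu})^{*}*\chi^k_{n,\mu})$ computed in Lemma \ref{lipproj}, so $\chi^k_{n,\mu}\in\mathcal{E}^k_n\subset\mathcal{E}^\Omega_A$ and the span is already norm-dense in $\mathpzc{E}^\Omega_A$. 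The Lipschitz module property is then Proposition \ref{projdirectsum} applied to Proposition \ref{projsum} via Lemma \ref{lipproj}, using the continuous inclusion $\textnormal{Lip}(\Omega_A,d_{\Omega_A})\hookrightarrow \textnormal{Lip}_{\tau,s}(\Omega_A)$.

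Self-adjointness, regularity and compactness of the resolvent of $D_\lambda$ will all follow from its diagonal structure with respect to (\ref{lambdadecomp}). Proposition \ref{rangevlambda} together with the cylinder-function structure of $p_{n,0,\lambda}=v_{n,0,\lambda}v_{n,0,\lambda}^{*}$ produces finitely generated projective Lipschitz submodules $\mathcal{E}^0_{n,\lambda}$ and $\mathcal{E}^{\perp 0}_{n,\lambda}$ refining $\mathcal{E}^0_n$: the diagonal entries of $p_{n,0,\lambda}$ are again shifted cylinder characteristic functions, hence Lipschitz by the same argument as in Lemma \ref{lipproj}. On each summand, $\psi_\lambda$ takes a constant integer value (respectively $-|n|-k$, $n$ and $-n$ on $\mathpzc{E}^k_n$ for $k\geq 1$, on $\mathpzc{E}^0_{n,\lambda}$ and on $\mathpzc{E}^{\perp 0}_{n,\lambda}$), so $D_\lambda$ is literally diagonal with the eigenvalue on each summand tending to infinity. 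Its resolvent is a norm-convergent sum of $(c\pm i)^{-1}$ times the projections onto these summands, each of which lies in $\K(\mathcal{E}^\Omega_A)$ since the summands are finitely generated projective; hence $(D_\lambda\pm i)^{-1}\in\K(\mathcal{E}^\Omega_A)$, and self-adjointness and regularity are immediate from the diagonal form.

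For $[D_\lambda,\nabla]=0$ the connection has to be chosen to respect (\ref{lambdadecomp}), that is, one replaces $\nabla^0_n$ by the direct sum of the Grassmann connections for $p_{n,0,\lambda}$ and its complement $p_{n,0}-p_{n,0,\lambda}$, leaving $\nabla^k_n$ unchanged for $k\geq 1$. On any resulting summand, $D_\lambda$ is a real scalar $c$ and, by $C(\Omega_A)$-linearity, extends to $\mathpzc{E}^\Omega_A\hotimes\Omega^1_\tau$ as the same scalar on the left factor, so the Leibniz rule gives $D_\lambda\nabla(f)=c\nabla(f)=\nabla(cf)=\nabla(D_\lambda f)$ on the core. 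The main technical obstacle is justifying this refinement: one must check that the refined Grassmann connection is still completely bounded in the Lipschitz-module norm (\ref{Lipnorm}), which amounts to a uniform-in-$(n,\mu)$ estimate for the commutator $[D_{\mathcal{V},s},p_{n,0,\lambda}]$, itself a direct consequence of the explicit description of the entries of $p_{n,0,\lambda}$ as $\sigma$-shifts of a fixed finite family of cylinder characteristic functions.
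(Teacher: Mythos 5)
Your proof is correct and follows essentially the same route as the paper: density and the Lipschitz-module structure via Lemma \ref{lipproj} and Proposition \ref{projdirectsum}, and selfadjointness, regularity and compactness of the resolvent from the fact that $\psi_\lambda$ is constant on each summand of the decomposition \eqref{lambdadecomp} with $|\psi_\lambda|\to\infty$ -- which is precisely the blockwise estimate $(1+n^{2}+k^{2})^{-1}$ carried out in the paper's proof. The one place where you create unnecessary work is the final step: the connection does not need to be replaced. Since the frame $v_{n,0}=\bigl(\,(\chi^{0}_{n,\mu})^{*}\,\bigr)_{|\mu|=n}$ is indexed by individual words and $p_{n,0}$ is a diagonal matrix, the connection $\nabla^{0}_{n}f=\sum_{|\mu|=n}\chi^{0}_{n,\mu}\otimes[D_{\mathcal{V},s},\rho((\chi^{0}_{n,\mu})^{*}*f)]$ already splits over any partition of the index set $\{|\mu|=n\}$, in particular over $\mathcal{V}_\lambda$ versus its complement; your ``refined'' direct sum of Grassmann connections for $p_{n,0,\lambda}$ and $p_{n,0}-p_{n,0,\lambda}$ is therefore literally the same operator as $\nabla^{0}_{n}$. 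Because $\psi_\lambda$ is constant (equal to $\pm n$) on each individual set $X^{(0)}_{n,\mu}$, the identity $[D_\lambda,\nabla]=0$ already holds for $\nabla$ as originally defined -- this is what the paper means by ``by construction'' -- and the uniform cb-estimate on $[D_{\mathcal{V},s},p_{n,0,\lambda}]$ that you single out as the main technical obstacle is not needed.
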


\begin{proof} 
To see that $\mathcal{E}^{\Omega}_{A}$ is dense in $\mathpzc{E}^{\Omega}_{A}$, observe that the finitely generated projective $\Lip_{\tau,s}(\Omega_{A})$-module
\[\mathcal{E}_{n}^{k}:=\left\{f\in \mathpzc{E}^{n}_{k}:v_{n,k}f\in \Lip_{\tau,s}(\Omega_{A})^{\phi(n+k)}\right\}\subset\mathcal{E}^{\Omega}_{A},\]
is dense in $\mathpzc{E}^{k}_{n}$. The $\Lip_{\tau,s}(\Omega_{A})$-module $\mathcal{E}^{\Omega}_{A}$ contains the algebraic direct sum of the $\mathcal{E}^{k}_{n}$ as a dense submodule. Since the norm \eqref{Lipnorm} comes from the embedding
\[\begin{split}v:\mathcal{E}^{\Omega}_{A}&\rightarrow \bigoplus_{n,k,\mu}\Lip_{\tau,s}(\Omega_{A})^{\phi(n+k)}\cong\mathpzc{H}_{\Lip_{\tau}^s(\Omega_{A})}\\
f&\mapsto (\rho(\chi^{k*}_{n,\mu}f))_{n,k,\mu},\end{split}\]
$\mathcal{E}^{\Omega}_{A}$ is a Lipschitz module. We now prove that the resolvents $(D_{\lambda}\pm i)^{-1}$ are completely bounded for the Lipschitz norm. The Lipschitz norm is given by \eqref{Lipnorm}, for $f\in \mathcal{E}^{\Omega}_{A}$ we have
\small
\[\|(D_{\lambda}\pm i)^{-1}f\|^{2}_{\mathcal{E}}=\left\|\sum_{n,k,\mu}\tilde{\pi}_{D}(\rho(\chi^{k*}_{n,\mu}(D_{\lambda}\pm i)^{-1}f))^{*}\tilde{\pi}_{D}(\rho(\chi_{n,\mu}^{k*}(D_{\lambda}\pm i)^{-1}f))\right\|_{C(\Omega_A)\oplus \Bo(\ell^2(\mathcal{V},\C^4))},\]
\normalsize
and this norm identity is compatible with the projective module decomposition \eqref{lambdadecomp}. Thus (although $D_{\lambda}$ depends on whether $k=0$ or $k>0$ and $\mu\in\mathcal{V}_{\lambda}$ or not) for fixed $n$, $k$, $\mu$, we have 
\begin{align*}
\tilde{\pi}_{D}(\rho(\chi^{k*}_{n,\mu}&(D_{\lambda}\pm i)^{-1}f))^{*}\tilde{\pi}_{D}(\rho(\chi_{n,\mu}^{k*}(D_{\lambda}\pm i)^{-1}f))\\
&\leq (1+n^{2}+k^{2})^{-1}\tilde{\pi}_{D}(\rho(\chi^{k*}_{n,\mu}f))^{*}\tilde{\pi}_{D}(\rho(\chi_{n,\mu}^{k*}f)),
\end{align*}
by definition of $\psi_{\lambda}$, see Equation \eqref{psilambda}. This shows that $\|(D_{\lambda}\pm i)^{-1}f\|^{2}_{\mathcal{E}}\leq \|f\|^{2}_{\mathcal{E}}$. The same computation shows that the resolvent $(D_{\lambda}\pm i)^{-1}$ is completely contractive. Moreover, they also show that the resolvents are cb-norm limits of finite rank operators (see Proposition \ref{rangevlambda} and Lemma \ref{lipproj}), and hence $(D_{\lambda}\pm i)^{-1}\in\K(\mathcal{E}^{\Omega}_{A})$. By construction, the connection satisfies $[\nabla,D_{\lambda}]=0$.
\end{proof}

The operator $1\otimes_{\nabla}D_{\mathcal{V},s}$ acts on elementary tensors $e\otimes (\phi_+,\phi_-)^T\in \mathcal{E}^{\Omega}_{A}\otimes_{ \Lip_{\tau,s}(\Omega_{A})}^{alg}C_c(\mathcal{V}_A,\C^2)$ as
\[(1\otimes_{\nabla}D_{\mathcal{V},s})\left(e\otimes\begin{pmatrix}\phi_{+} \\ \phi_{-}\end{pmatrix}\right)(v)=\sum_{k=0}^{\infty}\sum_{n=-k}^{\infty}\sum_{|\mu|=n+k}\chi^{k}_{n,\mu}\otimes\begin{pmatrix}|v|^{s}\pi_{-}(\rho(\chi^{k*}_{n,\mu}e))\phi_{-} \\ |v|^{s}\pi_{+}(\rho(\chi^{k*}_{n,\mu}e)) \phi_{+}\end{pmatrix}(v).\]

\begin{thm} 
\label{selfcpt} 
For any logarithmic Bellissard-Pearson spectral triple with grading operator $\gamma$ and any finite word $\lambda$, the operator
\[D_{\lambda,\tau,s}:=D_{\lambda}\otimes \gamma + 1\otimes_{\nabla}D_{\mathcal{V},s},\]
is selfadjoint and has compact resolvent in $\He(\tau):=\mathpzc{E}^\Omega_A\otimes_{C(\Omega_{A})}\ell^{2}(\mathcal{V}_{A},\C^{2})$.
\end{thm}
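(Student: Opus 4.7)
The plan is to view $D_{\lambda,\tau,s}$ as an unbounded Kasparov product assembled via the operator-module machinery of \cite{BMS, KaLe, Mes}, applied to the Lipschitz module $\mathcal{E}^{\Omega}_{A}$ over $\mathrm{Lip}_{\tau,s}(\Omega_{A})$. All the structural ingredients are already in place: the compact resolvent of $D_\lambda$ on $\mathcal{E}^{\Omega}_{A}$ (Lemma \ref{lipresolv}), the compact resolvent of $D_{\mathcal{V},s}$ on $\ell^{2}(\mathcal{V}_{A},\C^{2})$ (Proposition \ref{thetaandsummable}), the Grassmann connection $\nabla$ from Subsection \ref{connsubsec}, and crucially the identity $[D_\lambda,\nabla]=0$. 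The task is therefore to verify the hypotheses guaranteeing that the tensor sum is self-adjoint with compact resolvent, and to identify the resulting operator explicitly on $\He(\tau)$.

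First I would exploit the decomposition $\mathpzc{E}^{\Omega}_{A}=\bigoplus_{n,k}\mathpzc{E}^{k}_{n}$ from Proposition \ref{projsum} together with the identification $\mathpzc{E}^{k}_{n}\otimes_{C(\Omega_{A})}\ell^{2}(\mathcal{V}_{A},\C^{2}) \cong \pi_{\tau}(p_{n,k})\bigl(\ell^{2}(\mathcal{V}_{A},\C^{2})\bigr)^{\phi(n+k)}$ coming from Lemma \ref{kisom}. On each summand, $D_{\lambda}$ acts as the scalar $\psi_\lambda(n,k)$ (one of the finitely many constant values prescribed by \eqref{psilambda}), so $D_\lambda\otimes\gamma$ restricts to a bounded multiple of the grading. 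The connection term $1\otimes_{\nabla} D_{\mathcal{V},s}$ reduces on this summand to the compression $\pi_{\tau}(p_{n,k})D^{(\phi(n+k))}_{\mathcal{V},s}\pi_{\tau}(p_{n,k})$, which by Lemma \ref{lipproj} differs from a restriction of $D^{(\phi(n+k))}_{\mathcal{V},s}$ by a bounded operator of norm controlled by the Lipschitz constants of the entries of $p_{n,k}$.

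Self-adjointness of $D_{\lambda,\tau,s}$ would then follow summand-by-summand: each $(n,k)$-component is a bounded perturbation of a self-adjoint compression of an operator with compact resolvent, so Kato--Rellich applies, and essential self-adjointness of the direct sum on the algebraic direct sum of $\mathcal{E}^{k}_{n}\otimes^{\mathrm{alg}}_{\mathrm{Lip}_{\tau,s}(\Omega_{A})} C_{c}(\mathcal{V}_{A},\C^{2})$ is automatic. For compactness of the resolvent I would compute
\begin{equation*}
D_{\lambda,\tau,s}^{2} = D_{\lambda}^{2}\otimes 1 + (1\otimes_{\nabla}D_{\mathcal{V},s})^{2} + B,
\end{equation*}
with $B$ a bounded operator, using $\gamma^{2}=1$, $\{\gamma,D_{\mathcal{V},s}\}=0$, and $[D_{\lambda},\nabla]=0$ to cancel the leading-order cross terms. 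On the $(n,k)$-summand the eigenvalues of $D_{\lambda,\tau,s}^{2}$ then grow at least as $\psi_\lambda(n,k)^{2}+|v|^{2s}$ with $|v|$ ranging over word lengths in $\mathcal{V}_A$; since both $|\psi_\lambda(n,k)|\to\infty$ as $|n|+k\to\infty$ and $|v|^{s}\to\infty$ as $|v|\to\infty$, the spectrum of $D_{\lambda,\tau,s}$ is discrete and tends to infinity, so the resolvent is a norm-limit of finite-rank operators.

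The main technical hurdle will be verifying that the anticommutator $\{D_{\lambda}\otimes\gamma,\,1\otimes_{\nabla}D_{\mathcal{V},s}\}$ is genuinely bounded (not merely formally so on elementary tensors), which forces a careful choice of initial core and a uniform control on the Lipschitz norms of the projections $p_{n,k}$ across $(n,k)$; this is precisely where the restriction $s\in(0,1]$ and the continuous inclusion $\mathrm{Lip}(\Omega_{A},d_{\Omega_{A}})\hookrightarrow\mathrm{Lip}_{\tau,s}(\Omega_{A})$ supplied by Proposition \ref{thetaandsummable} enter. The $s=1$ case corresponds exactly to the borderline where $\mathfrak{a}_\lambda$ of Lemma \ref{compmatrkfak} remains bounded, so all estimates should go through uniformly in that regime.
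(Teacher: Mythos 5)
Your proposal is correct in substance but proceeds by a different, more hands-on route than the paper. The paper's proof is essentially a citation of the general machinery: essential self-adjointness of $1\otimes_{\nabla}D_{\mathcal{V},s}$ from \cite[Theorem 2.30]{BMS}, self-adjointness of the anticommuting sum from \cite[Theorem 2.33, Example 2.39]{BMS}, and compactness of the resolvent from \cite[Lemma 6.3.2]{Mes} once the products of resolvents are known to be compact. You instead verify everything directly on the block decomposition $\He(\tau)=\bigoplus_{n,k}\pi_\tau(p_{n,k})\ell^2(\mathcal{V}_A,\C^2)^{\phi(n+k)}$, reducing each block to a Lipschitz-projection compression of $D_{\mathcal{V},s}$ plus a bounded odd perturbation and then applying Kato--Rellich and a spectral growth estimate. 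This buys transparency and avoids the external references, at the cost of having to control the blocks uniformly; the paper's route is shorter but opaque about where the decomposition is actually used.

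A few points in your argument need tightening, though none is fatal. First, on $\mathpzc{E}^{0}_{n}$ the function $\psi_\lambda$ takes the two values $\pm n$, not a single scalar; you should either invoke the refined decomposition \eqref{lambdadecomp} from Proposition \ref{rangevlambda} or simply use that $D_\lambda$ restricted to each block is a bounded self-adjoint multiplication with $D_\lambda^2=(|n|+k)^2$, which is all your argument requires. Second, your worry about uniform control of the Lipschitz norms of $p_{n,k}$ is both unnecessary and unachievable: $\|[D_{\mathcal{V},s},\pi_\tau(p_{n,k})]\|$ grows like $k^{s}$ (the entries are cylinder functions of depth $k+1$), so there is no uniform bound --- but none is needed, because $D_\lambda$ is literally a scalar (or a two-valued even multiplication) on each block, $\gamma$ commutes with $\pi_\tau(p_{n,k})$ and anticommutes with $D_{\mathcal{V},s}$, so the cross term $\{D_\lambda\otimes\gamma,\,1\otimes_\nabla D_{\mathcal{V},s}\}$ vanishes identically on the core rather than merely being bounded. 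Third, the eigenvalue lower bound $\psi_\lambda(n,k)^2+|v|^{2s}$ is not literally valid at small $|v|$ inside a block (the compression $(pDp)^2$ can vanish there); the correct statement is that $D_{\lambda,\tau,s}^2\geq (|n|+k)^2$ on the $(n,k)$-block, which sends the resolvent norms of the blocks to zero, while within each fixed block $pD_{\mathcal{V},s}p$ has compact resolvent by the standard argument for compressions by projections with bounded commutators. Finally, your closing remark tying the $s=1$ borderline to the boundedness of $\mathfrak{a}_\lambda$ in Lemma \ref{compmatrkfak} is a non sequitur: $\mathfrak{a}_\lambda$ concerns $[D_\lambda,S_i]$ and does not involve $s$, and Theorem \ref{selfcpt} holds for all $s\in(0,1]$; the restriction $s<1$ only enters later, in Theorem \ref{comegaandbp}.
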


\begin{proof}
The unbounded $KK$-cycle $(\mathpzc{E}^\Omega_A,D_{\lambda})$ admits the compatible Lipschitz structure $(\mathcal{E}^\Omega_A,D_{\lambda},\nabla)$ (described above) associated with a Bellissard-Pearson spectral triple $(\pi_\tau,\ell^2(\mathcal{V}_A,\C^2),D_{\mathcal{V},s})$. Therefore, the operator $1\otimes_{\nabla}D_{\mathcal{V},s}$ is essentially selfadjoint by \cite[Theorem 2.30]{BMS}. Since $(D_{\lambda}\pm i)^{-1}\in\mathbb{K}(\mathcal{E}^\Omega_A)$, 
\[\textnormal{im}(D_{\lambda}\otimes \gamma \pm i)^{-1}(1\otimes_{\nabla}D_{\mathcal{V},s}\pm i)^{-1}=\textnormal{im}(1\otimes_{\nabla}D_{\mathcal{V},s}\pm i)^{-1}(D_{\lambda}\otimes \gamma \pm i)^{-1},\]
and $D_{\lambda}\otimes \gamma$ and $1\otimes_{\nabla}D_{\mathcal{V},s}$ anticommute on this subspace by the proof of \cite[Theorem 2.35]{BMS}. From \cite[Theorem 2.33]{BMS}, and the discussion in \cite[Example 2.39]{BMS}, it follows that $D_{\lambda,\tau,s}$ is selfadjoint on the intersection of the domains of $D_{\lambda}\otimes \gamma$ and $1\otimes_{\nabla}D_{\mathcal{V},s}$. The products of the resolvents $(1\otimes_{\nabla}D_{\mathcal{V},s}\pm i)^{-1}$ and $(D_{\lambda}\otimes \gamma \pm i)^{-1}$ are compact by construction; hence by \cite[Lemma 6.3.2]{Mes}, the resolvent of the sum is compact as well.
\end{proof}

\begin{remark} 
In this section and Theorem \ref{selfcpt}, contrary to the constructions in \cite{BMS, KaLe, Mes}, we have not discussed any left module structure for a dense subalgebra of $O_{A}$ on $\mathcal{E}^{\Omega}_{A}$. The existence of a left module structure as in \cite{BMS, KaLe, Mes} would imply that the product operator has bounded commutators with the dense subalgebra of $O_{A}$, and thus represents the Kasparov product of the unbounded modules involved. In view of not having a well behaved left module structure, we cannot conclude bounded commutators with the left action of the dense subalgebra of $O_A$ from Theorem \ref{selfcpt}. Due to the lack of bounded commutators, we are required to use the broader setting of $\epsilon$-unbounded Fredholm modules in order to identify this operator as the Kasparov product.
\end{remark}

\subsection{A family of $\epsilon$-unbounded Fredholm modules}
\label{Kaspprod} 

We now proceed to show that $(\He(\tau),D_{\lambda,\tau,s})$ constitutes an $\epsilon$-unbounded Fredholm module representing the Kasparov product
\[\begin{split} KK_{1}(O_{A},C(\Omega_{A}))\times K^{0}(C(\Omega_{A}))&\rightarrow K^{1}(O_{A})\\
[D_{\lambda}]\times [BP_{s}(\tau)]&\mapsto [D_{\lambda}]\otimes_{C(\Omega_{A})} [BP_{s}(\tau)].\end{split}\]
The classes $[D_{\lambda}]\in KK_1(O_A,C(\Omega_A))$ are described in Subsection \ref{definingoacoa}, and $[BP_{s}(\tau)]\in K^0(C(\Omega_A))=KK_0(C(\Omega_A),\C)$ are the classes associated with the logarithmic Bellissard-Pearson spectral triples, with $s<1$, from Section \ref{sectionbptriples}. The reader is referred to the appendix for the notion of $\epsilon$-unbounded Fredholm modules.

\begin{lem}
\label{nkpos} 
Let $k+n>0$ and $\mu$ be a nonempty word starting in $\mu_1$. Then
\begin{enumerate}\item $S_{i}\chi^{k}_{n,\mu}=A_{i,\mu_{1}}\chi^{k}_{n+1,i\mu}$;
\item $\left(\chi^{k}_{n,\mu}\right)^{*}S_{i}=\delta_{i,\mu_{1}}\left(\chi^{k}_{n-1,\sigma_\mathcal{V}(\mu)}\right)^{*}$.
\end{enumerate}
\end{lem}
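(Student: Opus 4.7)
The plan is to unwind the definitions in the groupoid picture and compute both convolutions directly. Recall that under the isomorphism $O_A \cong C^*_r(\mathcal{G}_A)$ of Theorem \ref{oaandcga}, the generator $S_i$ corresponds to $\chi_{X_i}$ where $X_i=\{(x,1,\sigma(x)): x\in C_i\}$, and the convolution product is
\[f*g(\eta)=\sum_{\xi\in r^{-1}(\eta)}f(\xi)g(\xi^{-1}\eta).\]
Thus $S_i*\chi^k_{n,\mu}(x,m,y)\neq 0$ only for the unique $\xi\in r^{-1}((x,m,y))$ of the form $\xi=(x,1,\sigma(x))$ with $x\in C_i$; the contribution is then $\chi^k_{n,\mu}(\sigma(x),m-1,y)$.

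For statement (1), the non-vanishing conditions become: $m-1=n$, $\sigma(x)\in C_\mu$, and $\kappa(\sigma(x),n,y)=k$. The conditions $x\in C_i$ and $\sigma(x)\in C_\mu$ together say that $x\in C_{i\mu}$; this set is nonempty precisely when $A_{i,\mu_1}=1$ (so one picks up the factor $A_{i,\mu_1}$). The essential calculation, and the main algebraic observation, is the identity
\[\kappa(x,n+1,y)=\min\{k':\sigma^{k'+n+1}(x)=\sigma^{k'}(y)\}=\min\{k':\sigma^{k'+n}(\sigma(x))=\sigma^{k'}(y)\}=\kappa(\sigma(x),n,y),\]
which lets us rewrite the condition as $\kappa(x,n+1,y)=k$. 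Finally, $|i\mu|=k+n+1=k+(n+1)$, confirming that the surviving indicator is exactly $\chi^k_{n+1,i\mu}(x,m,y)$, proving (1).

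For statement (2), use $(\chi^k_{n,\mu})^*(\xi)=\chi^k_{n,\mu}(\xi^{-1})$ together with the convolution formula: the only $\xi=(x,p,z)$ that contributes must satisfy simultaneously $\xi^{-1}\in X^{(k)}_{n,\mu}$ (so $p=-n$, $z\in C_\mu$, $\kappa(z,n,x)=k$) and $\xi^{-1}(x,m,y)=(z,m+n,y)\in X_i$ (so $m+n=1$, $z\in C_i$, $y=\sigma(z)$). The condition $z\in C_i\cap C_\mu$ is non-empty only when $i=\mu_1$, yielding the Kronecker delta; when it holds, $z\in C_\mu$ and $y=\sigma(z)\in C_{\sigma_\mathcal{V}(\mu)}$. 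Applying the same key identity in the form $\kappa(y,n-1,x)=\kappa(\sigma(z),n-1,x)=\kappa(z,n,x)=k$, and noting $|\sigma_\mathcal{V}(\mu)|=|\mu|-1=k+(n-1)$, identifies the result as $(\chi^k_{n-1,\sigma_\mathcal{V}(\mu)})^*(x,m,y)$.

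The only obstacle worth flagging is verifying the cocycle identity $\kappa(x,n+1,y)=\kappa(\sigma(x),n,y)$ and its analogue $\kappa(z,n,x)=\kappa(\sigma(z),n-1,x)$; both follow immediately from the definition of $\kappa$ in \eqref{kappa} by a shift of the exponent, but they are what makes the indices match on both sides. The hypothesis $k+n>0$ ensures that $|\mu|\geq 1$ so $\sigma_\mathcal{V}(\mu)$ is defined and (in (1)) that $k+n+1\geq 1$, so both sides indeed live in the appropriate pieces of the filtration.
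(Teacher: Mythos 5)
Your proof is correct and follows essentially the same route as the paper: both unwind the convolution product in the groupoid picture, observe that exactly one $\xi\in r^{-1}(x)$ can contribute, and match the resulting indicator conditions against the definition of $\chi^{k}_{n+1,i\mu}$ (resp. $\left(\chi^{k}_{n-1,\sigma_{\mathcal{V}}(\mu)}\right)^{*}$). One caveat: the ``key identity'' $\kappa(x,n+1,y)=\kappa(\sigma(x),n,y)$ is \emph{not} an unconditional consequence of \eqref{kappa} by shifting the exponent, because the two minima range over different sets of admissible $k'$ (one needs $k'+n+1\geq 0$, the other $k'+n\geq 0$); indeed the paper records in the proof of Proposition \ref{kappaprop} that the shift identity acquires a $-1$ correction when $c_A+\kappa=0$, and it is precisely this failure that produces the extra terms in the companion Lemma \ref{nknull}. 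In your setting the hypothesis $k+n>0$ (equivalently, $\mu$ nonempty) rules out the boundary case, so the identity does hold where you apply it, but the hypothesis is doing more work than ensuring $\sigma_{\mathcal{V}}(\mu)$ is defined --- it is what excludes the degenerate minimizer $k'=-n-1$, and that one-line check deserves to be made explicit.
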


\begin{proof} 
We compute
\[\begin{split}S_{i}\chi^{k}_{n,\mu}(x,m,y) & =\sum S_{i}(x,\ell, z)\chi^{k}_{n,\mu}(z,m-\ell,y) \\
&= \chi_{C_{i}}(x)\chi^{k}_{n,\mu}(\sigma(x),m-1,y),\end{split}\]
which is nonzero only if $m=n+1$, $x_{1}=i$, $\sigma(x)\in C_{\mu}$ and $\kappa(\sigma(x),n,y)=k$. This holds if and only if $x\in C_{i\mu}$ and $\kappa (x,n+1,y)=k$, proving 1.). For 2.) we compute again
\[\begin{split}\left(\chi^{k}_{n,\mu}\right)^{*}S_{i}(x,m,y)&=\sum \left(\chi^{k}_{n,\mu}\right)^{*}(x,\ell,z)S_{i}(z,m-\ell,y)\\
&=\sum\chi^{k}_{n,\mu}(z,-\ell,x)S_{i}(z,m-\ell,y)\\
&=A_{i,y_{1}}\chi^{k}_{n,\mu}(iy,1-m,x),\end{split}\]
and this is nonzero only if $m=-(n-1)$, $\mu_{1}=i$, $y\in C_{\sigma_\mathcal{V}(\mu)}$ and $\kappa(iy,n,x)=k$. This holds only if $\kappa(y, n-1, x)=k$, proving 2.)
\end{proof}

\begin{lem}
\label{nknull}
Let $k\geq 0$ and $i=1,\ldots, N$. Then
\begin{enumerate}
\item  $\left(\chi^{k}_{-k,\circ}\right)^{*}S_{i}=(\chi_{C_{i}}\circ\sigma^{k})*\left(\chi^{k+1}_{-k-1,\cemptyset}\right)^{*}$;
\item $S_{i}\chi^{k}_{-k,\cemptyset}=\chi^{k}_{-k+1,i}+\chi_{C_{i}}*\chi^{k-1}_{-k+1,\cemptyset}$;
\item $\chi_{C_{i}}*\chi^{k}_{-k,\cemptyset}=\chi^{k}_{-k,\cemptyset}*\left(\chi_{C_{i}}\circ\sigma^{k}\right)$.
\end{enumerate}
\end{lem}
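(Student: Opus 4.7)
The plan is to verify all three identities by direct computation at a test point $(x,m,y) \in \mathcal{G}_A$, in the same spirit as Lemma \ref{nkpos}. The setup requires first making explicit what the functions $\chi^{k}_{-k,\circ}$ and $\chi^{k}_{-k+1,i}$ are characteristic functions of. From the definition of $\kappa$ in \eqref{kappa} and the cocycle constraint $n+k \geq 0$, one checks that
\[
X^{(k)}_{-k,\circ} = \{(\sigma^{k}(y), -k, y) : y \in \Omega_{A}\},
\]
and similarly
\[
X^{(k)}_{-k+1,i} = \{(x,-k+1,y) : x\in C_i,\ \sigma(x) = \sigma^{k}(y),\ x \neq \sigma^{k-1}(y)\}
\]
(the last inequality encodes minimality of $\kappa$). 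Once these descriptions are in hand, each computation is a short manipulation using three elementary observations: that the convolution formula \eqref{theconvolutiononccg} collapses whenever one factor is supported on the unit space (giving $g*f(\eta) = g(r(\eta))f(\eta)$ for $g\in C(\Omega_A)$, and symmetrically on the right); that $S_i$ corresponds to $\chi_{X_i}$ with $X_i$ as in \eqref{xi}; and that $f^{*}(\xi) = \overline{f(\xi^{-1})}$ flips the sign of the cocycle.

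First I would handle identity (3), which is the easiest. Since $\chi_{C_i}$ sits on the unit space, $\chi_{C_i}*\chi^{k}_{-k,\circ}(x,m,y) = \chi_{C_i}(x)\chi^{k}_{-k,\circ}(x,m,y)$, and this is nonzero precisely when $x\in C_i$, $m=-k$, $x=\sigma^{k}(y)$. On the other hand $\chi^{k}_{-k,\circ}*(\chi_{C_i}\circ\sigma^{k})(x,m,y) = \chi^{k}_{-k,\circ}(x,m,y)\cdot \chi_{C_i}(\sigma^{k}(y))$, and substituting $x=\sigma^{k}(y)$ gives the same support. For identity (1), I would use $f^{*}(\xi)=\overline{f(\xi^{-1})}$ to see that $(\chi^{k}_{-k,\circ})^{*}$ is the characteristic function of $\{(y,k,\sigma^{k}(y)) : y\in\Omega_A\}$ (after relabelling), and then a single summand in $(\chi^{k}_{-k,\circ})^{*}S_{i}(x,m,y)$ survives, forcing $m=k+1$ and $\sigma^{k}(x)\in C_{i}$, which matches the RHS by the same unit-space trick applied to $\chi_{C_i}\circ\sigma^{k}$.

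The main obstacle is identity (2), which is the only one with two terms on the right; the sum reflects precisely the dichotomy in the minimality condition for $\kappa$. Expanding, $S_{i}\chi^{k}_{-k,\circ}(x,m,y) = \chi_{C_i}(x)\chi^{k}_{-k,\circ}(\sigma(x),m-1,y)$, which is nonzero iff $x \in C_i$, $m = -k+1$, and $\sigma(x) = \sigma^{k}(y)$. The last equation splits into two disjoint cases: either $x = \sigma^{k-1}(y)$, in which case $(x,-k+1,y) \in X^{(k-1)}_{-k+1,\circ}$ and the element equals $\chi_{C_{i}}*\chi^{k-1}_{-k+1,\circ}$ evaluated at $(x,m,y)$; or $x\neq \sigma^{k-1}(y)$, in which case by the minimality characterisation above $(x,-k+1,y)\in X^{(k)}_{-k+1,i}$ and the element equals $\chi^{k}_{-k+1,i}(x,m,y)$. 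Summing over the partition gives the claimed identity, with the convention $\chi^{-1}_{1,\circ} := 0$ handling the edge case $k=0$. The remaining technical point is that the convolution $\chi_{C_{i}}*\chi^{k-1}_{-k+1,\circ}(x,m,y) = \chi_{C_i}(x)\chi^{k-1}_{-k+1,\circ}(x,m,y)$ again collapses via the unit-space observation, so no nontrivial sums appear anywhere in the verification.
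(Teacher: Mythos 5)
Your proposal is correct and follows essentially the same route as the paper: a direct pointwise evaluation of each convolution, with the support descriptions of $\chi^{k}_{-k,\circ}$ and $\chi^{k}_{-k+1,i}$ and the minimality dichotomy for $\kappa$ (namely $x=\sigma^{k-1}(y)$ versus $x\neq\sigma^{k-1}(y)$) producing exactly the two-term splitting in identity (2). The unit-space collapse of the convolution and the case analysis you describe are precisely the computations carried out in the paper's proof.
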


\begin{proof} 
For 1.) compute
\[\begin{split}&\left(\chi^{k}_{-k,\cemptyset}\right)^{*}S_{i}(x,m,y)  = \sum \left(\chi^{k}_{-k,\cemptyset}\right)^{*}(x,\ell, z)S_{i}(z,m-\ell,y) \\
&=\sum \chi^{k}_{-k,\cemptyset}(z,-\ell,x)S_{i}(z,m-\ell,y)\\
&=A_{iy_{1}}\chi^{k}_{-k,\cemptyset}(iy, 1-m, x)\\
&=\left\{\begin{matrix} 1 & \textnormal{when } m=k+1, \;A_{iy_{1}}=1,\; \kappa(iy,-k,x)=k\\ 0 & \textnormal{otherwise }\end{matrix}\right. \\
&=\left\{\begin{matrix} 1 & \textnormal{when } m=k+1, \;A_{iy_{1}}=1,\; \kappa(y,-(k+1),x)=k+1,\;  \sigma^{k}(x)\in C_{i} \\ 0 & \textnormal{otherwise }\end{matrix}\right.\\
&=\chi_{C_{i}}(\sigma^{k}(x))\chi^{k+1}_{-k-1,\cemptyset}(y,-m,x) =\chi_{C_{i}}(\sigma^{k}(x))\left(\chi^{k+1}_{-k-1,\cemptyset}\right)^{*}(x,m,y).
\end{split}\]
For 2.)
\[\begin{split}S_{i}\chi^{k}_{-k,\cemptyset}&(x,m,y)=\sum S_{i}(x,\ell,z)\chi^{k}_{-k,\circ}(z,m-\ell,y)\\
&=\delta_{i,x_{1}}\chi^{k}_{-k,\cemptyset}(\sigma(x),m-1,y)\\
&=\left\{\begin{matrix}1 & \textnormal{when } x\in C_{i}, \;m=-(k-1),\; \kappa(\sigma(x),-k,y)=k \\ 0 & \textnormal{otherwise}\end{matrix}\right. \\
&=\left\{\begin{matrix}1 & \textnormal{when } x\in C_{i}, \;m=-(k-1), \;\kappa(x,-(k-1),y)\in\{k,k-1\} \\ 0 & \textnormal{otherwise}\end{matrix}\right.\\
&=( \chi^{k}_{-(k-1),i}+\chi_{C_{i}}*\chi^{k-1}_{-(k-1),\cemptyset } )(x,m,y). \end{split}\]

Also 3.) is verified by direct computation.
\[\begin{split}\chi_{C_{i}}*\chi^{k}_{-k,\cemptyset}(x,m,y) & =\chi_{C_{i}}(x)\chi^{k}_{-k,\cemptyset}(x,m,y) \\
&=\left\{\begin{matrix} 1 & \textnormal{when } x\in C_{i}, \;m=-k, \;\kappa(x,-k,y)=k \\ 0 & \textnormal{otherwise}\end{matrix}\right.\\
&=\left\{\begin{matrix} 1 & \textnormal{when } x\in C_{i}, \;m=-k, \;x=\sigma^{k}(y) \\ 0 & \textnormal{otherwise}\end{matrix}\right.\\
&=\left\{\begin{matrix} 1 & \textnormal{when } \sigma^{k}(y)\in C_{i}, \;m=-k, \;  \kappa(x,-k,y)=k\\ 0 & \textnormal{otherwise}\end{matrix}\right.\\
&=\chi^{k}_{-k,\cemptyset}*(\chi_{C_{i}}\circ\sigma^{k})(x,m,y).
\end{split}\]
\end{proof}

\begin{prop} 
\label{Scomm} 
Let $(\pi_{\tau}, \ell^{2}(\mathcal{V}_{A},\C^{2}), D_{\mathcal{V},s})$ be a logarithmic Bellissard-Pearson spectral triple. The operators $S_{i}$ preserve the algebraic tensor product
\[\left(\bigoplus_{n,k}^{alg}\mathcal{E}^{k}_{n}\right)\otimes_{C(\Omega_{A})}^{alg} C_{c}(\mathcal{V}_{A},\C^{2}),\]
 which is a core for $D_{\lambda}\otimes\gamma+1\otimes_{\nabla}D_{\mathcal{V},s}$ and $[1\otimes_{\nabla}D_{\mathcal{V},s},S_{i}]$ is given on an elementary tensor $e\otimes (\phi_+ \; \phi_-)^T$ by the sum
\begin{align}
\label{redcomm}
\left[1\otimes_{\nabla}D_{\mathcal{V},s},S_{i}\right]&e\otimes\begin{pmatrix}\phi_{+} \\  \phi_{-}\end{pmatrix}\\
\nonumber&= -\sum_{k=0}^{\infty}\chi^{k}_{-k,\cemptyset}\otimes 
\left[D_{\mathcal{V},s},\pi_{\tau}\left(\chi_{C_i}\circ\sigma^{k}\right)\right]
\pi_{\tau}\left(\rho(\chi^{(k+1)*}_{-k-1,\cemptyset}e)\right)
\begin{pmatrix}\phi_{+} \\  \phi_{-}\end{pmatrix}.
\end{align}
The operator $[D_{\mathcal{V},s},\pi_{\tau}(\chi_{C_i}\circ\sigma^{k})]$ on $\ell^2(\mathcal{V}_A,\C^2)$ is given by multiplication by a compactly supported matrix valued function on $\mathcal{V}_A$ satisfying the estimate
\begin{equation}
\label{ks}
\left\|[D_{\mathcal{V},s},\pi_{\tau}(\chi_{C_i}\circ\sigma^{k})]\right\|_{\Bo(\ell^2(\mathcal{V}_A,\C^2))}\leq k^{s}.
\end{equation}
\end{prop}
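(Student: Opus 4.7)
\textbf{Plan for the proof of Proposition \ref{Scomm}.}

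I would begin with the preservation statement. By Proposition \ref{rangevlambda} and the construction of $\mathcal{E}^{\Omega}_A$ as an internal direct sum of the finitely generated projective modules $\mathcal{E}^k_n$, the algebraic direct sum $\bigoplus^{alg}_{n,k}\mathcal{E}^k_n$ is generated as a right $C(\Omega_{A})$-module by the frame vectors $\chi^k_{n,\mu}$, and $C_c(\mathcal{V}_A,\C^2)$ is generated as a $\C$-vector space by finite sums of $\delta_\nu\otimes e_\pm$. Lemma \ref{nkpos} shows that for $n+k>0$ the left action of $S_i$ sends $\chi^k_{n,\mu}$ to a scalar multiple of $\chi^k_{n+1,i\mu}$, which again lies in the algebraic direct sum. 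Lemma \ref{nknull} parts 2 and 3 together give the decomposition
\[S_i\chi^k_{-k,\cemptyset}=\chi^k_{-k+1,i}+\chi^{k-1}_{-k+1,\cemptyset}\cdot\bigl(\chi_{C_i}\circ\sigma^{k-1}\bigr),\]
again a (finite) element of the algebraic direct sum. Since the representation is by bounded operators commuting with the $C(\Omega_A)$-action, the preservation of the algebraic tensor product follows. That this is a core for $D_\lambda\otimes\gamma+1\otimes_\nabla D_{\mathcal{V},s}$ uses the fact that $D_\lambda$ acts as scalar multiplication by $\psi_\lambda(n,k)$ on each $\mathcal{E}^k_n$ (a finitely generated projective $C(\Omega_A)$-module) together with the standard argument that $C_c(\mathcal{V}_A,\C^2)$ is a core for $D_{\mathcal{V},s}$; cf. the construction in Theorem \ref{selfcpt}.

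For the commutator formula I would compute $[1\otimes_\nabla D_{\mathcal{V},s},S_i](e\otimes\phi) = (\nabla(S_ie)-S_i\nabla(e))\cdot\phi$ on homogeneous generators $e=\chi^k_{n,\mu}$. The key observation is that the projection $\rho((\chi^k_{n,\mu})^*\chi^k_{n,\mu})$ defining the Grassmann connection on the frame vector $\chi^k_{n,\mu}$ depends only on the \emph{last letter} $\mu_{n+k}$; since by Lemma \ref{nkpos} the left $S_i$-action sends $\chi^k_{n,\mu}\mapsto A_{i,\mu_1}\chi^k_{n+1,i\mu}$, which has the same last letter, one obtains $\nabla(S_i\chi^k_{n,\mu})=S_i\nabla(\chi^k_{n,\mu})$ whenever $n+k>0$; hence the commutator vanishes on interior modules. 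For boundary generators $\chi^k_{-k,\cemptyset}$, one has $\nabla(\chi^k_{-k,\cemptyset})=0$ since $\rho((\chi^k_{-k,\cemptyset})^*\chi^k_{-k,\cemptyset})=1\in C(\Omega_A)$. Applying $\nabla$ to $S_i\chi^k_{-k,\cemptyset}$, using Lemma \ref{nknull} and the Leibniz rule $\nabla(ea)=\nabla(e)a+e\otimes[D_{\mathcal{V},s},\pi_\tau(a)]$, the only surviving unbounded contribution is
\[\chi^{k-1}_{-k+1,\cemptyset}\otimes\bigl[D_{\mathcal{V},s},\pi_\tau(\chi_{C_i}\circ\sigma^{k-1})\bigr].\]
Re-indexing $k\mapsto k+1$ and observing that Lemma \ref{nknull} part 1 yields $\rho((\chi^{(k+1)}_{-k-1,\cemptyset})^*S_ie)=(\chi_{C_i}\circ\sigma^k)\rho((\chi^{(k+1)}_{-k-1,\cemptyset})^*e)$ (this is precisely how the factor $\pi_\tau(\rho(\chi^{(k+1)*}_{-k-1,\cemptyset}e))$ arises), and summing over all boundary components, gives exactly the right-hand side of \eqref{redcomm}.

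For the support and norm estimate \eqref{ks}, note that $\chi_{C_i}\circ\sigma^k:\Omega_A\to\{0,1\}$ depends on its argument only through the $(k+1)$-th letter. Evaluating on $\delta_\mu\otimes e_\pm$ with $|\mu|\geq k+1$, the cylinder condition gives $\tau_\pm(\mu)\in C_\mu$ so both $\tau_+(\mu)$ and $\tau_-(\mu)$ have the same first $|\mu|\geq k+1$ letters, forcing $(\chi_{C_i}\circ\sigma^k)(\tau_+(\mu))=(\chi_{C_i}\circ\sigma^k)(\tau_-(\mu))$ and hence $[D_{\mathcal{V},s},\pi_\tau(\chi_{C_i}\circ\sigma^k)]\delta_\mu\otimes e_\pm=0$. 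The operator is therefore compactly supported on $\{\delta_\mu:|\mu|\leq k\}$ and its norm on that subspace is bounded by $\sup_{|\mu|\leq k}|\mu|^s\leq k^s$, which is \eqref{ks}.

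The main obstacle is the careful algebraic bookkeeping in Paragraph 3: tracking how the two summands produced by Lemma \ref{nknull} combine, verifying that the ``interior'' contribution $\chi^k_{-k+1,i}\otimes[D_{\mathcal{V},s},\pi_\tau(\rho((\chi^k_{-k+1,i})^*\chi^k_{-k+1,i}))]$ contributes only bounded corrections (absorbed into the $\epsilon$-unbounded framework of the appendix) via the support property of $\rho((\chi^k_{-k+1,i})^*\chi^k_{-k+1,i})$ and the relation $\chi^k_{-k+1,i}\cdot\rho((\chi^k_{-k+1,i})^*\chi^k_{-k+1,i})=\chi^k_{-k+1,i}$, while isolating the genuinely unbounded part that yields the formula \eqref{redcomm}.
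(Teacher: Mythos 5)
Your overall strategy---organizing the computation around the Leibniz rule for the Grassmann connection, observing that the commutator vanishes on the ``interior'' modules $\mathcal{E}^k_n$ with $n+k>0$ because the local projections $\rho\bigl((\chi^{k}_{n,\mu})^{*}\chi^{k}_{n,\mu}\bigr)$ depend only on $k$ and the last letter of $\mu$ (which $S_i$ preserves), and then isolating the boundary contribution---is a genuinely different and more conceptual organization than the paper's proof, which expands $[S_i,1\otimes_\nabla D_{\mathcal{V},s}]$ term by term over the whole frame and exhibits the cancellations explicitly in \eqref{expandedcomm1}--\eqref{expandedcomm3}. The interior vanishing you claim is correct and I checked it; your treatment of the core and of the estimate \eqref{ks} coincides with the paper's.

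There is, however, a concrete gap in your final paragraph. You propose to treat the term involving $\chi^{k}_{-k+1,i}$ (produced by Lemma \ref{nknull}~2.) as a ``bounded correction absorbed into the $\epsilon$-unbounded framework.'' This cannot be right for two reasons. First, \eqref{redcomm} is asserted as an exact identity for the commutator, so no terms may be discarded up to boundedness; if such a residual term were genuinely present, the Proposition as stated would be false. Second, the putative correction $\chi^{k}_{-k+1,i}\otimes[D_{\mathcal{V},s},\pi_\tau(\rho((\chi^{k}_{-k+1,i})^{*}\chi^{k}_{-k+1,i}))]$ is \emph{not} uniformly bounded in $k$: by Lemma \ref{lipproj} the projection is a sum of functions of the form $(\sigma^{k-1})^{*}\chi_{C_{lj}}$, whose commutators with $D_{\mathcal{V},s}$ have norm of order $k^{s}$, i.e.\ exactly the size of the terms you keep. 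The correct resolution is that these terms cancel \emph{exactly}: by Lemma \ref{nkpos}~2.\ the frame coefficient of $S_i e$ along $\chi^{k}_{-k+1,i}$ is $\rho\bigl((\chi^{k}_{-k+1,i})^{*}S_i e\bigr)=\rho\bigl((\chi^{k}_{-k,\cemptyset})^{*}e\bigr)$ with \emph{no} projection factor, and this matches the coefficient occurring in $S_i(1\otimes_\nabla D_{\mathcal{V},s})(e\otimes\phi)$ on the nose --- this is precisely the cancellation between the $n+k=1$ part of \eqref{expandedcomm2} and \eqref{expandedcomm3} in the paper. What survives is only the $\chi^{k-1}_{-k+1,\cemptyset}$-component, and the commutator $[D_{\mathcal{V},s},\pi_\tau(\chi_{C_i}\circ\sigma^{k-1})]$ then arises from the discrepancy between the diagonal action $\pi_+\oplus\pi_-$ (used to move $\chi_{C_i}\circ\sigma^{k-1}$ across the balanced tensor product) and the flipped action appearing in $1\otimes_\nabla D_{\mathcal{V},s}$ --- the paper's ``exchanging $\pi_+$ and $\pi_-$ at the expense of a commutator.'' With that repair your argument closes; also note your citation of Lemma \ref{nknull}~1.\ should read $\rho\bigl((\chi^{k}_{-k,\cemptyset})^{*}S_ie\bigr)=(\chi_{C_i}\circ\sigma^{k})\,\rho\bigl((\chi^{k+1}_{-k-1,\cemptyset})^{*}e\bigr)$, with the shorter frame on the left.
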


\begin{proof}
Since $S_{i}(\mathcal{E}^{k}_{n})\subseteq  \mathcal{E}^{k-1}_{n+1}\oplus \mathcal{E}^{k}_{n+1}$, the operator $S_{i}$ preserves the algebraic direct sum of the $\mathcal{E}^{n}_{k}$ and hence a common core for $D_{\lambda}\otimes 1$ and $1\otimes_{\nabla}D_{\mathcal{V},s}$. The  commutator $[S_{i},1\otimes_{\nabla}D_{\mathcal{V},s}]$ is computed as
\begin{align}
\label{commutinpropsixtwo}
&[S_{i}, 1\otimes_{\nabla}D_{\mathcal{V},s}]\left(e\otimes
\begin{pmatrix}
\phi_{+} \\ 
\vspace{-3mm}\\ 
\phi_{-}\end{pmatrix}
\right)(v) \\
& = \sum_{n,k,\mu} S_{i}\chi^{k}_{n,\mu}\otimes
\begin{pmatrix}
|v|^{s}\pi_{-}\left(\rho(\chi^{k*}_{n,\mu}e)\right)\phi_{-} \\
\vspace{-3mm}\\
\nonumber
|v|^{s}\pi_{+}\left(\rho(\chi^{k*}_{n,\mu}e)\right) \phi_{+}
\end{pmatrix}(v)\\
\nonumber&\qquad\qquad-\chi^{k}_{n,\mu}\otimes 
\begin{pmatrix}
|v|^{s}\pi_{-}\left(\rho(\chi^{k*}_{n,\mu}S_{i}e)\right)\phi_{-} \\
\vspace{-3mm}\\
|v|^{s}\pi_{+}\left(\rho(\chi^{k*}_{n,\mu}S_{i}e)\right) \phi_{+}
\end{pmatrix}(v). 
\end{align}
This expression can by Lemma \ref{nkpos} (for $n+k>0$) and by Lemma \ref{nknull} (for $n+k=0$) be written as:
\[\begin{split}
=\sum_{n+k>0}A_{i,\mu_{1}}&\chi^{k}_{n+1,i\mu}\otimes 
\begin{pmatrix}
|v|^{s}\pi_{-}\left(\rho(\chi^{k*}_{n,\mu}e)\right)\phi_{-} \\
\vspace{-3mm}\\
|v|^{s}\pi_{+}\left(\rho(\chi^{k*}_{n,\mu}e)\right) \phi_{+}
\end{pmatrix}(v)\\
&-\chi^{k}_{n,\mu}\otimes 
\begin{pmatrix}
|v|^{s}\pi_{-}\left(\rho(\delta_{i,\mu_{1}}\chi^{k*}_{n-1,\sigma(\mu)}e)\right)\phi_{-} \\
\vspace{-3mm}\\
|v|^{s}\pi_{+}\left(\rho(\delta_{i,\mu_{1}}\chi^{k*}_{n-1,\sigma(\mu)})e)\right) \phi_{+}
\end{pmatrix}(v)
\\
\\
&+\sum_{k=0}^{\infty}\left(\chi^{k}_{-k+1,i}+\chi_{C_i}*\chi^{k-1}_{-k+1,\cemptyset})\right)\otimes
\begin{pmatrix}
|v|^{s}\pi_{-}\left(\rho(\chi^{k*}_{-k,\cemptyset}e)\right)\phi_{-} \\ 
\vspace{-3mm}\\
|v|^{s}\pi_{+}\left(\rho(\chi^{k*}_{-k,\cemptyset}e)\right) \phi_{+}
\end{pmatrix}(v)\\
\\
&\quad-\sum_{k=0}^{\infty}\chi^{k}_{-k,\cemptyset}\otimes 
\begin{pmatrix}
|v|^{s}\pi_{-}\left(\rho((\chi_{C_{i}}\circ \sigma^{k})\chi^{(k+1)*}_{-k-1,\cemptyset}e)\right)\phi_{-} \\ 
\vspace{-3mm}\\
|v|^{s}\pi_{+}\left(\rho((\chi_{C_{i}}\circ \sigma^{k})\chi^{(k+1)*}_{-k-1,\cemptyset}e)\right) \phi_{+}
\end{pmatrix}(v)
\end{split}\]
We regroup these expressions as follows.
\begin{equation}\label{expandedcomm1}\sum_{n+k>0}A_{i,\mu_{1}}\chi^{k}_{n+1,i\mu}\otimes 
\begin{pmatrix}
|v|^{s}\pi_{-}\left(\rho(\chi^{k*}_{n,\mu}e)\right)\phi_{-} \\
\vspace{-3mm}\\
|v|^{s}\pi_{+}\left(\rho(\chi^{k*}_{n,\mu}e)\right) \phi_{+}
\end{pmatrix}(v)\qquad\qquad\end{equation}
\begin{equation}\label{expandedcomm2} \qquad\qquad -
\chi^{k}_{n,\mu}\otimes 
\begin{pmatrix}
|v|^{s}\pi_{-}\left(\rho(\delta_{i,\mu_{1}}\chi^{k*}_{n-1,\sigma(\mu)}e)\right)\phi_{-} \\
\vspace{-3mm}\\
|v|^{s}\pi_{+}\left(\rho(\delta_{i,\mu_{1}}\chi^{k*}_{n-1,\sigma(\mu)}e)\right) \phi_{+}
\end{pmatrix}(v)
\end{equation}
\begin{equation}\label{expandedcomm3} \qquad\qquad\qquad\qquad\qquad\qquad
+\sum_{k=0}^{\infty}\chi^{k}_{-k+1,i}\otimes
\begin{pmatrix}
|v|^{s}\pi_{-}\left(\rho(\chi^{k*}_{-k,\cemptyset}e)\right)\phi_{-} \\ 
\vspace{-3mm}\\
|v|^{s}\pi_{+}\left(\rho(\chi^{k*}_{-k,\cemptyset}e)\right) \phi_{+}
\end{pmatrix}(v)\end{equation}
\begin{align}
\label{rest}
+\sum_{k=0}^{\infty}\chi_{C_{i}}*\chi^{k}_{-k,\cemptyset}\otimes &\begin{pmatrix}
|v|^{s}\pi_{-}\left(\rho(\chi^{(k+1)*}_{-k-1,\cemptyset}e)\right)\phi_{-} \\ 
\vspace{-3mm}\\
|v|^{s}\pi_{+}\left(\rho(\chi^{(k+1)*}_{-k-1,\cemptyset}e)\right) \phi_{+}
\end{pmatrix}(v)\\
\nonumber
&-\chi^{k}_{-k,\cemptyset}\otimes
\begin{pmatrix}
|v|^{s}\pi_{-}\left(\rho((\chi_{C_{i}}\circ \sigma^{k})\chi^{(k+1)*}_{-k-1,\cemptyset}e)\right)\phi_{-} \\ 
\vspace{-3mm}\\
|v|^{s}\pi_{+}\left(\rho((\chi_{C_{i}}\circ \sigma^{k})\chi^{(k+1)*}_{-k-1,\cemptyset}e)\right) \phi_{+}
\end{pmatrix}(v).\end{align}
We claim that \eqref{expandedcomm1}, \eqref{expandedcomm2} and \eqref{expandedcomm3} add up to $0$. To see this, consider a nonempty word  $\mu$ with $A_{i\mu_{1}}=1$. Each nonzero term in \eqref{expandedcomm1} is cancelled by a nonzero term in \eqref{expandedcomm2}. All of \eqref{expandedcomm1} is cancelled in this way. What remains in \eqref{expandedcomm2} are the terms with $n+k=|\mu |=1$ and $\delta_{i,\mu_{1}}=1$. The remaining terms in \eqref{expandedcomm2} correspond to $\mu=i$ and $n=-k+1$. These are exactly the terms occuring in \eqref{expandedcomm3}, with the opposite sign. As such, the remainder of \eqref{expandedcomm2} is cancelled by \eqref{expandedcomm3}, as claimed, and the entire commutator in \eqref{commutinpropsixtwo} equals \eqref{rest}.

Subsequently, we handle \eqref{rest} by exchanging $\pi_{+}$ and $\pi_{-}$ at the expense of a commutator to obtain
\begin{align}
\nonumber
[S_{i}, 1\otimes_{\nabla}D_{\mathcal{V},s}]&\left(e\otimes
\begin{pmatrix}
\phi_{+} \\ 
\vspace{-3mm}\\ 
\phi_{-}\end{pmatrix}
\right)(v) \\
\label{rel}&=\sum_{k=0}^{\infty}\chi_{C_i}*\chi^{k}_{-k,\cemptyset}\otimes
\begin{pmatrix}
|v|^{s}\pi_{+}\left(\rho(\chi^{(k+1)*}_{-k-1,\cemptyset}e)\right)\phi_{-} \\ 
\vspace{-3mm}\\
|v|^{s}\pi_{-}\left(\rho(\chi^{(k+1)*}_{-k-1,\cemptyset}e)\right) \phi_{+}
\end{pmatrix}(v) \\
\nonumber
\\
\nonumber
&\qquad\qquad-\sum_{k=0}^{\infty}\chi^{k}_{-k,\cemptyset}\otimes 
\begin{pmatrix}
|v|^{s}\pi_{+}\left(\rho((\chi_{C_{i}}\circ \sigma^{k})\chi^{(k+1)*}_{-k-1,\cemptyset}e)\right)\phi_{-} \\
\vspace{-3mm}\\ 
|v|^{s}\pi_{-}\left(\rho((\chi_{C_{i}}\circ \sigma^{k})\chi^{(k+1)*}_{-k-1,\cemptyset}e)\right) \phi_{+}
\end{pmatrix}(v)\end{align}
\begin{align}
\label{Lip}
+\sum_{k=0}^{\infty}\chi_{C_i}*\chi^{k}_{-k,\cemptyset}\otimes &\left[D_{\mathcal{V},s},\pi_{\tau}\left(\rho(\chi^{(k+1)*}_{-k-1,\cemptyset}e)\right)\right]
\begin{pmatrix}\phi_{+} \\
\vspace{-3mm}\\
\phi_{-}
\end{pmatrix}(v)\\
\nonumber\\
\nonumber
&\quad-\sum_{k=0}^{\infty}\chi^{k}_{-k,\cemptyset}\otimes\left[D_{\mathcal{V},s},\pi_{\tau}\left(\rho((\chi_{C_{i}}\circ \sigma^{k})\chi^{(k+1)*}_{-k-1,\cemptyset}e)\right)\right] 
\begin{pmatrix}
\phi_{+} \\
\vspace{-3mm}\\ 
\phi_{-}\end{pmatrix}(v),
\end{align}
and to the term \eqref{rel} we apply Lemma \ref{nknull} 3.) to obtain
\begin{align*}
\sum_{k=0}^{\infty}\chi^{k}_{-k,\cemptyset}(\chi_{C_{i}}\circ\sigma^{k})\otimes&
\begin{pmatrix}
|v|^{s}\pi_{+}\left(\rho(\chi^{(k+1)*}_{-k-1,\cemptyset}e)\right)\phi_{-} \\ 
\vspace{-3mm}\\
|v|^{s}\pi_{-}\left(\rho(\chi^{(k+1)*}_{-k-1,\cemptyset}e)\right) \phi_{+}
\end{pmatrix}(v)\\
&-\sum_{k=0}^{\infty}\chi^{k}_{-k,\cemptyset}\otimes 
\begin{pmatrix}
|v|^{s}\pi_{+}\left(\rho((\chi_{C_{i}}\circ \sigma^{k})\chi^{(k+1)*}_{-k-1,\cemptyset}e)\right)\phi_{-} \\
\vspace{-3mm}\\
|v|^{s}\pi_{-}\left(\rho((\chi_{C_{i}}\circ \sigma^{k})\chi^{(k+1)*}_{-k-1,\cemptyset}e)\right) \phi_{+}
\end{pmatrix}(v)
\end{align*}
\begin{align*}
=\sum_{k=0}^{\infty}\chi^{k}_{-k,\cemptyset}\otimes&
\begin{pmatrix}
|v|^{s}\pi_{+}\left(\rho((\chi_{C_{i}}\circ\sigma^{k})\chi^{(k+1)*}_{-k-1,\cemptyset}e)\right)\phi_{-} \\ 
\vspace{-3mm}\\
|v|^{s}\pi_{-}\left(\rho((\chi_{C_{i}}\circ\sigma^{k})\chi^{(k+1)*}_{-k-1,\cemptyset}e)\right) \phi_{+}
\end{pmatrix}(v)\\
\\
&\qquad -\sum_{k=0}^{\infty}\chi^{k}_{-k,\cemptyset}\otimes 
\begin{pmatrix}|v|^{s}\pi_{+}\left(\rho((\chi_{C_{i}}\circ \sigma^{k})\chi^{(k+1)*}_{-k-1,\cemptyset}e)\right)\phi_{-} \\ 
\vspace{-3mm}\\
|v|^{s}\pi_{-}\left(\rho((\chi_{C_{i}}\circ \sigma^{k})\chi^{(k+1)*}_{-k-1,\cemptyset}e)\right) \phi_{+}
\end{pmatrix}(v)=0.
\end{align*}
The remaining term \eqref{Lip} further simplifies to \eqref{redcomm} using Lemma \ref{nknull} 3.) once more. 

The commutator $[D_{\mathcal{V},s},\pi_{\tau}(\chi_{C_i}\circ\sigma^{k})]$ appearing in \eqref{redcomm} vanishes whenever $|v|>k$ because in that case $\tau_{+}(v)_{k+1}=v_{k+1}=\tau_{-}(v)_{k+1}$ hence $\chi_{C_i}(\tau_{+}(v))=\chi_{C_i}(\tau_{-}(v))$. Here the subscript $k+1$ indicates the $(k+1)$-st letter. The estimate \eqref{ks} follows readily from the same observation. 
\end{proof}

Recall the notation $\He(\tau)$ and $D_{\lambda,\tau,s}$ from Theorem \ref{selfcpt}.

\begin{thm} 
\label{comegaandbp}
For $s\in (0,1)$ the pair $(\He(\tau), D_{\lambda,\tau,s})$ is a well defined $(1-s)$-unbounded Fredholm module on $O_A$ that represents the Kasparov product $[\mathpzc{E},D_{\lambda}]\otimes_{C(\Omega_{A})}[BP_{s}(\tau)]$. 
\end{thm}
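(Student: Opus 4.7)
The plan is to leverage Theorem \ref{selfcpt} for the spectral-theoretic input and Proposition \ref{Scomm} for the algebraic control of commutators, packaging them with the $\epsilon$-unbounded machinery from the appendix.

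First, I would recall from Theorem \ref{selfcpt} that $D_{\lambda,\tau,s}$ is selfadjoint with compact resolvent on $\He(\tau)$, so the only issue in calling $(\He(\tau),D_{\lambda,\tau,s})$ a $(1-s)$-unbounded Fredholm module is the behaviour of commutators with the $O_A$-action. Since $[D_\lambda\otimes\gamma,S_i]$ is bounded (Theorem \ref{unbdd2}), it suffices to estimate $[1\otimes_\nabla D_{\mathcal{V},s},S_i]$. Proposition \ref{Scomm} writes this commutator, on the algebraic core, as
\[
-\sum_{k=0}^{\infty}\chi^{k}_{-k,\cemptyset}\otimes
\left[D_{\mathcal{V},s},\pi_{\tau}\left(\chi_{C_i}\circ\sigma^{k}\right)\right]
\pi_{\tau}\left(\rho(\chi^{(k+1)*}_{-k-1,\cemptyset}e)\right),
\]
with $\|[D_{\mathcal{V},s},\pi_\tau(\chi_{C_i}\circ \sigma^k)]\|\leq k^s$. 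The key observation is that the tensor prefactor $\chi^{k}_{-k,\cemptyset}$ lives in the submodule $\mathpzc{E}^{k}_{-k}$, on which $D_\lambda$ acts as multiplication by a scalar of modulus $k$. Consequently the operator displayed above is of the form $\sum_k T_k$ where $T_k$ is supported on a spectral subspace of $|D_{\lambda,\tau,s}|$ of size comparable to $k$, with $\|T_k\|\lesssim k^s$. This gives the estimate
\[
\bigl\|[1\otimes_\nabla D_{\mathcal{V},s},S_i](1+D_{\lambda,\tau,s}^2)^{-(1-s)/2}\bigr\|<\infty,
\]
which is precisely the $(1-s)$-unbounded Fredholm module condition. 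Taking the adjoint gives the same estimate for $S_i^*$, and the $S_i$ generate a dense $*$-subalgebra of $O_A$, so the Lipschitz-type condition required in the appendix is satisfied.

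Next, to identify the $K$-homology class as the Kasparov product $[\mathpzc{E},D_\lambda]\otimes_{C(\Omega_A)}[BP_s(\tau)]$, I would verify the analogue of the Kucerovsky--Kaad--Lesch criterion in the $\epsilon$-unbounded setting (as recorded in the appendix). The three conditions to check are (i) the connection condition, (ii) the domain condition, and (iii) a positivity-modulo-bounded condition. The connection condition is automatic from the construction: $\nabla$ is by definition a $D_{\mathcal{V},s}$-connection on the Lipschitz module $\mathcal{E}^{\Omega}_A$ (Lemma \ref{lipresolv}), so on the algebraic tensor product $(\bigoplus^{alg}\mathcal{E}^k_n)\otimes^{alg}C_c(\mathcal{V}_A,\C^2)$ the commutator $[1\otimes_\nabla D_{\mathcal{V},s},e\otimes-]$ equals Grassmann-connection application, which is the content of the connection condition. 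The domain condition follows from the common core described in Proposition \ref{Scomm} and the fact that $[D_\lambda,\nabla]=0$ (Lemma \ref{lipresolv}), which ensures $D_\lambda\otimes\gamma$ and $1\otimes_\nabla D_{\mathcal{V},s}$ anticommute on the core, as already used in Theorem \ref{selfcpt}. Positivity-modulo-bounded reduces to the Grassmann construction: for any $e\in \mathcal{E}^\Omega_A$,
\[
\bigl\langle e\otimes \xi,\,[D_{\lambda,\tau,s},\pi_\tau(\rho(\langle e,\cdot\rangle))]\xi\bigr\rangle \geq -C\|e\|\,\|\xi\|^2,
\]
coming from the completely boundedness of the Grassmann connection, together with the estimate above showing the $S_i$-commutators are controlled by $|D_{\lambda,\tau,s}|^{1-s}$ rather than being bounded; this is exactly the point where one needs the $\epsilon$-unbounded version of the criterion.

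The hard part will be verifying the $\epsilon$-version of the Kucerovsky criterion at $\epsilon = 1-s$. The standard Kucerovsky criterion requires bounded commutators of the algebra with the candidate product operator; here we only have $|D|^{1-s}$-bounded commutators, which is the reason the appendix's framework is needed. In particular, one must check that the localised index computations survive when commutators are controlled only up to $|D|^{1-s}$, which is precisely why the construction fails at $s=1$ (where only $|D|^0 = 1$ control would be permitted, matching the classical case) and at $s=0$ (where no control is available). I would record the positivity and connection conditions as a formal application of the $\epsilon$-Kucerovsky criterion from the appendix, using the boundedness estimate \eqref{ks} and the grading of $D_\lambda$ on $\mathpzc{E}^k_n$ as the key quantitative inputs.
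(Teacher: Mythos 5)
Your overall strategy is the paper's: Theorem \ref{selfcpt} for selfadjointness and compact resolvent, Proposition \ref{Scomm} for the commutator formula together with the estimate $\|[D_{\mathcal{V},s},\pi_\tau(\chi_{C_i}\circ\sigma^k)]\|\leq k^s$, the observation that $D_\lambda\otimes 1$ acts as $-2k$ on $\mathcal{E}^k_{-k}\otimes_{\Lip_{\tau}(\Omega_A)}C_c(\mathcal{V}_A,\C^2)$, and finally the $\epsilon$-Kucerovsky criterion (Theorem \ref{epsilonprod}) whose three conditions hold by construction. However, there is a genuine error in the key exponent, coming from a systematic confusion between $\epsilon$ and $1-\epsilon$ in the definition of $\epsilon$-boundedness. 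By definition, an $\epsilon$-bounded commutator is one for which $[D,a](1+D^2)^{-\frac{1-\epsilon}{2}}$ is bounded, so for $\epsilon=1-s$ the operator to control is $[1\otimes_\nabla D_{\mathcal{V},s},S_i](1+D_{\lambda,\tau,s}^2)^{-s/2}$, not $[1\otimes_\nabla D_{\mathcal{V},s},S_i](1+D_{\lambda,\tau,s}^2)^{-(1-s)/2}$ as you claim. This is not cosmetic: from your own (correct) observations that the $k$-th block has norm $\lesssim k^s$ and sits where $|D_\lambda|\sim 2k$, the product with $(1+D_{\lambda,\tau,s}^2)^{-(1-s)/2}$ is of size $k^s\cdot k^{-(1-s)}=k^{2s-1}$, which is unbounded for $s>1/2$; and even for $s\leq 1/2$, where your estimate holds, it establishes $s$-boundedness rather than the $(1-s)$-boundedness asserted in the theorem, and $s\leq 1-s$ does not imply the stronger condition. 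The same reversal infects your closing discussion: at $s=1$ one has $\epsilon=0$, where the commutator is controlled by $|D|^{1}$ (the weakest, not the classical bounded case), and the appendix machinery breaks down because the integral estimates in Lemma \ref{intes} and Theorem \ref{epsilonFred} degenerate at $\epsilon=0$; at $s=0$ one would have $\epsilon=1$, i.e.\ genuinely bounded commutators, which is the classical situation and not the degenerate one.

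Two smaller points. First, your positivity inequality is not the semiboundedness condition of Theorem \ref{epsilonprod}, which asks for $\langle Dx,(S\otimes 1)x\rangle+\langle (S\otimes 1)x,Dx\rangle\geq-\lambda\langle x,x\rangle$; in the present situation this holds trivially because $D_\lambda\otimes\gamma$ and $1\otimes_\nabla D_{\mathcal{V},s}$ anticommute on the common core, so the anticommutator of $D_{\lambda,\tau,s}$ with $D_\lambda\otimes\gamma$ is $2(D_\lambda\otimes\gamma)^2\geq 0$. Second, the definition of $\epsilon$-boundedness requires both orders, $[D,a](1+D^2)^{-s/2}$ and $(1+D^2)^{-s/2}[D,a]$, to be bounded; your adjoint remark, combined with Proposition \ref{*-alg}, does cover this, but it should be said that the reverse-order product requires reversing the order in which the two estimates (the resolvent bound on $\mathcal{E}^k_{-k}$ versus the block-norm bound $k^s$) are applied.
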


\begin{proof} 
The operator $D_{\lambda,\tau,s}=D_{\lambda}\otimes \gamma+ 1\otimes_{\nabla}D_{\mathcal{V},s}$ is selfadjoint with compact resolvent by Theorem \ref{selfcpt}. We will show that the operators 
\[[1\otimes_{\nabla}D_{\mathcal{V},s},S_{i}](1+D_{\lambda,\tau,s}^{2})^{-\frac{s}{2}},\quad (1+D_{\lambda,\tau,s}^{2})^{-\frac{s}{2}}[1\otimes_{\nabla}D_{\mathcal{V},s},S_{i}]\] 
are bounded. By Proposition \ref{Scomm}, $S_{i}$ preserves a core for $D_{\lambda}\otimes \gamma+1\otimes_{\nabla}D_{\mathcal{V},s}$. The operator $1+D_{\lambda,\tau,s}^2=1+D_{\lambda}^2\otimes 1+(1\otimes_{\nabla}D_{\mathcal{V},s})^{2}$ preserves subspaces of the form $\mathcal{E}^{k}_{n}\otimes_{\Lip_{\tau}(\Omega_{A})}C_{c}(\mathcal{V}_{A},\C^{2})$. From the form of \eqref{redcomm}, it follows that 
\begin{equation}
\label{comrestitok}
(1+D_{\lambda,\tau,s}^{2})^{-\frac{s}{2}}[1\otimes_{\nabla}D_{\mathcal{V},s},S_{i}]:\mathcal{E}^{k+1}_{-k-1}\otimes_{\Lip_{\tau}(\Omega_{A})}C_{c}(\mathcal{V}_{A},\C^{2})\rightarrow \mathcal{E}^{k}_{-k}\otimes_{\Lip_{\tau}(\Omega_{A})}C_{c}(\mathcal{V}_{A},\C^{2}).
\end{equation}
We denote the restricted operator of Equation \eqref{comrestitok} by $T_{i,k}$. By Equation \eqref{redcomm} and the orthogonality of the decomposition of Proposition \ref{rangevlambda}, it holds that
$$\left\|(1+D_{\lambda,\tau,s}^{2})^{-\frac{s}{2}}[1\otimes_{\nabla}D_{\mathcal{V},s},S_{i}]\right\|_{\Bo(\He(\tau))}\leq\sup_k\|T_{i,k}\|_{\Bo(\He(\tau))}$$
As such, it suffices to show that for any $k$ and any finite sum
\small
\begin{equation}
\label{xexpaenda}
x=\sum_{j} e_j\otimes \begin{pmatrix}\phi_+^j\\\phi^j_-\end{pmatrix}=\sum_{j} \chi^{k+1}_{-k-1,\circ}\otimes \pi_\tau\left(\rho\left((\chi^{k+1}_{-k-1,\circ})^*e_j\right)\right) \begin{pmatrix}\phi_+^j\\\phi^j_-\end{pmatrix}\in \mathcal{E}^{k+1}_{-k-1}\otimes_{\Lip_{\tau}(\Omega_{A})}C_{c}(\mathcal{V}_{A},\C^{2})
\end{equation}
\normalsize
it holds that
\begin{align}
\nonumber
\|&T_{i,k}x\|_{\He(\tau)}=\left\|(1+D_{\lambda,\tau,s}^{2})^{-\frac{s}{2}}[1\otimes_{\nabla}D_{\mathcal{V},s},S_{i}]x\right\|_{\He(\tau)}\\
\nonumber
&=\left\|\sum_{j}(1+D_{\lambda,\tau,s}^{2})^{-\frac{s}{2}}\chi^{k}_{-k,\cemptyset}\otimes [D_{\mathcal{V},s},\pi_{\tau}(\chi_{C_i}\circ\sigma^{k})]\pi_{\tau}(\rho(\chi^{(k+1)*}_{-k-1,\cemptyset}e_{j})\begin{pmatrix}\phi_{+}^{j} \\  \phi_{-}^{j}\end{pmatrix}\right\|_{\He(\tau)}\\
\label{unifestink}
&\qquad\quad\leq\left \|\sum_{j}\chi_{-k-1,\circ}^{k+1}\otimes \pi_{\tau}\left(\rho(\chi^{(k+1)*}_{-k-1,\circ}e_{j})\right)\begin{pmatrix}\phi_{+}^{j} \\  \phi_{-}^{j}\end{pmatrix}\right\|_{\He(\tau)}=\|x\|_{\He(\tau)}.
\end{align} 
It is of computational importance to note that when writing 
$$x= \chi^{k+1}_{-k-1,\circ}\otimes v, \quad\mbox{where}\quad v= \sum_j\pi_\tau\left(\rho\left((\chi^{k+1}_{-k-1,\circ})^*e_j\right)\right) \begin{pmatrix}\phi_+^j\\\phi^j_-\end{pmatrix},$$ 
as in Equation \eqref{xexpaenda}, we have that 
\begin{equation}
\label{normsinheta}
\|x\|_{\He(\tau)}^2=\left\langle v,\pi_\tau\left(\rho( ( \chi^{k+1}_{-k-1,\circ})^* \chi^{k+1}_{-k-1,\circ})\right)v\right\rangle_{\ell^2(\mathcal{V}_A,\C^2)}=\|v\|_{\ell^2(\mathcal{V}_A,\C^2)}^2,
\end{equation}
because $\rho(( \chi^{k+1}_{-k-1,\circ})^* \chi^{k+1}_{-k-1,\circ})=1$ by Lemma \ref{kisom}.
It follows from the construction of $D_\lambda$ that $D_{\lambda}\otimes 1$ acts as multiplication by $-2k$ on $\mathcal{E}^{k}_{-k}\otimes_{\Lip_{\tau}(\Omega_{A})}C_{c}(\mathcal{V}_{A},\C^{2})$. With this fact at hand, the verification of this estimate is a straightforward computation using  the inequality \eqref{ks}:
\begin{align}
\nonumber
\|&T_{i,k}x\|_{\He(\tau)}=\big\|(1+D_{\lambda,\tau,s}^{2})^{-\frac{s}{2}}[1\otimes_{\nabla}D_{\mathcal{V},s},S_{i}]x\big\|_{\He(\tau)}\\
\nonumber
&= \left\|\sum_{j}  \right.(1+4k^{2}+ \left.(1\otimes_{\nabla}D_{\mathcal{V},s})^{2})^{-\frac{s}{2}}\chi^{k}_{-k,\cemptyset}\otimes [D_{\mathcal{V},s},\pi_{\tau}(\chi_{C_i}\circ\sigma^{k})]\pi_{\tau}\left(\rho(\chi^{(k+1)*}_{-k-1,\cemptyset}e_{j})\right)\begin{pmatrix}\phi_{+}^{j} \\  \phi_{-}^{j}\end{pmatrix}\right\|_{\He(\tau)} \\
\nonumber
&= \left\|  \right.(1+4k^{2}+ \left.(1\otimes_{\nabla}D_{\mathcal{V},s})^{2})^{-\frac{s}{2}}\sum_{j}\chi^{k}_{-k,\cemptyset}\otimes [D_{\mathcal{V},s},\pi_{\tau}(\chi_{C_i}\circ\sigma^{k})]\pi_{\tau}\left(\rho(\chi^{(k+1)*}_{-k-1,\cemptyset}e_{j})\right)\begin{pmatrix}\phi_{+}^{j} \\  \phi_{-}^{j}\end{pmatrix}\right\|_{\He(\tau)} \\
\nonumber
&\qquad\qquad \leq (1+k^{2})^{-\frac{s}{2}}
\left\|\sum_{j}\chi^{k}_{-k,\circ}\otimes[D_{\mathcal{V},s},\pi_{\tau}(\chi_{C_i}\circ\sigma^{k})]\pi_{\tau}\left(\rho(\chi^{(k+1)*}_{-k-1}e_{j}\right)\begin{pmatrix}\phi_{+}^{j} \\  \phi_{-}^{j}\end{pmatrix}\right\|_{\He(\tau)}\\
\label{firstteptik}
&\qquad\qquad= (1+k^{2})^{-\frac{s}{2}}
\left\|\chi^{k}_{-k,\circ}\otimes[D_{\mathcal{V},s},\pi_{\tau}(\chi_{C_i}\circ\sigma^{k})]\sum_{j}\pi_{\tau}\left(\rho(\chi^{(k+1)*}_{-k-1}e_{j}\right)\begin{pmatrix}\phi_{+}^{j} \\  \phi_{-}^{j}\end{pmatrix}\right\|_{\He(\tau)}.
\end{align}
Using the identity \eqref{normsinheta}, we arrive at
\begin{align}
\nonumber
 \|T_{i,k}x\|_{\He(\tau)}&\leq (1+k^{2})^{-\frac{s}{2}}
\left\|[D_{\mathcal{V},s},\pi_{\tau}(\chi_{C_i}\circ\sigma^{k})]\sum_{j}\left(\rho(\chi^{(k+1)*}_{-k-1}e_{j}\right)\begin{pmatrix}\phi_{+}^{j} \\  \phi_{-}^{j}\end{pmatrix}\right\|_{\ell^2(\mathcal{V}_A,\C^2)}\\
\nonumber
&\leq (1+k^{2})^{-\frac{s}{2}}k^{s}
\left\|\sum_{j}\pi_{\tau}\left(\rho(\chi^{(k+1)*}_{-k-1}e_{j}\right)\begin{pmatrix}\phi_{+}^{j} \\  \phi_{-}^{j}\end{pmatrix}\right\|_{\ell^2(\mathcal{V}_A,\C^2)}\\
\nonumber
&\qquad\qquad\leq \left\|\sum_{j}\pi_{\tau}\left(\rho(\chi^{(k+1)*}_{-k-1}e_{j}\right)\begin{pmatrix}\phi_{+}^{j} \\  \phi_{-}^{j}\end{pmatrix}\right\|_{\ell^2(\mathcal{V}_A,\C^2)}\\
\label{secondteptik}
&\qquad\qquad= \left\|\sum_{j}\chi^{k+1}_{-k-1,\circ}\otimes\pi_{\tau}\left(\rho(\chi^{(k+1)*}_{-k-1}e_{j}\right)\begin{pmatrix}\phi_{+}^{j} \\  \phi_{-}^{j}\end{pmatrix}\right\|_{\He(\tau)}=\|x\|_{\He(\tau)}.
\end{align}
Hence \eqref{unifestink} holds proving that $\left(1+(D_{\lambda,\tau,s})^{2}\right)^{-\frac{s}{2}}[1\otimes_{\nabla}D_{\mathcal{V},s},S_{i}]$ is bounded. Boundedness of the reverse product follows from a similar computation, reversing the order in which the estimates \eqref{firstteptik} and \eqref{secondteptik} respectively, are applied. Now Lemma \ref{corecomm} implies that the commutators $[D,S_{i}]$ are $\epsilon$-bounded. Thus, by Proposition \ref{*-alg}, $D$ has $\epsilon$-bounded commutators with the $*$-subalgebra of $O_{A}$ generated by the operators $S_{i}$, which is dense in $O_{A}$. Thus we have an $\epsilon$-unbounded Fredholm module with $\epsilon=1-s$. To see that this $\epsilon$-unbounded Fredholm module represents the Kasparov product one uses Theorem \ref{epsilonprod} which applies because the connection condition 1.), the domain condition 2.) and the semiboundedness condition 3.) are satisfied by construction. 
\end{proof}

\begin{remark}
We remark once more that $s=1$ is excluded from Theorem \ref{comegaandbp} because the theory of $\epsilon$-unbounded Fredholm modules breaks down at $\epsilon=0$. As the proof of Theorem \ref{comegaandbp} shows, the operators $[1\otimes_{\nabla}D_{\mathcal{V},1},S_{i}](1+D_{\lambda,\tau,1}^{2})^{-\frac{1}{2}}$ and $(1+D_{\lambda,\tau,1}^{2})^{-\frac{1}{2}}[1\otimes_{\nabla}D_{\mathcal{V},1},S_{i}]$ are bounded, but it is unclear if the bounded transform is well defined and represents the Kasparov product $[\mathpzc{E},D_{\lambda}]\otimes_{C(\Omega_{A})}[BP_{1}(\tau)]$. 
\end{remark}

\subsection{The rational $K$-homology class of the product}
Lastly, we identify the rational $K$-homology class of the Kasparov products constructed in the previous subsection. The identification is done via an index theoretic argument, therefore it needs only to hold rationally.

\begin{thm}
\label{rationkhomcomp}
In $K^1(O_A)\otimes \Q$ we have
\begin{align*}
[\mathpzc{E},D_{\lambda}]&\otimes_{C(\Omega_{A})}[BP_{s}(\tau)]\otimes \Q\\
&=
\begin{cases}
[\beta_{j_+}]\otimes \Q-[\beta_{j_-}]\otimes \Q,\quad &\mbox{if}\; \lambda=\circ_A,\\
(A(\lambda_\ell,j_+)-A(\lambda_\ell,j_-))[\beta_{\lambda_1}]\otimes \Q,\quad &\mbox{if}\; \lambda=\lambda_1\cdots \lambda_\ell\in \mathcal{V}_A\setminus\{\circ_A\},
\end{cases}\end{align*}
where $j_\pm$ is the first letter of $\tau_\pm(\circ_A)$.
\end{thm}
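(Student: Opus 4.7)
The strategy is to reduce the identification of the rational $K$-homology class to a computation at the level of point evaluations on $C(\Omega_A)$, exploiting the decomposition of the Bellissard-Pearson spectral triple from Remark \ref{decomposingbptriples} together with the pushforward formula from Theorem \ref{computingkhomclasseslikeaboss}. Since $O_A$ is nuclear with finitely generated $K$-groups, the rational UCT yields
\[
K^1(O_A)\otimes\Q\;\hookrightarrow\;\mathrm{Hom}(K_1(O_A)\otimes\Q,\Q),
\]
the Ext-term being torsion and hence killed by $\otimes\Q$. Consequently, to identify a class in $K^1(O_A)\otimes\Q$, it suffices to match its index pairings with every $[u]\in K_1(O_A)$.

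By associativity of the Kasparov product, for each $[u]\in K_1(O_A)$ one has
\[
\bigl\langle [u],\,[\mathpzc{E}^\Omega_A,D_\lambda]\otimes_{C(\Omega_A)}[BP_s(\tau)]\bigr\rangle
=\bigl\langle [u]\otimes_{O_A}[\mathpzc{E}^\Omega_A,D_\lambda],\,[BP_s(\tau)]\bigr\rangle,
\]
where the inner class lies in $K_0(C(\Omega_A))\cong C(\Omega_A,\Z)$ by Lemma \ref{ktheoryfunctions}, i.e.\ is a locally constant integer-valued function on $\Omega_A$. Next I would invoke the decomposition of Remark \ref{decomposingbptriples}, representing $[BP_s(\tau)]$ as the formal difference $[\pi_{\tau_+}]-[\pi_{\tau_-}]$ of direct sums of point-evaluation representations. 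At the level of pairings with a locally constant function $f\in C(\Omega_A,\Z)$ this becomes the manifestly finite sum
\[
\langle f,[BP_s(\tau)]\rangle=\sum_{\mu\in\mathcal{V}_A}\bigl(f(\tau_+(\mu))-f(\tau_-(\mu))\bigr),
\]
the finiteness of the sum following from $f$ being locally constant combined with $d_{\Omega_A}(\tau_+(\mu),\tau_-(\mu))=O(\mathrm{diam}\,C_\mu)$; this is the analogue of the trace identity used in the computation \eqref{mucomputations}.

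Combining these two reductions, the computation reduces to recognising $[\mathpzc{E}^\Omega_A,D_\lambda]\otimes[BP_s(\tau)]$ as the formal sum
\[
\sum_{\mu\in\mathcal{V}_A}\bigl(\omega_{\tau_+(\mu)*}[\mathpzc{E}^\Omega_A,D_\lambda]-\omega_{\tau_-(\mu)*}[\mathpzc{E}^\Omega_A,D_\lambda]\bigr),
\]
and applying Theorem \ref{computingkhomclasseslikeaboss} to each summand. Here the decisive cancellation occurs: by Theorem \ref{computingkhomclasseslikeaboss}, $\omega_{x*}[\mathpzc{E}^\Omega_A,D_\lambda]$ depends only on the first letter of $x$. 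Since $\tau$ satisfies the cylinder condition, for every non-empty $\mu\in\mathcal{V}_A$ both $\tau_+(\mu)$ and $\tau_-(\mu)$ lie in $C_\mu$ and hence share their first letter $\mu_1$; consequently all terms with $\mu\neq\circ_A$ cancel in pairs and only $\mu=\circ_A$ survives. Writing $j_\pm$ for the first letter of $\tau_\pm(\circ_A)$ and applying Theorem \ref{computingkhomclasseslikeaboss} to the two surviving terms yields exactly the stated formula, the cases $\lambda=\circ_A$ and $\lambda\neq\circ_A$ reproducing the two branches of the right-hand side.

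The main obstacle is making the formal infinite decomposition of $[BP_s(\tau)]$ rigorous, since $\sum_\mu([\omega_{\tau_+(\mu)}]-[\omega_{\tau_-(\mu)}])$ does not converge in $K^0(C(\Omega_A))$. The way around this is to carry out the entire argument at the level of pairings with locally constant $\Z$-valued functions on $\Omega_A$, where the expressions above truncate to finite sums; this is made legitimate by the bounded-module representative of $[BP_s(\tau)]$ from Lemma \ref{bddtransformofpb}, for which the commutator $[F,\pi_\tau(\chi_{C_\mu})]$ is of finite rank supported on words of length $<|\mu|$. A minor secondary point is verifying that the pushforwards $\omega_{\tau_\pm(\circ_A)*}[\mathpzc{E}^\Omega_A,D_\lambda]$ really can be read off Theorem \ref{computingkhomclasseslikeaboss} (i.e.\ that $\tau_\pm(\circ_A)$ indeed begin with the letters $j_\pm$), which is immediate from the definition of $j_\pm$.
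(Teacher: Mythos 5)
Your proof is correct, and it reaches the stated formula by a genuinely different route than the paper's. The paper works directly with the concrete $\epsilon$-unbounded representative $(\He(\tau),D_{\lambda,\tau,s})$ of the product from Theorem \ref{comegaandbp}: it decomposes the phase as $\bigoplus_{\mu\in\mathcal{V}_A}F_\mu$ via Remark \ref{decomposingbptriples}, shows each $F_\mu$ is a compact perturbation of $(2p_\lambda-1)\otimes_{\omega_{\tau_+(\mu)}\oplus\omega_{\tau_-(\mu)}}(1\oplus-1)$ with norm growing like $2+|\mu|^s$, and then argues that the pairing with a fixed unitary $u$ localizes to a finite subset $F_u\subseteq\mathcal{V}_A$ before invoking the cylinder-condition cancellation and Theorem \ref{computingkhomclasseslikeaboss}. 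You never touch the product operator: you re-bracket by associativity of the Kasparov product as $([u]\otimes_{O_A}[\mathpzc{E}^\Omega_A,D_\lambda])\otimes_{C(\Omega_A)}[BP_s(\tau)]$, identify the middle class as a locally constant $\Z$-valued function on $\Omega_A$ --- which by Theorem \ref{computingkhomclasseslikeaboss} is in fact constant on each $C_j$ --- and pair it with $[BP_s(\tau)]$ through the manifestly finite trace formula underlying \eqref{mucomputations}, extended by $\Z$-linearity from the cylinder projections that generate $K_0(C(\Omega_A))$. What your route buys is that the delicate finiteness step in the paper (justified there by the somewhat informal remark that otherwise the index pairing could not be well defined, even though the perturbations $F_\mu-(2p_\lambda-1)\otimes(1\oplus-1)$ have unbounded norms) becomes automatic. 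What the paper's route buys is explicit control of the product operator and its phase, in keeping with its emphasis on concrete cycles. Both arguments rest on the same two essential inputs --- the rational UCT reduction to index pairings and the first-letter dependence of the point localizations from Theorem \ref{computingkhomclasseslikeaboss} --- and both use the cylinder condition on $\tau$ to cancel all terms with $\mu\neq\circ_A$, so the proofs agree where it matters.
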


\begin{proof}
The computation of the class $[\mathpzc{E}^\Omega_A,D_{\lambda}]\otimes_{C(\Omega_{A})}[BP_{s}(\tau)]$ in $K^1(O_A)\otimes \Q$ relies on the fact that $O_A$ is in the bootstrap class with finitely generated $K$-theory and $K$-homology, so $K^1(O_A)\otimes \Q\cong \Hom_{\!\Z}(K_1(O_A),\Q)$ and rational classes are determined by their index pairing. Furthermore, using Remark \ref{decomposingbptriples}, we write 
\[\He(\tau)=\bigoplus_{\mu\in \mathcal{V}_A} \mathpzc{E}^\Omega_A\otimes_{\omega_{\tau_+(\mu)}\oplus \omega_{\tau_-(\mu)}}\C^2\quad\mbox{and}\quad D_{\lambda,\tau,s}|D_{\lambda,\tau,s}|^{-1}=\bigoplus_{\mu\in \mathcal{V}_A}F_\mu.\]
Since in each fixed summand $\mathpzc{E}^\Omega_A\otimes_{\omega_{\tau_+(\mu)}\oplus \omega_{\tau_-(\mu)}}\C^2\subseteq \He(\tau)$, $D_{\lambda,\tau,s}$ is a bounded perturbation of the operator $D_\lambda\otimes_{\omega_{\tau_+(\mu)}\oplus \omega_{\tau_-(\mu)}} (1\oplus-1)$, it follows from \cite[Appendix A, Theorem $8$]{cp1}, and an argument similar to the proof of Proposition \ref{posspeccom}, that for any $\mu$ 
\begin{align}
\label{cpompprert}
F_\mu-&(2p_\lambda-1)\otimes_{\omega_{\tau_+(\mu)}\oplus \omega_{\tau_-(\mu)}} (1\oplus -1)\in \Ko\left( \mathpzc{E}^\Omega_A\otimes_{\omega_{\tau_+(\mu)}\oplus \omega_{\tau_-(\mu)}}\C^2\right)\\
\nonumber
&\mbox{and}\quad \|F_\mu-(2p_\lambda-1) \otimes_{\omega_{\tau_+(\mu)}\oplus \omega_{\tau_-(\mu)}} (1\oplus -1)\|_{\Ko( \mathpzc{E}^\Omega_A\otimes_{\omega_{\tau_+(\mu)}\oplus \omega_{\tau_-(\mu)}}\C^2)}\leq 2+|\mu|^s.
\end{align}
For any $x\in K_1(O_A)$, represented by a unitary $u$, there is a finite set $F_u\subseteq \mathcal{V}_A$ such that 
\begin{align*}
x\otimes_{O_A}[\mathpzc{E}^\Omega_A,D_{\lambda}]\otimes_{C(\Omega_{A})}[BP_{s}(\tau)]&=\sum_{\mu\in F_u}x\otimes_{O_A}[\mathpzc{E}^\Omega_A,D_{\lambda}]\otimes_{C(\Omega_{A})}[\mathfrak{S}_\mu]\\
&=\sum_{\mu\in F_u} x\otimes (\omega_{\tau_+(\mu)}-\omega_{\tau_-(\mu)})_*[\mathpzc{E}^\Omega_A,D_\lambda]
\end{align*}
If such a finite set $F_u$ does not exist, the index pairing $x\otimes_{O_A}[\mathpzc{E}^\Omega_A,D_{\lambda}]\otimes_{C(\Omega_{A})}[BP_{s}(\tau)]$ can not be well defined. The cylinder condition implies that for any nonempty word $\mu$ it holds that the first letter of $\tau_+(\mu)$ is the same as that of $\tau_-(\mu)$. Hence
$$x\otimes_{O_A}[\mathpzc{E}^\Omega_A,D_{\lambda}]\otimes_{C(\Omega_{A})}[BP_{s}(\tau)]=x\otimes (\omega_{\tau_+(\circ_A)}-\omega_{\tau_-(\circ_A)})_*[\mathpzc{E}^\Omega_A,D_\lambda].$$
The theorem now follows from Theorem \ref{computingkhomclasseslikeaboss}.
\end{proof}

\begin{remark}
\label{integralremark}
It would be interesting to compute the integral class $[\mathpzc{E}^\Omega_A,D_{\lambda}]\otimes_{C(\Omega_{A})}[BP_{s}(\tau)]\in K^1(O_A)$ explicitly. It is to the authors unclear if there is a deeper homological obstruction for Theorem \ref{rationkhomcomp} to hold over $\Z$. A direct $K$-homological proof, e.g. using partial isometries, would require a deeper understanding of the Hilbert space $\mathpzc{H}(\tau)$. One might speculate that the analytic difficulties arising in this problem are analogous to the limiting behaviour in the construction of the measure $\mu_{A}=\mathrm{w}^*\mbox{-}\lim_{s\downarrow \delta_{A}} \mu_{s}$ from Subsection \ref{realizingboundarysubsec}. 
\end{remark}

\appendix
\vspace{3mm}

\Large
\section{$\epsilon$-unbounded $KK$-cycles and the Kasparov product}
\normalsize
\renewcommand{\theequation}{\thesection.\arabic{equation}}
\label{theappendix}

We describe a weakening of the definition of an unbounded $KK$-cycle \cite{BJ}. This notion, and in particular Theorem \ref{epsilonFred} below, originated from discussions of the second author with A. Rennie. One of the key observations in the proof of this theorem appears in \cite[Lemma 51]{grensing}. Related notions are anticipated in the literature, (eg. \cite{cp1, Lesch}) but to the authors' knowledge, a concise exposition as in this appendix has not appeared before. The main idea here is to relax the requirement on the commutators $[D,a]$ to be bounded by only asking for $\epsilon$-\emph{boundedness} of these operators.

\begin{deefapp} 
Let $B$ be a $C^*$-algebra and $\mathpzc{E}$ be a $B$-Hilbert $C^*$-module. An operator $a\in \End^{*}_{B}(\mathpzc{E})$ has $\epsilon$-\emph{bounded commutators} with the selfadjoint regular operator $D$ if 
\begin{enumerate} 
\item $a\Dom D\subset \Dom D $;
\item $[D,a](1+D^{2})^{-\frac{1-\epsilon}{2}} \textnormal{and } (1+D^{2})^{-\frac{1-\epsilon}{2}}[D,a] \textnormal{ extend to } \End^{*}_{B}(\mathpzc{E})$.
\end{enumerate}
In short we say that $[D,a]$ is $\epsilon$-bounded. We write $\delta:=\frac{\epsilon}{2}$ throughout this section.
\end{deefapp}

\begin{remark}
Let us give a geometric example of $\epsilon$-bounded commutators to explain the appearance of the parameter $\epsilon>0$. Let $D$ be a self-adjoint elliptic pseudodifferential operator of order $m >0$ acting on a vector bundle $E\to M$ on a closed manifold $M$. The Hilbert space is $\He=L^2(M,E)$. The domain of $D$ is the Sobolev space $W^{m,2}(M,E)$. If $a\in C^\infty(M)$, then $[D,a]$ is a pseudodifferential operator of order $m-1$. Hence $(1+D^2)^{\frac{1-m}{2m}}[D,a]$ and $[D,a](1+D^2)^{\frac{1-m}{2m}}$ are pseudodifferential operators of order $0$, thus bounded on $L^2(M,E)$. We conclude that any $a\in C^\infty(M)$ has $1/m$-bounded commutators with $D$. As such, one can consider the reciprocal $\epsilon^{-1}$ as an ``order" of the operator $D$ appearing in an $\epsilon$-bounded commutator.
\end{remark}

\begin{deefapp} 
Let $A$ and $B$ be $C^{*}$-algebras and $\epsilon>0$. An odd $\epsilon$-$KK$-cycle is a pair $(\mathpzc{E},D)$ where $\mathpzc{E}$ is a $C^{*}-(A,B)-$bimodule and $D$ a selfadjoint regular operator such that
\begin{enumerate}
\item $a(1+D^{2})^{-\frac{1}{2}}\in \K(\mathpzc{E})$;
\item the space\[\Lip^{\epsilon}(\mathpzc{E},D):=\{a\in A: [D,a] \textnormal{ is $\epsilon$-bounded}\}\] is dense in $A$.
\end{enumerate}
If $\mathpzc{E}$ is a $\Z/2\Z$-graded $C^{*}-(A,B)-$bimodule\footnote{In this appendix, ungraded $C^{*}$-algebras $A$ and $B$ will be equipped with the trivial gradings whenever a grading on them is required.}, and $(\mathpzc{E},D)$ is as above with $D$ anticommuting with the grading operator on $\mathpzc{E}$, we say that $(\mathpzc{E},D)$ is an even $\epsilon$-$KK$-cycle. If $B=\C$, we call an odd/even $\epsilon$-$KK$-cycle an odd/even $\epsilon$-unbounded Fredholm module and if the $B$-action is faithful, we call it an $\epsilon$-spectral triple. 
\end{deefapp}

We remind the reader that in the context of graded $C^{*}$-algebras, it suffices to consider even unbounded $KK$-cycles, because the odd group $KK_{1}(A,B)$ can be naturally identified with the even group $KK_{0}(A,B\otimes \C_{1})$, where $\C_{1}$ denotes the first complex Clifford algebra. This is known as \emph{formal Bott periodicity}  (cf. \cite{AKH, Kas1, Kas2}). For this reason we will in this appendix formulate things mostly for even $KK$-cycles.

\begin{remarkapp}
Although the definition of $\epsilon$-boundedness allows for larger classes of unbounded Fredholm modules, obstructions to finite summability remains. The reader can check that the proof of \cite[Theorem 8]{Connestrace} implies the following statement:  if $A$ is a $C^*$-algebra and $(\pi,\He,D)$ is an $\epsilon$-unbounded Fredholm module with $(1+D^2)^{-1}\in \mathcal{L}^p(\He)$, for some $p\in [1,\infty)$, then there is a tracial state on $A$.
\end{remarkapp}

An $\epsilon$-cycle is an $\epsilon'$-cycle for any $\epsilon'\leq\epsilon$. All of the proofs below rely on the integral representation formula and the estimates in the following lemma.

\begin{lemapp}
\label{intes} 
Let $D$ be a regular self-adjoint operator on a $B$-Hilbert $C^*$-module $\mathpzc{E}$. For any $0<r<1$
\begin{equation}
\label{magicint} 
(1+D^{2})^{-r}=\frac{\sin(r\pi)}{\pi}\int_{0}^{\infty}\lambda^{-r}(1+D^{2}+\lambda)^{-1}\rd\lambda,
\end{equation}
is a norm convergent integral. Moreover we have the estimates
\begin{align*} 
\|(1+D^{2}+\lambda)^{-s}\|_{\End^*_B(\mathpzc{E})}&\,\leq (1+\lambda)^{-s};\\
\|D(1+D^{2}+\lambda)^{-\frac{1}{2}}\|_{\End^*_B(\mathpzc{E})}\leq 1\quad\mbox{and}&\quad \|D^{2}(1+D^{2}+\lambda)^{-1}\|_{\End^*_B(\mathpzc{E})}\leq 1.
\end{align*}
\end{lemapp}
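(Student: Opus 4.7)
The plan is to reduce everything to scalar estimates via the bounded Borel functional calculus for self-adjoint regular operators on Hilbert $C^*$-modules, as developed in Lance's book on Hilbert $C^*$-modules (and used implicitly in \cite{BJ}). Recall that for such an operator $D$, the map $f \mapsto f(D)$ is a norm-contractive $*$-homomorphism $C_b(\mathbb{R}) \to \End^*_B(\mathpzc{E})$, so operator-norm bounds come for free from sup-norm bounds of the continuous functional calculus applied to $D^2 \geq 0$.

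I would first dispatch the three norm estimates, since they are the building blocks for everything else. For any $\lambda \geq 0$ and any $s>0$ the scalar functions
\[
f_\lambda(t) := (1+t+\lambda)^{-s},\quad g_\lambda(t) := \frac{\sqrt{t}}{\sqrt{1+t+\lambda}},\quad h_\lambda(t) := \frac{t}{1+t+\lambda}
\]
on $[0,\infty)$ satisfy $\sup f_\lambda = f_\lambda(0) = (1+\lambda)^{-s}$ and $\sup g_\lambda, \sup h_\lambda \leq 1$ (with suprema attained in the limit as $t \to \infty$). Applying the functional calculus to the positive self-adjoint regular operator $D^2$ then gives the three estimates verbatim.

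For the integral representation, I would start from the classical scalar identity
\[
(1+t)^{-r} = \frac{\sin(r\pi)}{\pi}\int_0^\infty \lambda^{-r}(1+t+\lambda)^{-1}\rd\lambda, \qquad t \geq 0,\ 0 < r < 1,
\]
which follows from the substitution $\lambda = (1+t)s$ together with the Beta-function identity $\int_0^\infty s^{-r}(1+s)^{-1}\rd s = \pi/\sin(r\pi)$. Thus, pointwise in the functional calculus of $D^2$, the desired identity already holds as an equality of bounded continuous functions. To promote this to an operator identity, I would interpret the right-hand side as a Bochner integral of the continuous $\End^*_B(\mathpzc{E})$-valued function $\lambda \mapsto \lambda^{-r}(1+D^{2}+\lambda)^{-1}$, observe that by the first estimate
\[
\bigl\|\lambda^{-r}(1+D^{2}+\lambda)^{-1}\bigr\|_{\End^*_B(\mathpzc{E})} \leq \lambda^{-r}(1+\lambda)^{-1},
\]
and note that $\lambda^{-r}(1+\lambda)^{-1}$ is integrable on $(0,\infty)$ precisely because $0 < r < 1$ (controlling both the endpoint $\lambda = 0$ and $\lambda = \infty$).

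The only step requiring any care is the interchange of the Bochner integral with the functional calculus; this is where I expect the mild obstacle to lie. It is handled by the standard observation that a norm-continuous $*$-homomorphism commutes with Bochner integrals of norm-continuous integrands, so applying the functional calculus to both sides of the scalar identity yields the operator identity, with norm convergence of the integral coming from the bound above. Together the three estimates and the integral formula follow, completing the proof.
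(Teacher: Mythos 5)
Your proof is correct. For the record, the paper does not actually prove this lemma: it simply points to Baaj--Julg and to \cite[Appendix A, Remarks 3 and 5]{cp1} for the integral formula and the estimates respectively, so your argument fills in a proof rather than replicating one. What you write is the standard argument that those references use: the scalar Beta-function identity $\int_0^\infty s^{-r}(1+s)^{-1}\,\rd s=\pi/\sin(r\pi)$ plus the substitution $\lambda=(1+t)s$ gives the pointwise identity, the bound $\lambda^{-r}(1+\lambda)^{-1}$ is integrable precisely for $0<r<1$, and the continuous functional calculus for regular self-adjoint operators (a contractive $*$-homomorphism, hence a bounded linear map commuting with Bochner integrals of norm-continuous integrands) transports everything to $\End^*_B(\mathpzc{E})$. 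One cosmetic point: applying $g_\lambda$ to $D^2$ produces $|D|(1+D^2+\lambda)^{-1/2}$ rather than $D(1+D^2+\lambda)^{-1/2}$; to get the stated middle estimate you should either use the functional calculus of $D$ itself with $\phi(t)=t(1+t^2+\lambda)^{-1/2}$, or invoke the $C^*$-identity $\|T\|^2=\|T^*T\|$ to reduce it to your third estimate $\|D^2(1+D^2+\lambda)^{-1}\|\leq 1$. Either fix is one line, and the rest of the argument stands as written.
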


The integral formula has been used in the Hilbert $C^*$-module context since the work of Baaj-Julg \cite{BJ}. A detailed treatment can be found in \cite[Appendix A, Remark $3$]{cp1}. The estimates can be found in \cite[Appendix A, Remark $5$]{cp1}.

\begin{propapp}
\label{*-alg} 
If $a,b\in\Lip^{\epsilon}(\mathpzc{E},D)$ then $a^{*}, ab\in\Lip^{\epsilon}(\mathpzc{E},D)$. In particular $\Lip^{\epsilon}(\mathpzc{E},D)$ is a $*$-algebra.
\end{propapp}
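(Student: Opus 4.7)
The proof splits into verifying that $a^{*}\in \Lip^{\epsilon}(\mathpzc{E},D)$ and $ab\in \Lip^{\epsilon}(\mathpzc{E},D)$. In both cases the domain-preservation condition is handled quickly, while the $\epsilon$-boundedness of the commutators is the substantive content. The principal tools will be the integral representation \eqref{magicint} and the elementary norm bounds from Lemma \ref{intes}. Throughout, I denote by $L_{a},R_{a}\in \End^{*}_{B}(\mathpzc{E})$ the bounded adjointable extensions of $(1+D^{2})^{-(1-\epsilon)/2}[D,a]$ and $[D,a](1+D^{2})^{-(1-\epsilon)/2}$, and similarly $L_{b},R_{b}$ for $b$.

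For the involution, the formal identity $[D,a^{*}]=-[D,a]^{*}$ on $\Dom D$, together with taking adjoints inside $\End^{*}_{B}(\mathpzc{E})$, shows immediately that $[D,a^{*}](1+D^{2})^{-(1-\epsilon)/2}$ and $(1+D^{2})^{-(1-\epsilon)/2}[D,a^{*}]$ extend to $-R_{a}^{*}$ and $-L_{a}^{*}$ respectively. The condition $a^{*}\Dom D\subseteq \Dom D$ then follows from the characterisation $\Dom D=\Dom D^{*}$: for $\xi,\eta\in \Dom D$, the identity
\[
\langle a^{*}\xi,D\eta\rangle=\langle a^{*}D\xi,\eta\rangle-\langle \xi,[D,a]\eta\rangle
\]
reduces matters to bounding $|\langle \xi,[D,a]\eta\rangle|$ by a $\xi$-dependent multiple of $\|\eta\|$. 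Writing $[D,a]\eta=(1+D^{2})^{(1-\epsilon)/2}L_{a}\eta$ on $\Dom D$ and transferring the positive power of $1+D^{2}$ (which is defined on $\xi\in\Dom D$) to the other side of the inner product yields the required estimate with constant $\|L_{a}\|\,\|(1+D^{2})^{(1-\epsilon)/2}\xi\|$.

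For the product, the inclusion $ab\Dom D\subseteq a\Dom D\subseteq \Dom D$ is immediate and $[D,ab]=[D,a]b+a[D,b]$ on $\Dom D$. Multiplying by $(1+D^{2})^{-(1-\epsilon)/2}$ on the right and rewriting $b(1+D^{2})^{-(1-\epsilon)/2}=(1+D^{2})^{-(1-\epsilon)/2}b+\bigl[b,(1+D^{2})^{-(1-\epsilon)/2}\bigr]$, the hypotheses on $a$ and $b$ directly handle every term except the double-commutator $[D,a]\bigl[b,(1+D^{2})^{-(1-\epsilon)/2}\bigr]$. A symmetric manipulation on the left reduces the left-handed $\epsilon$-boundedness condition to the analogous task of showing $\bigl[(1+D^{2})^{-(1-\epsilon)/2},a\bigr][D,b]$ is bounded adjointable.

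The main technical obstacle is the boundedness of these double-commutator terms. I illustrate with the left-handed one; the other is handled identically. Applying \eqref{magicint} with $r=(1-\epsilon)/2$ together with the resolvent identity
\[
\bigl[(1+D^{2}+\lambda)^{-1},a\bigr]=-(1+D^{2}+\lambda)^{-1}\bigl(D[D,a]+[D,a]D\bigr)(1+D^{2}+\lambda)^{-1},
\]
expresses $\bigl[(1+D^{2})^{-(1-\epsilon)/2},a\bigr][D,b]$ as a $\lambda$-integral. Exploiting that $D$ commutes with every functional-calculus expression in $D$, each integrand can be rearranged so that the unbounded factors $[D,a]$ and $[D,b]$ are absorbed (via $[D,a]=R_{a}(1+D^{2})^{(1-\epsilon)/2}$ on $\Dom D$, and the analogous identity for $b$ or its adjoint) into bounded operators drawn from $\{L_{a},R_{a},L_{b},R_{b}\}$ or their adjoints, while every leftover positive power $(1+D^{2})^{(1-\epsilon)/2}$ combines with a resolvent $(1+D^{2}+\lambda)^{-1}$ to yield a bounded operator of norm $O\bigl((1+\lambda)^{-(1+\epsilon)/2}\bigr)$ by elementary functional calculus. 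Combined with the factor $\|D(1+D^{2}+\lambda)^{-1/2}\|\leq 1$ from Lemma \ref{intes}, the integrand is majorised by $C\lambda^{-(1-\epsilon)/2}(1+\lambda)^{-1-\epsilon/2}$, which is integrable on $(0,\infty)$ for every $\epsilon\in(0,1)$. Norm convergence of the integral then delivers the required bounded adjointable extension, completing the proof.
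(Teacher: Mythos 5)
Your proposal is correct and follows essentially the same route as the paper's proof: the adjoint case is dispatched from the identity $[D,a^{*}]=-[D,a]^{*}$, the Leibniz rule reduces the product case to the double-commutator term $[D,a]\bigl[b,(1+D^{2})^{-\frac{1-\epsilon}{2}}\bigr]$ (and its left-handed mirror), and that term is controlled by the integral representation \eqref{magicint} together with the resolvent-commutator identity and the norm bounds of Lemma \ref{intes}, exactly as in the paper. The only quibbles are cosmetic: you have transposed the labels in the adjoint case (since $\bigl([D,a](1+D^{2})^{-\frac{1-\epsilon}{2}}\bigr)^{*}=-(1+D^{2})^{-\frac{1-\epsilon}{2}}[D,a^{*}]$, the operator $(1+D^{2})^{-\frac{1-\epsilon}{2}}[D,a^{*}]$ extends to $-R_{a}^{*}$ and $[D,a^{*}](1+D^{2})^{-\frac{1-\epsilon}{2}}$ to $-L_{a}^{*}$, not the other way around), and in the Hilbert $C^{*}$-module setting the domain argument for $a^{*}$ should exhibit the representing element $a^{*}D\xi-L_{a}^{*}(1+D^{2})^{\frac{1-\epsilon}{2}}\xi$ explicitly rather than invoke a norm bound on $\eta\mapsto\langle \xi,[D,a]\eta\rangle$, which your own computation already provides.
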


\begin{proof} 
The statement $a^*\in \Lip^{\epsilon}(\mathpzc{E},D)$ follows directly from the definition. For the product of $a$ and $b$, we write
\[[D,ab](1+D^{2})^{-\frac{1}{2}+\halfepsilon}=a[D,b](1+D^{2})^{-\frac{1}{2}+\halfepsilon}+[D,a]b(1+D^{2})^{-\frac{1}{2}+\halfepsilon},\]
and observe that the first summand admits a bounded extension. For the second summand we use the integral expression
\begin{align*}
[&b,(1+D^{2})^{-\frac{1}{2}+\halfepsilon}]\\
&=\frac{\sin{(\frac{1}{2}-\halfepsilon)\pi}}{\pi}\int_{0}^{\infty}\lambda^{-\frac{1}{2}+\halfepsilon}(1+D^{2}+\lambda)^{-1}([b,D]D+D[D,b])(1+D^{2}+\lambda)^{-1}\rd\lambda.
\end{align*}
Multiplying with $[D,a]$ and estimating the relevant parts of the integral gives
\[\begin{split} \|[D,a]\lambda^{-\frac{1}{2}+\halfepsilon}(1+D^{2}+&\lambda)^{-1}[b,D]D(1+D^{2}+\lambda)^{-1}\| \\
&\leq \frac{C_{a,\halfepsilon}C_{b,\epsilon}}{\lambda^{\frac{1}{2}-\halfepsilon}}\|(1+D^{2}+\lambda)^{-\epsilon}\|\|(1+D^{2}+\lambda)^{-\frac{1}{2}}\|\leq\frac{C_{a,\epsilon}C_{b,\epsilon}}{\lambda^{1+\halfepsilon}},
\end{split}\]
and similarly
\[\|[D,a]\lambda^{-\frac{1}{2}+\delta}(1+D^{2}+\lambda)^{-1}D[D,b](1+D^{2}+\lambda)^{-1}\|\leq\frac{C_{a,\epsilon}C_{b,\epsilon}}{\lambda^{1+\halfepsilon}}.\]
Therefore, the integral converges in norm and $[D,ab](1+D^2)^{-\frac{1}{2}+\delta}$ admits a bounded extension. The proof that $(1+D^2)^{-\frac{1}{2}+\delta}[D,ab]$ admits a bounded extension is carried out analogously.
\end{proof}

We now come to the main result about $\epsilon$-$KK$-cycles, concerning the bounded transform and the relation to $KK$-theory.

\begin{thmapp}[cf. \cite{BJ}]
\label{epsilonFred} 
The bounded transform $(\mathpzc{E}, D(1+D^{2})^{-\frac{1}{2}})$ of an $\epsilon$-$KK$-cycle is an $(A,B)$ Kasparov module and hence defines a class in $KK_{*}(A,B)$.
\end{thmapp}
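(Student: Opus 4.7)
My plan is to verify that $F := D(1+D^2)^{-1/2}$ satisfies the three Kasparov module axioms: for every $a \in A$, $a(F-F^*) \in \Ko(\mathpzc{E})$, $a(F^2-1) \in \Ko(\mathpzc{E})$, and $[F, a] \in \Ko(\mathpzc{E})$. The first is immediate since $F$ is self-adjoint by Borel functional calculus of the self-adjoint regular operator $D$. The second follows from $F^2 - 1 = -(1+D^2)^{-1}$, since $a(1+D^2)^{-1} = [a(1+D^2)^{-1/2}]\cdot (1+D^2)^{-1/2}$ is compact (by the $\epsilon$-$KK$-cycle condition) times bounded.

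For the commutator condition, I would first observe that $a\mapsto [F,a]$ is norm-continuous from $A$ into $\End^*_B(\mathpzc{E})$ and that $\Ko(\mathpzc{E})$ is norm-closed; hence it suffices to verify compactness of $[F,a]$ for $a\in \Lip^{\epsilon}(\mathpzc{E},D)$, which is dense in $A$ by the definition of an $\epsilon$-$KK$-cycle and closed under multiplication and adjoint by Proposition \ref{*-alg}. Fix such an $a$, and set $h=(1+D^2)^{-1/2}$ and $R_\lambda=(1+D^2+\lambda)^{-1}$. Applying the Leibniz rule gives
\[ [F,a] = [Dh, a] = [D,a]h + D[h,a], \]
reducing matters to compactness of $[D,a]h$ and $D[h,a]$ separately.

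For $[h, a]$, I would substitute the norm-convergent integral formula $h=\tfrac{1}{\pi}\int_{0}^{\infty}\lambda^{-1/2}R_\lambda\,d\lambda$ from Lemma \ref{intes} and use the telescoping identity
\[ R_\lambda[D^2,a]R_\lambda = R_\lambda[(1+D^2+\lambda),a]R_\lambda = aR_\lambda - R_\lambda a, \]
recognising each integrand as the compact operator $aR_\lambda - R_\lambda a$; here $aR_\lambda = [a(1+D^2)^{-1/2}]\cdot [(1+D^2)^{1/2}R_\lambda]$ is compact times bounded, with the adjoint giving compactness of $R_\lambda a$. The Lemma \ref{intes} bounds give integrand norm $O((1+\lambda)^{-1})$, so the integral converges in norm to a compact operator, yielding $[h,a]\in\Ko(\mathpzc{E})$. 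An analogous integral representation for $[D,a]h$, combined with $\epsilon$-boundedness factorisations like $[D,a](1+D^2)^{-(1-\epsilon)/2}\cdot(1+D^2)^{(1-\epsilon)/2}R_\lambda$, yields $[D,a]h\in\Ko(\mathpzc{E})$.

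The hard part will be $D[h,a]$. Formally, $D[h,a] = -\tfrac{1}{\pi}\int_0^\infty \lambda^{-1/2}DR_\lambda[D^2,a]R_\lambda\, d\lambda$, and I would expand
\[ DR_\lambda[D^2,a]R_\lambda = D^2R_\lambda[D,a]R_\lambda + DR_\lambda[D,a]DR_\lambda. \]
Using $\|D^2R_\lambda\|\leq 1$ and $\|DR_\lambda\|\leq (2\sqrt{1+\lambda})^{-1}$ from Lemma \ref{intes}, together with the $\epsilon$-bounds $\|[D,a]R_\lambda\|\leq C(1+\lambda)^{-1/2-\epsilon/2}$ and $\|[D,a]DR_\lambda\|\leq C(1+\lambda)^{-\epsilon/2}$ (obtained by factorising through $(1+D^2)^{-(1-\epsilon)/2}$ and applying Lemma \ref{intes}), the integrand has operator norm bounded by $C\lambda^{-1/2}(1+\lambda)^{-1/2-\epsilon/2}$, which is integrable precisely because $\epsilon>0$. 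Compactness of each integrand is the subtle point: I would rewrite $D^2R_\lambda = I-(1+\lambda)R_\lambda$ and use compactness of $R_\lambda[D,a]R_\lambda = DR_\lambda\cdot aR_\lambda - R_\lambda a\cdot DR_\lambda$ (a sum of compact-times-bounded products), together with a similar manipulation of $DR_\lambda[D,a]DR_\lambda$, to express each integrand as a combination of (bounded)-times-(compact) operators. Since the integrand is then compact for each $\lambda$ and the integral converges in norm, $D[h,a]\in\Ko(\mathpzc{E})$, and combining all compactness statements and extending by norm-continuity to all $a\in A$ shows that $(\mathpzc{E},F)$ is a Kasparov module whose class in $KK_*(A,B)$ is as claimed, see \cite{BJ, Kas1}.
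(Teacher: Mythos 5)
Your proposal follows essentially the same route as the paper's proof: the trivial verification of $a(F-F^{*})$ and $a(F^{2}-1)$, the Leibniz splitting $[F,a]=[D,a](1+D^{2})^{-\frac{1}{2}}+D[(1+D^{2})^{-\frac{1}{2}},a]$, the integral formula of Lemma \ref{intes} with the expansion $DR_{\lambda}[D^{2},a]R_{\lambda}=D^{2}R_{\lambda}[D,a]R_{\lambda}+DR_{\lambda}[D,a]DR_{\lambda}$, and the $\epsilon$-bounded factorisations giving an integrand of size $O(\lambda^{-1-\epsilon/2})$. The explicit reduction to $a\in\Lip^{\epsilon}(\mathpzc{E},D)$ by norm continuity is left implicit in the paper and you are right to record it.

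The one point needing more care --- in your write-up and, to be fair, in the paper's as well --- is the compactness (as opposed to boundedness) of the pieces into which the commutator is cut. The definition of an $\epsilon$-$KK$-cycle only demands \emph{local} compactness $a(1+D^{2})^{-\frac{1}{2}}\in\K(\mathpzc{E})$, not compactness of $(1+D^{2})^{-\frac{1}{2}}$ itself. Consequently the term $[D,a](1+D^{2})^{-\frac{1}{2}}=\bigl([D,a](1+D^{2})^{-\frac{1-\epsilon}{2}}\bigr)(1+D^{2})^{-\frac{\epsilon}{2}}$, and likewise the leftover $[D,a]R_{\lambda}$ produced by your substitution $D^{2}R_{\lambda}=1-(1+\lambda)R_{\lambda}$, are only manifestly bounded: writing $[D,a]R_{\lambda}=DaR_{\lambda}-aDR_{\lambda}$, the summand $aDR_{\lambda}$ is compact, but $DaR_{\lambda}$ is an unbounded operator composed with a compact one and need not be compact. (The paper's proof asserts that $(1+D^{2})^{-\frac{\epsilon}{2}}$ is compact, which amounts to assuming $D$ has globally compact resolvent; this holds in every application in the paper but is not part of the definition.) The clean repair is to assemble the whole commutator before estimating: on the Lipschitz algebra one has
\[
[F,a]=\frac{1}{\pi}\int_{0}^{\infty}\lambda^{-\frac{1}{2}}\bigl((1+\lambda)R_{\lambda}[D,a]R_{\lambda}-DR_{\lambda}[D,a]DR_{\lambda}\bigr)\,\rd\lambda,
\]
in which every occurrence of $[D,a]$ is flanked by a resolvent factor on each side. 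Your own splitting $R_{\lambda}[D,a]R_{\lambda}=(DR_{\lambda})(aR_{\lambda})-(R_{\lambda}a)(DR_{\lambda})$ and its analogue $DR_{\lambda}[D,a]DR_{\lambda}=(D^{2}R_{\lambda})(aDR_{\lambda})-(DR_{\lambda}a)(D^{2}R_{\lambda})$ then exhibit each integrand as compact using only local compactness, and the $\epsilon$-bounded estimates still give $O(\lambda^{-1-\epsilon/2})$ decay. Alternatively, add the standing hypothesis $(1+D^{2})^{-1}\in\K(\mathpzc{E})$, after which your argument (and the paper's) goes through verbatim.
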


We note that the proof of this Theorem is carried out analogously to the proof of \cite[Lemma $51$]{grensing}.

\begin{proof} 
The proof of the theorem relies on the integral formula \eqref{magicint} to show that the commutators $[F,a]$ are compact. The properties $a(F-F^{*}), a(1-F^{2})\in \K(\mathpzc{E})$ hold trivially. Recall that $\delta:=\epsilon/2$. 

We have
\[[D(1+D^{2})^{-\frac{1}{2}},a]=[D,a](1+D^{2})^{-\frac{1}{2}}+D[(1+D^{2})^{-\frac{1}{2}},a].\]
The first term is compact because $(1+D^{2})^{-\halfepsilon}$ is compact and $[D,a](1+D^{2})^{-\frac{1}{2}+\halfepsilon}$ is bounded. For the second term, we expand
\begin{equation}
\begin{split}
D[(1+D^{2})^{-\frac{1}{2}},a] =\frac{1}{\pi}&\int_{0}^{\infty}\lambda^{-\frac{1}{2}}D(1+D^{2}+\lambda)^{-1}[a,D]D(1+D^{2}+\lambda)^{-1}\rd \lambda\\ 
\label{magicinttwo}
&+\frac{1}{\pi}\int_{0}^{\infty}\lambda^{-\frac{1}{2}}D^{2}(1+D^{2}+\lambda)^{-1}[a,D](1+D^{2}+\lambda)^{-1}\rd\lambda.
\end{split}
\end{equation}
Using the estimates from Lemma \ref{intes} we find that
\[\|\lambda^{-\frac{1}{2}}D(1+D^{2}+\lambda)^{-1}[a,D]D(1+D^{2}+\lambda)^{-1}\|\leq\frac{C_{a,\epsilon}}{2\lambda^{1+\halfepsilon}},\]
and
\[\|\lambda^{-\frac{1}{2}}D^{2}(1+D^{2}+\lambda)^{-1}[a,D](1+D^{2}+\lambda)^{-1}\|\leq\frac{C_{a,\epsilon}}{\lambda^{1+\halfepsilon}},\]
where $C_{a,\epsilon}:=\|[D,a](1+D^{2})^{-\frac{1}{2}+\halfepsilon}\|$. We conclude that the integral formula \eqref{magicinttwo} converges in norm and the commutators are compact.
\end{proof}

Kucerovsky \cite{Kuc} gives sufficient conditions for a triple of even cycles to represent a Kasparov product. As in the bounded case, to formulate this result,  we need the mappings 
$$T_x\in \End^*_C(\mathpzc{F},\mathpzc{E}\otimes_{B}\mathpzc{F}),\quad T_x:f\mapsto x\otimes_B f,$$ 
defined for $x\in \mathpzc{E}$. The adjoint of the operator $T_{x}$ is given by
$$T_x^{*}\in \End^*_C(\mathpzc{E}\otimes_{B}\mathpzc{F},\mathpzc{F}),\quad T_x^{*}:e\otimes f\mapsto \langle x,e\rangle f,$$

\begin{thmapp}[cf. \cite{Kuc}]
\label{epsilonprod} 
Let $(\mathpzc{E},S)$ be an even $\epsilon$-unbounded $(A,B)-KK$-cycle, $(\mathpzc{F},T)$  an even $\epsilon$-unbounded $(B,C)-KK$-cycle and $(\mathpzc{E}\otimes_{B}\mathpzc{F},D)$ an even $\epsilon$-unbounded $(A,C)-KK$-cycle such that:
\begin{enumerate}\item for all $x$ in a dense subspace of $A\mathpzc{E}$, the operator
\[\left[\begin{pmatrix} D & 0 \\ 0 & T\end{pmatrix},\begin{pmatrix} 0 & T_{x} \\ T^{*}_{x} & 0\end{pmatrix}\right]\begin{pmatrix}(1+D^{2})^{-\frac{1}{2}+\halfepsilon}& 0\\ 0& (1+T^{2})^{-\frac{1}{2}+\halfepsilon}\end{pmatrix},\]
defined on $\Dom D\oplus \Dom T$, extends to an operator in $\End^{*}_{B}(\mathpzc{E}\hotimes_{B}\mathpzc{F})$;
\item $\Dom D\subset\Dom S\otimes 1$;
\item there is $\lambda\in\R$ such that $\langle Dx,S\otimes 1 x\rangle+\langle S\otimes 1 x,Dx\rangle\geq -\lambda \langle x,x\rangle$.
\end{enumerate}
Then $(\mathpzc{E}\otimes_{B}\mathpzc{F},D)$ represents the Kasparov product of $(\mathpzc{E},S)$ and $(\mathpzc{F},T)$.
\end{thmapp}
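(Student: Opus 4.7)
The plan is to reduce Theorem~\ref{epsilonprod} to Kucerovsky's original theorem \cite{Kuc} for bounded Kasparov modules, via the bounded transforms. By Theorem~\ref{epsilonFred}, each of the three $\epsilon$-cycles yields an ordinary Kasparov module by passing to $F_X := X(1+X^2)^{-1/2}$. What remains is to verify the Connes--Skandalis sufficient conditions for $(\mathpzc{E}\otimes_B\mathpzc{F}, F_D)$ to represent the Kasparov product of $(\mathpzc{E}, F_S)$ and $(\mathpzc{F}, F_T)$, namely (a) the \emph{connection property} that, setting $\mathbb{D}:= D\oplus T$ and $\mathbb{T}_x:=\begin{pmatrix} 0 & T_x\\ T_x^* & 0\end{pmatrix}$, the commutator $[F_\mathbb{D}, \mathbb{T}_x]$ lies in $\Ko_C(\mathpzc{E}\otimes_B\mathpzc{F}\oplus\mathpzc{F})$ for $x$ in a dense subset of $A\mathpzc{E}$; and (b) the \emph{positivity condition} that the graded commutator $[F_S\otimes 1, F_D]$ is non-negative modulo $\Ko_C$ on a dense subspace of $A\cdot(\mathpzc{E}\otimes_B\mathpzc{F})$.

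First I would dispose of (b), which uses conditions (2) and (3) but requires no $\epsilon$-bookkeeping. Condition (2) ensures $\Dom D\subseteq \Dom(S\otimes 1)$, so the anticommutator of $D$ and $S\otimes 1$ makes sense on $\Dom D$, and condition (3) provides a uniform lower semibound for this anticommutator. Passing from the unbounded operators $D$, $S\otimes 1$ to their bounded transforms via the integral formula of Lemma~\ref{intes}, this semiboundedness transfers to $[F_S\otimes 1, F_D]\geq -K\pmod{\Ko_C}$ on the relevant dense subspace; the argument is identical to the classical case \cite{Kuc} because neither (2) nor (3) involves $\epsilon$-bounded commutators.

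The hard part will be (a). Following the pattern in the proof of Theorem~\ref{epsilonFred}, I decompose
$$[F_\mathbb{D}, \mathbb{T}_x] = [\mathbb{D}, \mathbb{T}_x](1+\mathbb{D}^2)^{-1/2} + \mathbb{D}\,[(1+\mathbb{D}^2)^{-1/2}, \mathbb{T}_x].$$
For the first summand, I split $(1+\mathbb{D}^2)^{-1/2} = (1+\mathbb{D}^2)^{-1/2+\halfepsilon}\cdot(1+\mathbb{D}^2)^{-\halfepsilon}$: condition (1) makes the $\epsilon$-ed part absorb into the commutator to yield a bounded factor, while $(1+\mathbb{D}^2)^{-\halfepsilon}$ supplies $C$-compactness after writing $x=ay$ with $a\in A$, $y\in\mathpzc{E}$, using that $(\pi(a)\otimes 1)(1+\mathbb{D}^2)^{-\halfepsilon}$ is $C$-compact by the Kasparov module property of $(\mathpzc{E}\otimes_B\mathpzc{F}, F_D)$ and $(\mathpzc{F},F_T)$. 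For the second summand I expand via \eqref{magicint} as in \eqref{magicinttwo}, so that up to the factor $\lambda^{-1/2}/\pi$ the integrand becomes a sum of operators of the schematic form $\mathbb{D}^\alpha(1+\mathbb{D}^2+\lambda)^{-1}[\mathbb{T}_x,\mathbb{D}]\mathbb{D}^\beta(1+\mathbb{D}^2+\lambda)^{-1}$, and in each piece I interpose a resolvent splitting of the form $(1+\mathbb{D}^2+\lambda)^{-1} = (1+\mathbb{D}^2+\lambda)^{-1/2+\halfepsilon}(1+\mathbb{D}^2+\lambda)^{-1/2-\halfepsilon}$. Condition (1), combined with Proposition~\ref{*-alg}-style manipulations, absorbs the $(1+\mathbb{D}^2+\lambda)^{-1/2+\halfepsilon}$ factor against the commutator to yield an $O(\lambda^{-1/2+\halfepsilon})$ bounded term, while the remaining factors yield $O(\lambda^{-1/2-\halfepsilon})$ decay together with a residual $(1+\mathbb{D}^2+\lambda)^{-\halfepsilon}$ supplying $C$-compactness via the decomposition $x=ay$ as above. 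The resulting integrand is then $C$-compact with operator-norm majorant $C_x\lambda^{-1-\halfepsilon}$, so the integral converges in norm to a $C$-compact operator, establishing (a). The main technical obstacle is precisely this placement of the $\epsilon$-ed resolvent factor so that simultaneously the $\epsilon$-bounded commutator becomes bounded, the surviving resolvent power provides $C$-compactness, and the $\lambda$-integrand has integrable decay; the splitting described above accomplishes all three at once because $\epsilon<1$. Once (a) and (b) are established, Kucerovsky's theorem concludes that the bounded transform of $(\mathpzc{E}\otimes_B\mathpzc{F},D)$ represents the Kasparov product of those of $(\mathpzc{E},S)$ and $(\mathpzc{F},T)$, which by Theorem~\ref{epsilonFred} is the required statement.
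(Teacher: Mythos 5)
Your proposal is correct and takes essentially the same route as the paper: the paper's own proof is a two-sentence reduction to Kucerovsky's theorem, observing that conditions (2) and (3) give the positivity condition for the bounded transforms exactly as in the classical case, and that condition (1) yields the connection condition by the same integral-formula argument used to prove Theorem~\ref{epsilonFred}. Your write-up simply spells out in detail the two steps the paper leaves as references.
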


\begin{proof} 
As in \cite{Kuc}, conditions 2.) and 3.) imply the positivity condition for the bounded transforms. The proof that condition 1.) implies the bounded connection condition is the same as the proof that an $\epsilon$-unbounded $KK$-cycle gives a Fredholm module.
\end{proof}

Sufficient conditions for products in which one of the factors is an odd $\epsilon$-unbounded Fredholm module can be derived by formal Bott periodicity. We refer to the relevant discussions in \cite{BMS, AKH, KaLe}.

The following lemma describes a weakening of the domain preservation condition, and is useful in practice for proving $\epsilon$-boundedness.

\begin{lemapp}
\label{corecomm} 
Suppose $a$ maps a core for $D$ into $\Dom D$ and $[D,a](1+D^{2})^{-\frac{1}{2}+\halfepsilon}$ and  $(1+D^{2})^{-\frac{1}{2}+\halfepsilon}[D,a]$ extend to operators in $\End_{B}^{*}(\mathpzc{E})$. Then the commutator $[(1+D^{2})^{-\frac{1}{2}},a]$ maps $\mathpzc{E}$ into $\Dom D$. Consequently $a$ preserves $\Dom D$ and $[D,a](1+D^{2})^{-\frac{1}{2}+\halfepsilon}$and $(1+D^{2})^{-\frac{1}{2}+\halfepsilon}[D,a]$ are bounded on $\Dom D$.
\end{lemapp}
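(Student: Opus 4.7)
The plan unfolds in two stages: first use the closedness of $D$ to extend the commutator $[D,a]$ from the core $\mathcal{D}_0$ to all of $\Dom D$, and then use a direct algebraic identity to establish the commutator statement. Although the lemma is phrased with the commutator statement preceding the domain-preservation claim, the natural argument runs in the opposite order.

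By hypothesis there exists $T_1\in\End^*_B(\mathpzc{E})$ with $[D,a]y=T_1(1+D^2)^{\frac{1}{2}-\halfepsilon}y$ for every $y\in\mathcal{D}_0$. Given $x\in\Dom D$, approximate $x$ by $x_n\in\mathcal{D}_0$ in the graph norm of $D$. The functional-calculus inequality $(1+t^2)^{\frac{1}{2}-\halfepsilon}\le(1+t^2)^{\frac{1}{2}}$ guarantees that $(1+D^2)^{\frac{1}{2}-\halfepsilon}x_n$ is Cauchy, whence $[D,a]x_n=T_1(1+D^2)^{\frac{1}{2}-\halfepsilon}x_n$ is Cauchy as well. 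Hence $D(ax_n)=aDx_n+[D,a]x_n$ converges, and closedness of $D$ forces $ax\in\Dom D$ together with the identity $[D,a]x=T_1(1+D^2)^{\frac{1}{2}-\halfepsilon}x$ valid for every $x\in\Dom D$. This furnishes $a\Dom D\subseteq\Dom D$; moreover, on $\Dom D$ one has $[D,a](1+D^2)^{-\frac{1}{2}+\halfepsilon}=T_1|_{\Dom D}$ and, by an analogous approximation using the second part of the hypothesis, $(1+D^2)^{-\frac{1}{2}+\halfepsilon}[D,a]=T_2|_{\Dom D}$, so both are bounded on $\Dom D$.

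For the main claim, set $F:=D(1+D^2)^{-1/2}$, the bounded transform of $D$. For every $x\in\mathpzc{E}$ the vector $(1+D^2)^{-1/2}x$ lies in $\Dom D$; by the first stage $a(1+D^2)^{-1/2}x\in\Dom D$ as well, so both summands of
\[[(1+D^2)^{-1/2},a]x=(1+D^2)^{-1/2}ax-a(1+D^2)^{-1/2}x\]
belong to $\Dom D$. This immediately yields $[(1+D^2)^{-1/2},a]\mathpzc{E}\subseteq\Dom D$. Moreover, a direct computation using the first stage gives
\[Da(1+D^2)^{-1/2}x=aFx+[D,a](1+D^2)^{-1/2}x=aFx+T_1(1+D^2)^{-\halfepsilon}x,\]
and subtraction from $D(1+D^2)^{-1/2}ax=Fax$ yields the clean operator identity
\[D[(1+D^2)^{-1/2},a]=[F,a]-T_1(1+D^2)^{-\halfepsilon},\]
whose right-hand side is a bounded adjointable operator on $\mathpzc{E}$.

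The main obstacle I anticipate is the careful justification of the displayed identity for $Da(1+D^2)^{-1/2}x$: the expansion $Da(1+D^2)^{-1/2}x=aD(1+D^2)^{-1/2}x+[D,a](1+D^2)^{-1/2}x$ requires $a(1+D^2)^{-1/2}x\in\Dom D$ for every $x\in\mathpzc{E}$, which is precisely the content of the first stage and forces the logical order of the argument to be the reverse of the one in the lemma statement. After this has been taken care of, the computation is entirely algebraic and sidesteps the Baaj-Julg integral machinery used in the proofs of Proposition \ref{*-alg} and Theorem \ref{epsilonFred}.
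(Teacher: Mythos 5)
Your proof is correct, and it takes a genuinely different route from the paper's. The paper proves the two conclusions in the order they are stated: it uses the integral representation of Lemma \ref{intes} to expand $[a,(1+D^{2})^{-\frac{1}{2}}]$ and $D[(1+D^{2})^{-\frac{1}{2}},a]$ as norm-convergent integrals on the core $X$, writes $X=(1+D^{2})^{-\frac{1}{2}}Y$ with $Y$ dense, and passes to the limit in graph norm along sequences $y_{n}\to e$; the domain preservation $a\Dom D\subseteq\Dom D$ is only extracted at the very end. You instead prove domain preservation first, by writing $[D,a]=T_{1}(1+D^{2})^{\frac{1}{2}-\halfepsilon}$ on the core and invoking the closedness of the regular operator $D$ together with the contraction property of $(1+D^{2})^{-\halfepsilon}$ to propagate this identity to all of $\Dom D$; the commutator statement then falls out of the purely algebraic identity $D[(1+D^{2})^{-\frac{1}{2}},a]=[F,a]-T_{1}(1+D^{2})^{-\halfepsilon}$ with $F=D(1+D^{2})^{-\frac{1}{2}}$. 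Your route sidesteps the Baaj--Julg integral machinery entirely and yields an explicit bounded formula for $D[(1+D^{2})^{-\frac{1}{2}},a]$, which is arguably cleaner; the paper's route is uniform in style with the integral-based proofs of Proposition \ref{*-alg} and Theorem \ref{epsilonFred}. The reversal of the logical order relative to the statement, which you flag yourself, is harmless, and your implicit uses of $\Dom D=\Dom(1+D^{2})^{\frac{1}{2}}\subseteq\Dom(1+D^{2})^{\frac{1}{2}-\halfepsilon}$ and of the fact that the bounded extension $T_{1}$ agrees with $[D,a](1+D^{2})^{-\frac{1}{2}+\halfepsilon}$ on $(1+D^{2})^{\frac{1}{2}-\halfepsilon}\mathcal{D}_{0}$ are both standard for regular self-adjoint operators.
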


\begin{proof} 
Denote the core respected by $a$ by $X$. By \eqref{magicint} and \cite[Lemma 2.3]{cp1} and the discussion succeeding it, we can write
\begin{equation}
\begin{split}
[a,(1+D^{2})^{-\frac{1}{2}}]=\frac{1}{\pi}\int_{0}^{\infty}\lambda^{-\frac{1}{2}}&(1+D^{2}+\lambda)^{-1}D[D,a](1+D^{2}+\lambda)^{-1}\rd \lambda \\
\label{intexpansion}  
&+ \frac{1}{\pi}\int_{0}^{\infty}\lambda^{-\frac{1}{2}}(1+D^{2}+\lambda)^{-1}[D,a]D(1+D^{2})^{-1}\rd\lambda,
\end{split}
\end{equation}
as a norm convergent integral on $X$. The integral expression \eqref{magicinttwo} for $D[(1+D^{2})^{-\frac{1}{2}},a]$ converges in norm on $X$. Since $X$ is a core, it is of the form $(1+D^{2})^{-\frac{1}{2}}Y$ for some dense $Y\subset\mathpzc{E}$. For a Cauchy sequence $y_{n}\in Y$, with limit $e\in\mathpzc{E}$, the integrals \eqref{magicint} and \eqref{intexpansion} converge in norm at $y_{n}-y_{m}$. Thus $[(1+D^{2})^{-\frac{1}{2}},a]y_{n}\in \Dom D$ is Cauchy for the graph norm and therefore $[(1+D^{2})^{-\frac{1}{2}},a]e\in \Dom D$. 
From this it follows that for a sequence $y_{n}\rightarrow e$ we have 
\[a(1+D^{2})^{-\frac{1}{2}}y_{n}=[a,(1+D^{2})^{-\frac{1}{2}}]y_{n}+(1+D^{2})^{-\frac{1}{2}}ay_{n},\]
and thus $a(1+D^{2})^{-\frac{1}{2}}e\in\Dom D$.
\end{proof}

\subsubsection*{\bf Acknowledgements}
The authors do first and foremost wish to thank the Mathematisches Forschungsinstitut Oberwolfach (Germany) for their support through the Research in Pairs program in 2013, during which most of this work was carried out. The authors also thank the Leibniz Universit\"{a}t Hannover and GRK1463 for facilitating this collaboration. The second author was supported by the EPSRC grant EP/J006580/2. We are indebted to Adam Rennie for discussing the subtleties surrounding spectral triples with relatively bounded commutators, resulting in the exposition in the appendix. We are grateful to Robin Deeley and Bogdan Nica for carefully reading the manuscript and valuable comments. We would also like to thank Richard Sharp, Georges Skandalis, Aidan Sims, Mike Whittaker and Johan \"Oinert for inspiring discussions.

\end{document}